\newtheorem{theorem}{Theorem}[section]
\newtheorem{corollary}[theorem]{Corollary}
\newtheorem{lemma}[theorem]{Lemma}
\newtheorem{proposition}[theorem]{Proposition}
\theoremstyle{definition}
\newtheorem{definition}[theorem]{Definition}
\theoremstyle{remark}
\newtheorem{remark}[theorem]{Remark}
\newtheorem{example}[theorem]{Example}
\newtheorem*{claim}{Claim}
\numberwithin{equation}{section}
\newcommand{\op}{\operatorname}
\newcommand{\brak}[1]{\langle #1 \rangle}
\newcommand{\C}{\mathbb{C}}
\newcommand{\bH}{\mathbb{H}}
\newcommand{\N}{\mathbb{N}}
\newcommand{\R}{\mathbb{R}}
\newcommand{\Z}{\mathbb{Z}}
\newcommand{\cL}{\mathcal{L}}
\newcommand{\cU}{\mathcal{U}}
\newcommand{\cV}{\mathcal{V}}
\newcommand{\cX}{\mathcal{X}}
\newcommand{\sN}{\mathscr{N}}
\newcommand{\fg}{\mathfrak{g}}
\newcommand{\rk}{\op{rk}}
\newcommand{\ad}{\op{ad}}
\newcommand{\Ow}{\op{O}_{w}}
\newcommand{\DO}{\op{DO}}
\newcommand{\lie}{\chi}
\begin{document}

\title{Privileged Coordinates and Nilpotent Approximation of Carnot Manifolds, I. General Results}

\subjclass[2000]{Primary}

 \author{Woocheol Choi}
\address{Department of Mathematics Education, Incheon National University, Incheon, South Korea}
\email{choiwc@inu.ac.kr}

 \author{Rapha\"el Ponge}
\address{Department of Mathematical Sciences, Seoul National University, Seoul, South Korea}
 \email{ponge.snu@gmail.com}

 \thanks{WC was partially supported by POSCO TJ Park Foundation. RP\ was partially supported by Research Resettlement Fund and Foreign Faculty Research Fund of Seoul National University, and  Basic Research grants 2013R1A1A2008802 and 2016R1D1A1B01015971 of National Research Foundation of Korea.}

\begin{abstract}
In this paper we attempt to give a systematic account on privileged coordinates and nilpotent approximation of Carnot manifolds. By a Carnot 
manifold it is meant a manifold with a distinguished filtration of subbundles of the tangent bundle which is 
compatible with the Lie bracket of vector fields.  This paper lies down the background for its sequel~\cite{CP:AMP17} by clarifying a few points on privileged coordinates and the nilpotent approximation of Carnot manifolds. In particular, we give a description of all the systems of privileged coordinates at a given point. We also give an algebraic characterization of all nilpotent groups that appear as the nilpotent approximation at a given point. In fact, given a nilpotent group $G$ satisfying this algebraic characterization, we exhibit all the changes of variables that transform a given system of privileged coordinates into another system of privileged coordinates in which the nilpotent approximation is given by $G$. 
\end{abstract}

\maketitle

 \section{Introduction}
This paper is part of a series of two papers on privileged coordinates and nilpotent approximation of Carnot manifolds. By a Carnot manifold we mean a manifold $M$ together with a filtration of subbundles,
\begin{equation}
   H_{1}\subset H_{2}\subset \cdots \subset H_{r}=TM,
   \label{eq:Intro.Carnot-filtration}
\end{equation}which is compatible with the Lie bracket of vector fields. 
We refer to Section~\ref{sec:Carnot-Manifolds}, and the references therein, for various examples of Carnot manifolds. Many of those examples are equiregular Carnot-Carath\'eodory manifolds, in which case the Carnot filtration~(\ref{eq:Intro.Carnot-filtration}) arises from the iterated  Lie bracket of sections of $H_1$. However,  even for studying equiregular (and even non-regular) Carnot-Carath\'eodory structures we may be naturally led to consider non bracket-generated Carnot filtrations (see Section~\ref{sec:Carnot-Manifolds} on this point). 

It is a general understanding that (graded) nilpotent Lie group are models for Carnot manifolds. 
From an algebraic perspective, any filtration~(\ref{eq:Intro.Carnot-filtration}) gives rise to a graded vector bundle $\fg M:=\fg_1M\oplus \cdots \oplus \fg_r M$, where $\fg_wM=H_{w}/H_{w-1}$. As a vector bundle $\fg M$ is (locally) isomorphic to the tangent bundle $TM$. Moreover, as observed by Tanaka~\cite{Ta:JMKU70}, the Lie bracket of vector fields induces on each fiber $\fg M(a)$, $a\in M$, a Lie algebra bracket. This turns $\fg M(a)$ into a graded nilpotent Lie algebra. Equipping it with its Dynkin product we obtain a graded nilpotent group $GM(a)$, which is called the tangent group at $a$ (see Section~\ref{sec:Carnot-Manifolds} for a review of this construction). 

There is an alternative construction of nilpotent graded groups associated with $(M,H)$.  This construction originated from the work of Folland-Sein~\cite{FS:CMPAM74}, Rothschild-Stein~\cite{RS:ActaMath76}, and others on hypoelliptic PDEs. In this context, it is natural to weight the differentiation by a vector field according to which sub-bundle $H_j$ of the filtration~(\ref{eq:Intro.Carnot-filtration}) that vector field lies. For instance, as $[H_1,H_1]\subset H_2$ we would like to regard directions in $H_2\setminus H_1$ as having order~$2$. More generally, directions in $H_w\setminus H_{w-1}$ have weight~$w$. Note that this notion of weight is consistent with the grading of $\fg M$ described above. 

In local coordinates centered at a given point $a\in M$ this gives rise to a one-parameter of anisotropic dilations $\delta_t$, $t\in \R$. Rescaling vector fields by means of these dilations and letting $t\rightarrow 0$ we obtain asymptotic expansions whose leading terms form a graded nilpotent Lie algebra of vector fields $\fg^{(a)}$. As it turns out, the algebraic structure of $\fg^{(a)}$ heavily depends on the choice of the coordinates. However, there is a special class of coordinates, called privileged coordinates, where the grading of $\fg^{(a)}$ is compatible with the weight. In addition, in these coordinates the graded nilpotent Lie algebra $\fg^{(a)}$ is isomorphic to $\fg M(a)$. The Lie algebra $\fg^{(a)}$ is actually the Lie algebra of left-invariant vector fields on a graded nilpotent Lie group $G^{(a)}$. This group gives rise to the so-called nilpotent approximation of $(M,H)$ at $a$. We refer to Section~\ref{sec:Privileged} and Section~\ref{sec:Nilpotent-approx}, and the references therein, for more details on privileged coordinates and nilpotent approximation. 

At the conceptual level and for the sake of applications, it is desirable to understand better the relationship between the tangent group and the nilpotent approximation. The ultimate aim of this paper and its sequel~\cite{CP:AMP17} is to single out a special class of privileged coordinates, called Carnot coordinates, for which the nilpotent approximation is naturally given by the tangent group. In particular, these coordinates are an important ingredient in the approach of~\cite{CP:Groupoid} on the generalization of Pansu derivative to maps between general Carnot manifolds and the construction of an analogue for Carnot manifolds of Connes' tangent groupoid. The existence of such a groupoid was conjectured by Bella\"iche~\cite{Be:Tangent}. This provides us with definitive evidence that the tangent groups as described above are the relevant osculating objects of Carnot manifolds. In particular, this gives a conceptual explanation for the occurrence of the group structure.

The Carnot coordinates will be introduced in~\cite{CP:AMP17}. In this paper, we attempt to give a systematic account on privileged coordinates and nilpotent approximation of Carnot manifolds.  In particular, we clarify a few important points on privileged coordinates and nilpotent approximation. In addition,  this lies down the background for~\cite{CP:AMP17}.

There are various constructions of privileged coordinates~\cite{AS:DANSSSR87, BS:SIAMJCO90, Be:Tangent, Go:LNM76, He:SIAMR, RS:ActaMath76, St:1988}. In particular, Bella\"iche~\cite{Be:Tangent} produced a simple and effective construction of privileged coordinates by means of a suitable polynomial change of coordinates. Bella\"iche's construction was carried out in the setting Carnot-Carath\'eodory manifolds.  
There is no major difficulty to extend Bella\"iche's construction to arbitrary Carnot manifolds (see Proposition~\ref{prop:privileged} and Proposition~\ref{prop:privileged.psi-privileged}). We shall refer to these coordinates as the $\psi$-privileged coordinates. We also give a characterization of these coordinates among all privileged coordinates. More precisely, we show that the polynomial change of variables that is used to obtain the  $\psi$-privileged coordinates is the only one of its form that produces privileged coordinates (Proposition~\ref{prop-uni}). 

As alluded to above, the nilpotent approximation arises from anisotropic asymptotic expansions of vector fields in local coordinates. This type of asymptotic expansions has been considered by a number of authors~\cite{ABB:SRG, BG:CHM, Be:Tangent, Go:LNM76, Gr:CC, He:SIAMR, Je:Brief14, MM:JAM00, Me:CPDE76, Mi:JDG85, Mo:AMS02, Ro:INA, RS:ActaMath76} in various levels of generality. In Section~\ref{sec:anisotropic}, we attempt to give a systematic account on anisotropic asymptotic expansions of functions, multi-valued maps, vector fields and differential operators. In particular, we show that the various asymptotics we consider actually hold with respect to the corresponding $C^\infty$-topologies. 

As it turns out, there are two different definitions of privileged coordinates. In this paper, we use the definition of~\cite{BS:SIAMJCO90, Be:Tangent} in terms of the orders of the coordinate functions. The earlier definition of~\cite{Go:LNM76} involves the weights of vector fields. It is well known that privileged coordinates in the 
sense of~\cite{BS:SIAMJCO90, Be:Tangent} are privileged coordinates in the sense of~\cite{Go:LNM76} (see~\cite{BS:SIAMJCO90, Be:Tangent, Je:Brief14, Mo:AMS02}).  We establish the converse result, and so this shows that the two  notions of privileged coordinates are equivalent (Theorem~\ref{thm-pri-equiv}). We use this result to give a simple characterization of the change of coordinates that transform privileged coordinates into privileged coordinates (Proposition~\ref{prop:char-priv.phi-hatphi-Ow}). 

We give a few applications of this characterization result. First, by combining it with the construction of the $\psi$-privileged coordinates, we obtain an explicit description of all the systems of privileged coordinates (Corollary~\ref{cor:privileged.all-privileged-coord}). Another application is a new proof that the canonical coordinates of the 1st kind of Rothschild-Stein~\cite{RS:ActaMath76} and Goodman~\cite{Go:LNM76} are privileged coordinates (Proposition~\ref{prop:can.1stkind-privileged}). By using similar arguments we also give a new proof that the canonical coordinates of the 2nd kind of Bianchini-Stefani~\cite{BS:SIAMJCO90} and Hermes~\cite{He:SIAMR} are privileged coordinates (Proposition~\ref{prop:can.2ndkind-privileged}). The approach presupposes the existence of privileged coordinates (e.g., $\psi$-privileged coordinates), but it bypasses the manipulations on flows of vector fields of previous proofs (compare~\cite{Je:Brief14, Mo:AMS02}). 

As mentioned above, the nilpotent approximation $G^{(a)}$ arises from anisotropic asymptotic expansions of vector fields in privileged coordinates. The graded nilpotent Lie group $G^{(a)}$ has $\R^n$ as underlying manifold, but its group law depends on the choice of the privileged coordinates. Thus, it is natural to ask what nilpotent groups arise as nilpotent approximations at a given point. We give an algebraic characterization of these groups (see Corollary~\ref{cor:nilpotent-approx.characterization}).  In fact, given any nilpotent group $G$ in this class we actually exhibit all the change of coordinates that convert any given system of privileged coordinates into a system of privileged coordinates in which the nilpotent approximation is given by $G$ (Theorem~\ref{thm:Nilpotent.coord-G(a)G}). 

The class of groups that satisfy this algebraic characterization is as large as it can be. In particular, we can exhibit all the nilpotent approximations of Heisenberg manifolds, including contact manifolds and CR manifolds of hypersurface type (see Proposition~\ref{prop:nilpotent-approx.Heisenberg-approx}). We similarly can exhibit all the nilpotent approximation of step~2 Carnot manifolds. For general Carnot manifolds, the construction leads us to a large class of nilpotent approximations at a given point. 

 This paper is organized as follows. In Section~\ref{sec:Carnot-Manifolds}, we review the main definitions and examples regarding Carnot manifolds and their tangent group bundles.  In Section~\ref{sec:Privileged}, we explain how to extend to Carnot manifold Bella\"iche's construction of privileged coordinates. In Section~\ref{sec:anisotropic}, we give a detailed account on the anisotropic asymptotic expansions of functions, maps, vector fields and differential operators. In Section~\ref{sec:charact-privileged}, we give a characterization of privileged coordinates in terms of weight of vector fields. This shows that the two main notions of privileged coordinates are equivalent. In Section~\ref{sec:Nilpotent-approx}, we give a systematic account on the nilpotent approximation of Carnot manifolds. Finally, in Section~\ref{sec:Canonical-coord} we give new proofs that the canonical coordinates of the 1st kind of~\cite{Go:LNM76, RS:ActaMath76} and canonical coordinates of the 2nd kind of~\cite{BS:SIAMJCO90, He:SIAMR} are privileged coordinates.  
  
\subsection*{Acknowledgements}
The authors wish to thank Andrei Agrachev, Davide Barilari, Enrico Le Donne, and Fr\'ed\'eric Jean for useful discussions related to
the subject matter of this paper. They also thank an anonymous referee whose insightful comments help improving the presentation of the paper. In addition, they would like to thank Henri Poincar\'e Institute (Paris, France), McGill University 
(Montr\'eal, Canada) and University of California at Berkeley (Berkeley, USA) for their hospitality during the 
preparation of this paper.  

\section{Carnot Manifolds. Definitions and Examples}\label{sec:Carnot-Manifolds}
In this section, we give the main definitions and examples related to graded nilpotent Lie groups and Carnot manifolds. 

\subsection{Carnot Groups and Graded nilpotent Lie groups} Prior to getting to Carnot manifolds we review some basic facts on Carnot groups and nilpotent graded Lie groups and their Lie algebras. 

\begin{definition} \label{def:Carnot.Carnot-algebra}
A \emph{step $r$ nilpotent graded Lie algebra} is the data of a real Lie algebra $(\fg,[\cdot, \cdot])$ and a grading $  \fg=\fg_{1}\oplus \fg_{2}\oplus \cdots \oplus \fg_{r}$, which is compatible with the Lie bracket, i.e., 
\begin{equation}
        [\fg_{w},\fg_{w'}]\subset \fg_{w+w'} \ 
        \text{for $w+w'\leq r$} \quad \text{and} \quad [\fg_{w},\fg_{w'}]=\{0\}\ 
        \text{for $w+w'> r$}.
         \label{eq:Carnot.grading-bracket}
\end{equation}
We further say that $\fg$ is  \emph{Carnot algebra} when $\fg_{w+1}=[\fg_{1},\fg_{w}]$ for $w=1,\ldots, r-1$. 
\end{definition}

\begin{remark}
 The conditions~(\ref{eq:Carnot.grading-bracket}) automatically imply that $\fg$ is nilpotent of step~$r$. 
\end{remark}

\begin{remark}
    Carnot algebras are also called stratified nilpotent Lie algebras  (see~\cite{Fo:SM79}). 
\end{remark}

\begin{remark}
    Any commutative real Lie algebra $\fg$ (i.e., any real vector space) is a step 1 Carnot algebra with $\fg_{1}=\fg$. 
\end{remark}

\begin{remark}
   A classification of $n$-dimensional Carnot algebras of step $n-1$ was obtained by Vergne~\cite{Ve:BSMF70}. A 
   classification of  rigid Carnot algebras  was obtained by Agrachev-Marigo~\cite{AM:JDCS05}. 
\end{remark}

In what follows, by the Lie algebra of a Lie group we shall mean the tangent space at the unit element equipped with the induced Lie bracket. 

\begin{definition}\label{def:Carnot.Carnot-group}
 A \emph{graded nilpotent Lie group} (resp., \emph{Carnot group}) is a connected simply connected nilpotent real Lie group whose Lie algebra is a  graded nilpotent  Lie algebra (resp., 
 Carnot algebra). 
\end{definition}

\begin{remark}
 We refer to the monographs~\cite{BLU:Springer07, CG:Cambridge90, FR:Birkhauser16} detailed accounts on nilpotent Lie groups and Carnot groups. 
\end{remark}

Let $G$ be a step $r$ graded nilpotent Lie group with unit $e$. Then its Lie algebra $\fg=TG(e)$ is canonically identified with the Lie algebra of left-invariant vector fields on $G$. More precisely, with any $\xi \in \fg$ is associated the unique left-invariant vector field $X_\xi$ on $G$ such that $X_\xi(e)=\xi$. In addition, as $G$ is a connected simply connected nilpotent Lie group, its exponential map is a global diffeomorphism $\exp: \fg\rightarrow G$ (see, e.g., \cite{CG:Cambridge90, FS:Hardy82}). For any $\xi \in G$, the flow $\exp(tX_\xi)$ exists for all times $t\in \R$. We then have
\begin{equation*}
\exp(\xi)=\exp(X_\xi), \qquad \text{where $\exp(X_\xi):=\exp(tX_\xi)(e)_{|t=1}$}. 
\end{equation*}
In addition, the flow $\R\ni t\rightarrow \exp(tX_\xi)$ is a one-parameter subgroup of $G$. Conversely, any one-parameter subgroup of $G$ is generated by a (unique) left-invariant vector field.  

By assumption $\fg$ comes equipped with a grading $  \fg=\fg_{1}\oplus \fg_{2}\oplus \cdots \oplus \fg_{r}$ which is compatible with its Lie algebra bracket. This grading then gives rise to a family of anisotropic dilations $\xi \rightarrow t\cdot \xi$, $t \in \R$, which are linear maps given by
\begin{equation}
 t\cdot (\xi_1+\xi_2 +\cdots + \xi_r) = t\xi_1+ t^2\xi_2 +\cdots + t^r \xi_r, \qquad \xi_j \in \fg_j.  
 \label{eq:Carnot.dilations}
\end{equation}
The compatibility of the grading with the Lie bracket implies that these dilations are Lie algebra automorphisms. Therefore, they give rise to dilations $\delta_t:G\rightarrow G$, $t\in \R$, which are group isomorphisms such that
\begin{equation*}
 \delta_t\left(\exp(\xi)\right)= \exp(t\cdot \xi) \qquad \text{for all $\xi \in \fg$}. 
\end{equation*}

For $\xi \in \fg$, let $\ad_\xi: \fg\rightarrow \fg$ be the adjoint endomorphism associated with $\fg$, i.e., $\ad_\xi \eta =[\xi,\eta]$ for all $\eta \in \fg$. This is a nilpotent endomorphism. In fact, if $\xi \in \fg_w$, then~(\ref{eq:Carnot.grading-bracket}) implies that $\ad_\xi(\fg_{w'})\subset \fg_{w+w'}$ if $w+w'\leq r$, and $\ad_\xi(\fg_{w'})=\{0\}$ otherwise. Moreover, by  the Baker-Campbell-Hausdorff formula we have 
\begin{equation}
\exp(\xi) \exp(\eta)= \exp(\xi \cdot \eta) \qquad \text{for all $\xi,\eta \in \fg$},
\label{eq:Carnot.BCH-Formula}
\end{equation}where $\xi \cdot \eta$ is given by the Dynkin formula, 
\begin{align}
 \xi \cdot \eta & = \sum_{n\geq 1} \frac{(-1)^{n+1}}{n} \sum_{\substack{\alpha, \beta \in \N_0^n\\ \alpha_j+\beta_j\geq 1}} 
 \frac{(|\alpha|+|\beta|)^{-1}}{\alpha!\beta!} (\ad_\xi)^{\alpha_1} (\ad_\eta)^{\beta_1} \cdots (\ad_\xi)^{\alpha_n} (\ad_\eta)^{\beta_n-1}\eta \nonumber \\
 & = \xi +\eta + \frac{1}{2}[\xi,\eta] + \frac{1}{12} \left( [\xi,[\xi,\eta]]+  [\eta,[\eta,\xi]]\right) - \frac{1}{24}[\eta,[\xi,[\xi,\eta]]] + \cdots .
 \label{eq:Carnot.Dynkin-product}
\end{align}
The above summations are finite, since all the iterated brackets of length~$\geq r+1$ are zero. In addition, when $\beta_n=0$ we make the convention that $(\ad_\xi)^{\alpha_n} (\ad_\eta)^{\beta_n-1}\eta=(\ad_\xi)^{\alpha_n-1}\xi$. Thus, with this convention 
$(\ad_\xi)^{\alpha_n} (\ad_\eta)^{\beta_n-1}\eta=0$ when either $\beta_n\geq 2$, or $\beta_n=0$ and $\alpha_n\geq 2$. 

Conversely, if $\fg$ is a graded nilpotent Lie algebra, then~(\ref{eq:Carnot.Dynkin-product}) defines a product on $\fg$. This turns $\fg$ into a Lie group with unit $0$. Under the identification $\fg\simeq T\fg(0)$, the corresponding Lie algebra is naturally identified with $\fg$, and so we obtain a graded nilpotent Lie group. Moreover, under this identification the exponential map becomes the identity  map, and so any Lie algebra automorphism of $\fg$ is a group automorphism as well. In particular, the dilations~(\ref{eq:Carnot.dilations}) are group automorphisms with respect to the group law~(\ref{eq:Carnot.Dynkin-product}). In addition, the group law~(\ref{eq:Carnot.Dynkin-product}) implies that 
\begin{equation*}
\xi^{-1}=-\xi \qquad \text{for all $\xi \in \fg$}. 
\end{equation*}

\subsection{Carnot manifolds} 
In what follows, given a manifold $M$ and distributions $H_{j}\subset TM$, $j=1,2$, we shall denote by $[H_{1},H_{2}]$ 
the distribution generated by the Lie brackets of their sections, i.e., 
\begin{equation*}
  [H_{1},H_{2}]=\bigsqcup_{x\in M}\biggl\{ [X_{1},X_{2}](x); \ X_{j}\in C^{\infty}(M,H_{j}), j=1,2\biggr\}.
\end{equation*}

Throughout this paper we shall use the following definition of a Carnot manifold. 

\begin{definition}
    A \emph{Carnot manifold} is a pair $(M,H)$, where $M$ is a manifold and $H=(H_{1},\ldots,H_{r})$ is a finite filtration of 
    subbundles, 
    \begin{equation}
       H_{1}\subset H_{2}\subset \cdots \subset H_{r}=TM,  
        \label{eq:Carnot.Carnot-filtration}
    \end{equation}which is compatible with the Lie bracket of vector fields, i.e., 
    \begin{equation}
        [H_{w},H_{w'}]\subset H_{w+w'}\qquad 
        \text{for $w+w'\leq r$}.
        \label{eq:Carnot-mflds.bracket-condition}
    \end{equation}
    The number $r$ is called the \emph{step} of the Carnot manifold $(M,H)$. The sequence $(\rk H_{1},\ldots, \rk H_{r})$ is called its \emph{type}.  
\end{definition}

\begin{remark}
    Carnot manifolds are also called filtered manifolds in~\cite{CS:AMS09}. 
\end{remark}

\begin{definition}\label{def:Carnot.Carnot-mfld-map}
    Let $(M,H)$ and $(M',H')$ be Carnot manifolds of step $r$, where $H=(H_{1},\ldots,H_{r})$ and 
    $H'=(H_{1}',\ldots,H_{r}')$. Then
    \begin{enumerate}
        \item  A \emph{Carnot manifold map} $\phi:M\rightarrow M'$ is smooth map such that, for $j=1,\ldots,r$, 
        \begin{equation}
            \phi'(x)X\in H_{j}'(\phi(x)) \qquad \text{for all $(x,X)\in H_{j}$}.
            \label{eq:Carnot.Carnot-map}
        \end{equation}
    
        \item  A \emph{Carnot diffeomorphism} $\phi:M\rightarrow M'$ is a diffeomorphism which is a Carnot manifold map. 
    \end{enumerate}
\end{definition}

\begin{remark}
   If $\phi:M\rightarrow M'$ is a Carnot diffeomorphism, then the condition~(\ref{eq:Carnot.Carnot-map}) exactly means 
  that $\phi_{*}H_{j}=H_{j}'$ for $j=1,\ldots,r$. Therefore, in this case, the Carnot manifold $(M,H)$ and $(M',H')$ must 
   have same type and the inverse map $\phi^{-1}$ is a Carnot diffeomorphism as well.  
\end{remark}

Let $(M^{n},H)$ be an $n$-dimensional Carnot manifold of step $r$, so that $H=(H_{1},\ldots,H_{r})$, 
where the subbundles $H_{j}$ satisfy~(\ref{eq:Carnot.Carnot-filtration}).  

\begin{definition}\label{def:Carnot.weight-sequence}
    The \emph{weight sequence} of a Carnot manifold $(M,H)$ is the sequence $w=(w_{1},\ldots,w_{n})$ defined by
    \begin{equation}
    w_{j}=\min\{w\in \{1,\ldots,r\}; j\leq \rk H_{w}\}.
    \label{eq:Carnot.weight}
\end{equation}
\end{definition}

\begin{remark}
    Two Carnot manifolds have same type if and only if they have same weight sequence. 
\end{remark}

Throughout this paper we will make use of the following type of tangent frames.  

\begin{definition}  
An \emph{$H$-frame} over an open $U\subset M$ is a tangent frame $(X_{1},\ldots,X_{n})$ over $U$ which is compatible with the 
filtration $(H_{1},\ldots.,H_{r})$ in the sense that, for $w=1,\ldots,r$, the vector fields $X_{j}$, $w_{j}= w$, are sections of $H_{w}$.
\end{definition}

\begin{remark}\label{rmk:Carnot-mfld.local-frames}
    If $(X_{1},\ldots,X_{n})$ is an $H$-frame near a point $a\in M$, then, for $w=1,\ldots,r$, the family $\{X_{j}; \ w_{j}\leq w\}$ is a local frame 
of $H_{w}$ near the point $a$. Note this implies that $X_{j}$ is a section of $H_{w_{j}}\setminus H_{w_{j}-1}$ when $w_{j}\geq 2$. 
\end{remark}

\begin{remark}\label{rmk:Carnot-mfld.brackets-H-frame}
    Suppose that $(X_{1},\ldots,X_{n})$ is an $H$-frame near a point $a\in M$. Then the condition~(\ref{eq:Carnot-mflds.bracket-condition}) implies that 
    the vector field $[X_{i},X_{j}]$ is a section of $H_{w_{i}+w_{j}}$ when $w_{i}+w_{j}\leq r$. As $\{X_{k}; w_{k}\leq 
    w_{i}+w_{j}\}$ is a local frame of $H_{w_{i}+w_{j}}$, we deduce that there are unique smooth functions 
    $L_{ij}^{k}(x)$ near $x=0$ such that
    \begin{equation}
        [X_{i},X_{j}]=\sum_{w_{k}\leq w_{i}+w_{j}}L_{ij}^{k}(x)X_{k} \qquad \text{near $x=a$}. 
        \label{eq:Carnot-mfld.brackets-H-frame}
    \end{equation}   
\end{remark}

\subsection{Examples of Carnot Manifolds}\label{sec:Examples}
\renewcommand{\thesubsubsection}{\Alph{subsubsection}}
We shall now describe various examples of Carnot manifolds. The following list is by no means exhaustive, but it 
should give the reader a good glimpse at the vast diversity of examples of Carnot manifolds. 

\subsubsection{Graded Nilpotent Lie Groups}
Let $G$ be a step $r$ graded nilpotent Lie group. Then its Lie algebra $\fg=TG(e)$ has a grading $ \fg=\fg_{1}\oplus \fg_{2}\oplus \cdots \oplus \fg_{r}$
satisfying~(\ref{eq:Carnot.grading-bracket}). For $w=1,\ldots, r$, let $E_w$ be the $G$-subbundle of $TG$ obtained by left-translation of $\fg_w$ over $G$. We then obtain a $G$-vector bundle grading $TG=E_1\oplus \cdots \oplus E_r$. This grading gives rise to the filtration $H_1\subset \cdots \subset H_r=TG$, where $H_w:=H_1 \oplus \cdots H_w$. It can be shown that $[H_w,H_{w'}]\subset H_{w+w'}$ whenever $w+w'\leq r$ (see, e.g., \cite{CP:AMP17}). Therefore, this defines a left-invariant Carnot manifold structure on $G$ which is uniquely determined by the grading of $\fg$.

\subsubsection{Heisenberg Manifolds} In the terminology of~\cite{BG:CHM}, a Heisenberg manifold is 
 a manifold together with a distinguished hyperplane bundle $H\subset TM$. In particular, $(H,TM)$ is a Carnot 
filtration. Following are important examples of Heisenberg manifolds:
\begin{itemize}
    \item  The Heisenberg group $\bH^{2n+1}$ and its products with Abelian groups.

    \item  Cauchy-Riemann (CR) manifolds of hypersurface type, e.g., real hypersurfaces in complex manifolds.

    \item  Contact manifolds and even contact manifolds.

    \item  Confoliations of Eliashberg-Thurston~\cite{ET:C}. 
\end{itemize}

Given a Heisenberg manifold $(M,H)$, taking Lie bracket of horizontal vector fields modulo $H$ defines an antisymmetric 
bilinear vector bundle map $\cL:H\times H\rightarrow TM/H$, which is called the Levi form of $(M,H)$ (see Lemma~\ref{lem-dep}). 
If $M$ has odd dimension, then $H$ is a contact distribution if and only if the Levi form is non-degenerate at every 
point. If $\dim M$ is even, we say that $(M,H)$ is an even contact manifold when the Levi form has maximal rank $\dim 
M-2$ at every point (\emph{cf.}~\cite{Mo:AMS02}). 

\subsubsection{Foliations}
More generally, a Carnot manifold structure of step 2 on a given manifold $M$ reduces to the datum of a subbundle $H\subset TM$, so that $(H, TM)$ is a Carnot filtration. In the same way  as with Heisenberg manifolds, we have a Levi form $\cL:H\times H\rightarrow TM/H$. 

We have a foliation when $H$ is integrable in Fr\"obenius' sense, i.e., $[H,H]\subset H$, or equivalently, the Levi form $\cL$ vanishes at every point. There are numerous examples of foliations such as foliations arising from submersions (including Reeb foliations), those associated with suspensions (including Kr\"onecker foliation), or foliations arising from locally free actions of Lie groups on manifolds (see, e.g., \cite{MM:Cambridge03}).

\subsubsection{Polycontact manifolds} They are step 2 Carnot manifolds that are the total opposite of Foliations. A polycontact manifold is a manifold $M$ equipped with a subbundle $H\subset TM$  which is totally non-integrable. This means that, for all $x\in M$ and $\theta\in (TM/H)^{*}(x)\setminus 0$, the bilinear form $\theta \circ \cL_{x}:H(x)\times H(x)\rightarrow \R$ is non-degenerate. Following are some noteworthy examples of polycontact manifolds: 
    \begin{itemize}
        \item  M\'etivier groups~\cite{Me:Duke80}, including the $H$-type groups of Kaplan~\cite{Ka:TAMS80}.
    
        \item  Principal bundles equipped with the horizontal distribution defined by a fat connection 
        (\emph{cf.}~Weinstein~\cite{We:Adv80}). 
    
        \item   The quaternionic contact manifolds of Biquard~\cite{Bi:Roma99, Bi:Asterisque}. 
    
        \item  The unit sphere $\mathbb{S}^{4n-1}$ in quaternionic space (see~\cite{vE:Polycontact}). 
    \end{itemize}

\begin{remark}
     Polycontact manifolds are called Heisenberg manifolds in~\cite{CC:SRG}. Polycontact distributions are called fat 
    in~\cite{Mo:AMS02}. The terminology polycontact was coined by van Erp~\cite{vE:Polycontact} (following a suggestion 
    of Alan Weinstein).
\end{remark}
    
 \subsubsection{Pluri-contact manifolds} These manifolds are introduced in the recent preprint~\cite{ACGL:Contact17}{\footnote{The terminology ``pluri-contact manifold'' is not used in~\cite{ACGL:Contact17}. We use it for the sake of exposition's clarity.}}. 
 They generalize polycontact manifolds. A pluri-contact manifold is a manifold $M$ equipped with a subbundle $H\subset TM$ such that, for every $x\in M$,  there is at least one $\theta\in (TM/H)^{*}(x)\setminus 0$ such that the bilinear form $\theta \circ \cL_{x}:H(x)\times H(x)\rightarrow \R$ is non-degenerate. 
Beside polycontact manifolds are examples of pluri-contact manifolds, examples of pluri-contact manifolds  that are not polycontact include
 \begin{itemize}
\item Products of contact manifolds~\cite{ACGL:Contact17}.

\item  Nondegenerate CR manifolds of non-hypersurface-type~\cite{ACGL:CR17}. 
\end{itemize}
We refer to~\cite{ACGL:Contact17, ACGL:CR17} (and the references therein) for further examples of pluri-contact manifolds.

\subsubsection{Carnot-Carath\'eodory Manifolds}
Given any subbundle, or even distribution, $H\subset TM$, we recursively define distributions $H^{[j]}$, $j\geq 1$, by
\begin{equation*}
    H^{[1]}=H \qquad \text{and} \qquad H^{[j+1]}=H^{[j]}+\left[H,H^{[j]}\right], \ j \geq 1. 
\end{equation*}

\begin{definition} A \emph{Carnot-Carath\'eodory} manifold  (or CC manifold) is a pair $(M,H)$, where $M$ is a manifold and $H$ is a subbundle of 
    $TM$ such that, for all $x\in M$, we can find $r\in \N$ so that $H^{[r]}(a)=TM(x)$. 
\end{definition}

\begin{definition}[\cite{Gr:CC}]
    An \emph{equiregular Carnot-Carath\'eodory manifold} (or ECC manifold) is a Carnot-Carath\'eodory
    manifold $(M,H)$ for which there is $r\in \N$ such that $H^{[r]}=TM$ and 
    each distribution $H^{[j]}$, $j=2,\ldots,r-1$, has constant rank.
\end{definition}

Any equiregular Carnot-Carath\'eodory manifold $(M,H)$ is a Carnot manifold with Carnot filtration 
$\left(H^{[1]},\ldots H^{[r]}\right)$, since each distribution $H^{[j]}$ is a subbundle. Moreover, any non-equiregular Carnot-Carath\'eodory structure can be 
"desingularized" into an equiregular Carnot-Carath\'eodory structure (see, e.g., \cite{Je:Brief14}).

If $G$ is a Carnot group, then the Carnot structure described above is actually an EEC structure. 
More generally, Carnot-Carath\'eodory structures naturally appear in sub-Riemannian geometry and control theory associated with 
non-holonomic systems of vector fields. In particular, they occur in real-life situations such as skating motion~\cite{Be:Tangent, 
Bl:Control}, rolling penny~\cite{CC:SRG}, 
car-like robots~\cite{CC:SRG}, or car with $N$-trailers~\cite{Je:ESAIM96}. They also appear in numerous applied mathematics settings 
such as the Asian option model in finance, computer vision, image 
processing, dispersive groundwater and pollution, statistical properties of laser light, extinction in systems of 
interacting biological populations, dynamics of polymers and astronomy (distribution of clusters in space) 
(see~\cite{Br:Brief14} and the references therein). 

We refer to~\cite{ABB:SRG, CC:SRG, Gr:CC, Je:Brief14, Mo:AMS02, Ri:Brief14}, and the references therein, for detailed accounts on 
Carnot-Carath\'eodory manifolds and their various applications. 

Contact and polycontact manifolds are examples of ECC manifolds with $r=2$. An important 
example of ECC manifolds with $r= 3$ is provided by Engel manifolds, which are  4-dimensional manifold equipped with a plane-bundle 
$H\subset TM$ such that $H^{[2]}$ has constant rank 3 and $H^{[3]}=TM$ (see~\cite{Mo:AMS02}). More generally, parabolic geometry provides us with a wealth of examples of ECC manifolds. 
We refer to~\cite{CS:AMS09} for a thorough account on parabolic geometric structures. In addition to the examples above, further examples include the following:
\begin{itemize}
    \item  Nondegenerate (partially) integrable CR manifolds of hypersurface type (see~\cite{CS:AMS09}).  
    
    \item Contact path geometric structures, including contact projective structures (\emph{cf}.~\cite{Fo:IUMJ05, 
    Fo:arXiv05}). 

    \item  Contact quaternionic structures of Biquard~\cite{Bi:Roma99, Bi:Asterisque}. 

    \item  Generic $\left(m,\frac{1}{2}m(m+1)\right)$-distributions, including generic $(3,6)$-distributions studied by 
    Bryant~\cite{Br:RIMS06} (see~\cite{CS:AMS09}). 
    
    \item Generic $(2,3,5)$-distributions introduced by Cartan~\cite{Ca:AENS10}. 
\end{itemize}

The next two examples provide are examples of Carnot manifolds that are not ECC manifolds. 

\subsubsection{The heat equation on an ECC manifold.} Let $(M,H)$ be an ECC manifold, so that the filtration 
$H=(H_{1},\ldots,H_{r})$ is generated by the iterative Lie brackets of sections of $H_{1}$. Let $X_{1},\ldots,X_{m}$ be 
a spanning frame of $H_{1}$. Then the sub-Laplacian $\Delta=-(X_{1}^{2}+\cdots X_{m}^{2})$ and the associated heat 
operator $\Delta+\partial_{t}$ are hypoelliptic (see~\cite{Ho:ActaMath67}). Following Rothschild-Stein~\cite{RS:ActaMath76} (and other authors) in order to study the heat operator $\Delta+\partial_{t}$ we seek for a setup where 
differentiation with respect to time has degree $2$ on $M\times \R$. This amount to use the filtration 
$\tilde{H}=(\tilde{H}_{1},\ldots,\tilde{H}_{r})$,  where
\begin{equation*}
  \tilde{H}_{1}=\pi_{1}^{*}H_1, \qquad \tilde{H}_{j}=\pi_{1}^{*}H_{j}+\pi_{2}^{*}T\R, \quad  j\geq 2. 
\end{equation*}
Here $\pi_{1}$ (resp., $\pi_{2}$) is the projection of $M\times \R$ onto $M$ (resp., $\R$). This gives rise to a Carnot manifold structure. However, this is not
an  ECC structure since $\tilde{H}_{1}$ is not bracket-generating (its Lie brackets only span $\pi_1^*TM$). 

\subsubsection{Group actions on contact manifolds} 
Let $(M,H)$ be an orientable contact manifold and $G$ its group of contactomorphisms, i.e., diffeomorphisms preserving 
the contact distribution $H$. The group $G$ is essential (see, e.g., \cite{Ba:JGeom00}). In the framework of 
noncommutative geometry~\cite{Co:NCG}, such a situation is typically handled by constructing a 
spectral triple (see~\cite{CM:GAFA95}). In the case of an arbitrary group of diffeomorphisms this is done by passing to 
the total space of the metric bundle, i.e., the $\R_{+}^{*}$-subbundle of $T^{*}M\otimes T^{*}M$ whose fibers consist of positive-definite 
symmetric $(0,2)$-tensors $g_{ij}dx^{i}\otimes dx^{j}$ (see~\cite{CM:GAFA95}). 
In the setting of contact geometry the relevant metric bundle is the bundle of contact metrics, i.e., metrics of the form,
\begin{equation*}
    g=g_{H}\oplus \theta\otimes \theta,
\end{equation*}where $g_{H}$ is a positive-definite metric on $H$ and $\theta$ is a contact form (i.e., a non-zero 
section of $(TM/H)^{*}$). Fixing a contact form $\theta$ on $M$, this bundle can be realized as
\begin{equation*}
    P=P(H)\oplus \R_{+}^{*}\theta \stackrel{\pi}{\longrightarrow} M,
\end{equation*}where $P(H)$ is the metric bundle of $H$.  

It is expected that the differential operators appearing in the construction of the spectral triple associated with the 
action of contactomorphisms would have an anisotropic notion of degree such that
\begin{itemize}
    \item[-] Differentiation along the vertical bundle $V=\ker d\pi\subset TP$ has degree~1. 

    \item[-] Differentiation along the lift to $P$ of the contact distribution has degree~2. 

    \item[-] If we denote by $T$ the Reeb vector field of $\theta$, then its lift to $P$ has degree $4$. 
\end{itemize}
This leads us to consider the filtration $\tilde{H}=(\tilde{H}_{1},\tilde{H}_{2},\tilde{H}_{3},\tilde{H}_{4})$  of $TP$ given by
\begin{equation*}
    \tilde{H}_{1}=V, \qquad \tilde{H}_{2}=\tilde{H}_{3}=V+\pi^{*}H, \qquad \tilde{H}_{4}=TP. 
\end{equation*}
This is definitely not  a Carnot filtration associated with an ECC structure, since the vertical bundle $V=\tilde{H}_{1}$ is 
integrable. However, we do get a Carnot filtration. 

This example can be generalized to more general ECC manifolds. In any case, we obtain a Carnot filtration whose first term 
is the vertical bundle of some fibration, and, hence, is not integrable. Therefore, although we get a Carnot 
structure, we never get an ECC structure. 

\begin{remark}
 The last two examples above show that, even if our main focus is on ECC manifolds, we may be naturally led to 
    consider non-ECC Carnot structures. This is the main motivation for considering Carnot manifolds instead of
    sticking to the setup of ECC manifolds. 
\end{remark}

\subsection{The Tangent Group Bundle of a Carnot Manifold}\label{sec:tangent-group}
The constructions of the tangent Lie algebra bundle and tangent group bundle of a Carnot manifold go back to Tanaka~\cite{Ta:JMKU70} (see also~\cite{AM:ERA03, AM:JDCS05, CS:AMS09, GV:JGP88, MM:JAM00, Me:Preprint82, Mo:AMS02, VG:JSM92}).  We refer to~\cite{ABB:SRG, FJ:JAM03, MM:JAM00, Ro:INA} for alternative intrinsic constructions of the tangent Lie algebra bundle and tangent group bundle. 

The Carnot filtration $H=(H_{1},\ldots,H_{r})$ has a natural grading defined as follows. For $w=1,\ldots,r$, set 
$\fg_{w}M=H_{w}/H_{w-1}$ (with the convention that $H_{0}=\{0\}$), and define
\begin{equation}\label{eq-grading}
    \fg M := \fg_{1}M\oplus \cdots \oplus \fg_{r}M.
\end{equation}

\begin{lemma}[\cite{Ta:JMKU70}]\label{lem-dep}
 The Lie bracket of vector fields induces smooth bilinear bundle maps, 
\begin{equation*}
    \cL_{w_1,w_2}:\fg_{w_1}M\times \fg_{w_2}M\longrightarrow \fg_{w_1+w_2}M, \qquad w_1+w_2\leq r. 
\end{equation*}More precisely, given any $a\in M$ and 
 sections $X_j$, $j=1,2$, of $H_{w_j}$ near $a$, if we let $\xi_j(a)$  be the class of $X_j(a)$ in $\fg_{w_j}M(a)$, then we have
\begin{equation*}
 \cL_{w_1,w_2}\left(\xi_1(a),\xi_2(a)\right)= \textup{class of  $[X_1,X_2](a)$ in $\fg_{w_1+w_2}M(a)$}. 
  \label{eq:Tangent.Lie-backet}
\end{equation*}
 \end{lemma}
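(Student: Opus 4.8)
The plan is to show that the proposed formula is \emph{well defined}—i.e., that the class of $[X_1,X_2](a)$ in $\fg_{w_1+w_2}M(a)=H_{w_1+w_2}(a)/H_{w_1+w_2-1}(a)$ depends only on $\xi_1(a)$ and $\xi_2(a)$—and then to verify that the resulting map is bilinear and smooth. That $[X_1,X_2]$ is a section of $H_{w_1+w_2}$ (for $w_1+w_2\le r$), so that its class in $\fg_{w_1+w_2}M(a)$ makes sense in the first place, is immediate from the bracket condition~(\ref{eq:Carnot-mflds.bracket-condition}).

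The first step is to establish \emph{tensoriality modulo $H_{w_1+w_2-1}$}. Using the Leibniz rule $[fX_1,X_2]=f[X_1,X_2]-(X_2f)X_1$ for $f\in C^\infty(M)$, and observing that $(X_2f)X_1$ is a section of $H_{w_1}\subset H_{w_1+w_2-1}$ (since $w_2\geq 1$), one obtains
\begin{equation*}
    [fX_1,X_2]\equiv f[X_1,X_2] \pmod{H_{w_1+w_2-1}},
\end{equation*}
and symmetrically in the second slot. Thus the assignment $(X_1,X_2)\mapsto [X_1,X_2](a)\bmod H_{w_1+w_2-1}(a)$ is $C^\infty(M)$-bilinear into the vector bundle $\fg_{w_1+w_2}M$. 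By the standard tensoriality principle, such a map is induced by a smooth vector bundle map; in particular its value at $a$ depends only on the pointwise values $X_1(a)\in H_{w_1}(a)$ and $X_2(a)\in H_{w_2}(a)$. This simultaneously yields the bilinearity and the smoothness of the induced map, the latter because in any local $H$-frame the bundle map can be read off from the smooth structure functions $L_{ij}^{k}$ of~(\ref{eq:Carnot-mfld.brackets-H-frame}).

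The key remaining point—and the main obstacle—is to descend from $H_{w_j}(a)$ to the quotient $\fg_{w_j}M(a)$, i.e., to show that the value vanishes when $\xi_1(a)=0$. Suppose $X_1(a)\in H_{w_1-1}(a)$. The difficulty is that $X_1$ is only a section of $H_{w_1}$, not necessarily of $H_{w_1-1}$, so one cannot directly invoke~(\ref{eq:Carnot-mflds.bracket-condition}). The resolution is to exploit the tensoriality just established: since $X_1(a)\in H_{w_1-1}(a)$, one may choose a local section $Y_1$ of $H_{w_1-1}$ with $Y_1(a)=X_1(a)$. Then $[X_1,X_2](a)\equiv [Y_1,X_2](a)\pmod{H_{w_1+w_2-1}(a)}$, while $[Y_1,X_2]$ is a section of $H_{(w_1-1)+w_2}=H_{w_1+w_2-1}$ by~(\ref{eq:Carnot-mflds.bracket-condition}), so that $[Y_1,X_2](a)\in H_{w_1+w_2-1}(a)$ and its class is zero. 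The same argument applies in the second variable. Hence the map factors through $\fg_{w_1}M(a)\times \fg_{w_2}M(a)$, so the formula for $\cL_{w_1,w_2}$ is well defined; combined with the bilinearity and smoothness from the previous step, this proves the lemma.
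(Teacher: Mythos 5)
Your proof is correct and complete. Note that the paper itself offers no proof of this lemma to compare against: it is stated with an attribution to Tanaka~\cite{Ta:JMKU70} and the argument is omitted. Your route---first using the Leibniz rule $[fX_1,X_2]=f[X_1,X_2]-(X_2f)X_1$ to get $C^\infty(M)$-bilinearity modulo $H_{w_1+w_2-1}$, hence tensoriality, and then exploiting the resulting pointwise dependence to replace $X_1$ by a genuine section of $H_{w_1-1}$ when $\xi_1(a)=0$---is the standard one, and you correctly isolate the one genuine subtlety: $\xi_1(a)=0$ only says $X_1(a)\in H_{w_1-1}(a)$ at the single point $a$, so the bracket condition~(\ref{eq:Carnot-mflds.bracket-condition}) cannot be applied to $X_1$ directly, and the detour through tensoriality (or, equivalently, an explicit local-frame computation with coefficients vanishing at $a$) is exactly what is needed. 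Your smoothness step also matches the only trace of the statement's mechanism that the paper records: in an $H$-frame the induced map is expressed through the smooth structure functions $L_{ij}^{k}$ of~(\ref{eq:Carnot-mfld.brackets-H-frame}), as in Remark~\ref{rem-xixj}.
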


\begin{definition}
The bilinear bundle map $[\cdot,\cdot]:\fg M\times \fg M\rightarrow \fg M$ is defined as follows. For $a\in M$ and
$\xi_{j}\in \fg_{w_{j}}M(a)$, $j=1,2$, we set
    \begin{equation}\label{eq-brak-a}
        [\xi_{1},\xi_{2}](a)=\left\{
        \begin{array}{ll}
            \cL_{w_{1},w_{2}}(a)(\xi_{1},\xi_{2})& \text{if $w_{1}+w_{2}\leq r$},  \smallskip \\
           0 &  \text{if $w_{1}+w_{2}> r.$ }
        \end{array}\right.
    \end{equation}
\end{definition}

It follows from Lemma~\ref{lem-dep} that $[\cdot, \cdot]$ is a smooth bilinear bundle map. On every fiber  $\fg M(a)$, $a\in M$, it defines 
a Lie algebra bracket such that
       \begin{align}
                   \left[ \fg_{w}M, \fg_{w'}M \right]  \subset \fg_{w+w'}M   & \quad \text{if $w+w'\leq r$}, \\
                \left[\fg_{w}M,\fg_{w'}M\right] =\{0\}  &  \quad \text{if $w+w'>r $}.
          \label{eq:Carnot.Carnot-grading}
           \end{align}
This shows that the Lie bracket is compatible with the grading~(\ref{eq-grading}). It also follows from this that $\fg M(a)$ is a nilpotent of step $r$.  
Therefore, we  arrive at the following result.

\begin{proposition}[\cite{Ta:JMKU70}]
   $(\fg M, [\cdot, \cdot])$ is a smooth bundle of step $r$ graded nilpotent Lie algebras.
\end{proposition}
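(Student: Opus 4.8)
The plan is to verify, one condition at a time, the data packaged into the word ``smooth bundle of step $r$ graded nilpotent Lie algebras'': that $\fg M$ is a smooth graded vector bundle, that the fibrewise bracket is smooth as a bundle map, that it restricts to a Lie algebra bracket on each fibre, that this bracket respects the grading~\eqref{eq-grading}, and that every fibre is nilpotent of step $r$. Several of these are already available. The underlying bundle $\fg M=\fg_1M\oplus\cdots\oplus\fg_rM$ is smooth as a direct sum of the smooth quotient bundles $H_w/H_{w-1}$ (each of constant rank), while Lemma~\ref{lem-dep} supplies the smoothness and bilinearity of the maps $\cL_{w_1,w_2}$, hence of the bundle map $[\cdot,\cdot]$ defined in~\eqref{eq-brak-a}; the compatibility with the grading is built into~\eqref{eq-brak-a} and recorded in~\eqref{eq:Carnot.Carnot-grading}. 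So the genuine content is to check the Lie algebra axioms on each fibre and then read off nilpotency from the grading.

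Fix $a\in M$. For bilinearity and antisymmetry I would argue with representing sections: given $\xi_j\in\fg_{w_j}M(a)$, pick sections $X_j$ of $H_{w_j}$ near $a$ whose class at $a$ is $\xi_j$. Bilinearity follows from bilinearity of the vector-field bracket together with that of $\cL_{w_1,w_2}$, and antisymmetry from $[X_1,X_2]=-[X_2,X_1]$ in the range $w_1+w_2\le r$ (and trivially, both sides being zero, when $w_1+w_2>r$). The only delicate point is that these identities a priori refer to the chosen representatives, so I would lean on the well-definedness established in Lemma~\ref{lem-dep}: replacing $X_1$ by $X_1'$ with $X_1-X_1'$ a section of $H_{w_1-1}$ alters $[X_1,X_2]$ by a section of $H_{w_1+w_2-1}$, whose class in $\fg_{w_1+w_2}M(a)$ vanishes.

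The main obstacle is the Jacobi identity, because the induced bracket is computed by passing to classes modulo lower filtration pieces, and one must confirm that the vector-field Jacobi identity descends intact. I would fix $\xi_j\in\fg_{w_j}M(a)$ with representatives $X_j$, $j=1,2,3$, set $N=w_1+w_2+w_3$, and split into two cases. If $N>r$, then in each cyclic term $[[\xi_i,\xi_j],\xi_k]$ either the inner bracket already vanishes by the convention in~\eqref{eq-brak-a} (when $w_i+w_j>r$) or the outer bracket sits in weight $N>r$ and vanishes; every term is zero and the identity is trivial. If $N\le r$, then all intermediate weights are $\le r$, so each $[\xi_i,\xi_j](a)$ is genuinely the class of $[X_i,X_j](a)$, and $[X_i,X_j]$ is a section of $H_{w_i+w_j}$ representing it; applying $\cL$ once more shows $[[\xi_i,\xi_j],\xi_k](a)$ is the class of $[[X_i,X_j],X_k](a)$ in $\fg_NM(a)$. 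Summing over the three cyclic permutations and invoking the Jacobi identity for the Lie bracket of vector fields, the representatives sum to zero, hence so does their common class, giving the fibrewise Jacobi identity.

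Finally, nilpotency of step $r$ on each fibre is immediate from the grading relations~\eqref{eq:Carnot.Carnot-grading}, by the same reasoning as in the remark following Definition~\ref{def:Carnot.Carnot-algebra}: any iterated bracket of length $\ge r+1$ lands in a component $\fg_wM(a)$ with $w\ge r+1$, which is $\{0\}$, so the lower central series terminates after at most $r$ steps. Combining the fibrewise Lie algebra structure with the smoothness of $[\cdot,\cdot]$ as a bundle map then yields the asserted smooth bundle of step $r$ graded nilpotent Lie algebras.
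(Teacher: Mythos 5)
Your proof is correct and takes essentially the same route as the paper, which obtains the proposition directly from Lemma~\ref{lem-dep} (smoothness and bilinearity of the bundle map~\eqref{eq-brak-a}) together with the grading relations~\eqref{eq:Carnot.Carnot-grading}, from which nilpotency of step $r$ is read off. The only genuine content you add is the fibrewise verification of the Jacobi identity with its split into the cases $w_1+w_2+w_3>r$ and $w_1+w_2+w_3\le r$, using $[X_i,X_j]$ as a representing section of $H_{w_i+w_j}$ so that Lemma~\ref{lem-dep} can be applied twice; this is precisely the detail the paper (following Tanaka) leaves implicit, and your treatment of it is sound.
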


\begin{definition}
The Lie algebra bundle $(\fg M, [\cdot, \cdot])$  is called the \emph{tangent Lie algebra bundle} of $(M,H)$.
\end{definition}

\begin{remark}
    In~\cite{GV:JGP88, Mo:AMS02, VG:JSM92} the tangent Lie algebra bundle $\fg M$ is called the nilpotenization of 
    $(M,H)$. 
\end{remark}

\begin{remark}\label{rem-xixj}
    Let $(X_{1},\ldots,X_{n})$ be an $H$-frame near a point $a\in M$. As mentioned above, this gives rise to a 
    frame $(\xi_{1},\ldots,\xi_{n})$ of $\fg M$ near $x=a$, where $\xi_{j}$ is the class of $X_{j}$ in 
    $\fg_{w_{j}}M$. Moroever, it follows from~(\ref{eq:Carnot-mfld.brackets-H-frame}) and~(\ref{eq-brak-a}) that, near $x=a$, we have
    \begin{equation}\label{eq-jk}
        [\xi_{i},\xi_{j}]=\left\{
        \begin{array}{cl}
          {\displaystyle \sum_{w_{i}+w_{j}=w_k}L_{ij}^{k}(x)\xi_{k}}    & \text{if $w_{i}+w_{j}\leq r$},  \\
            0  & \text{if $w_{i}+w_{j}>r$},
        \end{array}\right.
    \end{equation}where the functions $L_{ij}^{k}(x)$ are given by~(\ref{eq:Carnot-mfld.brackets-H-frame}). Specializing this to $x=a$ provides us with the structure constants 
    of $\fg M(a)$ with respect to the basis $(\xi_{1}(a),\ldots,\xi_{n}(a))$.  
\end{remark}

\begin{remark}\label{rem:GM.ECC-Carnot-algebra}
 When $(M,H)$ is an ECC manifold, each fiber $\fg M(a)$, $a\in M$, is  a Carnot algebra in the sense of Definition~\ref{def:Carnot.Carnot-algebra} (see, e.g.,~\cite{CP:AMP17}). 
\end{remark}
  
The Lie algebra bundle $\fg M$ gives rise to a Lie group bundle $GM$ as follows. As a manifold we take $GM$ to be $\fg M$ and we equip the fibers $GM(a)=\fg M(a)$ with the Dynkin product~(\ref{eq:Carnot.Dynkin-product}).This turns $GM(a)$ into a step~$r$ graded nilpotent Lie group with unit $0$ whose Lie algebra is naturally isomorphic to $\fg M(a)$. The fiberwise product on $GM$ is smooth, and so we arrive at the following statement. 

\begin{proposition}
    $GM$ is a smooth bundle of step $r$ graded nilpotent Lie groups. 
\end{proposition}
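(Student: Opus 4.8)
The plan is to show $GM$ is a smooth fiber bundle of Lie groups by building on the already-established fact that $(\fg M,[\cdot,\cdot])$ is a smooth bundle of step $r$ graded nilpotent Lie algebras. Since the underlying manifold of $GM$ is declared to be $\fg M$ itself, the smooth manifold and smooth bundle-projection structure come for free from $\fg M$; the only genuine content is that the fiberwise Dynkin product and inversion are smooth as maps on the total space, and that the bundle is locally trivial as a bundle of \emph{groups} (not merely as a smooth fiber bundle). So I would organize the proof around two points: fiberwise smoothness of the group operations, and local triviality compatible with the group law.

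First I would verify fiberwise that each $GM(a)$ is a step $r$ graded nilpotent Lie group. This is essentially immediate from the discussion preceding Definition~\ref{def:Carnot.Carnot-group}: given any graded nilpotent Lie algebra, the Dynkin formula~(\ref{eq:Carnot.Dynkin-product}) defines a group law with unit $0$, the exponential map is the identity, and the Lie algebra of this group is canonically $\fg M(a)$. The grading~(\ref{eq:Carnot.Carnot-grading}) is exactly the hypothesis needed, so each fiber is a step $r$ graded nilpotent Lie group, with inversion $\xi^{-1}=-\xi$.

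Next, and this is the heart of the argument, I would establish smoothness of the product map $GM\times_M GM\to GM$, $(\xi,\eta)\mapsto \xi\cdot\eta$. The key observation is that the Dynkin product~(\ref{eq:Carnot.Dynkin-product}) is a \emph{finite} universal polynomial in $\xi$, $\eta$, and the structure constants of the bracket, since all iterated brackets of length $\geq r+1$ vanish. To make this precise I would work in a local $H$-frame $(X_1,\dots,X_n)$ near a point $a$, which by Remark~\ref{rem-xixj} yields a smooth local frame $(\xi_1,\dots,\xi_n)$ of $\fg M$ together with structure functions $L_{ij}^k(x)$ that are smooth in $x$ by~(\ref{eq:Carnot-mfld.brackets-H-frame}) and~(\ref{eq-jk}). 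In these coordinates the adjoint endomorphism $\ad_\xi$ has matrix entries that are polynomial in the coordinates of $\xi$ with coefficients the smooth functions $L_{ij}^k(x)$; iterating, each term $(\ad_\xi)^{\alpha_1}(\ad_\eta)^{\beta_1}\cdots\eta$ in~(\ref{eq:Carnot.Dynkin-product}) is a polynomial in the coordinates of $\xi$ and $\eta$ with coefficients smooth in $x$. Summing the finitely many nonzero terms shows the product is given fiberwise by a finite collection of functions that are polynomial in the fiber variables and smooth in the base variable, hence jointly smooth on the local trivialization $U\times\R^n\times\R^n$. Inversion is the linear map $\xi\mapsto -\xi$, which is trivially smooth.

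Finally I would assemble these local verifications into the global statement. The local $H$-frames provide local trivializations $GM|_U\cong U\times\R^n$ as smooth bundles (inherited from $\fg M$), and the preceding computation shows that in each such trivialization the group operations are smooth; since smoothness is a local condition and the product/inversion are defined fiberwise (hence compatible on overlaps), they are globally smooth. Together with the fiberwise identification of each $GM(a)$ as a step $r$ graded nilpotent Lie group, this yields that $GM$ is a smooth bundle of step $r$ graded nilpotent Lie groups. The main obstacle is purely the joint-smoothness bookkeeping in the middle step, namely checking carefully that truncating the Dynkin series at iterated-bracket length $r$ gives a genuine polynomial expression with smooth-in-$x$ coefficients; once that is in hand the rest is formal.
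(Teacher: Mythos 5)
Your proposal is correct and follows essentially the same route as the paper, which simply asserts that the Dynkin product~(\ref{eq:Carnot.Dynkin-product}) turns each fiber $GM(a)=\fg M(a)$ into a step~$r$ graded nilpotent Lie group and that the fiberwise product on $GM$ is smooth; your expansion of the smoothness claim---the Dynkin series truncates by nilpotency to a finite polynomial in the fiber coordinates with coefficients built from the smooth structure functions $L_{ij}^{k}(x)$ of Remark~\ref{rem-xixj}---is exactly the computation the paper leaves implicit. One caution: the goal you announce at the outset, ``local triviality as a bundle of \emph{groups},'' is neither part of the claim nor true in general, since the structure constants $L_{ij}^{k}(a)$ vary with $a$ and the fiber groups $GM(a)$ need not be mutually isomorphic (for instance on a Heisenberg manifold whose Levi form degenerates at some points); fortunately your actual argument never uses this and establishes only the smooth fiberwise operations, which is what the proposition asserts.
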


\begin{definition}\label{def:Tangent.Tangent-group}
    $GM$ is called the \emph{tangent group bundle} of $(M,H)$.
\end{definition}

\begin{example}
 Suppose that $r=1$ so that $H_1=H_r=TM$. In this case, as a Lie algebra bundle, $\fg M=TM$. Therefore, as a Lie group bundle, $GM=TM$.  
\end{example}

\begin{example}
Let $G$ be a graded nilpotent Lie group with Lie algebra $\fg=\fg_1\oplus \cdots \oplus \fg_r$. We equip $G$ with the left-invariant Carnot manifold structure defined by the grading of $\fg$. Then the left-regular actions of $G$ on itself and on $\fg$ give rise to canonical identifications $\fg G\simeq G\times \fg$ and $GG\simeq G\times G$ (see~\cite{CP:AMP17}). 
\end{example}

\begin{remark}
 We refer to~\cite{CP:AMP17, Po:PJM06} for an explicit description of the tangent group bundle of a Heisenberg manifold. 
\end{remark}

\begin{remark}
When $(M,H)$ is an ECC manifold, it follows from Remark~\ref{rem:GM.ECC-Carnot-algebra} that every tangent group $GM(a)$ is a 
Carnot group in the sense of Definition~\ref{def:Carnot.Carnot-group}.
\end{remark}

\begin{remark}\label{rmk:tangent.dilations-group-automorphisms}
 In the same way as above, the grading~(\ref{eq-grading}) gives rise to a smooth family of dilations $\delta_t$, $t\in \R$. 
 This gives rise to Lie algebra automorphisms on the fibers of $\fg M$ and group automorphisms on the fibers of $GM$. In addition, the inversion on the fibers of $GM$ is just  the symmetry $\xi \rightarrow -\xi$. 
 \end{remark}

\section{Privileged Coordinates for Carnot Manifolds}\label{sec:Privileged}
In this section, we explain how to adapt to the setting of Carnot manifolds the construction of polynomial privileged coordinates
of Agrachev-Sarychev~\cite{AS:DANSSSR87} and  Bella\"\i che~\cite{Be:Tangent} for Carnot-Carath\'eodory manifolds. Our 
approach follows the approach of~\cite{Be:Tangent}, but there are two main differences. The first 
difference concerns the uniqueness of the construction which we will need later and is not stated in~\cite{Be:Tangent}. 
The second main difference concerns the definition of the order of a function (see Definition~\ref{orderD}), since we cannot use the same definition 
as in~\cite{Be:Tangent} in the setting of general Carnot manifolds (\emph{cf}.~Remark~\ref{rmk:Privileged.order-difference}).  

We refer to~\cite{ABB:SRG, Go:LNM76, Gr:CC, He:SIAMR, MM:JAM00, Me:CPDE76, Mo:AMS02, RS:ActaMath76} for alternative constructions of privileged coordinates. 
 
Throughout the remainder of the paper, we let $(M,H)$ be an $n$-dimensional Carnot manifold of step $r$, so that 
$H=(H_{1},\ldots,H_{r})$ is a filtration of subbundles satisfying~(\ref{eq:Carnot.Carnot-filtration}). In addition, we let 
$w=(w_{1},\ldots,w_{n})$ be the weight sequence of $(M,H)$ (\emph{cf.}\ Definition~\ref{def:Carnot.weight-sequence}). 

In this section, we shall work in local coordinates near a point $a\in M$ around which there is an $H$-frame 
$X_{1},\ldots,X_{n}$ of $TM$. In addition, given any
finite sequence $I=(i_{1},\ldots,i_{k})$ with values in $\{1,\ldots,n\}$, we define
\begin{equation*}
    X_{I}=X_{i_{1}}\cdots X_{i_{k}}.
\end{equation*}
For such a sequence we also set $|I|=k$ and $\brak I=w_{i_{1}}+\cdots + w_{i_{k}}$.

\begin{definition}\label{orderD} Let $f(x)$ be a smooth function defined near $x=a$ and $N$ a non-negative integer.
 \begin{enumerate}
     \item  We say that $f(x)$ has order $\geq N$ at $a$ when $X_{I}f(a)=0$ whenever $\brak I<N$.

     \item  We say that $f(x)$ has order $N$ at $a$ when it has order~$\geq N$ and there is a sequence
     $I=(i_{1},\ldots,i_{k})$ with values in $\{1,\ldots,n\}$ with $\brak I=N$ such that $X_{I}f(a)\neq 0$.
 \end{enumerate}
\end{definition}

\begin{remark}\label{rmk:Privileged.order-difference}
    The above definition of the order of a function differs from that of Bella\"{\i}che~\cite[Definition~4.12]{Be:Tangent}, 
    since Bella\"{\i}che only considers monomials in vector fields $X_{i}$ with $w_{i}=1$. This definition was used in 
    the case of CC manifolds. We cannot use the same definition for general Carnot manifolds since we would be missing 
    directions that are not obtained as commutators of the weight 1 vector fields $X_{i}$, $w_{i}=1$. Nevertheless, in 
    the ECC case the two definitions are equivalent. This is seen by using Lemma~\ref{lem:privileged.properties-order} below and a $H$-frame 
    $(X_{1},\ldots,X_{n})$ built out of iterated commutators of the weight 1 vector fields $X_{i}$, $w_{i}=1$. (This is 
    precisely the type of $H$-frames considered in~\cite{Be:Tangent}). 
\end{remark}

\begin{lemma}\label{lem:privileged.properties-order} Let $f(x)$ be a smooth function near $x=a$. Then its order is
    independent of the choice of the $H$-frame $(X_{1},\ldots,X_{n})$ near $a$.
\end{lemma}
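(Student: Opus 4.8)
The plan is to fix two $H$-frames $(X_{1},\ldots,X_{n})$ and $(Y_{1},\ldots,Y_{n})$ near $a$ and to show that the predicate ``$f$ has order $\geq N$ at $a$'' holds for one frame if and only if it holds for the other; since the order of $f$ is the largest $N$ for which this predicate holds, this will give the lemma. The whole argument rests on the observation that a change of $H$-frame cannot raise weights. Indeed, by Remark~\ref{rmk:Carnot-mfld.local-frames} the family $\{X_{j};\ w_{j}\leq w\}$ is a local frame of $H_{w}$ for each $w$, and $Y_{i}$ is a section of $H_{w_{i}}$; hence there are smooth functions $a_{i}^{j}$ near $a$ with
\begin{equation*}
    Y_{i}=\sum_{w_{j}\leq w_{i}}a_{i}^{j}(x)X_{j}.
\end{equation*}
In other words the change-of-frame matrix $(a_{i}^{j})$ is block lower-triangular with respect to weights. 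As this relation is symmetric in the two frames, it will be enough to prove one implication.

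The key step is the following claim: for every sequence $I=(i_{1},\ldots,i_{k})$ with values in $\{1,\ldots,n\}$ there are smooth functions $c_{K}$ near $a$, indexed by sequences $K$ with $\brak{K}\leq \brak{I}$, such that $Y_{I}f=\sum_{\brak{K}\leq \brak{I}}c_{K}X_{K}f$ as an identity of smooth functions near $a$. I would prove this by induction on $|I|=k$, the case $k=0$ being trivial. For the inductive step I write $I=(i_{1},I')$ with $|I'|=k-1$, apply the induction hypothesis to obtain $Y_{I'}f=\sum_{\brak{L}\leq \brak{I'}}c_{L}X_{L}f$, and then expand $Y_{I}f=Y_{i_{1}}(Y_{I'}f)$ using the frame relation for $Y_{i_{1}}$ together with the Leibniz rule. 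This produces exactly two kinds of terms: terms $(X_{m}c_{L})X_{L}f$, where the vector field differentiates a coefficient, and terms $c_{L}X_{(m,L)}f$, where it is prepended to the monomial. In the first kind one has $\brak{L}\leq \brak{I'}\leq \brak{I}$, while in the second $\brak{(m,L)}=w_{m}+\brak{L}\leq w_{i_{1}}+\brak{I'}=\brak{I}$ because $w_{m}\leq w_{i_{1}}$; so both respect the required weight bound, and after collecting terms the claim follows.

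Once the claim is established the lemma is immediate. If $f$ has order $\geq N$ with respect to $(X_{1},\ldots,X_{n})$, i.e.\ $X_{K}f(a)=0$ whenever $\brak{K}<N$, then for any $I$ with $\brak{I}<N$ every $K$ occurring in the claim satisfies $\brak{K}\leq \brak{I}<N$; evaluating $Y_{I}f=\sum_{\brak{K}\leq \brak{I}}c_{K}X_{K}f$ at $a$ therefore gives $Y_{I}f(a)=0$, so $f$ has order $\geq N$ with respect to $(Y_{1},\ldots,Y_{n})$. By the symmetry of the frame relation the converse implication holds as well, so the two frames declare exactly the same integers $N$ to be orders $\geq N$ of $f$, whence the order of $f$ is the same in both frames.

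I expect the only delicate point to be the weight bookkeeping in the inductive step, namely checking that differentiating the coefficient functions $c_{L}$ (rather than $f$ itself) never increases the relevant weight. This is precisely what the two displayed inequalities guarantee, and ultimately it is a consequence of the block lower-triangular structure of $(a_{i}^{j})$: dropping some of the $X$-factors or replacing $X_{j}$ by lower-weight vector fields can only decrease $\brak{\,\cdot\,}$, since all weights $w_{j}$ are positive.
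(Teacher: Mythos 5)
Your proof is correct and follows essentially the same route as the paper: the key identity $Y_{I}f=\sum_{\brak{K}\leq \brak{I}}c_{K}X_{K}f$ is exactly the paper's expansion~(\ref{eq-YX}) applied to $f$ (which you make rigorous via induction and the Leibniz rule, where the paper simply multiplies out the frame relations), and it rests on the same block-triangular structure of the change of $H$-frame. The only cosmetic difference is that you conclude by applying the symmetric expansion to the predicate ``order $\geq N$'' and taking the largest such $N$, whereas the paper handles the exact-order statement directly by using the reverse expansion $X_{I}=\sum_{\brak{J}\leq\brak{I}}d_{IJ}(x)Y_{J}$ to transfer a non-vanishing witness at level $N$; both are equivalent bookkeeping.
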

\begin{proof} Let $(Y_{1},\ldots,Y_{n})$ be another $H$-frame near $a$.  We note that each vector field $Y_{i}$ is a
    section of $H_{i}$. Therefore, near $x=a$, we have
    \begin{equation*}
        Y_{i}=\sum_{w_{j}\leq w_{i}}c_{ij}(x)X_{j},
    \end{equation*}where the coefficients $c_{ij}(x)$ are smooth and there is an integer $j$ with $w_{j}=w_{i}$ in such that $c_{ij}(a)\neq 0$.  
    More generally, given any finite sequence $I=(i_{1},\ldots,i_{k})$ with values in $\{1,\ldots,n\}$, near $x=a$
    we have
    \begin{equation}\label{eq-YX}
        Y_{I}=Y_{i_{1}}\cdots Y_{i_{k}} = \biggl( \sum_{w_{j_{1}}\leq w_{i_{1}}}c_{i_{1}j_{1}}(x)X_{j_1}\biggr) \cdots \biggl(
        \sum_{w_{j_{k}}\leq w_{i_{k}}}c_{i_kj_k}(x)X_{j_k}\biggr)
        = \sum_{\brak{J}\leq \brak{I}}c_{IJ}(x)X_{J},
    \end{equation}where the coefficients $c_{IJ}(x)$ are smooth.

    Let $N$ be the order of $f$ with respect to the $H$-frame $(X_{1}, \ldots,X_{n})$. If $\brak I<N$, then~(\ref{eq-YX})
    shows that $Y_{I}f(a)$ is  a linear combination of terms $X_{J}f(a)$ with $\brak J\leq \brak I<N$, which are zero.
    Thus $Y_{I}f(a)=0$ whenever $\brak I<N$. Suppose now that $I$ is such that $\brak I=N$ and
    $X_{I}f(a)\neq 0$. In the same way as in~(\ref{eq-YX}), near $x=a$, we have
    \begin{equation*}
        X_{I}=\sum_{\brak J \leq \brak I}d_{IJ}(x) Y_{J},
    \end{equation*}where the coefficients $d_{IJ}(x)$ are smooth. Then, we have
   \begin{equation*}
        0\neq X_{I}f(a)= \sum_{\brak J \leq \brak I}d_{IJ}(a) Y_{J}f(a)=  \sum_{\brak J =N}d_{IJ}(a) Y_{J}f(a).
    \end{equation*}Therefore, at least one of the numbers $Y_{J}f(a)$, $\brak J=N$, must be non-zero. We then deduce that $f$
    also has order $N$ at $a$ with respect to the $H$-frame $(Y_{1},\ldots,Y_{n})$. This shows that the order of $f$ at $a$
    is independent of the choice of the $H$-frame. The lemma is thus proved.
\end{proof}

\begin{lemma}\label{lem:privileged.properties-order-product} Let $f(x)$ and $g(x)$ be smooth functions near $x=a$ of respective
    orders $N$ and $N'$ at $a$. Then $f(x)g(x)$ has order~$\geq N+N'$ at $a$.
\end{lemma}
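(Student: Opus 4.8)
The plan is to work directly from Definition~\ref{orderD}: to show that $f(x)g(x)$ has order $\geq N+N'$ at $a$, I must verify that $X_I(fg)(a)=0$ whenever $\brak I < N+N'$. The whole argument rests on a generalized Leibniz rule for the iterated first-order operator $X_I = X_{i_1}\cdots X_{i_k}$ acting on a product.

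First I would establish, by induction on the length $k=|I|$, the expansion
\begin{equation*}
  X_I(fg) = \sum_{S\subseteq\{1,\ldots,k\}} (X_{I_S}f)\,(X_{I_{S^c}}g),
\end{equation*}
where, for each subset $S$ of positions, $I_S$ denotes the subsequence of $I=(i_1,\ldots,i_k)$ obtained by keeping the entries indexed by $S$ in their original order, and $I_{S^c}$ the complementary subsequence. The base case $k=1$ is the ordinary Leibniz rule $X(fg)=(Xf)g+f(Xg)$, and the inductive step follows by applying $X_{i_1}$ to the expansion for $X_{i_2}\cdots X_{i_k}(fg)$ and invoking Leibniz once more; the bookkeeping amounts to observing that prepending position~$1$ to either the ``$f$-part'' or the ``$g$-part'' of each term for the shorter sequence produces exactly the terms for the longer sequence. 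Since the $X_j$ do not commute, the essential point is to keep the two subsequences \emph{ordered}, which the induction handles automatically.

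Next I would exploit the additivity of the weight: since $\brak{\cdot}$ simply sums the weights $w_{i_j}$ over the entries of a sequence, for every $S$ one has $\brak{I_S}+\brak{I_{S^c}}=\brak I$. Assume $\brak I < N+N'$. Then for each $S$ it is impossible to have both $\brak{I_S}\geq N$ and $\brak{I_{S^c}}\geq N'$, for that would force $\brak I \geq N+N'$. Hence at least one of $\brak{I_S}<N$ or $\brak{I_{S^c}}<N'$ holds. In the first case $X_{I_S}f(a)=0$ because $f$ has order $\geq N$; in the second case $X_{I_{S^c}}g(a)=0$ because $g$ has order $\geq N'$. Either way the corresponding summand $(X_{I_S}f)(a)\,(X_{I_{S^c}}g)(a)$ vanishes, and summing over $S$ gives $X_I(fg)(a)=0$, which is exactly what the definition requires.

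I expect the only genuine obstacle to be the careful statement and proof of the Leibniz expansion, since the non-commutativity of the $X_j$ means one cannot simply assign each factor to $f$ or to $g$ without tracking the order in which the vector fields are applied; the subset/subsequence formulation is designed precisely to absorb this difficulty. Once that identity is in place, the weight-additivity and pigeonhole step is immediate, and no appeal to the $H$-frame independence of the order (Lemma~\ref{lem:privileged.properties-order}) is even needed.
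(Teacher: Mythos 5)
Your proof is correct and takes essentially the same route as the paper: the paper's key identity (Eq.~(\ref{eq-I})) is exactly your iterated Leibniz expansion, stated there with unspecified constants $c_{I'J''}$ and the constraint $\brak{I'}+\brak{I''}=\brak{I}$, which your subset/subsequence formulation simply makes explicit (all coefficients equal to $1$). The concluding weight-additivity and pigeonhole step is identical to the paper's, so there is nothing to add.
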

\begin{proof}
We know that $X_{i}(fg)=(X_{i}f)g+fX_{i}g$. More generally, given any sequence $I=(i_{1},\ldots,i_{k})$, we may write
 \begin{equation}\label{eq-I}
     X_{I}(fg)=X_{i_{1}}\cdots X_{i_{k}}(fg)= \sum_{\brak{I'}+\brak{I''}=\brak{I}}c_{I'J''}(X_{I'}f)(X_{I''}g),
 \end{equation}for some constants $c_{IJ}$ independent of $f$ and $g$. If $\brak{I'}+\brak{I''}<N+N'$, then at least one of the
 inequalities $\brak{I'}<N$ or $\brak{I''}<N'$ holds. In any case the product $(X_{I'}f)(a)(X_{I''}g)(a)$ is zero.
 Combining this with~(\ref{eq-I}) we then see that $X_{I}(fg)(a)=0$ whenever $\brak{I}<N+N'$. That is, $f(x)g(x)$ has
 order~$\geq N+N'$ at $a$. The proof is complete.
\end{proof}

Given any multi-order $\alpha\in \N_{0}^{n}$, we set
\begin{equation*}
|\alpha|=\alpha_1 +\cdots + \alpha_n \quad \textrm{and}\quad \langle \alpha \rangle = w_1 \alpha_1 + \cdots + w_n \alpha_n.
\end{equation*}
In addition, we define
\[ 
X^{\alpha}:=X_{1}^{\alpha_{1}}\cdots X_{n}^{\alpha_{n}}.
\]
We note that $X^{\alpha}=X_{I}$, where $I=(i_{1},\ldots,i_{k})$ is the unique non-decreasing sequence of length $k=|\alpha|$
where each index $i$  appears with multiplicity $\alpha_{i}$. Conversely, if $I=(i_{1},\ldots,i_{k})$ is a
non-decreasing sequence, then $X_{I}=X^{\alpha}$ for some multi-order $\alpha$ with $|\alpha|=|I|$ and $\brak \alpha=\brak I$.

It is convenient to  reformulate the definition of the order of a function in terms of the sole monomials
$X^{\alpha}$. To reach this end we need the following lemma.

\begin{lemma}[{\cite[Lemma 2.1]{NSW:ActaM85}}; see also {\cite[Lemma~4.12(i)]{Be:Tangent}}]\label{lem-X-I}
   Let $I=(i_{1},\ldots,i_{k})$ be a finite sequence with values in $\{1,\ldots,n\}$ and set $w=\brak I$. Then, near
   $x=a$, we have
   \begin{equation}\label{eq-X-I}
       X_{I}=\sum_{\substack{\brak\alpha\leq w\\ |\alpha|\leq k}} c_{I\alpha}(x)X^{\alpha},
   \end{equation}where the  $c_{I\alpha}(x)$ are smooth functions near $x=a$.
\end{lemma}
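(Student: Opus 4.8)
The plan is to prove \eqref{eq-X-I} by reducing any ordered product $X_I$ to the ``sorted'' monomials $X^\alpha$ through repeated use of the bracket relation \eqref{eq:Carnot-mfld.brackets-H-frame}. The basic move is the elementary identity $X_iX_j=X_jX_i+[X_i,X_j]$: swapping two adjacent factors either permutes them at no cost to the length or weight, or else replaces the two factors by the single vector field coming from their commutator. By Remark~\ref{rmk:Carnot-mfld.brackets-H-frame}, and using that $H_r=TM$ to cover the case $w_i+w_j>r$ as well, we may write $[X_i,X_j]=\sum_{w_m\leq w_i+w_j} L_{ij}^m(x)X_m$ near $x=a$ in all cases; the crucial point is the weight bound $w_m\leq w_i+w_j$, which guarantees that each commutator term has its length dropped by one while its weight does not exceed that of the pair it replaces.

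I would organize this as a double induction: an outer induction on the length $k=|I|$, and, for fixed $k$, an inner induction on the number of inversions of the sequence $I$ (the number of pairs that are out of order). The base cases ($k\leq 1$, or $I$ already non-decreasing) are immediate, since then $X_I$ is itself a monomial $X^\alpha$ with $\brak\alpha=\brak I=w$ and $|\alpha|=k$. For the inductive step, pick an adjacent descent $i_p>i_{p+1}$ and write $X_I=X_{I'}+R$, where $I'$ swaps the two offending indices and $R$ collects the commutator. The sorted term $X_{I'}$ has the same length $k$ and weight $w$ but one fewer inversion, so it is handled by the inner induction hypothesis.

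The main obstacle, and the only point requiring care, is the remainder $R$. Expanding the commutator gives $R=\sum_m X_{i_1}\cdots X_{i_{p-1}}\,L_{i_pi_{p+1}}^m(x)\,X_m\,X_{i_{p+2}}\cdots X_{i_k}$, and the smooth coefficient $L^m$ sits in the middle of the product rather than in front of a clean product of vector fields, so one cannot directly invoke the induction hypothesis. To fix this I would commute $L^m$ to the far left using the operator identity $X_j\circ(f\,\cdot)=f\,X_j+(X_jf)$; iterating over $X_{i_1}\cdots X_{i_{p-1}}$ produces a finite sum of terms of the form (smooth function) $\times$ $X_{I''}$, where $I''$ is built from a subsequence of $i_1,\dots,i_{p-1}$ followed by $m$ and then $i_{p+2},\dots,i_k$. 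Each such $I''$ has length $|I''|\leq k-1$ and, since $w_m\leq w_{i_p}+w_{i_{p+1}}$ and dropping indices only lowers the weight, satisfies $\brak{I''}\leq w$.

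The outer induction hypothesis then applies to every $X_{I''}$ and expresses it as $\sum c_{I''\alpha}(x)X^\alpha$ with $\brak\alpha\leq\brak{I''}\leq w$ and $|\alpha|\leq|I''|\leq k-1\leq k$; absorbing the smooth prefactors into the coefficients shows that $R$, and hence $X_I=X_{I'}+R$, has the required form \eqref{eq-X-I}. Finally I would note that the whole argument is local near $x=a$ and that only finitely many reordering steps occur, so all the coefficients $c_{I\alpha}(x)$ produced are genuinely smooth functions near $x=a$.
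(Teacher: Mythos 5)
Your proof is correct. The paper itself gives no proof of this lemma---it is quoted from \cite[Lemma 2.1]{NSW:ActaM85} and \cite[Lemma~4.12(i)]{Be:Tangent}---and your double induction (on length $k$, then on inversions), with adjacent swaps via $X_iX_j=X_jX_i+[X_i,X_j]$ and the Leibniz identity $X_j\circ(f\,\cdot)=fX_j+(X_jf)$ to move the stranded coefficient $L_{i_pi_{p+1}}^m(x)$ to the left, is exactly the standard argument behind those citations; in particular you correctly handle the two delicate points, namely the case $w_i+w_j>r$ (covered because $H_r=TM$ lets the bracket be expanded in the frame with $w_m\leq r<w_i+w_j$) and the verification that every term produced by the Leibniz iteration has length $\leq k-1$ and weight $\leq w$, so the outer induction hypothesis applies.
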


Granted this lemma we shall obtain the following characterization of the order of a function. 
\begin{proposition}\label{TFAE}
  Let $f(x)$ be a smooth function defined near $x=a$.   Then $f(x)$ has order $N$ at $x=a$ if and only if the following
  two conditions are satisfied:
    \begin{enumerate}
        \item[(i)] $(X^{\alpha}f)(a)=0$ for all multi-orders $\alpha$ such that $\brak\alpha<N$.

        \item[(ii)] $(X^{\alpha}f)(a)\neq 0$ for at least one multi-order $\alpha$ with $\brak\alpha=N$.
    \end{enumerate}
\end{proposition}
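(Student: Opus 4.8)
The plan is to use Lemma~\ref{lem-X-I} as a dictionary between the two families of operators $\{X_I\}$ and $\{X^\alpha\}$, together with the elementary observation recorded just before that lemma: every monomial $X^\alpha$ coincides with $X_I$ for the unique non-decreasing sequence $I$ of length $|\alpha|$ in which each index $i$ occurs with multiplicity $\alpha_i$, and for this $I$ one has $\brak I = \brak\alpha$. With this correspondence in hand, conditions (i)--(ii) are nothing but Definition~\ref{orderD} rewritten in terms of the monomials $X^\alpha$ rather than arbitrary products $X_I$, and the whole content of the proposition is that restricting attention to monomials loses no information. The substance is already contained in Lemma~\ref{lem-X-I}, so both implications should reduce to careful weight bookkeeping.

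For the forward implication, I would assume $f$ has order $N$ and prove (i) and (ii) in turn. Condition (i) is immediate: given $\alpha$ with $\brak\alpha < N$, I write $X^\alpha = X_I$ with $\brak I = \brak\alpha < N$, and since $f$ has order $\geq N$, Definition~\ref{orderD} gives $X_I f(a) = X^\alpha f(a) = 0$. For (ii), I would take a sequence $I$ with $\brak I = N$ and $X_I f(a) \neq 0$, furnished by the definition of order $N$, and apply Lemma~\ref{lem-X-I} with $w = \brak I = N$, evaluating the expansion on $f$ at $a$:
\[
0 \neq X_I f(a) = \sum_{\brak\alpha \leq N} c_{I\alpha}(a)\,(X^\alpha f)(a).
\]
By the already-established condition (i), every term with $\brak\alpha < N$ vanishes, so the nonzero sum is carried entirely by the monomials with $\brak\alpha = N$. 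Hence $(X^\alpha f)(a) \neq 0$ for at least one such $\alpha$, which is exactly (ii).

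For the converse, I would assume (i) and (ii). To recover order $\geq N$, I take any $I$ with $\brak I < N$ and expand $X_I$ via Lemma~\ref{lem-X-I}; every monomial $X^\alpha$ that appears satisfies $\brak\alpha \leq \brak I < N$, so (i) forces $X_I f(a) = \sum c_{I\alpha}(a)(X^\alpha f)(a) = 0$, as required. Finally, (ii) supplies a multi-order $\alpha$ with $\brak\alpha = N$ and $(X^\alpha f)(a) \neq 0$; writing $X^\alpha = X_I$ for the associated non-decreasing sequence $I$ with $\brak I = N$, I get a sequence realizing the nonvanishing at weight $N$, so $f$ has order exactly $N$.

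I do not expect a genuine obstacle here; the proposition is really a translation statement. The only point needing care is the weight accounting in the forward direction—specifically, observing that the freshly proven condition (i) annihilates precisely the unwanted low-weight terms in the expansion of $X_I$, so that the nonvanishing of $X_I f(a)$ must be witnessed by a weight-$N$ monomial. Everything else is routine given Lemma~\ref{lem-X-I}.
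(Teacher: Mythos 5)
Your proposal is correct and follows essentially the same route as the paper's own proof: both directions reduce to Lemma~\ref{lem-X-I} together with the identification $X^\alpha = X_I$ for the non-decreasing sequence $I$, and your weight bookkeeping (low-weight terms annihilated by (i), nonvanishing carried by weight-$N$ monomials) is exactly the paper's argument. No gaps.
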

\begin{proof}
 Suppose that (i) and (ii) are satisfied. Then (ii) implies that $f(x)$ has order~$\leq  N$ at $x=a$. Moreover, using (i) and
   Lemma~\ref{lem-X-I} shows that $f(x)$ has order~$\geq N$ at $x=a$. Thus $f(x)$ has order $N$ at $x=a$. 
   
   Conversely, assume that $f(x)$ has order $N$ at $x=a$. It is immediate that (i) holds. Let $I=(i_{1},\ldots,i_{k})$ be a sequence
 with values in $\{1,\ldots,n\}$ with $\brak I=N$ and $X_{I}f(a)\neq 0$. By Lemma~\ref{lem-X-I}, near $x=a$, we have
 \begin{equation*}
     X_{I}=\sum_{\brak \alpha\leq \brak I}c_{I\alpha}(x)X^{\alpha}=\sum_{\brak \alpha\leq N}c_{I\alpha}(x)X^{\alpha}
 \end{equation*}for some smooth coefficients $c_{I\alpha}(x)$. Thus,
 \begin{equation*}
     0\neq X_{I}f(a)= \sum_{\brak \alpha\leq N}c_{I\alpha}(a)X^{\alpha}f(a)=  \sum_{\brak
     \alpha=N}c_{I\alpha}(a)X^{\alpha}f(a).
 \end{equation*}This implies that at least one of the numbers $X^{\alpha}f(a)$, $\brak \alpha=N$,  is non-zero, i.e.,
 (ii) is satisfied. The proof is complete.
\end{proof}

\begin{remark}
    In the ECC case and for the type of $H$-frame mentioned in Remark~\ref{rmk:Privileged.order-difference}, we recover the characterization of the order of a function provided 
    by Bella\"iche~\cite[Lemma~4.12(ii)]{Be:Tangent} (see also~\cite[Lemma~B.4]{Je:Brief14}). 
\end{remark}

\begin{definition}\label{def-lin-adp}
   We say that local coordinates $\{x_{1},\ldots,x_{n}\}$ centered at a point $a\in M$ are \emph{linearly adapted} at $a$ to
   the $H$-frame $X_{1},\ldots,X_{n}$ when $X_{j}({0})=\partial_{j}$ for $j=1,\ldots,n$.
\end{definition}

\begin{lemma}\label{lem-affine}
    Given local coordinates $x=(x_{1},\cdots,x_{n})$, there is a unique affine change of coordinates $x\rightarrow T_{a}(x)$
   which provides us with local coordinates centered at $a$ that are linearly adapted to the $H$-frame $(X_{1},\cdots X_{n})$.
\end{lemma}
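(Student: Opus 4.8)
Lemma~\ref{lem-affine} asserts the existence and uniqueness of an affine change of coordinates that simultaneously (i) centers the coordinates at $a$ and (ii) makes the $H$-frame linearly adapted, i.e. $X_j(0)=\partial_j$ for all $j$. Let me think about what this means concretely. Let me denote the starting local coordinates by $x=(x_1,\ldots,x_n)$ and suppose $a$ has coordinates $x(a)=x_a$. An affine change of coordinates has the form $y = A(x - x_a) + b$ or more generally $y = Ax + c$ where $A$ is an invertible $n\times n$ matrix. Actually, I need to think about what "linearly adapted" requires. We have the vector fields $X_j$, and in the original coordinates $X_j = \sum_i b_{ij}(x)\partial_{x_i}$ where $b_{ij}$ are smooth functions. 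The matrix $B(x) = (b_{ij}(x))$ is invertible at every point since $(X_1,\ldots,X_n)$ is a frame. The condition $X_j(0) = \partial_{y_j}$ in the new coordinates $y$ centered at $a$ means: after the change of variables, evaluated at the origin (which corresponds to $a$), the $j$-th vector field equals the $j$-th coordinate partial.

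**The plan.** First I would write the affine change of coordinates as $y = T_a(x) := B(x_a)^{-1}(x - x_a)$, where $B(x_a)$ is the value at $a$ of the matrix expressing the $X_j$ in the $\partial_{x_i}$ basis. The centering is automatic since $T_a(x_a)=0$. Then I would compute how the vector fields transform: under a linear map $y = Lx$ (here $L = B(x_a)^{-1}$), a vector field $X = \sum_i b_i \partial_{x_i}$ pushes forward to $\sum_i b_i \cdot \partial_{x_i}(Ly)\,\partial_{y} = \sum_{k,i} L_{ki} b_i \partial_{y_k}$, i.e. the coefficient matrix transforms as $B \mapsto LB$. Evaluating at $a$ (the new origin), the coefficient matrix of the frame becomes $B(x_a)^{-1} B(x_a) = I$, which is exactly the statement $X_j(0) = \partial_{y_j}$. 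This establishes existence.

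**Uniqueness.** For uniqueness, I would argue that any affine map $y = A x + c$ achieving both properties is forced. The centering condition $y(a) = 0$ gives $c = -A x_a$, so the map is $y = A(x - x_a)$ and is determined by $A$. The linear-adaptedness condition forces the transformed coefficient matrix at the origin to be the identity: by the transformation rule above, $A\, B(x_a) = I$, hence $A = B(x_a)^{-1}$. Thus both $A$ and $c$ are uniquely determined, giving uniqueness. The only subtlety is to confirm $B(x_a)$ is invertible, which holds because $(X_1,\ldots,X_n)$ is a frame, hence a basis of the tangent space at every point including $a$.

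**Main obstacle.** The computation is essentially routine linear algebra once the transformation rule for vector field coefficients under a linear change of coordinates is pinned down correctly; the only place to be careful is getting the matrix convention right (whether the frame transforms by $B \mapsto AB$ or $B \mapsto A^{-1}B$ or a transpose thereof), since a misplaced inverse or transpose would give the wrong formula for $T_a$. I expect the bookkeeping of indices in the pushforward formula $\phi_* X$ under $\phi(x) = A(x-x_a)$ to be the one genuinely error-prone step, but it is not conceptually hard. No compatibility with the filtration needs to be checked here: the lemma only concerns the linear part at a single point, and any $H$-frame works equally well since the statement is purely about matching $X_j(0)$ with $\partial_j$.
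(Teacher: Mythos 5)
Your proof is correct and follows essentially the same route as the paper: express the frame via the coefficient matrix $B$, compute how it transforms under an affine map, and solve $A\,B(x_a)=I$ to pin down both existence and uniqueness (your formula $T_a(x)=B(x_a)^{-1}(x-x_a)$ matches the paper's $T_a(x)=\left(B(a)^{t}\right)^{-1}(x-a)$, the transpose being only a row-versus-column convention for $B$, a pitfall you correctly flagged). No gaps.
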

\begin{proof}
In the local coordinates $(x_{1},\ldots,x_{n})$ we have
\begin{equation*}
    X_{j}=\sum_{1\leq k \leq n}b_{jk}(x)\partial_{x_{k}}, \qquad j=1,\ldots,n,
\end{equation*}where the coefficients $b_{jk}(x)$ are smooth. Set $B(x)=(b_{jk})_{1\leq j,k\leq n}\in \op{GL}_{n}(\R)$.
In what follows we shall use the same notation for the point $a$ and its coordinate vector $a=(a_{1},\ldots,a_{n})$
with respect to the local coordinates $(x_{1},\ldots,x_{n})$.

Let $T(x)=A(x-a)$ be an affine transformation with $T(a)=0$ and $A=(a_{kl})\in \op{GL}_{n}(\R)$. Set
$y=(y_{1},\ldots,y_{n})=T(x)$, i.e., $y_{k}=\sum_{l}a_{kl}(x_{l}-a_{l})$, $k=1,\ldots,n$. Then $(y_{1},\ldots,y_{n})$
are local coordinates centered at $a$. In those coordinates, for $j=1,\ldots, n$, we have
\begin{equation}\label{eq-Xi-form}
    X_{j}=\sum_{1\leq k,l \leq n}b_{jl}(x)\frac{\partial y_{k}}{\partial x_{l}}\partial_{y_{k}}
    =\sum_{1\leq k \leq n}\biggl( \sum_{1\leq l \leq n} b_{jl}\circ T^{-1}(y) a_{kl}\biggr)\partial_{y_{k}}.
\end{equation}
Thus $X_{j}=\frac{\partial }{\partial y_{j}}$ at $y=0$ if and only if  $\sum_{1\leq l \leq n} b_{jl}(a)
a_{kl}=\delta_{jk}$. We then see that the local coordinates $(y_{1},\ldots,y_{n})$ are linearly adapted at $a$ if and
only if $B(a)A^{t}=1$, i.e., $A=\left( B(a)^{t}\right)^{-1}$. This shows that $T_{a}(x)= \left( B(a)^{t}\right)^{-1}(x-a)$
is the unique affine isomorphism that produces linearly adapted coordinates centered at $a$. The proof is
complete.
\end{proof}

\begin{definition}\label{def:Privileged-coordinates}
    We say that local coordinates $x=(x_{1},\ldots,x_{n})$ centered at $a$ are \emph{privileged coordinates} at $a$ adapted to
    the $H$-frame ($X_{1},\ldots, X_{n})$ when the following two conditions are satisfied:
    \begin{enumerate}
        \item[(i)] These coordinates are linearly adapted at $a$ to the $H$-frame $(X_{1},\ldots ,X_{n})$. 

        \item[(ii)] For all $k=1,\ldots,n$, the coordinate function $x_{k}$ has order $w_{k}$ at $a$.
    \end{enumerate}
\end{definition}

\begin{remark}
 Privileged coordinates are called adapted coordinates in~\cite{BS:SIAMJCO90}. 
\end{remark}

\begin{remark}
  As mentioned in~Remark~\ref{rmk:Privileged.order-difference},  in the ECC case our notion of order of a function agrees with that of Bella\"iche~\cite{Be:Tangent}. 
  Therefore, we see that in the ECC case Definition~\ref{def:Privileged-coordinates} agrees with the definition of privileged coordinates in~\cite{Be:Tangent}. 
\end{remark}

\begin{remark}
Our definition of privileged coordinates is different from the definition used in~\cite{ABB:SRG, Go:LNM76, Gr:CC, He:SIAMR, MM:JAM00, Mo:AMS02}.
We will see later that the two definitions are equivalent (see Corollary~\ref{cor:nilp-approx.priv-weight}). 
\end{remark}

\begin{remark}\label{rmk:Privileged.order-xj}
 If the condition (i) holds, then $X_{j}(x_{k})(a)=\partial_{x_j}(x_{k})=\delta_{jk}$. Therefore, we see that in this case $x_{k}$ has 
 order $w_{k}$ if and only if $X^{\alpha}(x_{k})=0$ for all multi-orders $\alpha$ such that $\brak\alpha<w_{k}$ and 
 $|\alpha|\geq 2$. 
\end{remark}

In what follows using local coordinates centered at $a$ we may regard the vector fields $X_{1},\ldots,X_{n}$ as vector
fields defined on a neighborhood of the origin $0\in\R^{n}$.

\begin{lemma}[{\cite[Lemma 4.13]{Be:Tangent}}]\label{lem:privileged.polynomial}
Let $h(x)$ be a homogeneous polynomial of \mbox{degree $m$}. Then
\begin{equation*}
    (X^{\alpha}h)(0)=\left\{
    \begin{array}
    {cc}
        \partial_{x}^{\alpha}h(0)& \text{if $|\alpha|=m$},  \\

        0 & \text{if $|\alpha|<m$}.
    \end{array}\right.
\end{equation*}
\end{lemma}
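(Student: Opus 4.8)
The plan is to expand the composite operator $X^{\alpha}$ as a differential operator with smooth coefficients and then read off its action on $h$ at the origin using the homogeneity of $h$. Write each vector field in the given coordinates as $X_{j}=\sum_{k}b_{jk}(x)\partial_{x_{k}}$ with $b_{jk}$ smooth near $0$. Since the coordinates are linearly adapted to the $H$-frame (Definition~\ref{def-lin-adp}), we have $X_{j}(0)=\partial_{j}$, that is $b_{jk}(0)=\delta_{jk}$; this normalization is exactly what will pin down the top-order term at the origin.

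First I would record the structural expansion. Composing the first-order operators $X_{1}^{\alpha_{1}}\cdots X_{n}^{\alpha_{n}}$ and applying the Leibniz rule (each differentiation either falls on $h$ or on a coefficient $b_{jk}$, the latter lowering the order by one), one obtains smooth functions $c_{\beta}(x)$ such that
\begin{equation*}
 X^{\alpha}=\sum_{|\beta|\leq |\alpha|}c_{\beta}(x)\,\partial_{x}^{\beta}.
\end{equation*}
The top-order part $|\beta|=|\alpha|$ arises only when every differentiation hits $h$ and none hits a coefficient, so its coefficients are products of the $b_{jk}$; equivalently, the principal symbol of $X^{\alpha}$ is $\prod_{j}\bigl(\sum_{k}b_{jk}(x)\xi_{k}\bigr)^{\alpha_{j}}$. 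Evaluating at $x=0$ and using $b_{jk}(0)=\delta_{jk}$ gives the principal symbol $\xi^{\alpha}$, hence $c_{\beta}(0)=\delta_{\beta\alpha}$ for all $\beta$ with $|\beta|=|\alpha|$.

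Next I would evaluate at the origin. Applying the expansion to $h$ and setting $x=0$,
\begin{equation*}
 (X^{\alpha}h)(0)=\sum_{|\beta|\leq|\alpha|}c_{\beta}(0)\,(\partial_{x}^{\beta}h)(0).
\end{equation*}
For a homogeneous polynomial $h$ of degree $m$, the derivative $\partial_{x}^{\beta}h$ is homogeneous of degree $m-|\beta|$ when $|\beta|\leq m$ and vanishes identically when $|\beta|>m$; in particular $(\partial_{x}^{\beta}h)(0)=0$ unless $|\beta|=m$. If $|\alpha|<m$, then every $\beta$ in the sum has $|\beta|\leq|\alpha|<m$, so every term vanishes and $(X^{\alpha}h)(0)=0$. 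If $|\alpha|=m$, the only surviving terms have $|\beta|=m=|\alpha|$, i.e.\ they are the top-order terms, for which $c_{\beta}(0)=\delta_{\beta\alpha}$; hence the sum collapses to $(\partial_{x}^{\alpha}h)(0)$, as claimed.

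The one delicate point is the identification of the top-order coefficients $c_{\beta}(0)$ for $|\beta|=|\alpha|=m$: I must be sure that the terms in which a differentiation lands on a coefficient $b_{jk}$ contribute only to orders $|\beta|<|\alpha|$ and therefore cannot interfere at the level $|\beta|=m$. This is most transparently handled by an induction on $|\alpha|$, showing at each step that $X_{j}$ raises the order of the operator by exactly one and multiplies its principal symbol by $\sum_{k}b_{jk}(x)\xi_{k}$; the normalization $b_{jk}(0)=\delta_{jk}$ then yields the principal symbol $\xi^{\alpha}$ at the origin. Everything else is a direct consequence of the homogeneity of $h$.
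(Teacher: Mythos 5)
Your proof is correct and follows essentially the same route as the paper's source: the paper gives no proof of its own but cites Bella\"iche's Lemma~4.13 in~\cite{Be:Tangent}, whose argument is precisely the expansion $X^{\alpha}=\sum_{|\beta|\leq|\alpha|}c_{\beta}(x)\partial_{x}^{\beta}$ with $c_{\beta}(0)=\delta_{\beta\alpha}$ at top order (this is exactly the corrected summation range $|\beta|\leq|\alpha|$ that the paper's remark following the lemma points out for Bella\"iche's Eq.~(34)), combined with the homogeneity of $h$. You were also right to make explicit the normalization $X_{j}(0)=\partial_{j}$ coming from linearly adapted coordinates (Definition~\ref{def-lin-adp}), which is implicit in the paper's context and is what pins down the top-order coefficients in the case $|\alpha|=m$.
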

 \begin{remark} In the proof of the above result in \cite[page~40]{Be:Tangent}, the summation in Eq.~(34) is over all
     multi-orders $\beta=(\beta_{1},\ldots,\beta_{n})$ such that $\beta\neq \alpha$ and $\beta_{i}\leq \alpha_{i}$ for
     $i=1,\ldots,n$. This should be replaced by the summation over all multi-orders $\beta$ such that $|\beta|\leq
     |\alpha|$.
 \end{remark}

\begin{proposition}[Compare {\cite[Theorem~4.15]{Be:Tangent}}]\label{prop:privileged}
Let $(x_{1},\ldots,x_{n})$ be local coordinates centered at $a$ that are linearly adapted to the $H$-frame
$(X_{1},\ldots,X_{n})$. Then there is a unique change of coordinates $x\rightarrow \hat{\psi}(x)$ such that
\begin{enumerate}
    \item  It provides us with privileged coordinates at $a$.

    \item  For $k=1,\ldots,n$, the $k$-th component $\hat{\psi}_{k}(x)$ is of the form,
    \begin{equation}
        \hat{\psi}_{k}(x)=x_{k}+\sum_{\substack{\brak\alpha <w_{k}\\ |\alpha|\geq 2}} a_{k\alpha}x^{\alpha}, \qquad
        a_{k\alpha}\in \R.
        \label{eq-form-h}
    \end{equation}
\end{enumerate}
\end{proposition}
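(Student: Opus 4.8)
The plan is to reduce the construction of $\hat\psi$ to solving, for each $k\in\{1,\ldots,n\}$ separately, a single \emph{square} linear system in the unknown coefficients $a_{k\alpha}$, and then to read off both existence and uniqueness from the invertibility of that system. First I would note that any map $\hat\psi$ of the form~(\ref{eq-form-h}) satisfies $\hat\psi(0)=0$ and has Jacobian $D\hat\psi(0)=\mathrm{Id}$, because every correction monomial has $|\alpha|\geq 2$. Hence $\hat\psi$ is automatically a local diffeomorphism fixing $a$, and the resulting coordinates stay linearly adapted to the $H$-frame; so the privileged-coordinate requirement reduces entirely to arranging that each coordinate function $\hat\psi_k$ has order $w_k$ at $a$.

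Next I would translate "order $w_k$" into linear equations. Since $X^{e_k}\hat\psi_k(0)=X_k\hat\psi_k(0)=1\neq 0$ with $\brak{e_k}=w_k$, condition~(ii) of Proposition~\ref{TFAE} holds automatically; by Remark~\ref{rmk:Privileged.order-xj} the requirement that $\hat\psi_k$ have order $w_k$ is therefore equivalent to the vanishing condition~(i), which after using linear adaptedness to dispose of the terms with $|\beta|\leq 1$ becomes
\[
  (X^\beta\hat\psi_k)(0)=0 \qquad\text{for every }\beta\text{ with }|\beta|\geq 2,\ \brak\beta<w_k.
\]
Substituting~(\ref{eq-form-h}) turns this into the inhomogeneous system
\[
  \sum_{\substack{|\alpha|\geq 2\\ \brak\alpha<w_k}} (X^\beta x^\alpha)(0)\, a_{k\alpha}
  = -(X^\beta x_k)(0),
  \qquad |\beta|\geq 2,\ \brak\beta<w_k,
\]
whose unknowns and equations are both indexed by the same finite set $S_k=\{\alpha : |\alpha|\geq 2,\ \brak\alpha<w_k\}$; in particular it is a square system.

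The heart of the matter is to show that the coefficient matrix $M=\big((X^\beta x^\alpha)(0)\big)_{\beta,\alpha\in S_k}$ is invertible, and this is where I would invoke Bella\"iche's Lemma~\ref{lem:privileged.polynomial}. Applying it to the homogeneous polynomial $h=x^\alpha$ gives $(X^\beta x^\alpha)(0)=0$ whenever $|\beta|<|\alpha|$, and $(X^\beta x^\alpha)(0)=\partial^\beta(x^\alpha)(0)=\alpha!\,\delta_{\beta\alpha}$ whenever $|\beta|=|\alpha|$. Hence, ordering $S_k$ by increasing total degree $|\cdot|$, the matrix $M$ is block lower triangular with respect to this grading, and each diagonal block (indices of equal degree) is the diagonal matrix with nonzero entries $\alpha!$. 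A block lower triangular matrix with invertible diagonal blocks is invertible, so $M$ is invertible. This produces a unique solution $(a_{k\alpha})_{\alpha\in S_k}$, hence a unique $\hat\psi_k$ of the prescribed form for which $\hat\psi_k$ has order $w_k$; assembling over $k$ yields both the existence and the uniqueness claims.

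The step I expect to be the main obstacle is precisely the invertibility of $M$: the reduction to the linear system, the diffeomorphism property, and the preservation of linear adaptedness are essentially formal, but the triangular structure must be extracted with care. The crucial point is that the only information controlling the entries is the comparison of the total degrees $|\beta|$ and $|\alpha|$ (not the weights $\brak{\cdot}$), so $S_k$ must be graded by $|\cdot|$ rather than by $\brak{\cdot}$. The entries with $|\beta|>|\alpha|$ are left completely undetermined by Lemma~\ref{lem:privileged.polynomial}, but they fall into the strictly lower-triangular part and are therefore harmless.
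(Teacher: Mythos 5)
Your proposal is correct and takes essentially the same route as the paper: the paper's recursive formula~(\ref{eq-pri-det}), derived from Remark~\ref{rmk:Privileged.order-xj} and Lemma~\ref{lem:privileged.polynomial}, is precisely your square linear system solved by increasing total degree $|\alpha|$, with the diagonal entries $\alpha!$ supplied by the same lemma. Presenting this as invertibility of a block lower triangular matrix rather than as a recursion determining each $a_{k\alpha}$ from the $a_{k\beta}$ with $|\beta|<|\alpha|$ is only a cosmetic repackaging of the identical argument.
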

\begin{proof}
Let $x\rightarrow \hat{\psi}(x)$ be a change of coordinates of the form~(\ref{eq-form-h}). Set $y=\hat{\psi}(x)$ and let $k\in \{1,\ldots,n\}$. 
As pointed out in Remark~\ref{rmk:Privileged.order-xj}, the coordinate $y_{k}=\hat{\psi}_k(x)$ has order $w_{k}$ at $a$ if and only if 
$X^{\alpha}(y_{k})(a)=0$ for all multi-orders $\alpha$ with $\brak \alpha<w_{k}$ and $|\alpha|\geq 2$. Let $\alpha \in \N_0$ be such that 
$\brak \alpha<w_{k}$ and $|\alpha|\geq 2$. By Lemma~\ref{lem:privileged.polynomial} we have
\begin{align*}
     \left.X^{\alpha}(y_{k})\right|_{x=0} & =   \left.X^{\alpha}(x_{k})\right|_{x=0} + \sum_{\substack{\brak\beta <w_{k}\\
2\leq |\beta|}} a_{k\beta} \left.X^{\alpha}(x^{\beta})\right|_{x=0}\\
&=\left.X^{\alpha}(x_{k})\right|_{x=0} + \sum_{\substack{\brak\beta <w_{k}\\
 2\leq |\beta|<|\alpha|}} a_{k\beta} \left.X^{\alpha}(x^{\beta})\right|_{x=0} +\alpha!a_{k\alpha}.
\end{align*}Thus,
\begin{equation}\label{eq-pri-det}
    \left.X^{\alpha}(y_{k})\right|_{x=0}=0 \ \Longleftrightarrow \ \alpha!a_{k\alpha}=- \left.X^{\alpha}(x_{k})\right|_{x=0}
    - \sum_{\substack{\brak\beta <w_{k}\\
 2\leq |\beta|<|\alpha|}} a_{k\beta} \left.X^{\alpha}(x^{\beta})\right|_{x=0}.
\end{equation}As the right-hand side uniquely determines the coefficients $a_{k\alpha}$, we deduce that there is a 
unique map $\hat{\psi}(x)$ of the form~(\ref{eq-form-h}) such that the change of variable $x\rightarrow \hat{\psi}(x)$ provides us with privileged coordinates at $a$ that are linearly adapted to 
$H$-frame $(X_{1},\ldots,X_{n})$. The lemma is thus proved.
\end{proof}

\begin{remark}
    The proof above merely reproduces the arguments in~\cite{Be:Tangent}.  In particular, in the ECC case and for the type of $H$-frames mentioned in Remark~\ref{rmk:Privileged.order-difference}, we recover the privileged coordinates of~\cite{Be:Tangent}.  We wrote down the details for reader's convenience 
    and for recording how the coefficients $a_{k\alpha}$ are obtained (\emph{cf}.~Remark~\ref{rem-ab} below). 
    Note also that the uniqueness content is not mentioned in~\cite{Be:Tangent}, but this is an immediate consequence 
    of~(\ref{eq-pri-det}). This uniqueness result will play an important role at several places in the rest of the paper.
\end{remark}

\begin{remark}
 When $r=2$ the map $\hat{\psi}$ is the identity map, and so the privileged coordinates that we obtain are simply linearly 
 adapted coordinates. In the special case of Heisenberg manifolds, these coordinates are called $y$-coordinates in~\cite{BG:CHM}. 
\end{remark}

\begin{remark}
 When $r=3$ we recover the $p$-coordinates of Cummins~\cite{Cu:CPDE89}.   
\end{remark}

 \begin{remark}\label{rem-ab}
    It follows from the proof of Proposition~\ref{prop:privileged} that each coefficient $a_{k\alpha}$ in~(\ref{eq-form-h}) is a universal polynomial in the 
    derivatives $\left.X^{\alpha}(x^{\beta})\right|_{x=0}$ with $\brak \beta \leq w_{k}$ and $|\beta|\geq 1$. Set 
    $X_{j}=\sum_{k=1}^{n}b_{jk}(x)\partial_{x_k}$. An induction shows that
    \begin{equation*}
        X^{\alpha}=\sum_{1\leq |\beta|\leq |\alpha|}b_{\alpha\beta}(x)\partial^{\beta}_x,
    \end{equation*}where $b_{\alpha\beta}(x)$ is a universal polynomial in the partial derivatives 
    $\partial^{\gamma}b_{jk}(x)$ with $|\gamma|\leq |\alpha|-|\beta|$. As 
    $\left.X^{\alpha}(x^{\beta})\right|_{x=0}=\beta! b_{\alpha\beta}(0)$, we then deduce that each coefficient 
    $a_{j\alpha}$ is a universal polynomial in the partial derivatives $\partial^{\gamma}b_{kl}(0)$ with $|\gamma|\leq 
    |\alpha|-1$. 
\end{remark}

\begin{definition}\label{def-psia}
The map $\psi_a:\R^n\rightarrow \R^n$ is composition $\hat{\psi}_a\circ T_a$, where  $T_{a}$ is the affine map from Lemma~\ref{lem-affine} and  
$\hat{\psi}_a$ the polynomial diffeomorphism associated by Proposition~\ref{prop:privileged} with the linearly adapted  coordinates provided by $T_a$.  
\end{definition}

Proposition~\ref{prop:privileged} and the definition of $\psi_a$ immediately imply the following statement. 

\begin{proposition}\label{prop:privileged.psi-privileged}
 The change of coordinates $x\rightarrow \psi_a(x)$ provides us with privileged coordinates at $a$ that are adapted to the $H$-frame $(X_1,\ldots, X_n)$.
\end{proposition}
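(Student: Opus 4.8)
The plan is to simply unwind the definition of $\psi_a$ and chain together the two results from which it is assembled; indeed the statement is essentially a bookkeeping consequence of Lemma~\ref{lem-affine} and Proposition~\ref{prop:privileged}. By Definition~\ref{def-psia} we have $\psi_a=\hat\psi_a\circ T_a$, so the change of coordinates $x\rightarrow\psi_a(x)$ factors as two successive changes of coordinates. First I would invoke Lemma~\ref{lem-affine}: the affine map $T_a$ yields local coordinates $y=T_a(x)$ centered at $a$ that are linearly adapted to the $H$-frame $(X_1,\ldots,X_n)$, meaning $X_j(0)=\partial_{y_j}$ for each $j=1,\ldots,n$.

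Next, with these linearly adapted coordinates in hand, I would apply Proposition~\ref{prop:privileged}. By construction $\hat\psi_a$ is exactly the polynomial diffeomorphism that this proposition associates with the coordinates $y$, so the further change $y\rightarrow\hat\psi_a(y)$ produces coordinates satisfying both conditions (i) and (ii) of Definition~\ref{def:Privileged-coordinates}. Since the composite $x\rightarrow\psi_a(x)=\hat\psi_a(T_a(x))$ is precisely this second change carried out on top of $T_a$, the coordinates $\psi_a(x)$ are privileged at $a$ adapted to the $H$-frame $(X_1,\ldots,X_n)$, which is the assertion.

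There is no genuine obstacle here, but one should check that nothing is lost when the two steps are composed. Condition~(i) (linear adaptedness) survives because $\hat\psi_a$ has the form~(\ref{eq-form-h}), whose linear part is the identity and hence leaves the relation $X_j(0)=\partial_j$ established by $T_a$ untouched; condition~(ii) (each coordinate function having order $w_k$ at $a$) is already delivered by Proposition~\ref{prop:privileged}, and in any case the order of a function is intrinsic, being independent of the choice of $H$-frame by Lemma~\ref{lem:privileged.properties-order}. Thus the proposition follows immediately by composing Lemma~\ref{lem-affine} and Proposition~\ref{prop:privileged}.
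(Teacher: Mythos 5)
Your proposal is correct and is exactly the paper's argument: the paper also derives this proposition immediately from the definition $\psi_a=\hat{\psi}_a\circ T_a$ together with Lemma~\ref{lem-affine} and Proposition~\ref{prop:privileged}. Your extra verification that linear adaptedness survives the composition (since $\hat{\psi}_a'(0)=\op{id}$ by the form~(\ref{eq-form-h})) is a sound, if strictly redundant, elaboration of what Proposition~\ref{prop:privileged} already guarantees.
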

 
We shall refer to the coordinates provided by Proposition~\ref{prop:privileged.psi-privileged} as the \emph{$\psi$-privileged coordinates}. We conclude this section with the following characterization of these coordinates.  

\begin{proposition}\label{prop-uni}
    The $\psi$-privileged coordinates are the unique privileged coordinates at $a$ adapted to the $H$-frame
    $(X_{1},\ldots,X_{n})$ that are given by a change of coordinates of the form $y=\hat{\psi}(Tx)$, where $T$ is an affine
    map such that $T(a)=0$ and $\hat{\psi}(x)$ is a polynomial diffeomorphism of the form~(\ref{eq-form-h}).
\end{proposition}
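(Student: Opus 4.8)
The plan is to prove the uniqueness by reducing it to the two uniqueness results already in hand, namely Lemma~\ref{lem-affine} and Proposition~\ref{prop:privileged}. Suppose $y=\hat{\psi}(Tx)$ is any change of coordinates of the indicated form that yields privileged coordinates at $a$ adapted to $(X_{1},\ldots,X_{n})$; write $\Phi=\hat\psi\circ T$. I must show that $T=T_{a}$ and $\hat\psi=\hat\psi_{a}$, so that $\Phi=\hat\psi_{a}\circ T_{a}=\psi_{a}$ by Definition~\ref{def-psia}. The strategy is to first use condition~(i) of Definition~\ref{def:Privileged-coordinates} (being linearly adapted) to pin down the affine factor $T$, and then to invoke the uniqueness content of Proposition~\ref{prop:privileged} to pin down the polynomial factor $\hat\psi$.

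The key observation concerns the linear part of $\hat\psi$. Because $\hat\psi$ has the form~(\ref{eq-form-h}), each component reads $\hat\psi_{k}(x)=x_{k}+\sum_{\brak\alpha<w_{k},\,|\alpha|\geq 2}a_{k\alpha}x^{\alpha}$, where every monomial in the sum has total degree $|\alpha|\geq 2$. Hence the Jacobian $D\hat\psi(0)$ is the identity matrix, and consequently the Jacobian of $\Phi=\hat\psi\circ T$ at $x=a$ equals the linear part of the affine map $T$. Now being linearly adapted at $a$ is a purely first-order condition: writing $X_{j}=\sum_{k}b_{jk}(x)\partial_{x_{k}}$, the requirement $X_{j}(0)=\partial_{y_{j}}$ in the new coordinates depends only on the values $b_{jk}(a)$ and on the Jacobian of the coordinate change at $a$, exactly as in the computation~(\ref{eq-Xi-form}) of Lemma~\ref{lem-affine}. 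Since that Jacobian equals the linear part of $T$, the linearly adapted condition forces the linear part of $T$ to coincide with the linear part of $T_{a}$, namely $(B(a)^{t})^{-1}$. As $T$ and $T_{a}$ are affine maps with $T(a)=T_{a}(a)=0$ and the same linear part, I conclude $T=T_{a}$.

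With $T=T_{a}$ now fixed, the intermediate coordinates $\tilde x=T_{a}x$ are linearly adapted to $(X_{1},\ldots,X_{n})$, and the remaining change of variables $y=\hat\psi(\tilde x)$ has $\hat\psi$ of the form~(\ref{eq-form-h}) and produces privileged coordinates at $a$. This is precisely the situation governed by Proposition~\ref{prop:privileged}, whose uniqueness statement asserts that there is exactly one such $\hat\psi$; by Definition~\ref{def-psia} this is $\hat\psi_{a}$. Therefore $\hat\psi=\hat\psi_{a}$ and $\Phi=\hat\psi_{a}\circ T_{a}=\psi_{a}$, as claimed.

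The only point requiring care is the passage in the second paragraph: one must check that the higher-order tail of $\hat\psi$ genuinely contributes nothing to the linearization at $a$, so that the linearly adapted condition constrains the affine factor $T$ alone. This is immediate from the degree restriction $|\alpha|\geq 2$ in~(\ref{eq-form-h}), and once it is granted the result follows formally from the two uniqueness statements already established. I expect no further obstacle.
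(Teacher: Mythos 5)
Your proof is correct and follows essentially the same route as the paper's: you first use the fact that $\hat{\psi}'(0)=\op{id}$ (forced by the degree restriction $|\alpha|\geq 2$ in~(\ref{eq-form-h})) so that the linearly adapted condition constrains only the affine factor, whence $T=T_{a}$ by the uniqueness in Lemma~\ref{lem-affine}, and then you invoke the uniqueness content of Proposition~\ref{prop:privileged} to conclude $\hat{\psi}=\hat{\psi}_{a}$. This two-step reduction is exactly the paper's argument, so nothing further is needed.
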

\begin{proof}
Let $x\rightarrow \phi(x)$ be a change of coordinates providing us with privileged coordinates at $a$ adapted to the $H$-frame $(X_{1},\ldots,X_{n})$ 
such that $\phi(x)=\hat{\psi}(Tx)$, where $T$ is an affine
    map such that $T(a)=0$ and $\hat{\psi}(x)$ is a polynomial diffeomorphism of the form~(\ref{eq-form-h}). As privileged 
    coordinates are linearly adapted coordinates, we see that  $\phi_{*}X_{j}(0)=\partial_{j}$ for $j=1,\ldots,n$. Note that~(\ref{eq-form-h}) implies that ${\hat{\psi}}'(0)=\op{id}$.
    Thus $\phi_{*}X_{j}(0)=\hat{\psi}'(0)\circ T'(a)\left(X_{j}(a)\right)=T'(a)\left(X_{j}(a)\right)=T_{*}X_{j}(0)$, so
    that we see that $T_{*}X_{j}(0)=\partial_{j}$. This means that the coordinate change $x\rightarrow T(x)$ provides 
    us with coordinates that are linearly adapted at $a$ to the
    $H$-frame $(X_{1},\ldots,X_{n})$. As $T(x)$ is an affine map, it then follows from Lemma~\ref{lem-affine} that
    $T(x)=T_{a}(x)$. Therefore, we see that $\hat{\psi}(x)$ is a polynomial diffeomorphism of the form~(\ref{eq-form-h}) that transforms the coordinates
    $y=\hat{\psi}_{a}(x)$ into privileged coordinates at $a$ adapted to $(X_{1},\ldots,X_{n})$. It then follows from the 
    uniqueness contents of  Proposition~\ref{prop:privileged} that $\hat{\psi}(x)=\hat{\psi}_{a}(x)$, and hence $\phi(x)=\psi_{a}(x)$. This gives the result.
\end{proof}

\begin{remark}
 We refer to~\cite{AS:DANSSSR87, St:1988} for alternative polynomial constructions of privileged coordinates. 
  \end{remark}

\begin{remark}
Examples of non-polynomial  privileged coordinates are provided by the canonical coordinates of the first kind~\cite{Go:LNM76, RS:ActaMath76} and the canonical coordinates of the second kind~\cite{BS:SIAMJCO90, He:SIAMR}.  The former are given by the inverse of the local diffeomorphism, 
\begin{equation}
 (x_1,\ldots, x_n)\longrightarrow \exp\left( x_1X_1+\cdots + x_n X_n\right)\!(a).
 \label{eq:privileged.canonical-coord1} 
\end{equation}
The canonical coordinates of the second kind arise from the inverse of the local diffeomorphism,
\begin{equation}
 (x_1,\ldots, x_n)\longrightarrow \exp\left( x_1X_1\right) \circ \cdots  \circ \exp\left( x_n X_n\right)\!(a). 
  \label{eq:privileged.canonical-coord2}  
\end{equation}
It is shown in~\cite{BS:SIAMJCO90} that the canonical coordinates of the 2nd kind are privileged coordinates in the sense of Definition~\ref{def:Privileged-coordinates} (see also~\cite{Je:Brief14, Mo:AMS02}). For the canonical coordinates of the 1st kind the result is proved in~\cite{Je:Brief14}. We refer to Section~\ref{sec:Canonical-coord} for alternative proofs of these results. 
 \end{remark}

\section{Anisotropic Asymptotic Analysis}\label{sec:anisotropic}
In this section, we gather various results on anisotropic asymptotic expansions of maps and differential operators. Such types of asymptotic expansions have been considered in various levels of generality by a number of authors~\cite{ABB:SRG, BG:CHM, Be:Tangent, Go:LNM76, Gr:CC, He:SIAMR, Je:Brief14, MM:JAM00, Me:CPDE76, Mi:JDG85,  Mo:AMS02, Ro:INA, RS:ActaMath76}. We shall give here a systematic and precise account on this type of 
asymptotic expansions.  In particular, we will show that the various asymptotic expansions at stake are not just pointwise 
asymptotics, but they actually hold with respect to standard $C^{\infty}$-topologies. 

\subsection{Anisotropic approximation of functions} 
In what follows, we let $\delta_t:\R^n\rightarrow \R^n$, $t\in \R$, be the one-parameter group of anisotropic dilations given by 
\begin{equation}
    \delta_{t}(x)=t\cdot x:=(t^{w_{1}}x_{1},\ldots,t^{w_n}x_{n}), \qquad t\in \R, \ x\in \R^n.
    \label{eq:Nilpotent.dilations2}
\end{equation}

\begin{definition}
A function $f(x)$ on $\R^{n}$ is \emph{homogeneous of degree} $w$,
$w\in \N_0$, with respect to the dilations~(\ref{eq:Nilpotent.dilations2}) when
\begin{equation}
    f(t\cdot x)=t^{w}f(x) \qquad \text{for all $x\in \R^n$ and $t\in \R$}.
    \label{eq:Anisotropic.homogeneous}
\end{equation}
\end{definition}

\begin{example}
 For any multi-order $\alpha \in \N_{0}^{n}$, the monomial $x^{\alpha}$ is homogeneous of degree $\brak \alpha$.
\end{example}

\begin{remark}\label{rem-poly}
 Suppose that $f(x)$ is smooth and homogeneous of degree $w$. Differentiating~(\ref{eq:Anisotropic.homogeneous}) with respect to $t$ shows that, for every $\alpha \in \N_0$, we have $\partial^{\alpha}f(t\cdot x)=t^{w-\brak \alpha}\partial^{\alpha}f(x)$. That is, 
 $\partial^{\alpha}f(x)$ is homogeneous of degree $w-\brak \alpha$. Moreover, if we take $\alpha$ so that $\brak \alpha>w$ and let $t\rightarrow 0$, then we get
 \begin{equation*}
     \partial^{\alpha}f(x)=t^{\brak\alpha -w}\partial^{\alpha}f(t\cdot x)\longrightarrow 0\cdot \partial^{\alpha}f(0)=0.
 \end{equation*}Therefore, all the partial derivatives $\partial^{\alpha}f$ with $\brak \alpha>w$ are identically 
 zero. Combining this with the inequality $\brak \alpha<r|\alpha|$ shows that $\partial^{\alpha}f(x)$ is identically 
 zero as soon as $|\alpha|$ is large enough. It then follows that $f(x)$ is a polynomial function.  Note also that this implies that $w$ must be 
 a non-negative integer.
\end{remark}

In what follows we let $U$ be an open neighborhood of the origin $0\in \R^{n}$. 

\begin{definition}
Let $f\in C^{\infty}(U)$ and $w\in \N_0$. We shall say that
\begin{enumerate}
    \item  $f$ has weight $\geq w$ when $\partial^{\alpha}_{x}f(0)=0$ for all multi-orders
    $\alpha \in \N_{0}^{n}$ such that $\brak\alpha<w$.
    \item  $f$ has weight $w$ when $f(x)$ has weight~$\geq w$ and there is a multi-order
    $\alpha\in \N_{0}^{n}$ with $\brak\alpha=w$ such that $\partial^{\alpha}_{x}f(0)\neq 0$.
\end{enumerate}
\end{definition}

\begin{remark}
 Unless $f(x)$ vanishes at infinite order at $x=0$, it always has a (finite) weight. In the case $f(x)$ vanishes at infinite order at $x=0$ we shall say that its weight
  is $w=\infty$. 
\end{remark}

\begin{example}
 Let $f(x)\in C^\infty(\R^n)$ be homogeneous of degree $w$ with respect to the dilations~(\ref{eq:Nilpotent.dilations2}). 
 If $f(x)$ is not zero everywhere, then $f(x)$ has weight $w$. 
\end{example}

We mention the following version of Taylor's formula. 

\begin{lemma}[Anisotropic Taylor Formula]\label{lem:nilpotent-weighted-Taylor} 
    Let $f\in C^{\infty}(U)$.  Then, for any $N\in \N$, we may write
    \begin{equation}
        f(x)=\sum_{\brak\alpha <N} \frac{1}{\alpha!}\partial_{x}^{\alpha}f(0)x^{\alpha} + R_N(x), 
    \label{eq:nilpotent-weighted-Taylor}
    \end{equation}
    where the remainder term $R_N(x)\in C^\infty(U)$ has weight~$\geq N$ and is of the form,     
      \begin{equation}
         R_{N}(x) =  \sum_{|\alpha|\leq N \leq \brak   \alpha}x^{\alpha}R_{N\alpha}(x).
             \label{eq:nilpotent-weighted-Taylor2}
      \end{equation}    
\end{lemma}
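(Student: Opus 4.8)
The plan is to deduce the anisotropic formula from the ordinary Taylor formula with integral remainder, and then merely re-sort the terms according to the anisotropic weight $\brak{\cdot}$ rather than the usual order $|\cdot|$. The observation that makes this work is that each dilation weight satisfies $w_j\ge 1$, whence $\brak\alpha\ge |\alpha|$ for every multi-order $\alpha$. In particular $\brak\alpha<N$ forces $|\alpha|\le N-1$, so the entire desired polynomial part $\sum_{\brak\alpha<N}\frac{1}{\alpha!}\partial_x^\alpha f(0)x^\alpha$ is supported on monomials of ordinary degree at most $N-1$.

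Working on a star-shaped neighborhood of $0$ (which is harmless, the statement being local), I would first apply the classical Taylor formula with integral remainder to order $N-1$, namely
\begin{equation*}
 f(x)=\sum_{|\alpha|\le N-1}\frac{1}{\alpha!}\partial_x^\alpha f(0)\,x^\alpha+\sum_{|\alpha|=N} x^\alpha g_\alpha(x),
\end{equation*}
where $g_\alpha(x)=\frac{N}{\alpha!}\int_0^1(1-t)^{N-1}(\partial^\alpha f)(tx)\,dt$ is smooth on $U$. The next step is to split the polynomial part along the weight: I would keep the monomials with $\brak\alpha<N$ as the leading part of~(\ref{eq:nilpotent-weighted-Taylor}), and move the remaining monomials, those with $|\alpha|\le N-1$ but $\brak\alpha\ge N$, into the remainder. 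Thus I set
\begin{equation*}
 R_N(x)=\sum_{\substack{|\alpha|\le N-1\\ \brak\alpha\ge N}}\frac{1}{\alpha!}\partial_x^\alpha f(0)\,x^\alpha+\sum_{|\alpha|=N}x^\alpha g_\alpha(x).
\end{equation*}

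It remains to check that this $R_N$ meets the two requirements. For the factored shape~(\ref{eq:nilpotent-weighted-Taylor2}): in the first sum one has $|\alpha|\le N-1<N\le \brak\alpha$, with constant (hence smooth) coefficient; in the second sum $|\alpha|=N\le N\le \brak\alpha$ since $\brak\alpha\ge|\alpha|$, with smooth coefficient $g_\alpha$. Setting $R_{N\alpha}$ equal to these coefficients (and $0$ for the remaining indices in the range $|\alpha|\le N\le\brak\alpha$) puts $R_N$ in exactly the form $\sum_{|\alpha|\le N\le\brak\alpha}x^\alpha R_{N\alpha}(x)$. That $R_N$ has weight $\ge N$ then follows immediately: for any $\beta$ with $\brak\beta<N$, a Leibniz expansion of $\partial^\beta(x^\alpha R_{N\alpha})$ at $x=0$ only survives when $\alpha\le\beta$, which would give $\brak\alpha\le\brak\beta<N$, contradicting $\brak\alpha\ge N$; hence $\partial^\beta R_N(0)=0$. (Equivalently, writing $P$ for the polynomial part one has $\partial^\beta P(0)=\partial^\beta f(0)$ for $\brak\beta<N$, so $\partial^\beta R_N(0)=\partial^\beta f(0)-\partial^\beta P(0)=0$.) The argument is essentially bookkeeping; the one point requiring a little care, indeed the crux, is the re-absorption into the remainder of the low-order, high-weight monomials $x^\alpha$ with $|\alpha|<N\le\brak\alpha$, together with the verification that after this move $R_N$ simultaneously carries the prescribed factored form and vanishes to anisotropic weight $\ge N$.
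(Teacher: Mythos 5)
Your proof is correct and follows essentially the same route as the paper's: apply the classical Taylor formula at order $N$, re-sort the polynomial part by the anisotropic weight $\brak{\cdot}$ using $|\alpha|\leq\brak\alpha$, absorb the low-degree monomials with $\brak\alpha\geq N$ (constant coefficients $\frac{1}{\alpha!}\partial^\alpha f(0)$) into the remainder, and verify weight~$\geq N$ by noting that $\partial^\beta(x^\alpha R_{N\alpha})(0)=0$ whenever $\brak\beta<N\leq\brak\alpha$. Your explicit integral remainder (with the harmless restriction to a star-shaped neighborhood) and the Leibniz check are just slightly more detailed versions of the paper's steps.
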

\begin{proof}
 By Taylor's formula there are functions $R_{N\alpha}\in C^{\infty}(U)$, $|\alpha|=N$, such that
 \begin{equation*}
        f(x)=\sum_{|\alpha| <N} \frac{1}{\alpha!}\partial_{x}^{\alpha}f(0)x^{\alpha} + 
    \sum_{|\alpha|=N }x^{\alpha}R_{N\alpha}(x).
    \end{equation*} 
     Using the inequality $|\alpha|\leq \brak \alpha$ we may rewrite this as 
    \begin{equation*}
      f(x)=\sum_{\brak\alpha <N} \frac{1}{\alpha!}\partial_{x}^{\alpha}f(0)x^{\alpha} +  \sum_{|\alpha|<N\leq \brak\alpha} \frac{1}{\alpha!}\partial_{x}^{\alpha}f(0)x^{\alpha} +
    \sum_{|\alpha|=N}x^{\alpha}R_{N\alpha}(x). 
    \end{equation*}
     For $|\alpha|<N\leq \brak \alpha$, set $R_{N\alpha}(x)=\frac{1}{\alpha!}f(0)$. In addition, set $R_N(x)= \sum_{|\alpha|\leq N \leq \brak   \alpha}x^{\alpha}R_{N\alpha}(x)$. Then we obtain~(\ref{eq:nilpotent-weighted-Taylor})--(\ref{eq:nilpotent-weighted-Taylor2}). Note also that if $\brak\beta<N$ and $|\alpha|\leq N \leq \brak   \alpha$, then $\partial^\beta_x (x^\alpha R_{N\alpha}(x))$ vanishes at $x=0$, and so $\partial^\beta_x R_N(0)=0$ whenever $\brak\beta<N$. This means that $R_N(x)$ has weight~$\geq N$. The proof is complete. 
\end{proof}

\begin{definition}\label{def:anisotropic-pseudo-norm}
A \emph{pseudo-norm} on $\R^n$ is any continuous function $\|\cdot \|:\R^n \rightarrow [0,\infty)$ which is (strictly) positive on $\R^n\setminus 0$ and satisfies 
\begin{equation}
 \|t\cdot x\|= |t| \|x\| \qquad \text{for all $x\in \R^n $ and $t\in \R$}.
 \label{eq:anisotropic.homogeneity-pseudo-norm}
\end{equation}
\end{definition}
 
\begin{example}
The following functions on $\R^n$ are pseudo-norms, 
\begin{gather}
\|x\|_0= \max\left\{|x_1|^{\frac1{w_1}}, \ldots, |x_n|^{\frac1{w_n}}\right\}, \qquad x\in \R^n,
\label{eq:anisotropic.max-pseudo-norm} \\
\|x\|_1=  |x_1|^{\frac{1}{w_1}} + \cdots + |x_n|^{\frac{1}{w_n}}, \qquad x\in \R^n.
\end{gather}
 \end{example}

In what follows we shall say that two pseudo-norms $\|\cdot\|$ and $\|\cdot\|'$ are equivalent when there are constants $c_1>0$ and $c_2>0$ such that
\begin{equation}
 c_1\|x\| \leq \|x\|' \leq c_2\|x\| \qquad \text{for all $x\in \R^n$}. 
 \label{eq:anisotropic.equivalence-pseudo-norms}
\end{equation}

\begin{lemma} \label{lem:anisotropic.equivalence-pseudo-norms}
 All pseudo-norms are equivalent. 
\end{lemma}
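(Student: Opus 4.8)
The plan is to reduce everything to a comparison with a single reference pseudo-norm and then exploit compactness of the associated anisotropic unit sphere. Since the relation~\eqref{eq:anisotropic.equivalence-pseudo-norms} is plainly symmetric and transitive, it suffices to show that an arbitrary pseudo-norm $\|\cdot\|$ is equivalent to the fixed pseudo-norm $\|\cdot\|_0$ of~\eqref{eq:anisotropic.max-pseudo-norm}; the general case follows by composing the resulting inequalities for two given pseudo-norms.

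First I would introduce the anisotropic unit sphere $S=\{x\in\R^n;\ \|x\|_0=1\}$. From the formula~\eqref{eq:anisotropic.max-pseudo-norm} the condition $\|x\|_0=1$ forces $|x_j|\leq 1$ for every $j$, so $S\subset[-1,1]^n$ is bounded; as $\|\cdot\|_0$ is continuous, $S$ is also closed, hence compact, and clearly $0\notin S$.

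The next step is a normalization based on the homogeneity~\eqref{eq:anisotropic.homogeneity-pseudo-norm}. Given $x\neq 0$, set $t=\|x\|_0>0$ and $y=\delta_{1/t}(x)=(1/t)\cdot x$. Then $\|y\|_0=(1/t)\|x\|_0=1$, so $y\in S$, and since $t>0$ we have $x=t\cdot y$ and $\|x\|=\|t\cdot y\|=t\,\|y\|=\|x\|_0\,\|y\|$ for every pseudo-norm $\|\cdot\|$. Because $\|\cdot\|$ is continuous and strictly positive on the compact set $S$, it attains a minimum $m=\min_{y\in S}\|y\|>0$ and a maximum $M=\max_{y\in S}\|y\|<\infty$ (positivity of $m$ uses that the minimizer lies in $S$, hence is nonzero). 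Substituting into the displayed identity gives $m\,\|x\|_0\leq\|x\|\leq M\,\|x\|_0$ for all $x\neq0$, and both inequalities hold trivially at $x=0$. This is exactly the equivalence of $\|\cdot\|$ with $\|\cdot\|_0$, which completes the argument.

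The only point requiring care — and the main (mild) obstacle — is to verify that the normalization is legitimate: one must use that $t=\|x\|_0$ is strictly positive for $x\neq0$ (which is part of the definition of a pseudo-norm) so that the homogeneity relation applies with the positive scalar $t$, giving $|t|=t$ in~\eqref{eq:anisotropic.homogeneity-pseudo-norm}. Once this is in place, compactness of $S$ together with continuity and positivity of $\|\cdot\|$ does all the remaining work.
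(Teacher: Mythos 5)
Your proof is correct and follows essentially the same route as the paper: both reduce to showing an arbitrary pseudo-norm is equivalent to $\|\cdot\|_0$, normalize a nonzero $x$ to the compact anisotropic unit sphere $\{\|x\|_0=1\}$ via the dilation $\delta_{1/\|x\|_0}$, and then use continuity and positivity of $\|\cdot\|$ on that sphere to extract the two-sided bounds. The only cosmetic difference is that you take the min and max over the sphere explicitly, while the paper invokes generic constants $c_1,c_2$; the substance is identical.
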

\begin{proof}
 As the inequalities~(\ref{eq:anisotropic.equivalence-pseudo-norms}) define an equivalence relation on the set of pseudo-norms, it is enough to show that every pseudo-norm $\|\cdot\|$ is equivalent to the pseudo-norm $\|\cdot \|_0$ given by~(\ref{eq:anisotropic.max-pseudo-norm}). To see this we observe that  the unit sphere $\Omega_0:=\{x\in \R^n;\ \|x\|_0=0\}$ is just the box $\{x\in \R^n;\ \max(|x_1|,\ldots, |x_n|) =1\}$, and so it is a compact subset of $\R^n\setminus 0$. As $\|\cdot \|$ is a positive continuous function on $\R^n\setminus 0$ we deduce there are constants $c_1>0$ and $c_2>0$ such that $c_1\leq \|x\|\leq c_2$ for all $x\in \Omega_0$. For all $x\in \R^n\setminus 0$, the vector $\|x\|_0^{-1}\cdot x$ is in $\Omega_0$, and so by homogeneity we get
\begin{equation*}
  \|x\|=  \|x\|_0 \left\| \|x\|_0^{-1}\cdot x\right\| \leq c_2\|x\|_0 \qquad \text{and} \qquad  \|x\|=  \|x\|_0 \left\| \|x\|_0^{-1}\cdot x\right\|\geq  c_1\|x\|_0 .  
\end{equation*}
This shows that the pseudo-norms $\|\cdot\|_0$ and $\|\cdot \|$ are equivalent. The proof is complete.  
\end{proof}

We shall equip $C^{\infty}(U)$ with its standard locally convex space topology, i.e., the LCS topology 
defined by the semi-norms, 
\begin{equation*}
    p_{K,\alpha}(f)=\sup_{x\in K}|\partial^{\alpha}f(x)|, \qquad \alpha\in \N_{0}^{n}, \quad \text{$K\subset U$ 
    compact}. 
\end{equation*}
Therefore, a sequence $(f_{\ell}(x))_{\ell}\subset C^{\infty}(U)$ converges to a function $f(x)$ in $C^{\infty}(U)$ if and only if, for  all 
multi-orders $\alpha\in \N_{0}^{n}$, the partial derivatives $\partial^{\alpha}f_{\ell}(x)$ converge to $\partial^{\alpha}f$ uniformly on all compact subsets of $U$. 

\begin{remark}\label{rmk:anisotropic.swallowing-compact}
 Given any compact $K\subset U$, the semi-norm $p_{K,\alpha}(f)$ actually makes sense for any smooth function $f$ on an open neighborhood of $K$. Moreover, as $U$ is an open neighborhood of the origin, it contains some ball $B(0,\rho)$ with $\rho>0$. Note that, for all $t\in (0,1)$, we have $\delta_{t^{-1}}(U)\supset \delta_{t^{-1}}(B(0,\rho)) \supset B(0,t^{-w_n}\rho)$, and so  $\delta_{t^{-1}}(U)\supset K$ as soon as $t$ is small enough. Therefore, for any $f\in C^\infty(U)$, the semi-norm $p_{K,\alpha}(f\circ \delta_{t})$ makes sense as soon as $t$ is sufficiently small. This allows us to speak about asymptotics in $C^\infty(U)$ for $f(t\cdot x)$ as $t\rightarrow 0$.
\end{remark}

In what follows we let $\|\cdot \|$ be a pseudo-norm on $\R^n$. 

\begin{lemma}\label{lem:anisotropic.weight}
 Let $f(x)\in C^\infty(U)$,  and set $ \cU=\{(x,t)\in U \times \R; \ t\cdot x \in U\}$. Given 
 any $w\in \N_0$, the following are equivalent:
 \begin{enumerate}
     \item[(i)]   $f(x)$ has weight~$\geq w$.
 
     \item[(ii)]   $f(x)=\op{O}(\|x\|^w)$ near $x=0$. 
 
     \item[(iii)]  For all $x\in \R^{n}$  and as $t\rightarrow 0$, we have $f(t\cdot x)=\op{O}(t^{w})$. 
 
     \item[(iv)] As $t\rightarrow 0$, we have  $f(t\cdot x)=\op{O}(t^{w})$ in $C^{\infty}(U)$. 
     
     \item[(v)]  There is a function $\Theta(x,t)\in C^\infty(\cU)$ such that $f(t\cdot x)=t^w \Theta(x,t)$ for all $(x,t)\in \cU$. 
 \end{enumerate}
\end{lemma}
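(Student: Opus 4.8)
The plan is to prove the five conditions equivalent through the chain
(i)~$\Rightarrow$~(v)~$\Rightarrow$~(iv)~$\Rightarrow$~(iii)~$\Rightarrow$~(i), supplemented by (v)~$\Rightarrow$~(ii)~$\Rightarrow$~(iii) to bring in condition~(ii). The cornerstone is the implication (i)~$\Rightarrow$~(v), which produces the \emph{smooth} factorization of $f(t\cdot x)$; the remaining ``positive'' statements (ii)--(iv) are then read off from it, and the cycle is closed by the converse implication (iii)~$\Rightarrow$~(i).

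First I would prove (i)~$\Rightarrow$~(v) by applying the anisotropic Taylor formula of Lemma~\ref{lem:nilpotent-weighted-Taylor} with $N=w$. Since $f(x)$ has weight~$\geq w$, every coefficient $\partial_x^\alpha f(0)$ with $\brak{\alpha}<w$ vanishes, so the polynomial part of~(\ref{eq:nilpotent-weighted-Taylor}) is zero and $f(x)=R_w(x)=\sum_{|\alpha|\leq w\leq \brak{\alpha}}x^\alpha R_{w\alpha}(x)$. Replacing $x$ by $t\cdot x$ and using $(t\cdot x)^\alpha=t^{\brak{\alpha}}x^\alpha$ gives
\begin{equation*}
 f(t\cdot x)=\sum_{|\alpha|\leq w\leq \brak{\alpha}}t^{\brak{\alpha}}x^\alpha R_{w\alpha}(t\cdot x)=t^w\sum_{|\alpha|\leq w\leq \brak{\alpha}}t^{\brak{\alpha}-w}x^\alpha R_{w\alpha}(t\cdot x).
\end{equation*}
Because each exponent $\brak{\alpha}-w$ is $\geq 0$ and the sum is finite, the function $\Theta(x,t):=\sum_{|\alpha|\leq w\leq \brak{\alpha}}t^{\brak{\alpha}-w}x^\alpha R_{w\alpha}(t\cdot x)$ is smooth on $\cU$ and satisfies $f(t\cdot x)=t^w\Theta(x,t)$, which is~(v). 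The key point, and what makes the \emph{smooth} conclusion possible rather than a mere size estimate, is the special form of the remainder in Lemma~\ref{lem:nilpotent-weighted-Taylor}: the constraint $|\alpha|\leq w$ keeps the sum finite while $\brak{\alpha}\geq w$ permits $t^w$ to be factored out.

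Next I would dispatch the easy consequences of~(v). For (v)~$\Rightarrow$~(iv), I note that $\cU$ contains $K\times[-\varepsilon,\varepsilon]$ for any compact $K\subset U$ and $\varepsilon$ small (since $0\cdot x=0\in U$), so each derivative $\partial^\beta_x\Theta(x,t)$ is bounded on $K\times[-\varepsilon,\varepsilon]$; hence $p_{K,\beta}(f\circ\delta_t)=|t|^w\sup_{x\in K}|\partial^\beta_x\Theta(x,t)|=\op{O}(t^w)$, giving~(iv). For (v)~$\Rightarrow$~(ii), given $y\neq 0$ near the origin, I set $t=\|y\|$ and $x=\delta_{t^{-1}}(y)$, so that $\|x\|=1$ and $t\cdot x=y$; then $f(y)=\|y\|^w\Theta(x,\|y\|)$, and since $\Theta$ is continuous on the compact set $\{\|x\|=1\}\times[0,\varepsilon]\subset\cU$ we obtain $|f(y)|\leq C\|y\|^w$. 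The implications (iv)~$\Rightarrow$~(iii) (take $\beta=0$ and evaluate at a single point) and (ii)~$\Rightarrow$~(iii) (use $\|t\cdot x\|=|t|\,\|x\|$) are immediate.

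Finally, the substantive remaining step is (iii)~$\Rightarrow$~(i), which recovers the exact vanishing of the low-weight derivatives from a pointwise upper bound. I would argue by contradiction: if $f$ does not have weight~$\geq w$ (the case where $f$ vanishes to infinite order is trivial, as then the weight is $\infty\geq w$), then $w_0:=\min\{\brak{\alpha};\ \partial_x^\alpha f(0)\neq 0\}$ is finite and $w_0<w$. Expanding $f$ by the ordinary Taylor formula to order $M>w$ and substituting $x\mapsto t\cdot x$ (using $\brak{\alpha}\geq|\alpha|$, so every relevant multi-order satisfies $|\alpha|\leq w_0<M$) yields $f(t\cdot x)=t^{w_0}P(x)+\op{O}(t^{w_0+1})$, where $P(x)=\sum_{\brak{\alpha}=w_0}\tfrac{1}{\alpha!}\partial_x^\alpha f(0)x^\alpha$ is a nonzero homogeneous polynomial of degree $w_0$. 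Choosing $x_0$ with $P(x_0)\neq 0$ gives $f(t\cdot x_0)\sim t^{w_0}P(x_0)$, which is not $\op{O}(t^w)$ since $w_0<w$, contradicting~(iii). I expect this last implication to be the main obstacle, as it is the only place where one must convert an asymptotic \emph{upper} bound into the precise vanishing of finitely many derivatives, and where the interplay between the homogeneous degree and the anisotropic weight $\brak{\alpha}$ must be handled carefully.
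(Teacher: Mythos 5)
Your proof is correct and follows essentially the same route as the paper: the same anisotropic Taylor factorization (Lemma~\ref{lem:nilpotent-weighted-Taylor}) yielding the smooth factor $\Theta(x,t)$ for (i)$\Rightarrow$(v), the same compactness arguments for (v)$\Rightarrow$(iv) and (v)$\Rightarrow$(ii) (including compactness of the anisotropic unit sphere, which the paper establishes as a claim via Lemma~\ref{lem:anisotropic.equivalence-pseudo-norms} and which you may cite), and the recovery of (i) from (iii) by inspecting the low-order terms of the expansion of $f(t\cdot x)$. Your only deviation is cosmetic: you run (iii)$\Rightarrow$(i) by contradiction at the minimal weight $w_0$, whereas the paper compares coefficients in the expansion directly --- the same computation rearranged.
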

\begin{proof}
 Let us first show that (i), (iii), (iv) and (v) are equivalent. It is immediate that (iv) implies (iii). Moreover, by Lemma~\ref{lem:nilpotent-weighted-Taylor} 
 there are functions $R_{N\alpha}(x)\in C^\infty(U)$, $|\alpha|\leq w \leq \brak\alpha$, such that
\begin{equation}
 f(x)=\sum_{\brak\alpha <w} \frac{1}{\alpha!}x^\alpha \partial_x^\alpha f(0) + \sum_{|\alpha|\leq w \leq \brak\alpha} x^\alpha R_{N\alpha}(x). 
 \label{eq:anisotropic.Taylor-expansion2}
\end{equation}
 Let $\Theta:\cU \rightarrow \C$ be the smooth function defined by
 \begin{equation*}
\Theta(x,t)=  \sum_{|\alpha|\leq w \leq \brak\alpha} t^{\brak\alpha-w}x^\alpha R_{N\alpha}(t\cdot x)  \qquad \forall (x,t)\in \cU.
 \end{equation*}
Using~(\ref{eq:anisotropic.Taylor-expansion2}) we see that, for all $(x,t)\in \cU$, we have
\begin{equation}
 f(t\cdot x)- \sum_{\brak\alpha <w} \frac{1}{\alpha!}t^{\brak\alpha} x^\alpha \partial_x^\alpha f(0)=  \sum_{|\alpha|\leq w \leq \brak\alpha} (t\cdot x)^\alpha R_{N\alpha}(t\cdot x)=t^w \Theta(x,t).
 \label{eq:anisotropic.Taylor-expansion-Theta-tx} 
\end{equation}

Let $x\in \R^n$. As soon $t$ is small enough $t\cdot x\in U$, and so $(x,t)\in \cU$. Therefore, we see that $\theta (x,t)=\op{O}(1)$ as $t\rightarrow 0$. Combining this with~(\ref{eq:anisotropic.Taylor-expansion-Theta-tx}) shows that, as $t\rightarrow 0$, we have 
\begin{equation}
 f(t\cdot x)= \sum_{\brak\alpha <w} \frac{1}{\alpha!}t^{\brak\alpha} x^\alpha \partial_x^\alpha f(0)+\op{O}(t^m). 
 \label{eq:anisotropic.ftxxalphaOtm}
\end{equation}
If (iii) holds then, for all $x\in \R^n$, we have $f(t\cdot x)=\op{O}(t^m)$ as $t\rightarrow 0$. Comparing this to~(\ref{eq:anisotropic.ftxxalphaOtm}) we  deduce that $\partial^\alpha f(0)=0$ for $\brak \alpha<w$, i.e., $f(x)$ has weight~$\geq w$. This shows that (iii) implies (i). 

Furthermore, if $f(x)$ has weight~$\geq w$, then $\partial^\alpha f(0)=0$ for $\brak \alpha<w$, and so~(\ref{eq:anisotropic.Taylor-expansion-Theta-tx}) gives $f(t\cdot x)=t^w \Theta(x,t)$ for all $(x,t)\in \cU$. Therefore, we see that (i) implies (v). 

If  (v) holds, then there is a function $\Theta(x,t)\in C^\infty(\cU)$ such that $f(t\cdot x)=t^w \Theta(x,t)$ for all $(x,t)\in \cU$. Note that $\cU$ is an open subset of $\R\times \R^n$. Furthermore, by Remark~\ref{rmk:anisotropic.swallowing-compact}, given any compact $K\subset U$, there is $t_0>0$ small enough so that $K\times [-t_0,t_0] \subset \cU$. Combining this with the smoothness of $\Theta(x,t)$ we deduce that, as $t\rightarrow 0$, we have $\Theta(x,t)=\op{O}(1)$ in $C^\infty(U)$, and so $f(t\cdot x)=t^w \Theta(x,t)=\op{O}(t^w)$ in $C^\infty(U)$. This shows that (v) implies (iv). It then follows that (i), (iii), (iv) and (v) are equivalent.  

Bearing this in mind, suppose that $f(x)=\op{O}(\|x\|^w)$ near $x=0$. Let $x\in \R^n$. As $t\rightarrow 0$ we have
\begin{equation*}
f(t\cdot x)=\op{O}(\|t\cdot x\|^w)=\op{O}(|t|^w\|x\|)=\op{O}(|t|^w). 
\end{equation*}
Therefore, we see that (ii) implies (iii). 

To complete the proof it is enough to show that (v) implies (ii). Assume there is  a function $\Theta(x,t)\in C^\infty(\cU)$ such that $f(t\cdot x)=t^w \Theta(x,t)$ for all $(x,t)\in \cU$. Let $x\in U\setminus 0$. Note that $\|x\|\cdot(\|x\|^{-1}\cdot x)=x\in U$, and so $(\|x\|^{-1}\cdot x, \|x\|)\in \cU$. Thus, 
\begin{equation}
 f(x)=f\left( \|x\|\cdot(\|x\|^{-1}\cdot x)\right) =\|x\|^w \Theta \left( \|x\|^{-1}\cdot x, \|x\|\right). 
 \label{eq:anisotropic.f-theta|x|x}
\end{equation}

\begin{claim}
 The anisotropic sphere $\Omega:=\{x\in \R^n;\ \|x\|=1\}$ is compact.
\end{claim}
\begin{proof}[Proof of the claim] 
The continuity of $\|\cdot\|$ ensures us that $\Omega$ is a closed set. In addition, let $\|\cdot\|_0$ be the pseudo-norm~(\ref{eq:anisotropic.max-pseudo-norm}). Thanks to Lemma~\ref{lem:anisotropic.equivalence-pseudo-norms} we know there is $c>0$ such that $\|x\|_0\leq c \|x\|$ for all $x\in \R^n$. In particular, if $x\in \Omega$, then $|x_k|^{\frac1{w_k}}\leq \|x\|_0 \leq c\|x\|=c$ for $k=1,\ldots, n$. Therefore, we see that $\Omega$ is contained in the closed cube $\prod_{k=1}^n [-c^{w_k}, c^{w_k}]$. As this cube is compact and $\Omega$ is closed, it then follows that $\Omega$ is compact, proving the claim. 
 \end{proof}
 
 As $\Omega$ is compact, there is $t_0>0$ such that $\Omega\times [-t_0,t_0] \subset \cU$. Then $\Omega\times [-t_0,t_0]$ is a compact subset of $\cU$, and so there $C>0$ such that 
 \begin{equation}
|\Theta(x,t)|\leq C \qquad \text{for all $(x,t)\in\Omega\times [-t_0,t_0]$}. 
\label{eq:anisotropic.thetaC}
\end{equation}
 Let $x\in \R^n\setminus 0$ be such that $\|x\|\leq t_0$. Then $(\|x\|^{-1}\cdot x, \|x\|)\in\Omega\times [-t_0,t_0] \subset \cU$, and so using~(\ref{eq:anisotropic.f-theta|x|x}) and~(\ref{eq:anisotropic.thetaC}) we get
\begin{equation*}
 |f(x)|=\|x\|^w \left| \Theta \left(\|x\|^{-1}\cdot x, \|x\|\right)\right| \leq C\|x\|^m.
\end{equation*}
 This shows that $f(x)=\op{O}(\|x\|^w)$ near $x=0$. Therefore, we see that (v) implies (ii). The proof is complete. 
 \end{proof}

\begin{proposition}\label{prop:anisotropic.ftx}
    Let $f(x)\in C^\infty(U)$. Then, as $t\rightarrow 0$, we have 
                 \begin{equation}\label{eq-ftx-1}
            f(t\cdot x)\simeq  \sum_{\ell\geq 0} t^{\ell}f^{[\ell]}(x) \qquad \text{in $C^{\infty}(U)$},
        \end{equation}where $f^{[\ell]}(x)$ is a polynomial which is homogeneous of degree $\ell$ with respect to the dilations~(\ref{eq:Nilpotent.dilations2}) 
        (see Eq.~ (\ref{eq:anisotropic.fell}) below). 
\end{proposition}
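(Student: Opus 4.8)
The plan is to read off the homogeneous components $f^{[\ell]}$ directly from the anisotropic Taylor formula of Lemma~\ref{lem:nilpotent-weighted-Taylor}, and then to upgrade the resulting asymptotics to the $C^\infty$-topology by invoking the equivalence of weight conditions established in Lemma~\ref{lem:anisotropic.weight}. Concretely, for each $\ell\geq 0$ I would set
\begin{equation*}
    f^{[\ell]}(x) = \sum_{\brak\alpha=\ell}\frac{1}{\alpha!}\partial_x^\alpha f(0)\,x^\alpha,
\end{equation*}
which is a finite sum: since $w_j\geq 1$, only finitely many multi-orders $\alpha$ satisfy $\brak\alpha=\ell$. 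Each monomial $x^\alpha$ with $\brak\alpha=\ell$ is homogeneous of degree $\ell$ for the dilations~(\ref{eq:Nilpotent.dilations2}) (indeed $(t\cdot x)^\alpha=t^{\brak\alpha}x^\alpha$), and hence $f^{[\ell]}$ is a homogeneous polynomial of degree $\ell$, as the statement demands.

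The key computation is the substitution $x\rightarrow t\cdot x$ in the Taylor expansion. Fixing $N\in\N$, Lemma~\ref{lem:nilpotent-weighted-Taylor} gives
\begin{equation*}
    f(x)=\sum_{\brak\alpha<N}\frac{1}{\alpha!}\partial_x^\alpha f(0)\,x^\alpha + R_N(x),
\end{equation*}
where $R_N$ has weight~$\geq N$. Using $(t\cdot x)^\alpha=t^{\brak\alpha}x^\alpha$ and regrouping the monomials according to their weighted degree, I would obtain
\begin{equation*}
    f(t\cdot x)=\sum_{\ell<N}t^{\ell}f^{[\ell]}(x)+R_N(t\cdot x).
\end{equation*}
The remaining point is to show that the tail $R_N(t\cdot x)$ is $\op{O}(t^N)$ in $C^\infty(U)$, and this is exactly where the earlier work pays off: because $R_N$ has weight~$\geq N$, the implication (i)$\Rightarrow$(iv) of Lemma~\ref{lem:anisotropic.weight} yields $R_N(t\cdot x)=\op{O}(t^N)$ in $C^\infty(U)$ as $t\rightarrow 0$. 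Consequently $f(t\cdot x)=\sum_{\ell<N}t^{\ell}f^{[\ell]}(x)+\op{O}(t^N)$ in $C^\infty(U)$ for every $N$, which is precisely the asymptotic expansion~(\ref{eq-ftx-1}).

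The only genuinely delicate ingredient is the passage from pointwise asymptotics to the $C^\infty$-topology, and this difficulty is entirely absorbed into Lemma~\ref{lem:anisotropic.weight}; once that lemma is in hand, the proposition reduces to bookkeeping, namely collecting Taylor monomials of equal weighted degree. I would also note that Remark~\ref{rmk:anisotropic.swallowing-compact} is used implicitly to guarantee that the seminorms $p_{K,\alpha}(R_N\circ\delta_t)$ make sense for small $t$, so that the statement ``$\op{O}(t^N)$ in $C^\infty(U)$'' is well-posed on each compact $K\subset U$.
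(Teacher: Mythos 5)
Your proof is correct and follows essentially the same route as the paper: both define $f^{[\ell]}$ by collecting the Taylor monomials of weighted degree $\ell$ as in Eq.~(\ref{eq:anisotropic.fell}), apply Lemma~\ref{lem:nilpotent-weighted-Taylor} to isolate a remainder $R_N$ of weight~$\geq N$, and invoke the implication (i)$\Rightarrow$(iv) of Lemma~\ref{lem:anisotropic.weight} to control $R_N(t\cdot x)$ in the $C^\infty(U)$-topology. Your additional remark about Remark~\ref{rmk:anisotropic.swallowing-compact} ensuring the seminorms make sense for small $t$ is a point the paper leaves implicit at this stage, but it does not change the argument.
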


\begin{remark}
   The asymptotic expansion~(\ref{eq-ftx-1}) holds in $C^{\infty}(U)$ if and only if, for every $\alpha\in 
  \N_{0}^{n}$ and $N>w$ and for every compact $K\subset U$, we have
  \begin{equation*}
      p_{K,\alpha}\biggl( f\circ \delta_{t}-\sum_{w\leq \ell <N}t^{\ell}f^{[\ell]}\biggr)=\op{O}(t^{N}) \qquad \text{as 
      $t\rightarrow 0$}. 
  \end{equation*}
\end{remark}

\begin{proof}[Proof of Proposition~\ref{prop:anisotropic.ftx}]
 For $\ell =0, 1,\ldots $ set
\begin{equation}
 f^{[\ell]}(x)= \sum_{\brak \alpha =l} \frac{1}{\alpha!}\partial_{x}^{\alpha}f(0)x^{\alpha}. 
 \label{eq:anisotropic.fell}
\end{equation}
 Then $f^{[\ell]}(x)$ is  a polynomial which is homogeneous of degree $\ell$ with respect to the dilations~(\ref{eq:Nilpotent.dilations2}). Moreover, by Lemma~\ref{lem:nilpotent-weighted-Taylor}, for every $N\in \N$, there is a function $R_N(x)\in C^\infty(U)$ of weight~$\geq N$ such that
  \begin{equation*}
    f(x) =\sum_{\ell <  N} f^{[l]}(x) + R_N(x). 
\end{equation*}
As $R_N(x)$ has weight~$\geq N$,  Lemma~\ref{lem:anisotropic.weight} ensures us that, as $t\rightarrow 0$, we have $R_N(t\cdot x)=\op{O}(t^N)$ in $C^\infty(U)$. Therefore, as $t\rightarrow 0$ and in $C^\infty(U)$, we have
\begin{equation*}
 f(t\cdot x)= \sum_{\ell <  N} f^{[l]}(t\cdot x) + R_N(t\cdot x)=  \sum_{\ell <  N} t^{\ell} f^{[l]}(x)+\op{O}(t^N). 
\end{equation*}
This gives the asymptotic expansion~(\ref{eq-ftx-1}). The proof is complete. 
\end{proof}

\begin{corollary}\label{cor:anisotropic.ftx}
 Let $f(x)\in C^\infty(U)$. Then $f(x)$ has weight $w$, $w\in \N_0$, if and only if, as $t\rightarrow 0$, we have  
\begin{equation}
 f(t\cdot x) =t^w f^{[w]}(x) + \op{O}(t) \qquad \qquad \text{in $C^{\infty}(U)$},
 \label{eq:Anisotropic.limit-ftx}
        \end{equation}
        where $f^{[w]}(x)$ is a non-zero polynomial which is homogeneous of degree $w$ with respect to the dilations~(\ref{eq:Nilpotent.dilations2}).
\end{corollary}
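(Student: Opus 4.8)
The plan is to deduce the corollary directly from the full asymptotic expansion of Proposition~\ref{prop:anisotropic.ftx}, since that proposition already provides the homogeneous components $f^{[\ell]}(x)$ defined in~(\ref{eq:anisotropic.fell}). The bridge between these components and the notion of weight is the elementary observation that, because the monomials $x^{\alpha}$ with $\brak\alpha=\ell$ are linearly independent, the polynomial $f^{[\ell]}$ vanishes identically if and only if $\partial_{x}^{\alpha}f(0)=0$ for every $\alpha$ with $\brak\alpha=\ell$. Consequently, ``$f$ has weight~$\geq w$'' is equivalent to ``$f^{[\ell]}\equiv 0$ for all $\ell<w$'', and ``$f$ has weight~$w$'' adds to this the single requirement that $f^{[w]}\not\equiv 0$. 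First I would record this equivalence explicitly, as it is used in both directions.

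For the forward implication, assume $f$ has weight~$w$. The observation above gives $f^{[\ell]}\equiv 0$ for $\ell<w$ and $f^{[w]}\not\equiv 0$. Feeding this into the expansion $f(t\cdot x)\simeq \sum_{\ell\geq 0}t^{\ell}f^{[\ell]}(x)$ of Proposition~\ref{prop:anisotropic.ftx} and truncating at $N=w+1$ yields, in $C^{\infty}(U)$,
\begin{equation*}
    f(t\cdot x)=t^{w}f^{[w]}(x)+\op{O}(t^{w+1})\qquad \text{as $t\rightarrow 0$},
\end{equation*}
with $f^{[w]}$ a non-zero polynomial homogeneous of degree~$w$; this is the asserted expansion~(\ref{eq:Anisotropic.limit-ftx}) in its sharp form (the remainder being of strictly higher order).

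Conversely, suppose~(\ref{eq:Anisotropic.limit-ftx}) holds with a non-zero homogeneous polynomial of degree~$w$ as leading coefficient. Let $w_{0}\in \N_0\cup\{\infty\}$ be the actual weight of $f$. Applying Proposition~\ref{prop:anisotropic.ftx} once more, together with the bridge observation applied to $w_{0}$, produces $f(t\cdot x)=t^{w_{0}}f^{[w_{0}]}(x)+\op{O}(t^{w_{0}+1})$ in $C^{\infty}(U)$, where $f^{[w_{0}]}\not\equiv 0$ when $w_{0}$ is finite, while all homogeneous components vanish when $w_{0}=\infty$. I would then compare the two expansions by a uniqueness argument: fixing $x$ and isolating the lowest surviving power of $t$, the possibility $w_{0}<w$ forces $f^{[w_{0}]}\equiv 0$, and the possibility $w_{0}>w$ (including $w_{0}=\infty$) forces the hypothesised leading polynomial to vanish, both contradictions. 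Hence $w_{0}=w$ and $f$ has weight~$w$.

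The forward direction is essentially a bookkeeping consequence of Proposition~\ref{prop:anisotropic.ftx}; the only genuine work is the converse, where the main obstacle is the uniqueness-of-leading-term comparison and, in particular, carefully excluding the degenerate case in which $f$ vanishes to infinite order at the origin. All of this reduces to evaluating the competing expansions at a single point $x$ and letting $t\rightarrow 0$, so no analytic input beyond Proposition~\ref{prop:anisotropic.ftx} is required.
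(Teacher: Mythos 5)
Your proof is correct and takes essentially the same route as the paper's: both read the corollary off the expansion of Proposition~\ref{prop:anisotropic.ftx} together with the explicit formula~(\ref{eq:anisotropic.fell}), using the linear independence of the monomials $x^{\alpha}$ with $\brak\alpha=\ell$ to identify the vanishing of $f^{[\ell]}$ with the vanishing of the derivatives $\partial^{\alpha}f(0)$, $\brak\alpha=\ell$. Your converse merely spells out, via the pointwise comparison of leading powers of $t$ (including the infinite-order case), the uniqueness-of-leading-term fact that the paper compresses into the remark that~(\ref{eq:Anisotropic.limit-ftx}) means $f^{[w]}$ is the leading non-zero term of the expansion~(\ref{eq-ftx-1}).
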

\begin{proof}
The asymptotics~(\ref{eq:Anisotropic.limit-ftx}) means that in the asymptotics~(\ref{eq-ftx-1}) for $f(x)$ the leading non-zero terms is $f^{[w]}$. In view of~(\ref{eq:anisotropic.fell}) this means hat 
 $\partial^\alpha f(0)=0$ for $\brak \alpha <w$ and $\partial^\alpha f(0)\neq 0$ for some $\alpha \in \N_0^n$ with $\brak \alpha =w$. That is, $f(x)$ has weight~$w$. This proves the result.
\end{proof}

\subsection{Anisotropic Approximation of Multi-Valued Maps}
We shall now extend the previous results to (smooth) maps with values in some Euclidean space $\R^{n'}$. We assume we are given a weight sequence $(w_{1}',\ldots,w_{n'}')$. By this we mean a non-decreasing sequence of integers where $w_1'=1$.  This enables to endow $\R^{n'}$ with the family of anisotropic dilations~(\ref{eq:Nilpotent.dilations2}) associated with this weight sequence. We shall use the same notation for the dilations on $\R^n$ and $\R^{n'}$. When $n=n'$ we shall tacitly  assume that $w_j'=w_j$ for $j=1,\ldots,n$.

\begin{definition}\label{eq:anisotopic.w-homogeneous}
     A map $\phi:\R^{n}\rightarrow \R^{n'}$ is \emph{$w$-homogeneous} when
     \begin{equation}\label{eq-def-weight}
         \phi(t\cdot x)=t \cdot \phi(x) \qquad \text{for all $x\in \R^{n}$ and $t\in \R$}.
     \end{equation}
 \end{definition}
 
 \begin{remark}\label{rmk:Privileged-w-homogeneous-map}
 Set $\phi(x)=(\phi_{1}(x),\ldots,\phi_{n'}(x))$.  The condition~(\ref{eq-def-weight}) exactly means that, for every $k=1,\ldots,n'$, the component 
 $\phi_{k}(x)$ is homogeneous of degree $w_{k}'$ with respect to the dilations~(\ref{eq:Nilpotent.dilations2}). In particular, in view of Remark~\ref{rem-poly}, we see
 that if $\phi$ is smooth and $w$-homogeneous, then it must be a polynomial map. More precisely,   each 
 component $\phi_{k}(x)$ is a linear combination of monomials $x^{\alpha}$ with $\brak \alpha =w_{k}'$. 
 \end{remark}

\begin{definition}\label{def:anisotropic.Thetaw}
Let $\Theta(x)=(\Theta_{1}(x),\ldots,\Theta_{n'}(x))$ be a smooth map from $U$  to $\R^{n'}$. Given $m\in \Z$, $m\geq -w'_{n'}$, we say that $\Theta(x)$ is $\Ow(\|x\|^{w+m})$, and write $\Theta(x)=\Ow(\|x\|^{w+m})$, when, for $k=1,\ldots,n'$, we have
\begin{equation*}
 \Theta_{k}(x)=\op{O}(\|x\|^{w_{k}'+m}) \qquad \text{near $x=0$}.  
\end{equation*}
\end{definition}

In what follows we equip $C^\infty(U,\R^{n'})$ with its standard Fr\'echet-space topology. As an immediate consequence of Lemma~\ref{lem:anisotropic.weight} we obtain the following characterization of $\Ow(\|x\|^{w+m})$-maps. 

\begin{lemma}\label{lem-eq-we}
Given $m\in \N_0$, let $\Theta(x)=(\Theta_{1}(x),\ldots,\Theta_{n'}(x))$ be a smooth map from $U$ to $\R^{n'}$. In addition, set $ \cU=\{(x,t)\in U \times \R; \ t\cdot x \in U\}$. Then the following are equivalent:
\begin{enumerate}
 \item[(i)]    The map $\Theta(x)$ is $\Ow(\|x\|^{w+m})$ near $x=0$.

 \item[(ii)]   For $k=1,\ldots,n'$, the component $\Theta_{k}(x)$ has weight~$\geq w_{k}'+m$. 

 \item[(iii)] For all $x\in \R^{n}$ and as $t\rightarrow 0$, we have $t^{-1}\cdot \Theta(t\cdot x)=\op{O}(t^{m})$.  

 \item[(iv)]  As $t\rightarrow 0$, we have $t^{-1}\cdot \Theta(t\cdot x)=\op{O}(t^{m})$ in $C^{\infty}(U,\R^{n'})$. 
 
 \item[(v)]  There is $\tilde{\Theta}(x,t)\in C^\infty(\cU, \R^{n'})$ such that 
 $t^{-1}\cdot \Theta(t\cdot x)=t^m \tilde{\Theta}(x,t)$ for all $(x,t)\in \cU$, $t\neq 0$.
\end{enumerate}
\end{lemma}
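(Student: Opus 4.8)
The plan is to deduce all five equivalences directly from the scalar statement Lemma~\ref{lem:anisotropic.weight}, applied componentwise. The key observation is that the anisotropic dilation $t^{-1}\cdot$ acts diagonally on $\R^{n'}$, so that for each $k=1,\ldots,n'$ we have $\left(t^{-1}\cdot\Theta(t\cdot x)\right)_k = t^{-w_k'}\Theta_k(t\cdot x)$. Consequently, each of the conditions (i)--(v) in the statement is nothing but the conjunction over $k=1,\ldots,n'$ of the corresponding scalar condition for the single function $\Theta_k$, read with the exponent $w_k'+m$ in place of the generic weight $w$ of Lemma~\ref{lem:anisotropic.weight}.

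First I would record the topological reduction. Since $C^\infty(U,\R^{n'})$ carries the product Fr\'echet-space topology, a family converges (resp.\ is $\op{O}(t^m)$) in $C^\infty(U,\R^{n'})$ if and only if each of its $n'$ scalar components does so in $C^\infty(U)$; likewise a map into $\R^{n'}$ is smooth if and only if all its components are smooth. This lets me pass freely between the vector-valued assertions and the scalar ones in (iv) and (v).

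Next I would spell out the componentwise translation. Because $m\in \N_0$ and the weight sequence satisfies $w_k'\geq 1$ for all $k$, each exponent $w_k'+m$ is a positive integer, so Lemma~\ref{lem:anisotropic.weight} applies verbatim to the smooth scalar function $f=\Theta_k$ with weight $w_k'+m$. Matching the conditions then gives the dictionary: (i) corresponds to Lemma~\ref{lem:anisotropic.weight}(ii); (ii) corresponds to Lemma~\ref{lem:anisotropic.weight}(i); and, using the diagonal formula $\left(t^{-1}\cdot\Theta(t\cdot x)\right)_k = t^{-w_k'}\Theta_k(t\cdot x)$, the conditions (iii), (iv), (v) translate respectively into Lemma~\ref{lem:anisotropic.weight}(iii), (iv), (v) for $\Theta_k$. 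In (v) the smooth scalar function on $\cU$ produced by the scalar lemma is exactly the $k$-th component of the desired $\tilde\Theta$, which is therefore smooth by the product-topology identification. Since the scalar lemma asserts that all five of its conditions are equivalent for each fixed $k$, taking the conjunction over $k=1,\ldots,n'$ yields the equivalence of (i)--(v).

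There is essentially no genuine obstacle here: the statement is, as the text announces, an immediate consequence of Lemma~\ref{lem:anisotropic.weight}. The only points requiring care are purely bookkeeping ones, namely verifying that the diagonal dilation really produces the shift $w\mapsto w_k'+m$ in the $k$-th slot, and invoking the product-topology identification so that the $C^\infty(U,\R^{n'})$ assertions in (iv)--(v) split into the $n'$ scalar $C^\infty(U)$ assertions to which Lemma~\ref{lem:anisotropic.weight} directly applies.
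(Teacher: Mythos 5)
Your proof is correct and follows exactly the route the paper intends: the paper offers no separate argument, stating the lemma as an immediate consequence of Lemma~\ref{lem:anisotropic.weight}, and your componentwise reduction via the diagonal formula $\left(t^{-1}\cdot\Theta(t\cdot x)\right)_k=t^{-w_k'}\Theta_k(t\cdot x)$ together with the product Fr\'echet topology on $C^\infty(U,\R^{n'})$ is precisely the bookkeeping that justification requires. Your observation that $w_k'+m\in\N$ keeps the scalar lemma applicable verbatim, so the proposal is complete as written.
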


The following is a multi-valued version of Proposition~\ref{prop:anisotropic.ftx}. 

\begin{proposition}\label{lem:multi.Thetat}
Let $\Theta(x)=(\Theta_1(x), \ldots, \Theta_{n'}(x))$ be a smooth map from $U$ to $\R^{n'}$. Then, as $t\rightarrow 0$, we have
\begin{equation}
t^{-1}\cdot  \Theta(t\cdot x) \simeq \sum_{\ell \geq -w_{n'}'} t^\ell \Theta^{[\ell]}(x) \qquad \text{in $C^\infty(U,\R^{n'})$},
\label{eq:multi.Thetat}
\end{equation}
where $\Theta^{[\ell]}(x)$ is a polynomial map such that $t^{-1}\cdot \Theta^{[\ell]}(x)=t^\ell \Theta^{[\ell]}(x)$ for all $\R^*$ (see Eq.~(\ref{eq:multi.Thetaell}) below). 
\end{proposition}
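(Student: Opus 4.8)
The plan is to reduce the statement to the scalar case already established in Proposition~\ref{prop:anisotropic.ftx} by treating one component at a time, and then to reassemble the pieces using the fact that the Fr\'echet topology on $C^\infty(U,\R^{n'})$ is the product of $n'$ copies of the topology on $C^\infty(U)$. First I would record that, by the very definition of the dilations on $\R^{n'}$, the $k$-th component of $t^{-1}\cdot \Theta(t\cdot x)$ is $t^{-w_k'}\Theta_k(t\cdot x)$. Hence it suffices to expand each scalar function $\Theta_k(t\cdot x)$ and divide by $t^{w_k'}$.

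Applying Proposition~\ref{prop:anisotropic.ftx} to $\Theta_k\in C^\infty(U)$ gives, as $t\rightarrow 0$ and in $C^\infty(U)$,
\begin{equation*}
 \Theta_k(t\cdot x)\simeq \sum_{m\geq 0} t^m \Theta_k^{[m]}(x), \qquad \Theta_k^{[m]}(x)=\sum_{\brak\alpha=m}\frac{1}{\alpha!}\partial_x^\alpha\Theta_k(0)\,x^\alpha,
\end{equation*}
where each $\Theta_k^{[m]}$ is homogeneous of degree $m$ for the dilations~(\ref{eq:Nilpotent.dilations2}). Dividing by $t^{w_k'}$ and reindexing the sum by $\ell=m-w_k'$ then yields
\begin{equation*}
 t^{-w_k'}\Theta_k(t\cdot x)\simeq \sum_{\ell\geq -w_k'} t^\ell\, \Theta_k^{[\ell+w_k']}(x) \qquad \text{in $C^\infty(U)$}.
\end{equation*}

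Next I would assemble the vector-valued coefficients. For each integer $\ell\geq -w_{n'}'$, I define $\Theta^{[\ell]}(x)$ to be the polynomial map whose $k$-th component is $\Theta_k^{[\ell+w_k']}(x)$, with the convention that this component vanishes when $\ell+w_k'<0$. Since the weight sequence is non-decreasing, $w_k'\leq w_{n'}'$ for every $k$, so the lowest order occurring in any component is $\ell=-w_{n'}'$, which accounts for the range of summation in~(\ref{eq:multi.Thetat}). Because $\Theta_k^{[\ell+w_k']}$ is homogeneous of degree $\ell+w_k'$ in the scalar sense, the $k$-th component of $t^{-1}\cdot\Theta^{[\ell]}(t\cdot x)$ equals $t^{-w_k'}\cdot t^{\ell+w_k'}\Theta_k^{[\ell+w_k']}(x)=t^\ell\Theta_k^{[\ell+w_k']}(x)$; thus $\Theta^{[\ell]}$ is $w$-homogeneous of degree $\ell$, i.e.\ $t^{-1}\cdot\Theta^{[\ell]}(t\cdot x)=t^\ell\Theta^{[\ell]}(x)$, which is the homogeneity recorded in~(\ref{eq:multi.Thetaell}).

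Finally I would promote the $n'$ scalar asymptotics to a single asymptotic in $C^\infty(U,\R^{n'})$: since the Fr\'echet topology there is generated by the semi-norms $p_{K,\alpha}$ applied componentwise, an expansion holds in $C^\infty(U,\R^{n'})$ exactly when it holds componentwise in $C^\infty(U)$, giving~(\ref{eq:multi.Thetat}). The only delicate point is purely combinatorial rather than analytic: the shift by $w_k'$ differs from component to component, so I must verify that the finitely many negative-order terms ($-w_{n'}'\leq\ell<0$) are correctly matched across the components and that grouping by the single index $\ell$ produces well-defined coefficients $\Theta^{[\ell]}$. As each such term is a homogeneous polynomial and the remainder estimates transfer verbatim from Proposition~\ref{prop:anisotropic.ftx}, there is no obstacle beyond this index bookkeeping.
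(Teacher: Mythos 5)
Your proposal is correct and follows essentially the same route as the paper's own proof: apply the scalar expansion of Proposition~\ref{prop:anisotropic.ftx} to each component $\Theta_k$, shift the index by $w_k'$, define $\Theta^{[\ell]}$ componentwise with the convention that the $k$-th component vanishes when $\ell<-w_k'$, and reassemble using the componentwise nature of the $C^\infty(U,\R^{n'})$ topology. Your explicit verification of the homogeneity relation $t^{-1}\cdot\Theta^{[\ell]}(t\cdot x)=t^\ell\Theta^{[\ell]}(x)$ and of the index bookkeeping is slightly more detailed than the paper's, but the argument is the same.
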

\begin{proof}
For $k=1,\ldots, n'$, we know by Proposition~\ref{prop:anisotropic.ftx} that, as $t\rightarrow 0$ and in $C^\infty(U)$, we have 
\begin{equation}
\Theta_k(t\cdot x) \simeq \sum_{\ell \geq 0} t^\ell \Theta_k^{[\ell]}(x) \simeq t^{w_k'}\sum_{\ell' \geq -w_k'} t^{\ell'} \Theta_k^{[\ell'+w_k']}(x), 
\label{eq:multi.Thetaktx}
\end{equation}
where $ \Theta_k^{[\ell]}(x)$ is a polynomial which is homogeneous of degree~$\ell$ with respect to the dilations~(\ref{eq:Nilpotent.dilations2}). For $\ell\geq -r$ define 
\begin{equation}
\Theta^{[\ell]}(x) = \left( \Theta_1^{[\ell+w_1']}(x), \ldots, \Theta_{n'}^{[\ell+w_{n'}]}(x)\right), \qquad x \in \R^n,
\label{eq:multi.Thetaell}
\end{equation}
with the convention that $\Theta_k^{[\ell+w_k']}(x)=0$ when $\ell< -w_k'$. Then $ \Theta^{[\ell]}(x)$ is a polynomial map from $\R^n$ to $\R^{n'}$ such that $t^{-1} \cdot \Theta^{[\ell]}(t\cdot x)=t^\ell \Theta^{[\ell]}(x)$ for all $t\in \R^*$. Moreover, the asymptotic expansions~(\ref{eq:multi.Thetaktx}) for $k=1,\ldots,n$ give the asymptotics~(\ref{eq:multi.Thetat}). The proof is complete. 
\end{proof}

\begin{remark}\label{rmk:anisotropic.weighted-asymptotic} 
 Using Lemma~\ref{lem-eq-we} we see that the asymptotic expansion~(\ref{eq:multi.Thetat}) exactly mean that, for every integer $m> -w_{n'}'$, we have 
\begin{equation*}
 \Theta(x) - \sum_{\ell <m}  \Theta^{[\ell]}(x)=\Ow\left( \|x\|^{w+m}\right) \qquad \text{near $x=0$}.
\end{equation*}
In particular, we see that $ \Theta(x)=\Ow( \|x\|^{w+m})$ if and only if $\Theta^{[\ell]}(x)=0$ for $\ell<m$. 
\end{remark}

In the special case of diffeomorphisms we further have the following result. 

\begin{proposition}\label{lem:Carnot-coord.inverse-Ow}
Let $\phi:U_{1}\rightarrow U_{2}$ be a smooth diffeomorphism, where $U_{1}$ and $U_{2}$ are open neighborhoods of 
the origin $0\in \R^{n}$. Assume further that $\phi(0)=0$ and there is $m\in \N$ such that, near $x=0$, we have 
\begin{equation}
    \phi(x)=\hat{\phi}(x)+\Ow(\|x\|^{w+m}),
    \label{eq:Carnot-coord.phi-hatphi}
\end{equation}where $\hat{\phi}(x)$ is a $w$-homogeneous polynomial map. Then $\hat{\phi}(x)$ is a diffeomorphism of $\R^n$ and, near 
$x=0$, we have
\begin{equation*}
    \phi^{-1}(x)=\hat{\phi}^{-1}(x)+\Ow(\|x\|^{w+m}). 
\end{equation*}
\end{proposition}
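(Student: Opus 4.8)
The plan is to introduce the rescaled maps $\phi_t(x)=t^{-1}\cdot\phi(t\cdot x)$ and to reduce the whole statement to the stability of the inverse under these rescalings. Applying Lemma~\ref{lem-eq-we} to the $\Ow(\|x\|^{w+m})$-map $\phi-\hat\phi$, together with the $w$-homogeneity identity $t^{-1}\cdot\hat\phi(t\cdot x)=\hat\phi(x)$, shows that as $t\to0$
\[
\phi_t(x)=\hat\phi(x)+\op{O}(t^m) \qquad \text{in } C^\infty .
\]
The key elementary observation is that if $\psi:=\phi^{-1}$ and $\psi_t(y):=t^{-1}\cdot\psi(t\cdot y)$, then $\psi_t=\phi_t^{-1}$, because $\phi_t(\psi_t(y))=t^{-1}\cdot\phi(\psi(t\cdot y))=y$. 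Thus everything reduces to showing that $\phi_t^{-1}$ converges to $\hat\phi^{-1}$ at the same rate $\op{O}(t^m)$ in $C^\infty$; feeding this back through Lemma~\ref{lem-eq-we} in the direction (iv)$\Rightarrow$(i), applied to $\Theta=\psi-\hat\phi^{-1}$, will yield $\psi-\hat\phi^{-1}=\Ow(\|x\|^{w+m})$.

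\emph{Step 1: $\hat\phi$ is a diffeomorphism of $\R^n$.} I would first show $\hat\phi'(0)$ is invertible. Differentiating at the origin gives $\phi_t'(0)=\delta_{t^{-1}}\,\phi'(0)\,\delta_t$, whose $(k,j)$-entry is $t^{w_j-w_k}\phi'(0)_{kj}$. Since $\phi_t\to\hat\phi$ in $C^1$, this matrix converges, so the entries with $w_j<w_k$ would blow up unless $\phi'(0)_{kj}=0$; hence $\phi'(0)$ is block-triangular with respect to the weight grading (its entries vanish whenever $w_j<w_k$), its weight-diagonal blocks coincide with those of $\hat\phi'(0)$, and its determinant is the product of these diagonal blocks. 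As $\phi'(0)$ is invertible, each diagonal block is invertible, and therefore so is $\hat\phi'(0)$ (block-diagonal with these blocks). By the inverse function theorem $\hat\phi$ is a local diffeomorphism near $0$, with $\hat\phi(0)=0$. I then upgrade this to a global diffeomorphism using only homogeneity: differentiating $\hat\phi(t\cdot x)=t\cdot\hat\phi(x)$ gives $\hat\phi'(t\cdot x)=\delta_t\,\hat\phi'(x)\,\delta_t^{-1}$, so $\hat\phi'$ is invertible everywhere; and since the coincidence set $\{\hat\phi(x)=\hat\phi(y)\}$ and the image $\hat\phi(\R^n)$ are dilation-invariant, injectivity near $0$ and the fact that the image contains a neighborhood of $0$ both propagate to all of $\R^n$ by applying $\delta_t$.

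\emph{Step 2: the inverse asymptotics.} Here $\hat\phi^{-1}$ is again $w$-homogeneous (it inverts a $w$-homogeneous diffeomorphism), so $t^{-1}\cdot\hat\phi^{-1}(t\cdot y)=\hat\phi^{-1}(y)$. I would prove $\psi_t=\hat\phi^{-1}+\op{O}(t^m)$ in $C^\infty$ on compacts. For the $C^0$ bound, from $\hat\phi(\hat\phi^{-1}(y))=y=\phi_t(\psi_t(y))=\hat\phi(\psi_t(y))+\op{O}(t^m)$ and the local Lipschitz property of $\hat\phi^{-1}$ one gets $\hat\phi^{-1}(y)-\psi_t(y)=\op{O}(t^m)$. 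Differentiating $\phi_t\circ\psi_t=\op{id}$ gives $\psi_t'(y)=[\phi_t'(\psi_t(y))]^{-1}$; since $\phi_t'=\hat\phi'+\op{O}(t^m)$, $\psi_t=\hat\phi^{-1}+\op{O}(t^m)$ in $C^0$, and matrix inversion is smooth on matrices of controlled inverse norm, this yields $\psi_t'=(\hat\phi^{-1})'+\op{O}(t^m)$, with all higher derivatives following by induction, each $\partial^\alpha\psi_t$ being a universal smooth expression in lower derivatives of $\phi_t$ evaluated at $\psi_t$ and lower derivatives of $\psi_t$. Finally $\psi_t(y)-\hat\phi^{-1}(y)=t^{-1}\cdot(\psi-\hat\phi^{-1})(t\cdot y)$, so Lemma~\ref{lem-eq-we} converts this $C^\infty$ bound into $\psi-\hat\phi^{-1}=\Ow(\|x\|^{w+m})$.

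I expect the main obstacle to be the bootstrapping in Step 2: propagating the $\op{O}(t^m)$ rate from $C^0$ to all $C^\infty$ semi-norms uniformly on compacts, i.e. making precise a quantitative inverse function theorem with a small parameter. The $C^0$ and $C^1$ estimates are immediate, but controlling every higher derivative requires either an explicit Fa\`a di Bruno induction or an abstract smoothness statement for inversion; both need the domains $\delta_{t^{-1}}(U_1)$, $\delta_{t^{-1}}(U_2)$ to contain a fixed compact for all small $t$ (guaranteed by Remark~\ref{rmk:anisotropic.swallowing-compact}) and the maps $\phi_t$ to remain uniformly nondegenerate there (guaranteed by $C^1$ convergence to the diffeomorphism $\hat\phi$).
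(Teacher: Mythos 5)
Your overall strategy is correct, but it is genuinely different from the paper's in the main step, and it is worth comparing the two. The paper never inverts anything quantitatively: it sets $\Theta=\phi^{-1}-\hat{\phi}^{-1}$, assumes inductively that $\Theta=\Ow(\|x\|^{w+m'})$ for some $m'<m$, uses Lemma~\ref{lem-eq-we} and Proposition~\ref{lem:multi.Thetat} to write $t^{-1}\cdot\Theta(t\cdot x)=t^{m'}\Theta^{[m']}(x)+t^{m'+1}\tilde{\Theta}(x,t)$, and then evaluates this along the points $t\cdot\phi_t(x)$. Since $\phi^{-1}\circ\phi=\op{id}$ exactly and $\hat{\phi}^{-1}(\phi_t(x))=\hat{\phi}^{-1}(\hat{\phi}(x)+\op{O}(t^m))=x+\op{O}(t^m)$, the left-hand side is $\op{O}(t^m)$, which forces $\Theta^{[m']}\circ\hat{\phi}=0$, hence $\Theta^{[m']}=0$ and $\Theta=\Ow(\|x\|^{w+m'+1})$; induction on $m'$ finishes. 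The key payoff of that arrangement is that only points of the form $\phi_t(x)$, which converge to $\hat{\phi}(x)$, ever appear, so no control of $\psi_t=\phi_t^{-1}$ is required and no quantitative inverse function theorem is needed. Your route (the identity $\psi_t=\phi_t^{-1}$ plus a parametric inverse function theorem bootstrapped through Fa\`a di Bruno) is more self-contained and yields directly the $C^\infty$ rate for the rescaled inverses, at the cost of the uniformity issues you flag. Your Step 1 is in fact \emph{more} careful than the paper's: the paper asserts that $\phi$ and $\hat{\phi}$ have the same differential at $0$, which is literally true only for the entries with $w_j<w_k+m$ (e.g.\ for $w=(1,2)$, $\hat{\phi}=\op{id}$, $\phi(x)=(x_1+x_2,\,x_2)$, the hypothesis holds with $m=1$ yet $\partial_{x_2}\phi_1(0)=1\neq 0$); your observation that $\phi'(0)$ is merely block-triangular with the same weight-diagonal blocks as the block-diagonal matrix $\hat{\phi}'(0)$ is the accurate statement, and it still gives invertibility of $\hat{\phi}'(0)$. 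The globalization by homogeneity is also fine.

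The one genuine gap in your Step 2 is the a priori confinement of $\psi_t(y)$: your $C^0$ estimate applies the uniform bound $\phi_t=\hat{\phi}+\op{O}(t^m)$ \emph{at the point} $\psi_t(y)$, so you must first know that $\psi_t(y)$ stays in a fixed compact set for $y$ in a compact $K$ and $t$ small — before any convergence of $\psi_t$ is available. Uniform nondegeneracy of $\phi_t$ via $C^1$ convergence does not by itself rule out $\psi_t(y)$ escaping to infinity, where no convergence holds. This can be filled, for instance as follows: since $t\cdot\psi_t(y)=\psi(t\cdot y)\to\psi(0)=0$ uniformly for $y\in K$, one has $t\,R_t(y)\to 0$ where $R_t(y):=\|\psi_t(y)\|$; writing $\psi_t(y)=R_t(y)\cdot u$ with $\|u\|=1$, homogeneity gives $\|\hat{\phi}(\psi_t(y))\|\geq c\,R_t(y)$ with $c=\min_{\|u\|=1}\|\hat{\phi}(u)\|>0$ (the anisotropic sphere is compact), while the $j$-th component of the error $t^{-1}\cdot(\phi-\hat{\phi})(t\cdot\psi_t(y))$ is $\op{O}\bigl((t R_t(y))^{m}R_t(y)^{w_j}\bigr)=o\bigl(R_t(y)^{w_j}\bigr)$; hence $\|y\|=\|\phi_t(\psi_t(y))\|\geq \frac{c}{2}R_t(y)$ for small $t$, which bounds $R_t(y)$. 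With that in hand, your $C^0$, $C^1$ and inductive higher-derivative estimates go through, and Lemma~\ref{lem-eq-we} converts them into $\phi^{-1}-\hat{\phi}^{-1}=\Ow(\|x\|^{w+m})$ as you intend. Note that the paper's composition trick sidesteps precisely this point, which is why its proof is shorter.
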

\begin{proof}
We observe that~(\ref{eq:Carnot-coord.phi-hatphi}) implies that $\phi(x)$ and $\hat{\phi}(x)$ have the same differential at the origin. It then 
follows that $\hat{\phi}(x)$ is a diffeomorphism near the origin. Its $w$-homogeneity then implies it is a 
diffeomorphism from $\R^{n}$ onto itself. 

 Let $\Theta:U_2\rightarrow \R^n$ be the smooth map defined by 
\begin{equation*}
\Theta(x)=\phi^{-1}(x)-\hat{\phi}^{-1}(x) \qquad \text{for all $x\in U_2$}. 
\end{equation*}
Suppose that $\Theta(x)=\Ow(\|x\|^{w+m'})$ with $m'<m$. Lemma~\ref{lem-eq-we} then ensures us that $t^{-1}\cdot  \Theta(t\cdot x)=\op{O}(t^w)$ in  $C^\infty(U,\R^{n'})$
as $t\rightarrow 0$. Therefore, in the asymptotics~(\ref{eq:multi.Thetat}) for $\Theta(x)$ all the maps $\Theta^{[\ell]}(x)$ vanish for $\ell<m$, and so, as $t\rightarrow 0$, we have
\begin{equation}
    t^{-1}\cdot \Theta(t\cdot x)=  t^{m'} \Theta^{[m']}(x)+ \op{O}(t^{m'+1}) \qquad \text{in $C^\infty(U_2,\R^n))$}.  
\label{eq:multi.psi-t}
\end{equation}
For $j=1,2$, set $ \cU_{j}=\{(x,t)\in U \times \R; \ t\cdot x \in U_{j}\}$. Using~(\ref{eq:multi.psi-t}) and  Lemma~\ref{lem-eq-we} shows there is a smooth map $\tilde{\Theta}:\cU_{2}\rightarrow \R^{n}$ such that, for all $(x,t)\in \cU_{2}$ with $t\neq 0$, we have
\begin{equation}
       t^{-1}\cdot \Theta(t\cdot x)= t^{m'} \Theta^{[m']}(x)+t^{m'+1}\tilde{\Theta}(x,t). 
               \label{eq:Carnot-coord.phi-Theta2}
   \end{equation}

Let $x\in \R^n$. For $t\in \R^*$ small enough $(x,t)\in \cU_1$.  In this case set $\phi_t(x)=t^{-1}\cdot \phi(t\cdot x)$. Then~(\ref{eq:Carnot-coord.phi-hatphi}) and Lemma~\ref{lem-eq-we} imply that 
  \begin{equation}
                \phi_t(x)=\hat{\phi}(x) +\op{O}(t^m) \qquad \text{as $t\rightarrow 0$}. 
                \label{eq:multi.phi-t}
    \end{equation}
Thus, 
\begin{equation}
t^{-1} \cdot \Theta \left[ t \cdot \phi_t(x)\right] = t^{-1}\cdot \left[ \phi^{-1}\circ \phi (t\cdot x)\right] - \hat{\phi}^{-1}\left(\hat{\phi}(x) +\op{O}(t^m)\right)=\op{O}(t^m).
\label{eq:Anisotropic.Theta-Otm} 
\end{equation}
Moreover, using~(\ref{eq:Carnot-coord.phi-Theta2}) and~(\ref{eq:multi.phi-t}) we also obtain     
\begin{align*}
 t^{-1} \cdot \Theta\left[ t\cdot \phi_t(x)\right] & =   t^{m'} \Theta^{[m']}\left[ \phi_t(x)\right] +t^{m'+1}\tilde{\Theta}\left(\phi_t(x),t \right) \\
 & =   t^{m'} \Theta^{[m']}\left[ \hat\phi(x) +\op{O}(t)\right] + \op{O}\left(t^{m'+1}\right)\\\
 & =   t^{m'} \Theta^{[m']}\circ  \hat\phi(x) + \op{O}\left(t^{m'+1}\right). 
\end{align*}
 As $m'<m$ comparing this with~(\ref{eq:Anisotropic.Theta-Otm}) shows that $\Theta^{[m']}\circ \hat{\phi}(x)=0$. As $ \hat{\phi}(x)$ is a diffeomorphism this implies that 
 $\Theta^{[m']}(x)=0$ for all $x\in \R^n$, and so~(\ref{eq:Carnot-coord.phi-Theta2}) becomes  $t^{-1}\cdot \Theta(t\cdot x)= t^{m'+1}\tilde{\Theta}(x,t)$. By Lemma~\ref{lem-eq-we} this implies that $\Theta(x)=\Ow(\|x\|^{w+m'+1})$. 

 We just have established that if $\Theta(x)=\Ow(\|x\|^{w+m'})$ with $m'<m$, then $\Theta(x)=\Ow(\|x\|^{w+m'+1})$. An immediate induction then shows that  $\Theta(x)=\Ow(\|x\|^{w+m})$, i.e., $ \phi^{-1}(x)=\hat{\phi}^{-1}(x)+\Ow(\|x\|^{w+m})$. The proof is complete. 
\end{proof}

\subsection{Anisotropic approximation of differential operators} 
The notion of weight of a function extends to differential operators as follows. Given a differential operator $P$ on 
$U$,  for $t\in \R^*$ we denote by $\delta_{t}^{*}P$ the pullback of $P$ by the dilation $\delta_{t}$, i.e., $\delta_{t}^{*}P$ is the differential operator on $ \delta_{t}^{-1}(U)$ given by
\begin{equation*}
  (\delta_{t}^{*}P)u(x)= P(u\circ \delta_{t}^{-1})(t\cdot x) \qquad \text{for all $u\in C^{\infty}\left( 
  \delta_{t}^{-1}(U)\right)$.}  
\end{equation*}
If we write $P=\sum_{|\alpha|\leq m} a_\alpha(x)\partial^\alpha_x$, with $a_\alpha\in C^\infty(U)$, then we have 
\begin{equation}
\delta_{t}^{*}P= \sum_{|\alpha|\leq m} t^{-|\alpha|} a_\alpha(t\cdot x) \partial^\alpha_x. 
\label{eq:Anistropic.deltat*P}
\end{equation}

\begin{definition}
  A differential operator  $P$ on $\R^{n}$ is \emph{homogeneous of degree $w$}, $w\in \Z$, when
\begin{equation*}
    \delta_{t}^{*}P=t^{w} P \qquad \text{for all $t\in \R^*$}.
\end{equation*}
\end{definition}

\begin{example}
 For any $\alpha \in \N_0$, the differential operator $\partial^{\alpha}_x$ is
homogeneous of degree~$-\brak\alpha$.
\end{example}

\begin{remark}
 If we write $P=\sum a_\alpha(x) \partial^\alpha_x$ with $a_\alpha\in C^\infty(\R^n)$, then~(\ref{eq:Anistropic.deltat*P}) shows that $P$ is homogeneous of degree $w$ if and only if each coefficient $a_\alpha(x)$ is homogeneous of degree $w+\brak\alpha$. In particular, all the coefficients $a_\alpha(x)$ must be polynomials. 
\end{remark}

\begin{definition}
  Let  $P=\sum_{|\alpha|\leq m}a_{\alpha}(x)\partial_{x}^{\alpha}$ be a differential operator on $U$. We say that $P$ has \emph{weight} $w$, $w\in \Z$, when
  \begin{enumerate}
      \item[(i)]  Each coefficient $a_{\alpha}(x)$ has weight~$\geq w+\brak\alpha$.

      \item[(ii)]  There is one coefficient $a_{\alpha}(x)$ that has weight~$w+\brak\alpha$.
  \end{enumerate}
\end{definition}

\begin{example}
 Let $P$ be a differential operator on $\R^d$ which is homogeneous of degree~$w$. If $P\neq 0$, then  $P$ has weight $w$. 
\end{example}

\begin{remark}
 A differential operator of order~$\leq m$ always has weight~$\geq -mr$. 
\end{remark}

In what follows, given any $m\in \N_0$, we denote by $\DO^{m}(U)$ the space of $m$-th order differential operators on $U$.  If $P$ 
is an operator in $\DO^{m}(U)$, then they are unique coefficients $a_{\alpha}(P)(x)\in C^{\infty}(U)$, $|\alpha|\leq m$, such that
\begin{equation}
    P=\sum_{|\alpha|\leq m}a_{\alpha}(P)(x)\partial^{\alpha}_x.
    \label{eq:Anisotropic.P-partial}
\end{equation}
Set $N(m)=\# \{\alpha\in \N_{0}^{n};\ |\alpha|\leq m\}$. We then have a linear isomorphism $P\rightarrow P[x]$ from $\DO^m(U)$ onto $C^\infty(U,\R^{N(m)})$ given by
\begin{equation}
P[x]= \left( a_\alpha(P)(x))\right)_{|\alpha|\leq m}, \qquad P\in \DO^m(U).
 \label{eq:anisotropic.ThetaP} 
\end{equation}
The standard Fr\'echet-space topology of $\DO^m(U)$ is so that this map is a topological isomorphism. In addition, on $\R^{N(m)}$ we have a family of dilations 
$\delta_t(x)=t\cdot x$, $t\in \R$, where 
\begin{equation*}
 t\cdot x = \left( t^{\brak \alpha} x_\alpha\right)_{|\alpha|\leq m}, \qquad x=(x_\alpha)_{|\alpha|\leq m}\in \R^{N(m)}. 
\end{equation*}
Using this notation we can rewrite~(\ref{eq:Anistropic.deltat*P}) in the form, 
\begin{equation*}
 \left(\delta_t^* P\right)[x] = t^{-1}\cdot P[t\cdot x] \qquad \text{for all $t\in \R^*$}. 
\end{equation*}
Combining this with Proposition~\ref{lem:multi.Thetat} we obtain the following version for differential operators. 

\begin{proposition}\label{lem:Nilpotent.weight-diff-op}
    Let $P\in \DO^{m}(U)$. Then, as $t\rightarrow 0$, we have
        \begin{equation}\label{eq-delta-p}
            \delta_{t}^{*}P\simeq \sum_{\ell \geq -mr} t^{\ell}P^{[\ell]} \qquad \text{in $\DO^{m}(U)$},
        \end{equation}
        where $P^{[\ell]}$ is a homogeneous differential operator of degree $\ell$.
\end{proposition}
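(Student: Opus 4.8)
The plan is to transport the multi-valued asymptotic expansion of Proposition~\ref{lem:multi.Thetat} through the topological isomorphism $P\mapsto P[x]$ from $\DO^{m}(U)$ onto $C^{\infty}(U,\R^{N(m)})$ defined by~(\ref{eq:anisotropic.ThetaP}). The crucial bridge is the identity $(\delta_{t}^{*}P)[x]=t^{-1}\cdot P[t\cdot x]$ recorded just above the statement: it says that, under this isomorphism, pullback by $\delta_{t}$ on the operator side corresponds exactly to the anisotropic rescaling $\Theta\mapsto t^{-1}\cdot \Theta(t\cdot x)$ on the Euclidean side. Thus the whole statement will reduce to applying Proposition~\ref{lem:multi.Thetat} to the single smooth map $\Theta(x):=P[x]$.

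First I would fix the weight data on the target space $\R^{N(m)}$. The coordinate indexed by a multi-order $\alpha$ carries weight $\brak\alpha$ under the dilations $t\cdot x=(t^{\brak\alpha}x_{\alpha})_{|\alpha|\leq m}$, so the relevant weight sequence on $\R^{N(m)}$ is $\{\brak\alpha;\ |\alpha|\leq m\}$ arranged in non-decreasing order. Since $\brak\alpha=\sum_{i}w_{i}\alpha_{i}\leq r|\alpha|\leq rm$ whenever $|\alpha|\leq m$, its largest entry is at most $mr$. This identifies the role of the top weight $w_{n'}'$ in Proposition~\ref{lem:multi.Thetat} with a number $\leq mr$, which is precisely what produces (after padding with zero terms if needed) the lower summation bound $\ell\geq -mr$ in~(\ref{eq-delta-p}).

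With this setup, Proposition~\ref{lem:multi.Thetat} gives, as $t\rightarrow 0$ and in $C^{\infty}(U,\R^{N(m)})$,
\[
 t^{-1}\cdot P[t\cdot x]\simeq \sum_{\ell\geq -mr} t^{\ell}\, \Theta^{[\ell]}(x),
\]
where each $\Theta^{[\ell]}$ is a polynomial map satisfying $t^{-1}\cdot \Theta^{[\ell]}(t\cdot x)=t^{\ell}\Theta^{[\ell]}(x)$. I would then define $P^{[\ell]}\in \DO^{m}(U)$ to be the unique operator with $P^{[\ell]}[x]=\Theta^{[\ell]}(x)$; since the $\Theta^{[\ell]}$ are polynomial maps, $P^{[\ell]}$ has polynomial coefficients. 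Using the bridge identity once more, $(\delta_{t}^{*}P^{[\ell]})[x]=t^{-1}\cdot P^{[\ell]}[t\cdot x]=t^{-1}\cdot \Theta^{[\ell]}(t\cdot x)=t^{\ell}\Theta^{[\ell]}(x)=t^{\ell}P^{[\ell]}[x]$, and because $P\mapsto P[x]$ is injective this yields $\delta_{t}^{*}P^{[\ell]}=t^{\ell}P^{[\ell]}$, i.e., $P^{[\ell]}$ is homogeneous of degree $\ell$. Finally, since $P\mapsto P[x]$ is a topological isomorphism between the Fr\'echet spaces $\DO^{m}(U)$ and $C^{\infty}(U,\R^{N(m)})$, the $C^{\infty}(U,\R^{N(m)})$-asymptotic expansion transfers verbatim into a $\DO^{m}(U)$-asymptotic expansion, which is exactly~(\ref{eq-delta-p}).

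There is no serious obstacle here; the argument is essentially bookkeeping once the dictionary between operator pullback and Euclidean rescaling is in place. The only point requiring a little care is the compatibility of topologies: one must invoke that $\DO^{m}(U)$ is topologized so that $P\mapsto P[x]$ is a \emph{topological} (not merely linear) isomorphism, so that $\op{O}(t^{N})$-remainders in $C^{\infty}(U,\R^{N(m)})$ correspond to $\op{O}(t^{N})$-remainders in $\DO^{m}(U)$; this is precisely what was arranged in the paragraph introducing~(\ref{eq:anisotropic.ThetaP}). A second, purely cosmetic, point is that the weight sequence $\{\brak\alpha\}$ begins at $\brak 0=0$ rather than at $1$, so strictly speaking one applies Proposition~\ref{lem:multi.Thetat} after observing that its proof uses only the dilation structure, and not the normalization $w_{1}'=1$.
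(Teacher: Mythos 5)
Your proof is correct and takes essentially the same route as the paper, which likewise obtains~(\ref{eq-delta-p}) by combining the bridge identity $(\delta_{t}^{*}P)[x]=t^{-1}\cdot P[t\cdot x]$ with Proposition~\ref{lem:multi.Thetat} through the topological isomorphism $P\mapsto P[x]$ of $\DO^{m}(U)$ onto $C^{\infty}(U,\R^{N(m)})$. Your additional care (the bound $\brak\alpha\leq mr$ giving the lower limit $\ell\geq -mr$, and the observation that the weight-$0$ coordinate $\alpha=0$ harmlessly bypasses the normalization $w_{1}'=1$) only makes explicit bookkeeping that the paper leaves implicit.
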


\begin{remark}
    It follows from~(\ref{eq:anisotropic.fell}) and the proof of Proposition~\ref{lem:multi.Thetat} that we have
    \begin{equation}
P^{[\ell]}= \sum_{|\alpha|\leq m} a_\alpha(P)^{[\ell+\brak\alpha]}(x) \partial_x^\alpha = \sum_{\brak\beta + \ell =\brak \alpha} \frac{1}{\beta !}  
\partial^\beta_xa_\alpha(P)(0) x^\beta\partial^\alpha_x. 
\label{eq:anisotropic.Pw}
\end{equation}
\end{remark}

\begin{corollary}\label{cor:anisotropic.dtP-weight}
 Let $P\in \DO^m(U)$. Then $P$ has weight $w$ if and only if, as $t\rightarrow 0$, we have
     \begin{equation}
       \delta_{t}^{*}P= t^{w}P^{[w]} +\op{O}\left(t^{w+1}\right)\qquad \text{in $\DO^{m}(U)$},
       \label{eq:anisotropic.dtP-weight}
    \end{equation}
    where $P^{[w]}$ is a non-zero homogeneous polynomial differential operator of degree $w$.
\end{corollary}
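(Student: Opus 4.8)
The plan is to deduce this corollary from the asymptotic expansion of Proposition~\ref{lem:Nilpotent.weight-diff-op} in exactly the same way that Corollary~\ref{cor:anisotropic.ftx} was deduced from Proposition~\ref{prop:anisotropic.ftx}. Proposition~\ref{lem:Nilpotent.weight-diff-op} gives $\delta_{t}^{*}P\simeq \sum_{\ell \geq -mr} t^{\ell}P^{[\ell]}$ in $\DO^{m}(U)$, with each $P^{[\ell]}$ homogeneous of degree $\ell$. The asymptotics~(\ref{eq:anisotropic.dtP-weight}) asserts precisely that $w$ is the lowest index for which $P^{[\ell]}$ is nonzero; that is, (\ref{eq:anisotropic.dtP-weight}) holds if and only if $P^{[\ell]}=0$ for all $\ell<w$ and $P^{[w]}\neq 0$. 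So the whole task reduces to re-expressing these two conditions on the $P^{[\ell]}$ as conditions (i)--(ii) in the definition of weight $w$ of $P$.

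First I would use the explicit formula~(\ref{eq:anisotropic.Pw}), namely $P^{[\ell]}=\sum_{|\alpha|\leq m} a_{\alpha}(P)^{[\ell+\brak\alpha]}(x)\,\partial_{x}^{\alpha}$, which writes each homogeneous piece as a polynomial-coefficient combination of the monomial operators $\partial_{x}^{\alpha}$. Since the map $P\mapsto P[x]$ of~(\ref{eq:anisotropic.ThetaP}) is a linear isomorphism, the coefficients of a differential operator are uniquely determined by the operator; in particular $P^{[\ell]}=0$ as an operator if and only if every homogeneous polynomial $a_{\alpha}(P)^{[\ell+\brak\alpha]}$ vanishes identically.

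Next I would invoke the definition of the weight of a function together with~(\ref{eq:anisotropic.fell}): the coefficient $a_{\alpha}(P)$ has weight~$\geq w+\brak\alpha$ if and only if its homogeneous components $a_{\alpha}(P)^{[\ell']}$ vanish for $\ell'<w+\brak\alpha$, i.e.\ $a_{\alpha}(P)^{[\ell+\brak\alpha]}=0$ for all $\ell<w$. Combining this with the previous step shows that ``$P^{[\ell]}=0$ for all $\ell<w$'' is equivalent to ``each coefficient $a_{\alpha}(P)$ has weight~$\geq w+\brak\alpha$'', which is condition~(i). Likewise ``$P^{[w]}\neq 0$'' is equivalent to ``$a_{\alpha}(P)^{[w+\brak\alpha]}\neq 0$ for some $\alpha$'', which, granted~(i), says that some coefficient has weight exactly $w+\brak\alpha$, i.e.\ condition~(ii). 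Reading these equivalences in both directions yields the claimed characterization, and the surviving leading term $P^{[w]}$ is, by construction, a nonzero homogeneous polynomial differential operator of degree $w$.

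There is no serious obstacle here; the argument is a bookkeeping translation between the homogeneous components $P^{[\ell]}$ of the operator and the homogeneous components of its coefficients. The only point requiring a word of justification is the uniqueness of the expansion $P^{[\ell]}=\sum_{\alpha}(\cdots)\partial_{x}^{\alpha}$, so that vanishing of the operator forces vanishing of each coefficient polynomial; this is immediate from the isomorphism $P\mapsto P[x]$ recorded after~(\ref{eq:anisotropic.ThetaP}).
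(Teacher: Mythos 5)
Your proof is correct and takes essentially the same route as the paper's: both read the asymptotics~(\ref{eq:anisotropic.dtP-weight}) as saying that $P^{[w]}$ is the leading nonzero term in the expansion~(\ref{eq-delta-p}) and then use the explicit formula~(\ref{eq:anisotropic.Pw}) to translate the vanishing of $P^{[\ell]}$ for $\ell<w$ and the nonvanishing of $P^{[w]}$ into conditions (i)--(ii) in the definition of the weight of $P$, via the derivatives $\partial^\beta_x a_\alpha(P)(0)$. Your explicit appeal to the linear isomorphism $P\mapsto P[x]$ of~(\ref{eq:anisotropic.ThetaP}) merely spells out a uniqueness step the paper leaves implicit.
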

\begin{proof}
 In the same way as in the proof of Corollary~\ref{cor:anisotropic.ftx}, the asymptotics~(\ref{eq:anisotropic.dtP-weight}) means that in the asymptotics~(\ref{eq-delta-p}) for $P$ the leading non-zero term is $P^{[w]}$. In view of~(\ref{eq:anisotropic.Pw}) this means that $\partial^\beta_x a_\alpha(P)(0)=0$ when $\brak\beta <w+\brak\alpha$ and there is at least one pair $(\alpha, \beta)$ with $\brak\beta =w+\brak\alpha$ and $|\alpha|\leq m$ such that  $\partial^\beta_x a_\alpha(P)(0)=0$. Equivalently, each coefficient $a_\alpha(P)(x)$ has weight~$\geq w+\brak\alpha$ and we have equality for at least one of them. That is, $P$ has weight~$w$. This proves the result. 
\end{proof}

Let us now specialize the above results to vector fields. Let $\cX(U)$ be the space of (smooth) vector fields on $U$. Regarding it as a (closed) subspace of $\DO^{1}(U)$ we 
equip it with the induced topology. Equivalently, a vector field $X$ on $U$ has a unique expression as
\begin{equation*}
    X=\sum_{1\leq j \leq n}a_{j}(X)(x)\partial_{j}, \qquad \text{with $a_{j}(X)\in C^{\infty}(U)$}.
\end{equation*}
In the notation of~(\ref{eq:anisotropic.ThetaP}) we have $X[x]=(a_1(X)(x),\ldots, a_n(X)(x))$, which is a the usual identification of a vector field with a vector-valued map. This provides us with a topological isomorphism between $\cX(U)$ and $C^\infty(U,\R^n)$. 

Specializing Lemma~\ref{lem:Nilpotent.weight-diff-op} and Corollary~\ref{cor:anisotropic.dtP-weight} to vector fields leads us to the following statement. 

\begin{proposition}\label{prop:nilp-approx.vector-fields}
    Let $X$ be a smooth vector field on $U$. 
    \begin{enumerate}
\item As $t\rightarrow 0$, we have 
                \begin{equation}\label{eq-delta-X}
            \delta_{t}^{*}X\simeq \sum_{\ell\geq -r} t^{\ell}X^{[\ell]} \qquad \text{in $\cX(U)$},
        \end{equation}
        where $X^{[\ell]}$ is a homogeneous polynomial vector field of degree $\ell$.

\item $X$ has weight $w$ if and only if, as $t\rightarrow 0$, we have 
       \begin{equation*}
       \delta_{t}^{*}X= t^{w}X^{[w]} +\op{O}\left(t^{w+1}\right)\qquad \text{in $\cX(U)$}.
    \end{equation*}
\end{enumerate}
\end{proposition}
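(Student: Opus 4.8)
The plan is to deduce both assertions directly by specializing the differential-operator results already established, namely Lemma~\ref{lem:Nilpotent.weight-diff-op} and Corollary~\ref{cor:anisotropic.dtP-weight}, to the case $m=1$. The point is that a vector field $X$ is precisely a first-order differential operator, so that $\cX(U)$ sits inside $\DO^{1}(U)$ as a closed subspace carrying the induced topology, as recorded just before the statement. Consequently any convergence or asymptotic expansion taking place in $\DO^{1}(U)$ whose terms happen to lie in $\cX(U)$ automatically takes place in $\cX(U)$ itself. The only genuine thing to verify is therefore that each homogeneous component produced by the general machinery is again a \emph{vector field}, rather than a general first-order operator with a possibly nonzero zeroth-order part.

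For part (1), I would apply Lemma~\ref{lem:Nilpotent.weight-diff-op} with $m=1$; since $-mr=-r$, this yields the asymptotics $\delta_{t}^{*}X\simeq \sum_{\ell\geq -r}t^{\ell}X^{[\ell]}$ in $\DO^{1}(U)$ with each $X^{[\ell]}$ homogeneous of degree $\ell$. To identify the components I would use the explicit formula~(\ref{eq:anisotropic.Pw}). Writing $X=\sum_{1\leq j\leq n}a_{j}(X)(x)\partial_{j}$, the zeroth-order coefficient $a_{0}(X)$ vanishes, so the $\alpha=0$ term in~(\ref{eq:anisotropic.Pw}) drops out and one is left with
\begin{equation*}
X^{[\ell]}=\sum_{1\leq j\leq n}a_{j}(X)^{[\ell+w_{j}]}(x)\,\partial_{j}.
\end{equation*}
This is manifestly a vector field; moreover, by Proposition~\ref{prop:anisotropic.ftx} the coefficient $a_{j}(X)^{[\ell+w_{j}]}$ is homogeneous of degree $\ell+w_{j}$, and $\partial_{j}$ is homogeneous of degree $-w_{j}$, so $X^{[\ell]}$ is a homogeneous polynomial vector field of degree $\ell$. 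Since every term of the expansion lies in the closed subspace $\cX(U)$, the asymptotics, a priori valid in $\DO^{1}(U)$, hold in $\cX(U)$.

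For part (2), I would similarly apply Corollary~\ref{cor:anisotropic.dtP-weight} with $m=1$: the operator $X$ has weight $w$ if and only if $\delta_{t}^{*}X=t^{w}X^{[w]}+\op{O}(t^{w+1})$ in $\DO^{1}(U)$, with $X^{[w]}$ a nonzero homogeneous operator of degree $w$. By the computation above $X^{[w]}$ is in fact a vector field and the remainder again lies in $\cX(U)$, so the statement transfers verbatim to $\cX(U)$. The main (and essentially only) obstacle is the bookkeeping of the previous paragraph, i.e.\ checking that the zeroth-order coefficient stays zero under the homogeneous decomposition, and this is settled once and for all by the vanishing of $a_{0}(X)$. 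An alternative route, should one prefer to bypass $\DO^{1}(U)$ entirely, is to transport everything through the topological isomorphism $X\mapsto X[x]=(a_{1}(X),\ldots,a_{n}(X))$ onto $C^{\infty}(U,\R^{n})$, whose target carries the same weight sequence $(w_{1},\ldots,w_{n})$, and then invoke Proposition~\ref{lem:multi.Thetat} directly; the two approaches are equivalent.
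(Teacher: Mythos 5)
Your proposal is correct and matches the paper's route exactly: the paper derives this proposition by the single remark ``Specializing Lemma~\ref{lem:Nilpotent.weight-diff-op} and Corollary~\ref{cor:anisotropic.dtP-weight} to vector fields leads us to the following statement,'' which is precisely your $m=1$ specialization via the closed embedding $\cX(U)\subset \DO^1(U)$. Your explicit verification that the vanishing of the zeroth-order coefficient persists in each homogeneous component $X^{[\ell]}$ (so the expansion stays inside $\cX(U)$) is a detail the paper leaves implicit, and is a welcome addition rather than a deviation.
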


\section{Characterization of Privileged Coordinates}\label{sec:charact-privileged}
In this section, we produce a characterization of privileged coordinates in terms of the anisotropic approximation 
of vector fields the previous section. This will show that our definition of privileged coordinates
is equivalent to that of Goodman~\cite{Go:LNM76}. This will also show how to get all privileged coordinates at a given point and lead us to new proofs that 
that canonical coordinates of the first and second kind are privileged coordinates.

\subsection{Characterization of privileged  coordinates}
Let $a\in M$ and $(x_{1},\ldots,x_{n})$ local coordinates centered at $a$. We denote by $U$ the range of these local 
coordinates and by $V$ their domain. Note that $U$ is an open neighborhood of the origin in $\R^{n}$. We shall further 
assume there is an $H$-frame $(X_1,\ldots, X_n)$ over $V$. 

Using the local coordinates $(x_{1},\ldots,x_{n})$ we can regard any function (resp., vector field, differential 
operator) on $V$ as a function (resp., vector field, differential operator) on $U$. This enables us to define the 
\emph{weight} of such objects. In fact, as the weight depends only on the germ near $x=0$, we actually can define the weight for 
any such object that is defined near $x=a$. However, it should be pointed out that this notion of weight is \emph{extrinsic}, since it depends on the 
choice of the local coordinates. For instance, it is not preserved by permutation of the coordinates 
$(x_{1},\ldots,x_{n})$ (unless $r=1$). For this reason we will refer to this weight as the weight in the local coordinates 
$(x_{1},\ldots,x_{n})$. 

\begin{definition}\label{def:Privileged-coordinates.model-vector-field}
    Let $X$ be a vector field on an open neighborhood of $a$. Let $w$ be its weight in the local coordinates 
    $(x_{1},\ldots,x_{n})$. Then the vector field $X^{[w]}$ in~(\ref{eq-delta-X}) is denoted by $X^{(a)}$ and is called the \emph{model 
    vector field} of $X$ in the local coordinates $(x_{1},\ldots,x_{n})$. 
\end{definition}

\begin{remark}
The asymptotic expansion~(\ref{eq-delta-X}) implies that, as $t\rightarrow 0$, we have
  \begin{equation}\label{eq-t0X}
      t^{-w}\delta_{t}^{*}X=X^{(a)} +\op{O}(t) \qquad \text{in $\cX(U)$}.
 \end{equation}
 Furthermore, if we write $X=\sum_{k=1}^n b_k(x)\partial_x^\alpha$, with $b_k(x)\in C^\infty(U)$,  then~(\ref{eq:anisotropic.Pw}) gives
 \begin{equation}
 X^{(a)} = \sum_{\brak\alpha+w=w_k} \frac{1}{\alpha!} \partial_x^\alpha b_k(0)x^{\alpha} \partial_{x_k}. 
 \label{eq:anisotropic.X(a)} 
\end{equation}
\end{remark}

\begin{remark}
    We can similarly  define the model operator of any differential operator on an open neighborhood of $a$. 
\end{remark}

As mentioned above, the notion of weight is an \emph{extrinsic} notion. However, as the following shows, when using
privileged coordinates this extrinsic notion of weight actually agrees with the \emph{intrinsic} notion of order defined 
in the previous section.

\begin{lemma}[see also~\cite{Je:Brief14}]\label{lem-order-weight}
 Let $f$ be a smooth function near $a$ of order $N$, and $(x_1,\ldots, x_n)$ privileged coordinates at $a$ adapted to the $H$-frame
    $(X_{1},\ldots, X_{n})$. Then $f(x)$ has weight~$N$ in these coordinates. 
\end{lemma}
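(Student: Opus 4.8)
The plan is to translate both notions into linear conditions on the two families of numbers $X^{\alpha}f(a)$ and $\partial^{\alpha}_{x}f(0)$, and then to connect these families by a single triangular system. Working in the given coordinates, in which $a$ is the origin, the function $f$ has order~$\geq M$ at $a$ if and only if $X^{\alpha}f(0)=0$ for all $\langle\alpha\rangle<M$ (this is the equivalence established in the proof of Proposition~\ref{TFAE} via Lemma~\ref{lem-X-I}), whereas by definition $f$ has weight~$\geq M$ if and only if $\partial^{\alpha}_{x}f(0)=0$ for all $\langle\alpha\rangle<M$. It therefore suffices to prove that, for every $M$, these two systems of conditions are equivalent. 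Granting this, the lemma follows at once: $f$ has order exactly $N$ means it has order~$\geq N$ but not order~$\geq N+1$, and the equivalence transports both facts to the weight, giving weight exactly $N$.

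The heart of the matter is the expansion of the monomial operator $X^{\alpha}$ in the coordinate frame. Writing $X^{\alpha}=\sum_{\gamma}c_{\alpha\gamma}(x)\partial^{\gamma}_{x}$ as a finite sum with $|\gamma|\leq|\alpha|$ (cf.\ Remark~\ref{rem-ab}) and testing against monomials, I record the identity $c_{\alpha\gamma}(0)=\tfrac{1}{\gamma!}X^{\alpha}(x^{\gamma})(0)$. I then evaluate these constants. Applying Lemma~\ref{lem:privileged.polynomial} to the homogeneous polynomial $x^{\gamma}$ of ordinary degree $|\gamma|$ gives $c_{\alpha\gamma}(0)=0$ when $|\gamma|>|\alpha|$ and $c_{\alpha\gamma}(0)=\delta_{\alpha\gamma}$ when $|\gamma|=|\alpha|$. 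The decisive point is the constraint on the weights $\langle\cdot\rangle$: because the coordinates are privileged, each $x_i$ has order $w_i$, so Lemma~\ref{lem:privileged.properties-order-product} shows that $x^{\gamma}$ has order~$\geq\langle\gamma\rangle$; hence $X^{\alpha}(x^{\gamma})(0)=0$ whenever $\langle\alpha\rangle<\langle\gamma\rangle$, that is, $c_{\alpha\gamma}(0)=0$ as soon as $\langle\gamma\rangle>\langle\alpha\rangle$. Combining these observations yields the triangular formula
\begin{equation*}
 X^{\alpha}f(0)=\partial^{\alpha}_{x}f(0)+\sum_{\substack{|\gamma|<|\alpha|\\ \langle\gamma\rangle\leq\langle\alpha\rangle}} c_{\alpha\gamma}(0)\,\partial^{\gamma}_{x}f(0).
\end{equation*}

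With this formula both implications are immediate. If $f$ has weight~$\geq M$ and $\langle\alpha\rangle<M$, then every $\gamma$ occurring on the right-hand side (including $\gamma=\alpha$) satisfies $\langle\gamma\rangle\leq\langle\alpha\rangle<M$, so all the $\partial^{\gamma}_{x}f(0)$ vanish and $X^{\alpha}f(0)=0$; thus $f$ has order~$\geq M$. Conversely, assuming $f$ has order~$\geq M$, I prove $\partial^{\alpha}_{x}f(0)=0$ for $\langle\alpha\rangle<M$ by induction on $|\alpha|$: solving the formula for $\partial^{\alpha}_{x}f(0)$ expresses it as $X^{\alpha}f(0)$, which vanishes since $\langle\alpha\rangle<M$, minus a combination of terms $\partial^{\gamma}_{x}f(0)$ with $|\gamma|<|\alpha|$ and $\langle\gamma\rangle\leq\langle\alpha\rangle<M$, all of which vanish by the induction hypothesis.

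The main obstacle, and the only place where the privileged hypothesis is genuinely used, is the weight estimate $c_{\alpha\gamma}(0)=0$ for $\langle\gamma\rangle>\langle\alpha\rangle$. I emphasize that this has to be derived directly from the orders of the coordinate functions, through the product estimate of Lemma~\ref{lem:privileged.properties-order-product}, and \emph{not} from any statement about the weights of the vector fields $X_{j}$: the latter route would be circular, since the fact that $X_{j}$ has weight $-w_{j}$ is itself a consequence of the present lemma. Everything else is routine bookkeeping for a triangular system.
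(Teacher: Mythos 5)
Your proposal is correct, and it proves the lemma by a genuinely different (and in one respect sharper) route than the paper. The paper splits the argument in two: it first rules out weight~$\geq N+1$ by expanding $f$ with the anisotropic Taylor formula (Lemma~\ref{lem:nilpotent-weighted-Taylor}) and bounding the order of the remainder terms $x^\alpha R_\alpha(x)$ via Lemma~\ref{lem:privileged.properties-order-product}; it then bounds the order from above by selecting, among the multi-orders $\alpha$ with $\langle\alpha\rangle=w$ and $\partial^\alpha_x f(0)\neq 0$, one of \emph{maximal} length and asserting that $X^\alpha(x^\beta)(0)=\alpha!\,\delta_{\alpha\beta}$ for all $|\beta|\leq|\alpha|$. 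You instead package everything into the single unitriangular relation $X^\alpha f(0)=\partial^\alpha_x f(0)+\sum_{|\gamma|<|\alpha|,\ \langle\gamma\rangle\leq\langle\alpha\rangle}c_{\alpha\gamma}(0)\,\partial^\gamma_x f(0)$, with the privileged hypothesis entering only through the vanishing $c_{\alpha\gamma}(0)=0$ for $\langle\gamma\rangle>\langle\alpha\rangle$, and read off both directions. This is not merely a stylistic difference: the paper's assertion overstates Lemma~\ref{lem:privileged.polynomial}, which covers only $|\beta|\geq|\alpha|$, and the case $|\beta|<|\alpha|$ can genuinely fail in privileged coordinates --- for $X_1=\partial_1$, $X_2=\partial_2+x_1\partial_3$, $X_3=\partial_3$ with $w=(1,1,2)$ one has $X_1X_2(x_3)(0)=1$, and for $f=x_1x_2-x_3$ the maximal-length choice $\alpha=(1,1,0)$ gives $X^\alpha f(0)=0$ although $\partial^\alpha_x f(0)=1$ (the order-$2$ witness is instead $X_3f(0)=-1$). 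Within the paper's scheme the fix is to take $\alpha$ of \emph{minimal} length, which is exactly the direction of triangularity your induction on $|\alpha|$ exploits; so your argument is in fact more robust than the printed one. Your supporting steps --- the identity $c_{\alpha\gamma}(0)=\frac{1}{\gamma!}X^\alpha(x^\gamma)(0)$, the deduction of $c_{\alpha\gamma}(0)=0$ for $\langle\gamma\rangle>\langle\alpha\rangle$ from the orders of the coordinate functions and Lemma~\ref{lem:privileged.properties-order-product}, the monomial characterization of order $\geq M$ from the proof of Proposition~\ref{TFAE}, and the remark that appealing to the weights of the $X_j$ (Theorem~\ref{thm-pri-equiv}) would be circular --- are all sound.
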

\begin{proof}
As the order of a function is an intrinsic notion, we may work in the local coordinates $(x_{1},\ldots,x_{n})$. Suppose that $f(x)$ has weight~$\geq N+1$ in these coordinates. 
By Lemma~\ref{lem:nilpotent-weighted-Taylor} near $x=0$ there are functions $R_{\alpha}(x)$, 
$|\alpha|\leq N+1\leq \brak \alpha$, such that
\begin{equation}
  f(x)=  \sum_{|\alpha|\leq N+1 \leq \brak \alpha}x^{\alpha}R_{\alpha}(x).
  \label{eq:privileged.Taylo-fx}  
\end{equation}
As $(x_1,\ldots, x_n)$ are privileged coordinates, for $k=1,\ldots, n$, the coordinate $x_k$ has weight $w_k$. Lemma~\ref{lem:privileged.properties-order-product} then implies that each monomial $x^\alpha$ has order $\geq \alpha_1 w_1+ \cdots + \alpha_n w_n=\brak\alpha$, and so each reminder term $x^\alpha R_\alpha(x)$ in~(\ref{eq:privileged.Taylo-fx}) has order~$\geq \brak\alpha\geq N+1$. It then follows that $f(x)$ has order~$\geq N+1$, which is not possible since by assumption $f(x)$ has order~$N$. Thus, $f(x)$ must have weight~$\leq N$. 

Let $w$ be the weight of $f(x)$. Then $\partial^\alpha f(0) =0$ for $\brak\alpha <w$, and so by Lemma~\ref{lem:nilpotent-weighted-Taylor} there is a smooth function $R(x)$ of weight~$\geq w+1$ such that
\begin{equation*}
 f(x) =\sum_{\brak\beta =w} \frac{1}{\beta!}\partial^{\beta}f(0)x^{\beta} +R(x). 
\end{equation*}
As $f(x)$ has weight $w$ there is a multi-order $\alpha$ such that $\partial^\alpha f(0)\neq 0$. We may choose $\alpha$ such that $\partial^\beta f(0)= 0$ if $\brak \beta=w$ and $|\beta|>|\alpha|$. Then we have 
\begin{equation}
 X^\alpha f(0) =\sum_{\substack{\brak\beta =w\\ |\beta|\leq |\alpha|}} \frac{1}{\beta!}\partial^{\beta}f(0)X^\alpha(x^{\beta})(0) + X^\alpha R(0). 
 \label{eq:privileged.Xalphaf0}
\end{equation}
As $R(x)$ has weight~$\geq w+1$, it has order~$\geq w+1$, and so $X^\alpha R(0)=0$. Moreover, it follows from Lemma~\ref{lem:privileged.polynomial} that if $|\beta|\leq |\alpha|$, then $X^\alpha(x^\beta)(0)=\alpha ! \delta_{\alpha \beta}$. Combining this with~(\ref{eq:privileged.Xalphaf0}) we see that $X^\alpha f(0) = \partial^\alpha f(0)\neq 0$. This implies that  $N$, the order  of $f(x)$,  must be~$\geq w$. As it also is $~\geq w$, we deduce that $w=N$. This proves the result. 
\end{proof}

\begin{remark}
 In the ECC case, the equality between weight and order is mentioned without proof in~\cite[p.~43]{Be:Tangent}, but a proof
is given in~\cite{Je:Brief14}. 
\end{remark}

We are now in a position to establish the following characterization of privileged coordinates. 

\begin{theorem}\label{thm-pri-equiv}
    Let $(x_{1},\ldots,x_{n})$ be local coordinates centered at $a$ that are linearly adapted to the $H$-frame
    $(X_{1},\ldots, X_{n})$.  In addition, let $U\subset \R^n$ be the range of these coordinates.  Then the following are equivalent:
    \begin{enumerate}
        \item[(i)]  The local coordinates $(x_{1},\ldots,x_{n})$ are privileged coordinates at $a$.
        
        \item[(ii)] For $j=1,\ldots, n$ and as $t\rightarrow 0$, we have
                       \begin{equation}
                              t^{w_j} \delta_t^*X_j = X_j^{(a)} + \op{O}(t) \qquad \text{in $\cX(U)$}, 
                              \label{eq:char-priv.model-vector-field2}
                       \end{equation}
                     where $X_j^{(a)}$ is  homogeneous of degree~$-w_j$. 
        
        \item[(iii)]  For $j=1,\ldots,n$, the vector field $X_{j}$ has weight $-w_{j}$ in the local coordinates $(x_{1},\ldots,x_{n})$.

        \item[(iv)] For every multi-order $\alpha\in \N_{0}^{n}$, the differential operator $X^{\alpha}$ has weight $-\brak \alpha$ in the local coordinates 
                        $(x_{1},\ldots,x_{n})$.
    \end{enumerate}
\end{theorem}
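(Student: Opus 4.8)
The plan is to establish the cycle of implications $(i)\Rightarrow(iii)\Rightarrow(iv)\Rightarrow(i)$, together with the equivalence $(ii)\Leftrightarrow(iii)$, which is immediate. Throughout I would write $X_j=\sum_k b_{jk}(x)\partial_{x_k}$; the linear adaptation hypothesis means $b_{jk}(0)=\delta_{jk}$, and the elementary but crucial observation is that $b_{jk}=X_jx_k$. Since $b_{jj}(0)=1$, the coefficient $b_{jj}$ has weight $0=-w_j+w_j$, so each $X_j$ automatically has weight $\leq -w_j$ in these coordinates; thus condition $(iii)$ really amounts to the lower bound, namely that every coefficient $b_{jk}$ has weight $\geq w_k-w_j$. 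The equivalence $(ii)\Leftrightarrow(iii)$ is then just Proposition~\ref{prop:nilp-approx.vector-fields}(2) applied with $w=-w_j$, the model vector field $X_j^{(a)}$ being the leading homogeneous term of the expansion.

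For $(i)\Rightarrow(iii)$ I would use two facts about the order of functions. First, $X_j$ lowers order by at most $w_j$: if $f$ has order $N$ then, for any sequence $I$ with $\brak{I}<N-w_j$, one has $X_I(X_jf)(a)=X_{(I,j)}f(a)=0$, since $\brak{(I,j)}=\brak{I}+w_j<N$; hence $X_jf$ has order $\geq N-w_j$. Applying this with $f=x_k$, which has order $w_k$ by privilege, shows $b_{jk}=X_jx_k$ has order $\geq w_k-w_j$. Second, since we are in privileged coordinates, Lemma~\ref{lem-order-weight} identifies order with weight, so the weight of $b_{jk}$ is $\geq w_k-w_j$ for every $k$, which is precisely $(iii)$.

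For $(iii)\Rightarrow(iv)$ I would prove matching upper and lower bounds for the weight of $X^{\alpha}$. The lower bound $\geq -\brak{\alpha}$ comes from subadditivity of weight under composition: by Proposition~\ref{lem:Nilpotent.weight-diff-op} an operator $P$ has weight $\geq w$ if and only if $\delta_t^*P=\op{O}(t^w)$ in $\DO(U)$ as $t\to 0$, and since $\delta_t^*$ is multiplicative (it is conjugation by the pullback $u\mapsto u\circ\delta_t$) while composition of differential operators is continuous, $X^{\alpha}=X_1^{\alpha_1}\cdots X_n^{\alpha_n}$ has weight $\geq\sum_j\alpha_j(-w_j)=-\brak{\alpha}$. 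For the upper bound I would isolate the top-order coefficient: in top order the coefficients are not differentiated, so the coefficient of $\partial^{\alpha}$ in $X^{\alpha}$ has, at the symbol level, the value $\prod_j\bigl(\sum_k b_{jk}(x)\xi_k\bigr)^{\alpha_j}$, which at $x=0$ equals $\xi^{\alpha}$. Thus the $\partial^{\alpha}$-coefficient $a_{\alpha}(X^{\alpha})$ satisfies $a_{\alpha}(X^{\alpha})(0)=1$ and so has weight $0$; were the weight of $X^{\alpha}$ at least $-\brak{\alpha}+1$, this coefficient would be forced to have weight $\geq(-\brak{\alpha}+1)+\brak{\alpha}=1$, a contradiction. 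Hence the weight of $X^{\alpha}$ is exactly $-\brak{\alpha}$.

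Finally, for $(iv)\Rightarrow(i)$ I would read off the orders of the coordinate functions directly from the weights of the $X^{\alpha}$, carefully avoiding Lemma~\ref{lem-order-weight} (which presupposes privilege). Writing $X^{\alpha}=\sum_{\gamma}a_{\gamma}(X^{\alpha})\partial^{\gamma}$, one has $X^{\alpha}x_k(0)=a_{e_k}(X^{\alpha})(0)$; since weight $-\brak{\alpha}$ forces $a_{e_k}(X^{\alpha})$ to have weight $\geq-\brak{\alpha}+w_k$, this value vanishes whenever $\brak{\alpha}<w_k$. By the characterization of order in Proposition~\ref{TFAE}, $x_k$ then has order $\geq w_k$; and since $X_kx_k(0)=1$ with $\brak{e_k}=w_k$, its order is exactly $w_k$, giving $(i)$. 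The step I expect to be most delicate is the upper bound in $(iii)\Rightarrow(iv)$, i.e.\ showing that the leading term does not vanish; arguing through the surviving $\partial^{\alpha}$-coefficient, rather than by multiplying the asymptotic expansions of the $\delta_t^*X_j$, is what keeps this clean.
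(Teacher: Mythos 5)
Your proof is correct, and its skeleton coincides with the paper's: your (i)$\Rightarrow$(iii) (order-lowering by $X_j$ applied to $f=x_k$, then Lemma~\ref{lem-order-weight} to convert order into weight), your (iv)$\Rightarrow$(i) (reading $X^\alpha(x_k)(0)=a_{e_k}(X^\alpha)(0)$ from the weight hypothesis and concluding via Proposition~\ref{TFAE}, which is exactly what Remark~\ref{rmk:Privileged.order-xj} encapsulates), and your lower bound in (iii)$\Rightarrow$(iv) (multiplicativity of $\delta_t^*$ and continuity of composition) are the paper's arguments, the last being precisely the computation~(\ref{eq:nilp-approx.limit-Xalpha}). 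The one genuine local deviation is how you get sharpness in (iii)$\Rightarrow$(iv): the paper derives from the triangular form~(\ref{eq-homo-app}) of the model fields the explicit expression~(\ref{eq:nilp-approx.model-Xalpha}), $(X^{(a)})^\alpha=\partial_x^\alpha+\cdots\neq 0$, and concludes that the weight is exactly $-\brak{\alpha}$, whereas you bypass the model operators altogether by observing that the principal symbol of $X^\alpha$ at $x=0$ is $\xi^\alpha$, so $a_\alpha(X^\alpha)(0)=1$, which is incompatible with $X^\alpha$ having weight $\geq -\brak{\alpha}+1$; this buys a shorter verification that does not depend on the explicit formula for $(X^{(a)})^\alpha$. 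Likewise, your remark that $b_{jj}(0)=1$ forces the weight of $X_j$ to be $\leq -w_j$ is the same fact as the paper's evaluation $X_j^{(a)}(0)=\partial_j$ in~(\ref{eq:Char-priv.Xj(a)(0)}), but packaged so that it also automatically excludes the degenerate possibility $X_j^{(a)}=0$ in (ii), making your (ii)$\Leftrightarrow$(iii) step legitimately ``immediate'' as claimed. In short: same route, with a slightly slicker coefficient-level argument where the paper computes the model operator explicitly.
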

\begin{proof}
It follows from Proposition~\ref{prop:nilp-approx.vector-fields} that (iii) implies (ii). Conversely, suppose that (ii) holds.  As the coordinates $(x_{1},\ldots,x_{n})$ are linearly adapted, we know that $X_j(0)=\partial_j$ for $j=1,\ldots,n$. Combining this with~(\ref{eq:Anistropic.deltat*P}) and~(\ref{eq:char-priv.model-vector-field2}) we see that, as $t\rightarrow 0$, we have 
\begin{equation}
X_j^{(a)}(0) + \op{O}(t)=  t^{w_j} \delta_t^*X_j(0) =  t^{w_j}   t^{-w_j} \partial_j= \partial_j. 
\label{eq:Char-priv.Xj(a)(0)}
\end{equation}
Thus, $X_j^{(a)}$ agrees with $\partial_j$ at $x=0$, and hence this is a non-zero vector field. Combining this with~(\ref{eq:char-priv.model-vector-field2}) and Proposition~\ref{prop:nilp-approx.vector-fields} then shows that $X_j$ has weight~$-w_j$ for $j=1,\ldots, n$. Therefore, we see that (ii) and (iii) are equivalent.  

Let us now establish the equivalence between (i), (iii) and (iv). If $X=\sum_{j=1}^n a_{j}(x)\partial_{x_j}$ is a vector field on $U$, then, for $k=1,\ldots,n$, we have 
$X(x_{k})=\sum_{j=1}^n a_{j}(x)\partial_{x_j}(x_{k})=a_{k}(x)$. Thus,
\begin{equation}
 X_j = \sum_{1\leq k \leq n} X_j(x_k)\partial_{x_k}, \qquad j=1,\ldots, n. 
 \label{eq:anistropic.Xj-Xjxk}
\end{equation}
More generally, if $P=\sum_{1\leq |\alpha| \leq m} a_{\alpha}(x)\partial^{\alpha}_x$ is a differential operator near the origin such that $P(0)=0$, then $a_{\alpha}(x)=P(x^{\alpha})$ for $|\alpha|=1$. In particular, given any non-zero multi-order $\alpha \in \N_0^n$, the differential operator $X^\alpha$ takes the form,
\begin{equation*}
X^\alpha = \sum_{1\leq k \leq n} X^\alpha(x_k) \partial_{x_k} + \sum_{2 \leq |\beta| \leq |\alpha|} a_{\alpha\beta}(x) \partial^\beta, \qquad a_{\alpha\beta}(x)\in C^\infty(U).
\end{equation*}
If we further assume that each such differential operator $X^\alpha$ has weight~$-\brak\alpha$, then, for every $k=1,\ldots, n$, the function $X^\alpha(x_k)$ must have weight~$\geq w_k-\brak\alpha$, and so $X^\alpha(x_k)(0)=0$ whenever $\brak\alpha <w_k$. Using Remark~\ref{rmk:Privileged.order-xj} we then deduce that $(x_{1},\ldots,x_{n})$ are privileged coordinates. This shows that (iv) implies (i). 

Suppose now that $(x_{1},\ldots,x_{n})$ are privileged coordinates at $a$. Thus, each coordinate $x_k$ then has order $w_k$. It then follows from the definition of the order of a function that the function $X_j(x_k)$ has order~$\geq w_k-w_j$ for $j=1,\ldots, n$. As $(x_{1},\ldots,x_{n})$ are privileged coordinates, Lemma~\ref{lem-order-weight} ensures us that $X_j(x_k)$ has weight~$\geq w_k-w_j$ when $w_k\geq w_j$. Together with~(\ref{eq:anistropic.Xj-Xjxk}) this implies that $X_j$ has weight~$\geq -w_j$. As $X_j(0)=\partial_j$, it then follows that $X_j$ has weight $-w_j$ for $j=1,\ldots, n$. Therefore, we see that (i) implies (iii).

To complete the proof it remains to show that (iii) implies (iv).  Assume that $X_{j}$ has weight $-w_{j}$ for $j=1,\ldots,n$. Set $X_{j}=\partial_{x_j}+\sum_{k=1}^n b_{jk}(x)\partial_{x_k}$, 
with $b_{jk}(x)\in C^{\infty}(U)$. As the coordinates $(x_{1},\ldots,x_{n})$ are linearly adapted at $a$ to the 
$H$-frame $(X_{1},\ldots,X_{n})$. We have 
$b_{jk}(0)=0$ for $j,k=1,\ldots,n$. Combining this with~(\ref{eq:anisotropic.X(a)}) then shows that the model vector fields $X_{j}^{(a)}$ are given by
\begin{equation}\label{eq-homo-app}
    X_{j}^{(a)}= \partial_{x_j}+ \sum_{\substack{w_{j}+\brak\alpha=w_{k}\\ w_k>w_j}} \frac{1}{\alpha!}\partial^{\alpha}_x
    b_{jk}(0)x^{\alpha} \partial_{x_k}.
\end{equation}
More generally, given $\alpha \in \N_{0}^{n}$, the differential operator $(X^{(a)})^\alpha$ is of the form, 
\begin{equation}
 \left(X^{(a)}\right)^\alpha= \partial_x^\alpha + \sum_{\substack{\brak\alpha+\brak\beta=\brak\gamma\\ \brak\gamma>\brak \alpha}} b_{\alpha\beta\gamma}x^\beta \partial_x^\gamma, \qquad b_{\alpha\beta\gamma}\in \R.
 \label{eq:nilp-approx.model-Xalpha} 
\end{equation}
In particular, we see that $ \left(X^{(a)}\right)^\alpha\neq 0$. Set $m=|\alpha|$. Using~(\ref{eq-t0X}) we deduce that, as $t\rightarrow 0$ and in $\DO^{m}(U)$, we have
\begin{equation}
    t^{\brak\alpha}\delta_t^* X^{\alpha}= \left(t^{w_{1}}\delta_{t}^{*}X_{1}\right)^{\alpha_{1}}\cdots
    \left(t^{w_{n}}\delta_{t}^{*}X_{n}\right)^{\alpha_{n}}\longrightarrow \left(X_{1}^{(a)}\right)^{\alpha_{1}}\cdots
    \left(X_{n}^{(a)}\right)^{\alpha_{n}}. 
    \label{eq:nilp-approx.limit-Xalpha}
\end{equation}
As~$(X_{1}^{(a)})^{\alpha_{1}}\cdots(X_{n}^{(a)})^{\alpha_{n}}= (X^{(a)})^\alpha\neq 0$, this shows that $X^{\alpha}$ has weight $-\brak\alpha$. This proves that (iii) implies (iv). The proof is complete. 
\end{proof}

\begin{remark}
 The fact that we have the asymptotic expansions of the form~(\ref{eq:char-priv.model-vector-field2}) in privileged coordinates is well known (see~\cite{BS:SIAMJCO90}; see also~\cite{Be:Tangent, Je:Brief14}). 
\end{remark}

Goodman~\cite{Go:LNM76} defined privileged coordinates as linearly adapted local coordinates satisfying~(\ref{eq:char-priv.model-vector-field2}). Therefore, Theorem~\ref{thm-pri-equiv} implies that Goodman's definition is equivalent to ours. There is a number of constructions of privileged coordinates in the sense of~\cite{Go:LNM76} (see, e.g.,~\cite{ABB:SRG, BS:SIAMJCO90, Go:LNM76, Gr:CC, He:SIAMR, MM:JAM00, Me:CPDE76, Mo:AMS02, RS:ActaMath76}). We thus arrive at the following statement.   

\begin{corollary}\label{cor:nilp-approx.priv-weight}
 All privileged coordinates in the sense of~\cite{Go:LNM76} are privileged coordinates in the sense of Definition~\ref{def:Privileged-coordinates}. This includes all the aforementioned examples.   
\end{corollary}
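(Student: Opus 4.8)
The plan is to obtain the corollary as an immediate consequence of the equivalence of conditions (i) and (ii) in Theorem~\ref{thm-pri-equiv}. As recalled just above the statement, Goodman's privileged coordinates are by definition the linearly adapted local coordinates for which, for each $j=1,\ldots,n$ and as $t\rightarrow 0$,
\begin{equation*}
 t^{w_j}\delta_t^* X_j = X_j^{(a)} + \op{O}(t) \qquad \text{in $\cX(U)$},
\end{equation*}
with $X_j^{(a)}$ homogeneous of degree $-w_j$. This is verbatim condition~(ii) of Theorem~\ref{thm-pri-equiv}. Since that theorem is established under the standing assumption that the coordinates are linearly adapted to the $H$-frame $(X_1,\ldots,X_n)$---an assumption that is also built into Goodman's definition (\emph{cf.}\ Definition~\ref{def-lin-adp})---I may invoke the implication (ii)$\Rightarrow$(i). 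This shows at once that every system of Goodman-privileged coordinates is privileged in the sense of Definition~\ref{def:Privileged-coordinates}, which is the first assertion.

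For the second assertion, that this includes all the aforementioned examples, I would note that each of the constructions in the cited references produces, by design, privileged coordinates in Goodman's sense: in each case one obtains linearly adapted coordinates in which the rescaled vector fields $t^{w_j}\delta_t^*X_j$ converge to non-zero $(-w_j)$-homogeneous limits, i.e.\ the displayed asymptotic holds. Applying the first part of the corollary to each such family then places all of them within Definition~\ref{def:Privileged-coordinates}, with no further work.

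The step I expect to carry the actual content is not analytic but definitional: one must be sure that the convergence in Goodman's formulation is the same as the convergence in~(\ref{eq:char-priv.model-vector-field2}), namely in the $C^\infty$-topology of $\cX(U)$ rather than merely pointwise in $x$. This is precisely the kind of distinction the paper takes pains to make explicit. Should a given formulation assert only pointwise convergence, the identification with condition~(ii) is not immediate; in that case I would first exploit the equivalence between pointwise and $C^\infty$ asymptotics, for which Lemma~\ref{lem:anisotropic.weight} shows (for scalar coefficients) that a pointwise bound $f(t\cdot x)=\op{O}(t^w)$ for all $x$ already forces $f$ to have weight~$\geq w$. Applying this to the coefficients of each $X_j$ (equivalently, the vector-field reformulation underlying Proposition~\ref{prop:nilp-approx.vector-fields}) upgrades a pointwise Goodman-type hypothesis to condition~(iii) of Theorem~\ref{thm-pri-equiv}, after which the equivalence (iii)$\Leftrightarrow$(i) finishes the argument. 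Either way, once the mode of convergence is pinned down the corollary follows directly from Theorem~\ref{thm-pri-equiv}.
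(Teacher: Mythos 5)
Your proof is correct and follows essentially the same route as the paper, which derives the corollary directly from the equivalence (i)$\Leftrightarrow$(ii) of Theorem~\ref{thm-pri-equiv} after noting that Goodman's definition is exactly linearly adapted coordinates satisfying~(\ref{eq:char-priv.model-vector-field2}). Your additional remark on upgrading pointwise convergence to $C^\infty$-convergence via Lemma~\ref{lem:anisotropic.weight} is a sound precaution but not needed beyond what the theorem already provides.
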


\subsection{Getting all privileged coordinates at a point} We shall now explain how Theorem~\ref{thm-pri-equiv}
 enables us to get \emph{all} systems of privileged coordinates at a given point. 
 
 Bearing in mind Definition~\ref{eq:anisotopic.w-homogeneous}, we have the following result. 

\begin{proposition}\label{prop:char-priv.phi-hatphi-Ow}
 Let $(x_1,\ldots, x_n)$ be privileged coordinates at $a$ adapted to the $H$-frame $(X_1,\ldots, X_n)$. Then a change of coordinates $x\rightarrow \phi(x)$ produces privileged coordinates at $a$ adapted to $(X_1,\ldots, X_n)$ if and only if we have 
\begin{equation}
 \phi(x)=\hat{\phi}(x) +\Ow\left(\|x\|^{w+1}\right)  \qquad \text{near $x=0$},
 \label{eq:char-priv-coord.phi-x-Ow}
\end{equation}
where $\hat{\phi}(x)$ is a $w$-homogeneous polynomial diffeomorphism such that $\hat{\phi}'(0)=\op{id}$. 
\end{proposition}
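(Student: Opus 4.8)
The plan is to verify both defining conditions of privileged coordinates (Definition~\ref{def:Privileged-coordinates}) for the new system $y=\phi(x)$ --- linear adaptedness and the requirement that each coordinate function have order $w_k$ --- after trading the intrinsic notion of order for the coordinate-dependent notion of weight via Lemma~\ref{lem-order-weight}. This trade is legitimate precisely because $(x_1,\ldots,x_n)$ are already privileged, so that in these coordinates the order and the weight of a smooth function near $a$ coincide. Throughout I write $\phi=(\phi_1,\ldots,\phi_n)$ and use Remark~\ref{rmk:Privileged-w-homogeneous-map}, by which $\hat\phi$ is $w$-homogeneous exactly when each $\hat\phi_k$ is homogeneous of degree $w_k$.

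For the direct implication, suppose $\phi$ produces privileged coordinates adapted to $(X_1,\ldots,X_n)$, so $\phi(0)=0$. By Definition~\ref{def:Privileged-coordinates}(ii) and Lemma~\ref{lem-order-weight}, each $\phi_k$ has weight $w_k$ in the $x$-coordinates. The anisotropic Taylor formula (Lemma~\ref{lem:nilpotent-weighted-Taylor}) with $N=w_k+1$, after dropping the derivatives $\partial^\alpha\phi_k(0)$ with $\brak\alpha<w_k$ (which vanish), gives $\phi_k=\hat\phi_k+R_k$, where $\hat\phi_k(x)=\sum_{\brak\alpha=w_k}\frac{1}{\alpha!}\partial^\alpha\phi_k(0)\,x^\alpha$ is homogeneous of degree $w_k$ and $R_k$ has weight $\geq w_k+1$. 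By Lemma~\ref{lem-eq-we}, $R=(R_1,\ldots,R_n)=\Ow(\|x\|^{w+1})$, whence $\phi=\hat\phi+\Ow(\|x\|^{w+1})$ with $\hat\phi$ a $w$-homogeneous polynomial map. Linear adaptedness is the statement $X_j(0)=\partial_{y_j}$; since $X_j(0)=\partial_{x_j}$ in the old coordinates, the chain rule at the origin identifies $X_j(0)$ in the new coordinates with the $j$-th column of $\phi'(0)$, forcing $\phi'(0)=\op{id}$. Because $\hat\phi'(0)$ is carried by the blocks $w_j=w_k$ and the remainder contributes to $\phi'(0)$ only through blocks $w_j>w_k$, comparison with $\phi'(0)=\op{id}$ yields $\hat\phi'(0)=\op{id}$. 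Finally, a $w$-homogeneous polynomial map with invertible differential at the origin is a diffeomorphism of $\R^n$, by the argument in Proposition~\ref{lem:Carnot-coord.inverse-Ow}.

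For the converse, assume $\phi=\hat\phi+\Ow(\|x\|^{w+1})$ with $\hat\phi$ a $w$-homogeneous polynomial diffeomorphism satisfying $\hat\phi'(0)=\op{id}$. Then $\phi(0)=0$, and one checks that $\phi'(0)=\op{id}$ --- this is where the normalization $\hat\phi'(0)=\op{id}$ enters --- so that $\phi$ is a genuine change of coordinates centered at $a$ whose new coordinates are linearly adapted to $(X_1,\ldots,X_n)$. For the order condition, note that $\hat\phi'(0)=\op{id}$ means the coefficient of $x_k$ in $\hat\phi_k$ equals $1$, so $\hat\phi_k$ has weight exactly $w_k$, while $R_k$ has weight $\geq w_k+1$ by Lemma~\ref{lem-eq-we}; adding a term of strictly higher weight does not change the weight, so $\phi_k=\hat\phi_k+R_k$ has weight $w_k$, and hence order $w_k$ by Lemma~\ref{lem-order-weight}. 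Both conditions of Definition~\ref{def:Privileged-coordinates} therefore hold.

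The content of the argument is conceptual rather than computational: weight is extrinsic, attached to the fixed coordinates $x$, whereas order is intrinsic, and everything hinges on their coincidence in privileged coordinates (Lemma~\ref{lem-order-weight}); once this is in hand, the order condition converts into~(\ref{eq:char-priv-coord.phi-x-Ow}) through the weight-versus-$\Ow$ dictionary of Lemma~\ref{lem-eq-we}. I expect the step requiring the most care to be the bookkeeping of the differential at the origin: linear adaptedness of the new coordinates amounts to $\phi'(0)=\op{id}$, and it is the normalization $\hat\phi'(0)=\op{id}$ that governs the linear part of $\phi$, so one must track precisely how the $w$-homogeneous leading term and the $\Ow(\|x\|^{w+1})$ remainder contribute, block by block in the weight grading, to $\phi'(0)$.
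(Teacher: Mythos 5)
Your forward implication is correct and is in substance the paper's own argument: privilegedness of the new coordinates plus Lemma~\ref{lem-order-weight} gives that each $\phi_k$ has weight $w_k$ in the $x$-coordinates, and extracting the degree-$w_k$ homogeneous part (you do this via the anisotropic Taylor formula of Lemma~\ref{lem:nilpotent-weighted-Taylor}; the paper does the same computation through Proposition~\ref{lem:multi.Thetat} and Lemma~\ref{lem-eq-we}), which yields~(\ref{eq:char-priv-coord.phi-x-Ow}); your block-by-block identification $\hat{\phi}'(0)=\op{id}$ from linear adaptedness matches the paper's~(\ref{eq:char-priv-coord.phidj}). Your converse, however, takes a genuinely different route. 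The paper never reverses Lemma~\ref{lem-order-weight}: it controls $\phi^{-1}$ by the inverse-map asymptotics of Proposition~\ref{lem:Carnot-coord.inverse-Ow}, computes $t^{w_j}\delta_t^*(\phi_*X_j)\rightarrow \hat{\phi}_*X_j^{(a)}$, and concludes via the vector-field characterization of Theorem~\ref{thm-pri-equiv}. You instead verify condition~(ii) of Definition~\ref{def:Privileged-coordinates} directly, converting ``weight $w_k$'' back into ``order $w_k$''. This is shorter and avoids the inverse-map machinery, but Lemma~\ref{lem-order-weight} as stated goes from order to weight only; the reverse implication you invoke is true but deserves a line of justification: a function of finite weight has a finite order (express $\partial^\alpha$ in terms of the operators $X_I$, which is possible since the $X_j$ form a frame), and then Lemma~\ref{lem-order-weight} forces that order to equal the weight.

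The genuine gap is precisely the step you deferred: in the converse, ``one checks that $\phi'(0)=\op{id}$'' cannot be checked, because it is false in general. The hypothesis~(\ref{eq:char-priv-coord.phi-x-Ow}) only forces $\partial_j(\phi_k-\hat{\phi}_k)(0)=0$ when $w_j\leq w_k$: a linear term $x_j$ with $w_j\geq w_k+1$ has weight $w_j\geq w_k+1$ and is $\op{O}(\|x\|^{w_k+1})$, hence is absorbed by the remainder $\Ow(\|x\|^{w+1})$. Concretely, take $n=2$ with weights $(1,2)$, $X_j=\partial_j$, and the standard coordinates (which are privileged at $a=0$). The map $\phi(x)=(x_1+x_2,\,x_2)$ satisfies~(\ref{eq:char-priv-coord.phi-x-Ow}) with $\hat{\phi}=\op{id}$, since $\phi-\op{id}=(x_2,0)$ and $x_2=\op{O}(\|x\|^{2})=\op{O}(\|x\|^{w_1+1})$; yet in the new coordinates $X_2(0)=\partial_1+\partial_2\neq\partial_2$, so linear adaptedness fails and the coordinates are not privileged (note that the order condition~(ii) does hold here, confirming that your weight-to-order argument is sound and that the failure sits entirely in condition~(i)). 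Thus linear adaptedness does not follow from~(\ref{eq:char-priv-coord.phi-x-Ow}); one must additionally require the remainder to have vanishing linear part, i.e., $\phi'(0)=\op{id}$. To be fair, the paper's own proof asserts the identical implication (``the asymptotics implies $\phi'(0)=\op{id}$'') with no more justification, so you have reproduced a soft spot of the paper rather than introduced a new one --- but since you explicitly flagged this as the step requiring the most care, the honest resolution is to add $\phi'(0)=\op{id}$ to the characterization (or restrict the remainder class accordingly), not to claim it follows from the stated hypothesis.
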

\begin{proof}
 Let $U$ be the range of the coordinates $(x_1,\ldots, x_n)$. We may assume that $U$ agrees with the domain of $\phi$. We also set $V=\phi(U)$ and $y=(y_1, \ldots,y_n)=\phi(x)$. Suppose that $(y_1, \ldots,y_n)$ are privileged coordinates at $a$ adapted to $(X_1,\ldots, X_n)$, so that each coordinate $y_k=\phi_k(x)$ has order $w_k$. As $(x_1, \ldots, x_n)$ are privileged  coordinates as well, Lemma~\ref{lem-order-weight} then ensures us that $\phi_k(x)$ has weight $w_k$ in the coordinates $(x_1, \ldots, x_n)$ for $k=1,\ldots, n$. By Lemma~\ref{lem-eq-we} this implies that $t^{-1}\cdot \phi(t\cdot x)=\op{O}(1)$ in $C^\infty(U,\R^n)$ as $t\rightarrow 0$. Combining this with Proposition~\ref{lem:multi.Thetat} shows that, as $t \rightarrow 0$, we have  
\begin{equation*}
t^{-1}\cdot \phi(x) =\hat{\phi}(x) + \op{O}(t) \qquad \text{in $C^\infty(U,\R^n)$},
\end{equation*}
where $\hat\phi(x)$ is a $w$-homogeneous polynomial map. By Lemma~\ref{lem-eq-we} this implies that
\begin{equation}
 \phi(x) =\hat{\phi}(x) +  \Ow\left(\|x\|^{w+1}\right)  \qquad \text{near $x=0$}. 
 \label{eq:char-priv-coord.phi-hatphi-Ow}
\end{equation}
This also implies that $\phi(x) = \hat{\phi}(x)+\op{O}(|x|^2)$ near $x=0$, and so $\phi(x)$ and $\hat{\phi}(x)$ have the same differential at $x=0$. Moreover, as both $(x_1,\ldots, x_n)$ and $(y_1,\ldots, y_n)$ are linearly adapted coordinates, for $j=1,\ldots, n$, we have 
\begin{equation}
 \partial_j= \phi_*X_j(y)_{|y=0} = \phi'(0)[X_j(x)_{|x=0}] = \phi'(0)\partial_j. 
 \label{eq:char-priv-coord.phidj}
\end{equation}
Thus, we see that $\hat\phi'(0)=\phi'(0)=\op{id}$. 

Conversely, suppose that $\phi(x)$ has a behavior of the form~(\ref{eq:char-priv-coord.phi-hatphi-Ow}) near $x=0$, where  $\hat{\phi}(x)$ is a $w$-homogeneous polynomial diffeomorphism such that $\hat{\phi}'(0)=\op{id}$. As above, the asymptotics~(\ref{eq:char-priv-coord.phi-hatphi-Ow}) implies that $\phi'(0)=\hat{\phi}(0)=\op{id}$, and so in the same way as in~(\ref{eq:char-priv-coord.phidj}), for $j=1,\ldots,n$, we have $ \phi_*X_j(y)_{|y=0} = \phi'(0)[X_j(x)_{|x=0}]= X_j(x)_{|x=0}=\partial_j$. Therefore, the coordinates $(y_1,\ldots, y_n)$ are linearly adapted to  $(X_1,\ldots, X_n)$. Moreover, by Proposition~\ref{lem:Carnot-coord.inverse-Ow} the asymptotics~(\ref{eq:char-priv-coord.phi-hatphi-Ow}) also implies that $\hat{\phi}(x)$ is a diffeomorphism and $ \phi^{-1}(x) =\hat{\phi}^{-1}(x) +  \Ow\left(\|x\|^{w+1}\right)$ near $x=0$. Combining this with Lemma~\ref{lem-eq-we} we see that, as $t\rightarrow 0$, we have 
\begin{equation}
t^{-1}\cdot \phi^{-1}(x) =\hat{\phi}^{-1}(x) + \op{O}(t) \qquad \text{in $C^\infty(V,\R^n)$}.
\label{eq:char-priv-coord.phi-1tx}
\end{equation}
In addition, as $(x_1,\ldots, x_n)$ are privileged coordinates, we know by Theorem~\ref{thm-pri-equiv} that, for $j=1,\ldots, n$ and as 
$t\rightarrow 0$, we have
\begin{equation}
 t^{w_j} \delta_t^* X_j(x)= X_j^{(a)}(x)+\op{O}(t) \qquad \text{in $\cX(U)$}, 
 \label{eq:char-priv-coord.phidtXj} 
\end{equation}
 where the vector field $X^{(a)}$ is homogeneous of degree~$-w_j$. 

For $t\in \R^*$ set $\phi_t(x)=t^{-1}\cdot \phi(t\cdot x)$. Combining~(\ref{eq:char-priv-coord.phi-1tx}) and~(\ref{eq:char-priv-coord.phidtXj}) shows that, as $t\rightarrow 0$ and in $\cX(V)$, we have 
\begin{align}
 t^{w_j} \delta_t^* (\phi_*X_j)(y)& = t^{w_j} \left( \phi_t\right)_* \delta_t^*X_j(y)\nonumber \\
 & = \left( \phi_t^{-1}\right)'(y) \left[ (t^{w_j} \delta_t^*X_j)\left(\phi_t^{-1}(y)\right)\right] \nonumber\\
 & =  \left( \hat{\phi}^{-1}\right)'(y) \left[ X_j^a\left(\hat{\phi}^{-1}(y)\right)\right] +\op{O}(t)  \label{eq:char-priv.dtphiXj} \\
 & =\hat{\phi}_*X_j^{(a)}(y) +\op{O}(t).\nonumber
\end{align}
It is immediate that $ \hat{\phi}_*X_j^{(a)}(y)$ is homogeneous of degree $-w_j$. It then follows from Theorem~\ref{thm-pri-equiv} that the change of coordinates $x\rightarrow \phi(x)$ provides us with privileged coordinates. The proof is complete. 
\end{proof}

Combining Proposition~\ref{prop:char-priv.phi-hatphi-Ow} with Proposition~\ref{prop:privileged.psi-privileged} we then arrive at the following statement. 

\begin{corollary}\label{cor:privileged.all-privileged-coord}
 A system of local coordinates is a system of privileged coordinates at $a$ adapted to $(X_1,\ldots, X_n)$ if and only if it arises from a local chart of the form $\phi \circ \psi_{\kappa(a)} \circ \kappa$, where $\kappa$ is  local chart near $a$ such that $\kappa(a)=0$, the map $\psi_{\kappa(a)}$ is as in Definition~\ref{def-psia}, and $\phi(x)$ is a diffeomorphism near the origin $0\in \R^n$ satisfying~(\ref{eq:char-priv-coord.phi-x-Ow}).  
\end{corollary}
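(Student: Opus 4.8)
The plan is to obtain this corollary as a direct synthesis of Proposition~\ref{prop:privileged.psi-privileged} and the characterization in Proposition~\ref{prop:char-priv.phi-hatphi-Ow}, proving the two implications of the stated equivalence separately. Throughout, I regard a ``system of local coordinates'' near $a$ as a local chart sending $a$ to the relevant point of $\R^n$, and I keep track of the fact that the map $\psi_{\kappa(a)}$ of Definition~\ref{def-psia} is built out of the coordinate expression of the $H$-frame $(X_1,\ldots,X_n)$ in whatever initial chart is used.

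For the ``if'' direction I would start from a chart of the form $\phi\circ\psi_{\kappa(a)}\circ\kappa$, where $\kappa(a)=0$, $\psi_{\kappa(a)}$ is as in Definition~\ref{def-psia}, and $\phi$ satisfies~(\ref{eq:char-priv-coord.phi-x-Ow}). First I note that $\kappa$ provides local coordinates centered at $a$ in which the $H$-frame is expressed as vector fields near the origin. Proposition~\ref{prop:privileged.psi-privileged} then guarantees that $\psi_{\kappa(a)}\circ\kappa$ yields privileged coordinates at $a$ adapted to $(X_1,\ldots,X_n)$. Since $\phi$ satisfies~(\ref{eq:char-priv-coord.phi-x-Ow}), the ``if'' part of Proposition~\ref{prop:char-priv.phi-hatphi-Ow} shows that post-composing with $\phi$ preserves the privileged property, so $\phi\circ\psi_{\kappa(a)}\circ\kappa$ produces privileged coordinates, as required.

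For the ``only if'' direction I would begin with an arbitrary system of privileged coordinates at $a$ adapted to $(X_1,\ldots,X_n)$, arising from a chart $\chi$ with $\chi(a)=0$. The key move is to take $\kappa=\chi$ itself; then $\kappa(a)=0$, and the map $\psi_{\kappa(a)}$ of Definition~\ref{def-psia} is built from the very coordinates furnished by $\chi$. By Proposition~\ref{prop:privileged.psi-privileged}, the chart $\psi_{\kappa(a)}\circ\kappa$ gives privileged coordinates. Now both $\chi$ and $\psi_{\kappa(a)}\circ\kappa$ are privileged, and they differ by the change of coordinates $\psi_{\kappa(a)}^{-1}$, since $\psi_{\kappa(a)}^{-1}\circ(\psi_{\kappa(a)}\circ\kappa)=\chi$. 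Applying the ``only if'' part of Proposition~\ref{prop:char-priv.phi-hatphi-Ow} to this change of coordinates, with $\psi_{\kappa(a)}\circ\kappa$ playing the role of the base privileged system, forces $\phi:=\psi_{\kappa(a)}^{-1}$ to satisfy~(\ref{eq:char-priv-coord.phi-x-Ow}). With this choice of $\kappa$ and $\phi$ we recover $\phi\circ\psi_{\kappa(a)}\circ\kappa=\chi$, exhibiting the given privileged chart in the desired form.

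The argument is essentially bookkeeping once the two prior propositions are in hand, so the main difficulty is conceptual rather than computational: recognizing that for the necessity direction one may simply feed the given privileged chart back into the $\psi$-construction, which pins down $\phi$ as $\psi_{\kappa(a)}^{-1}$ and lets the ``only if'' half of Proposition~\ref{prop:char-priv.phi-hatphi-Ow} supply the $\Ow$-behavior for free. The only points requiring care are the composition order and the observation that $\psi_{\kappa(a)}$ depends on $\kappa$ solely through the coordinate expression of the $H$-frame, so that the symbol $\psi_{\kappa(a)}$ is unambiguous and consistent in both directions.
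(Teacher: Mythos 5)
Your proof is correct and follows exactly the route the paper intends: the corollary is stated there as an immediate consequence of combining Proposition~\ref{prop:privileged.psi-privileged} (the chart $\psi_{\kappa(a)}\circ\kappa$ is privileged) with both directions of Proposition~\ref{prop:char-priv.phi-hatphi-Ow}, which is precisely your two-implication argument. Your choice $\kappa=\chi$, $\phi=\psi_{\kappa(a)}^{-1}$ in the necessity direction is a clean and valid way to make the decomposition explicit.
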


\section{Nilpotent Approximations of a Carnot Manifold}\label{sec:Nilpotent-approx}
In this section, after recalling the nilpotent approximation of a Carnot manifold at a given point in privileged coordinates, we shall determine \emph{all} the nilpotent approximations that occur at a given point. Incidentally, this will clarify the dependance of the nilpotent approximation on the choice of the privileged coordinates. 

Throughout this section we let $(X_{1},\ldots,X_{n})$ be an $H$-frame near a given point $a\in M$. 

\subsection{Nilpotent approximation}\label{sec:extrinsic}
We shall now recall how the anisotropic approximation of vector fields in privileged coordinates described in 
the previous section leads us to the so-called nilpotent approximation. In particular, we will recover 
the tangent groups of Bella\"iche~\cite{Be:Tangent} and Gromov~\cite{Gr:CC} (see 
also~\cite{ABB:SRG, Go:LNM76, Je:Brief14, MM:JAM00, Me:CPDE76, Ro:INA, Mo:AMS02, RS:ActaMath76}). As opposed to the tangent group in the sense of 
Definition~\ref{def:Tangent.Tangent-group} this construction is \emph{extrinsic} since it depends on the choice of privileged coordinates. 

We shall work in privileged coordinates centered at $a$ and adapted to the $H$-frame $(X_{1},\ldots.,X_{n})$. 
For $j=1,\ldots,n$, we denote by $X_{j}^{(a)}$ the model vector 
field~(\ref{def:Privileged-coordinates.model-vector-field}) in these coordinates. 

\begin{definition} $\tilde{\fg}^{(a)}$ is the subspace of $T\R^{n}$ spanned by the model vector fields
    $X^{(a)}_{j}$, $j=1,\ldots,n$.
\end{definition}

For $w=1,\ldots,r$, let $\tilde{\fg}^{(a)}_{w}$ be the subspace of $\tilde{\fg}^{(a)}$ spanned by the vector fields $X_{j}^{(a)}$,
with $w_{j}=w$. This provides us with the grading,
\begin{equation}\label{eq-g-grading}
    \tilde{\fg}^{(a)}=\fg_{1}^{(a)}\oplus \cdots \oplus \fg_{r}^{(a)}.
\end{equation}
Moreover, as $(X_{1},\ldots,X_{n})$ is an $H$-frame, it follows from Remark~\ref{rmk:Carnot-mfld.brackets-H-frame} 
that there are smooth functions $L_{ij}^{k}(x)$, $w_{k}\leq w_{i}+w_{j}$, satisfying~(\ref{eq:Carnot-mfld.brackets-H-frame}). 

\begin{lemma}\label{lem-XX}
    For $i,j=1,\ldots,n$, we have
 \begin{equation}
     [X_{i}^{(a)}, X_{j}^{(a)}]=\left\{
     \begin{array}{cl}
   {\displaystyle \sum_{w_{k}=w_{i}+w_{j}}L_{ij}^{k}(a)X_{k}^{(a)}} & \text{if $w_{i}+w_{j}\leq  r$},   \\
         0 & \text{otherwise}.  \\
     \end{array}\right.
     \label{eq:Nilpotent.structure-constants-Xj(a)}
 \end{equation}
\end{lemma}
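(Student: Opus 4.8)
The plan is to rescale the structure equation~(\ref{eq:Carnot-mfld.brackets-H-frame}) by the dilations $\delta_t$ and then let $t\to 0$, exploiting that in privileged coordinates each $X_k$ has weight $-w_k$. Two ingredients make this work. First, for $t\neq 0$ the pullback $\delta_t^*$ is an algebra homomorphism on differential operators and hence preserves Lie brackets of vector fields, so that $\delta_t^*[X_i,X_j]=[\delta_t^*X_i,\delta_t^*X_j]$ and $\delta_t^*(fX_k)=(f\circ\delta_t)\,\delta_t^*X_k$. Second, Theorem~\ref{thm-pri-equiv} together with~(\ref{eq:char-priv.model-vector-field2}) gives $t^{w_k}\delta_t^*X_k=X_k^{(a)}+\op{O}(t)$ in $\cX(U)$ as $t\to 0$. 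Since the Lie bracket and the multiplication $(f,X)\mapsto fX$ are continuous bilinear maps for the $C^\infty$-topology, I will be free to pass to the limit inside brackets and products.

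First I would apply $\delta_t^*$ to~(\ref{eq:Carnot-mfld.brackets-H-frame}) and multiply by $t^{w_i+w_j}$. On the left this yields $[\,t^{w_i}\delta_t^*X_i,\,t^{w_j}\delta_t^*X_j\,]$, which converges to $[X_i^{(a)},X_j^{(a)}]$ as $t\to0$ in $\cX(U)$. On the right, writing $t^{w_i+w_j}=t^{\,w_i+w_j-w_k}\,t^{w_k}$ and using $\delta_t^*(L_{ij}^k X_k)=(L_{ij}^k\circ\delta_t)\,\delta_t^*X_k$, the $k$-th summand becomes $t^{\,w_i+w_j-w_k}\,L_{ij}^k(t\cdot x)\,\bigl(t^{w_k}\delta_t^*X_k\bigr)$. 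By Proposition~\ref{prop:anisotropic.ftx} one has $L_{ij}^k(t\cdot x)=L_{ij}^k(0)+\op{O}(t)=L_{ij}^k(a)+\op{O}(t)$ in $C^\infty(U)$, while $t^{w_k}\delta_t^*X_k\to X_k^{(a)}$. Since the summation in~(\ref{eq:Carnot-mfld.brackets-H-frame}) runs over $w_k\le w_i+w_j$, the exponent $w_i+w_j-w_k$ is nonnegative, and it vanishes precisely when $w_k=w_i+w_j$. Hence the terms with $w_k<w_i+w_j$ carry a positive power of $t$ and disappear in the limit, leaving exactly $\sum_{w_k=w_i+w_j}L_{ij}^k(a)X_k^{(a)}$. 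This establishes~(\ref{eq:Nilpotent.structure-constants-Xj(a)}) when $w_i+w_j\le r$.

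For the case $w_i+w_j>r$ I would note that $[X_i,X_j]$ is still a section of $H_r=TM$, so it expands as $\sum_{k=1}^n L_{ij}^k(x)X_k$ with every index satisfying $w_k\le r<w_i+w_j$. Repeating the rescaling argument verbatim, every exponent $w_i+w_j-w_k$ is now strictly positive, so the entire right-hand side tends to $0$ and therefore $[X_i^{(a)},X_j^{(a)}]=0$. The only point requiring genuine care is that all these limits must be taken in the $C^\infty$-topology of $\cX(U)$ rather than merely pointwise: I must invoke the continuity of the bracket and of multiplication by a function, together with the fact (supplied by Theorem~\ref{thm-pri-equiv} and Proposition~\ref{prop:anisotropic.ftx}) that the asymptotics $t^{w_k}\delta_t^*X_k=X_k^{(a)}+\op{O}(t)$ and $L_{ij}^k\circ\delta_t=L_{ij}^k(a)+\op{O}(t)$ hold in those topologies, so that the products and brackets converge to the products and brackets of their limits.
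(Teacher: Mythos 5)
Your proof is correct and follows essentially the same route as the paper's: rescaling the structure relation~(\ref{eq:Carnot-mfld.brackets-H-frame}) by $t^{w_i+w_j}\delta_t^*$, using $t^{w_k}\delta_t^*X_k=X_k^{(a)}+\op{O}(t)$ in $\cX(U)$ together with the continuity of the bracket and of multiplication to pass to the limit, and observing that the terms with $w_k<w_i+w_j$ carry a positive power of $t$. Your explicit treatment of the case $w_i+w_j>r$ (expanding $[X_i,X_j]$ over all $k$ with $w_k\leq r$) just spells out what the paper's proof leaves implicit.
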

\begin{proof}In $\cX(U)$ we have
    \begin{equation*}
     [X_{i}^{(a)}, X_{j}^{(a)}]=\lim_{t\rightarrow 0} [
     t^{w_{i}}\delta_{t}^{*}X_{i},t^{w_{j}}\delta_{t}^{*}X_{j}]=\lim_{t\rightarrow 0}
     t^{w_{i}+w_{j}}\delta_{t}^{*}[X_{i},X_{j}].
    \end{equation*}Combining this with~(\ref{eq:Carnot-mfld.brackets-H-frame}) we get
    \begin{equation*}
      [X_{i}^{(a)}, X_{j}^{(a)}]=\sum_{w_{k}\leq w_{i}+w_{j}}\lim_{t\rightarrow 0}
      t^{w_{i}+w_{j}}\delta_{t}^{*}(L_{ij}^{k}X_{k})=  \sum_{w_{k}\leq w_{i}+w_{j}}  L_{ij}^{k}(a)
     \lim_{t\rightarrow 0} t^{w_{i}+w_{j}}\delta_{t}^{*}X_{k}.
    \end{equation*}
    Note that $\lim_{t\rightarrow 0} t^{w_{i}+w_{j}}\delta_{t}^{*}X_{k}=X_{k}^{a}$ if $w_{k}=w_{i}+w_{j}$ and
    $\lim_{t\rightarrow 0} t^{w_{i}+w_{j}}\delta_{t}^{*}X_{k}=0$ if $w_{k}<w_{i}+w_{j}$. Therefore, $[X_{i}^{(a)},
    X_{j}^{(a)}]$ is equal to $\sum_{w_{k}=w_{i}+w_{j}}L_{ij}^{k}(a)X_{k}^{(a)}$ if $w_{i}+w_{j}\leq
    r$ and is zero otherwise. The proof is complete.
\end{proof}

As an immediate consequence of Lemma~\ref{lem-XX} we obtain the following result.
\begin{proposition}
With respect to the Lie bracket of vector fields and the grading~(\ref{eq-g-grading}) the vector space $\tilde{\fg}^{(a)}$ is a 
graded nilpotent Lie algebra of step $r$
\end{proposition}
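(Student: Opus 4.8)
The plan is to deduce everything directly from Lemma~\ref{lem-XX}. Since the Lie bracket of vector fields already turns the space $\cX(U)$ of all smooth vector fields on $U$ into a Lie algebra, antisymmetry and the Jacobi identity are inherited automatically; thus the only points requiring verification are that $\tilde{\fg}^{(a)}$ is closed under the bracket and that the grading~(\ref{eq-g-grading}) is compatible with it in the sense of~(\ref{eq:Carnot.grading-bracket}).

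First I would establish closure. As $\tilde{\fg}^{(a)}$ is spanned by the model vector fields $X_{1}^{(a)},\ldots,X_{n}^{(a)}$, bilinearity of the bracket reduces the question to the brackets $[X_{i}^{(a)},X_{j}^{(a)}]$ of generators. By~(\ref{eq:Nilpotent.structure-constants-Xj(a)}) each such bracket is either zero or the linear combination $\sum_{w_{k}=w_{i}+w_{j}}L_{ij}^{k}(a)X_{k}^{(a)}$ of generators, and hence lies in $\tilde{\fg}^{(a)}$. Therefore $\tilde{\fg}^{(a)}$ is a Lie subalgebra of $\cX(U)$.

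Next I would check the grading compatibility. Fix $w,w'\in\{1,\ldots,r\}$. Every element of $\fg_{w}^{(a)}$ is a linear combination of the generators $X_{i}^{(a)}$ with $w_{i}=w$, and likewise for $\fg_{w'}^{(a)}$, so by bilinearity it suffices to examine $[X_{i}^{(a)},X_{j}^{(a)}]$ with $w_{i}=w$ and $w_{j}=w'$. If $w+w'\leq r$, Lemma~\ref{lem-XX} gives $[X_{i}^{(a)},X_{j}^{(a)}]=\sum_{w_{k}=w+w'}L_{ij}^{k}(a)X_{k}^{(a)}\in\fg_{w+w'}^{(a)}$, whereas if $w+w'>r$ the bracket vanishes. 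Hence $[\fg_{w}^{(a)},\fg_{w'}^{(a)}]\subset\fg_{w+w'}^{(a)}$ when $w+w'\leq r$ and $[\fg_{w}^{(a)},\fg_{w'}^{(a)}]=\{0\}$ when $w+w'>r$, which is exactly~(\ref{eq:Carnot.grading-bracket}).

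Finally, these two facts say precisely that $\tilde{\fg}^{(a)}$, equipped with the grading~(\ref{eq-g-grading}), is a step $r$ nilpotent graded Lie algebra in the sense of Definition~\ref{def:Carnot.Carnot-algebra}; the nilpotency of step $r$ is then automatic by the Remark following that definition. There is really no obstacle to overcome here---all the analytic content has already been absorbed into Lemma~\ref{lem-XX}---so the proof is a short formal verification, the only mild care being to note that the Lie-algebra axioms are inherited from the ambient algebra of vector fields rather than re-proved from scratch.
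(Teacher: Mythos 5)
Your proof is correct and follows essentially the same route as the paper, which presents this proposition as an immediate consequence of Lemma~\ref{lem-XX}; you have simply spelled out the closure and grading-compatibility checks that the paper leaves implicit. Your observation that antisymmetry and the Jacobi identity are inherited from the ambient Lie algebra of vector fields, and that step~$r$ nilpotency follows from the remark after Definition~\ref{def:Carnot.Carnot-algebra}, matches the paper's intent exactly.
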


 In fact, it follows from~(\ref{eq-jk}) and Lemma~\ref{lem-XX} that the Lie algebras ${\fg}M(a)$ and $\tilde{\fg}^{(a)}$ have the same structure constants
 with respect to their respective bases $\{\xi_{j}(a)\}$ and $\{X_{j}^{(a)}\}$, where $\xi_j(a)$ is the class of $X_j(a)$ in $\fg_{w_j} M(a)$. Therefore, we obtain the following
 result.

\begin{proposition}\label{prop-hat-L}
Let $\lie_a:{\fg}M(a)\rightarrow   \tilde{\fg}^{(a)}$ be the linear map defined by
\begin{equation}\label{eq-hat-L}
\lie_{a}\left(x_{1}\xi_{1}(a)+\cdots + x_{n}\xi_{n}(a)\right) =x_{1}X_{j}^{(a)}+\cdots
  +x_{n}X_{n}^{(a)},\qquad x_{j}\in \R.
\end{equation}
Then $\lie_{a}$ is a graded Lie algebra isomorphism from ${\fg}M(a)$ onto $\tilde{\fg}^{(a)}$.
\end{proposition}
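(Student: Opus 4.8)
The proposition is essentially a corollary of the bracket computation carried out in Lemma~\ref{lem-XX}, so the plan is to verify in turn the three things packaged in the phrase ``graded Lie algebra isomorphism'': that $\lie_a$ is a linear isomorphism of vector spaces, that it intertwines the two Lie brackets, and that it carries the grading of $\fg M(a)$ onto the grading of $\tilde{\fg}^{(a)}$. Since $\lie_a$ is defined on the basis $\{\xi_1(a),\ldots,\xi_n(a)\}$ by $\lie_a(\xi_j(a))=X_j^{(a)}$ and extended by linearity, all three checks reduce to statements about these basis vectors.

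\emph{Linear isomorphism.} By Remark~\ref{rem-xixj} the family $\{\xi_1(a),\ldots,\xi_n(a)\}$ is a basis of $\fg M(a)$, so it suffices to show that the model vector fields $X_1^{(a)},\ldots,X_n^{(a)}$ are linearly independent, for then $\lie_a$ carries a basis to a basis and is automatically bijective. First I would recall that privileged coordinates are linearly adapted, so that $X_j(0)=\partial_{x_j}$; combining this with the explicit formula~(\ref{eq:anisotropic.X(a)}) for the model vector field (whose only $x$-independent term comes from the leading part $\partial_{x_j}$) gives $X_j^{(a)}(0)=\partial_{x_j}$ for $j=1,\ldots,n$. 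Thus the values $X_1^{(a)}(0),\ldots,X_n^{(a)}(0)$ are already linearly independent in $T\R^n(0)$, whence the vector fields themselves are linearly independent and form a basis of $\tilde{\fg}^{(a)}$.

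\emph{Homomorphism and grading.} By bilinearity of both brackets it is enough to check $\lie_a([\xi_i(a),\xi_j(a)])=[X_i^{(a)},X_j^{(a)}]$ on pairs of basis elements. Here the point is simply to compare the structure constants: equation~(\ref{eq-jk}) expresses $[\xi_i(a),\xi_j(a)]$ as $\sum_{w_k=w_i+w_j}L_{ij}^k(a)\,\xi_k(a)$ when $w_i+w_j\leq r$ and as $0$ otherwise, while Lemma~\ref{lem-XX}, namely~(\ref{eq:Nilpotent.structure-constants-Xj(a)}), gives exactly $\sum_{w_k=w_i+w_j}L_{ij}^k(a)\,X_k^{(a)}$, respectively $0$, for $[X_i^{(a)},X_j^{(a)}]$. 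Applying the linear map $\lie_a$ to the right-hand side of~(\ref{eq-jk}) and using $\lie_a(\xi_k(a))=X_k^{(a)}$ therefore reproduces the right-hand side of~(\ref{eq:Nilpotent.structure-constants-Xj(a)}) term by term, which is precisely the homomorphism identity. Finally, since $\xi_j(a)$ lies in $\fg_{w_j}M(a)$ and $X_j^{(a)}$ lies in $\tilde{\fg}^{(a)}_{w_j}$, and each homogeneous component is spanned by the corresponding basis vectors, we have $\lie_a(\fg_w M(a))=\tilde{\fg}^{(a)}_w$ for every $w$, so the grading is preserved.

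I do not expect any genuine obstacle here: all the substantive work is the bracket limit computed in Lemma~\ref{lem-XX}, and the only step needing even a line of argument is the linear independence of the $X_j^{(a)}$, which the identity $X_j^{(a)}(0)=\partial_{x_j}$ settles at once.
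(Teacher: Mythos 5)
Your proof is correct and follows essentially the same route as the paper, which deduces the result by observing that~(\ref{eq-jk}) and Lemma~\ref{lem-XX} give identical structure constants for $\fg M(a)$ and $\tilde{\fg}^{(a)}$ in the bases $\{\xi_j(a)\}$ and $\{X_j^{(a)}\}$. Your only addition is to make explicit the linear independence of the $X_j^{(a)}$ via $X_j^{(a)}(0)=\partial_{x_j}$, a point the paper leaves implicit, and that verification is accurate.
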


We also observe that~(\ref{eq:Nilpotent.structure-constants-Xj(a)}) implies that, for $i,j=1,\ldots,n$, the vector field $[X_{i}^{(a)},X_{j}^{(a)}]$ is 
homogeneous of degree $-(w_{i}+w_{j})$. Therefore, for all $t>0$, we have
\begin{equation*}
   \delta_{t}^{*}[X_{i}^{(a)},X_{j}^{(a)}]=t^{-(w_{i}+w_{j})}[X_{i}^{(a)},X_{j}^{(a)}] = 
   [\delta_{t}^{*}X_{i}^{(a)},\delta_{t}^{*}X_{j}^{(a)}]. 
\end{equation*}
It then follows that the dilations $\delta_{t}^{*}$, $t>0$, induce Lie algebra automorphisms of $\tilde{\fg}^{(a)}$. 

We realize $\tilde{\fg}^{(a)}$ as the Lie algebra of left-invariant vector fields on a graded nilpotent Lie group as follows.  Let $U$ be the range of the privileged coordinates $(x_1,\ldots, x_n)$. As there are linearly adapted at $a$ to $(X_1,\ldots, X_n)$, in these coordinates we can write
\begin{equation*}
 X_j= \partial_j + \sum_{1\leq k \leq n}b_{jk}(x)\partial_{x_k}, \qquad b_{jk}(x)\in C^\infty(U),\ b_{jk}(0)=0. 
\end{equation*}
The formula~(\ref{eq-homo-app}) then expresses each model vector field $X_j^{(a)}$, $j=1,\ldots, n$, in terms of the partial derivatives $\partial^\alpha b_{jk}(0)$ with $\brak{\alpha}+w_j=w_k$ and $w_k>w_j$. 

\begin{lemma}\label{lem:nilpotent.flowX}
 Let $X=\sum \xi_j X_j^{(a)}$, $\xi_j\in \R$, be a vector field in $\tilde{\fg}^{(a)}$. For every $y\in \R^n$, the flow $x(t):=\exp(tX)(y)$ is defined for all $t\in \R$. Moreover, it takes the form, 
\begin{equation}
 x_k(t)= y_k+ t\xi_k  + \sum_{\substack{\brak\alpha+\brak\beta=w_{k}\\ |\alpha|+|\beta|\geq 2}} \hat{c}_{k\alpha\beta} y^\alpha(t\xi)^\beta , \qquad k=1,\ldots, n.  
\label{eq:Carnot-coord.xk(t)}
\end{equation}
where $\hat{c}_{k\alpha\beta}$ is a universal polynomial $\hat{\Gamma}_{k\alpha\beta}(\partial^\gamma b_{jl}(0))$ in the partial derivatives $\partial^\gamma b_{jl}(0)$ with 
$w_j+\brak\gamma=w_l\leq w_k$ and $w_j\leq w_l$. Each polynomial $\hat{\Gamma}_{k\alpha\beta}$ is 
determined recursively by the polynomials $\hat{\Gamma}_{j\gamma \delta}$ with $w_j<w_k$  (see~(\ref{eq:nilpotent.Gammakalphbeta-recursive1})--(\ref{eq:nilpotent.Gammakalphbeta-recursive2}) \textit{infra}). 
\end{lemma}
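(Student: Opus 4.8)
The plan is to write down the flow equations for $X=\sum_j\xi_jX_j^{(a)}$ and exploit the triangular structure that comes from the homogeneity of the model vector fields. If $x(t)=\exp(tX)(y)$, then $\dot x_k(t)=\sum_j\xi_j\,(X_j^{(a)}x_k)(x(t))$. From the explicit shape~(\ref{eq-homo-app}), the coefficient $X_j^{(a)}x_k$ equals $\delta_{jk}$ plus, when $w_k>w_j$, the homogeneous polynomial $P_{jk}(x):=\sum_{w_j+\brak\alpha=w_k}\frac1{\alpha!}\partial_x^\alpha b_{jk}(0)\,x^\alpha$ of degree $w_k-w_j$, and it vanishes when $w_k<w_j$. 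Since every monomial $x^\alpha$ in $P_{jk}$ has $\brak\alpha=w_k-w_j$, any coordinate $x_l$ occurring in it satisfies $w_l\le\brak\alpha<w_k$. Hence the system is triangular,
\[
\dot x_k(t)=\xi_k+\sum_{w_j<w_k}\xi_j\,P_{jk}(x(t)),
\]
with $\dot x_k$ depending only on the coordinates $x_l$ of weight $w_l<w_k$.

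I would then solve weight by weight. For $w_k=1$ one has $\dot x_k=\xi_k$, so $x_k(t)=y_k+t\xi_k$, defined for all $t$. Proceeding by induction on the weight, suppose all coordinates $x_l$ with $w_l<w$ have been found as polynomials in $t$ (with coefficients polynomial in $(y,\xi)$) defined on all of $\R$. For a coordinate $x_k$ with $w_k=w$, the right-hand side $\xi_k+\sum_{w_j<w}\xi_j P_{jk}(x_{<w}(t))$ is then a known polynomial in $t$, and a single integration $x_k(t)=y_k+\int_0^t[\,\cdots\,]\,ds$ returns a polynomial in $t$ defined for all $t$. This simultaneously yields global existence of the flow and polynomial dependence on $(t,y,\xi)$.

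To obtain the precise form~(\ref{eq:Carnot-coord.xk(t)}) I would track the grading in which $y_l$ and $\xi_l$ both carry weight $w_l$ while $t$ carries weight $0$. The base case $y_k+t\xi_k$ is homogeneous of weight $w_k$. In the inductive step each substituted product $\xi_j\prod_l x_l(s)^{\alpha_l}$ with $\brak\alpha=w_k-w_j$ has weight $w_j+(w_k-w_j)=w_k$, and integration in $s$ preserves this weight while only raising the power of $t$; hence $x_k(t)$ is homogeneous of weight $w_k$, i.e.\ a sum of monomials $y^\alpha(t\xi)^\beta$ with $\brak\alpha+\brak\beta=w_k$. The linear part is exactly $y_k+t\xi_k$, since $x(0)=y$ and $\dot x_k(0)=\xi_k$, so all remaining monomials have $|\alpha|+|\beta|\ge2$. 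As a cross-check, the equivariance $x(t;s\cdot y,s\cdot\xi)=s\cdot x(t;y,\xi)$, which follows from $\delta_s^*X_{s\cdot\xi}=X_\xi$ and uniqueness of flows, recovers the same homogeneity degree.

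Finally, for the universal-polynomial claim I would read the coefficients off the same induction. The coefficients of $P_{jk}$ are the derivatives $\frac1{\alpha!}\partial_x^\alpha b_{jk}(0)$ with $w_j+\brak\alpha=w_k$; substituting the lower solutions, whose coefficients are inductively universal polynomials $\hat\Gamma_{l\mu\nu}$ in the $\partial^\gamma b_{jm}(0)$ with $w_m\le w_l<w_k$, and integrating produces each $\hat c_{k\alpha\beta}$ as a universal polynomial in the $\partial^\gamma b_{jl}(0)$ with $w_j+\brak\gamma=w_l\le w_k$ and $w_j\le w_l$. Equating the coefficients of $y^\alpha(t\xi)^\beta$ on the two sides of the integral equation, together with $\int_0^t s^{|\beta|-1}\,ds=t^{|\beta|}/|\beta|$, gives the recursion~(\ref{eq:nilpotent.Gammakalphbeta-recursive1})--(\ref{eq:nilpotent.Gammakalphbeta-recursive2}) expressing $\hat\Gamma_{k\alpha\beta}$ through the $\hat\Gamma_{j\gamma\delta}$ with $w_j<w_k$. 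The main obstacle is purely organizational: keeping the multi-index bookkeeping consistent so that the weight constraints and the dependence on strictly lower weights come out exactly as stated; the analytic content (global existence and polynomiality) is immediate from the triangular structure.
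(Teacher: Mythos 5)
Your proposal is correct and follows essentially the same route as the paper's proof: writing the flow ODE, using the explicit form~(\ref{eq-homo-app}) of the model vector fields to get a triangular system in which $\dot{x}_k$ depends only on coordinates of weight $<w_k$, and solving recursively by integration, which yields global existence, polynomiality, and the recursion~(\ref{eq:nilpotent.Gammakalphbeta-recursive1})--(\ref{eq:nilpotent.Gammakalphbeta-recursive2}) at once. Your explicit weight-bookkeeping (assigning weight $w_l$ to $y_l$ and $\xi_l$ and weight $0$ to $t$) and the dilation-equivariance cross-check merely spell out the induction on $w_k$ that the paper states more tersely.
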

\begin{proof}
 Let $X=\sum \xi_j X_j^{(a)}$, $\xi_j\in \R$, be a vector field in $\tilde{\fg}^{(a)}$. The flow $x(t)=\exp(tX)(y)$ is the solution of the ODE system, 
     \begin{equation}
        \dot{x}(t)=X\left(x(t)\right), \qquad x(0)=y.
        \label{eq:nilpotent.flowX}
    \end{equation}
 Set $b_{jk\alpha}=(\alpha!)^{-1}\partial^\alpha b_{jk}(0)$. Then using~(\ref{eq-homo-app}) we get 
   \begin{equation*}
     X= \sum_{1\leq j \leq n}\xi_{j}X_j^{(a)}= \sum_{k=1}^{n}\biggl( \xi_{k}+
     \sum_{\substack{w_{j}+\brak\alpha=w_{k}\\ w_k>w_j}} \xi_{j} b_{jk\alpha} x^\alpha \biggr) \partial_{x_k} .
   \end{equation*}
 We also observe that if $w_{j}+\brak\alpha=w_{k}$, then $\brak\alpha<w_{k}$, and so $x^{\alpha}$ must be a monomial  
   in the components $x_{l}$ with $w_{l}<w_{k}$.  
  Therefore, setting $x(t)=(x_{1}(t),\ldots,x_{n}(t))$ the equation $ \dot{x}(t)=X(t)$ can be rewritten in the form, 
   \begin{equation}
       \dot{x}_{k}(t)  =\xi_{k}+ \sum_{\substack{w_{j}+\brak\alpha=w_{k}\\
     w_k>w_j}} \xi_{k} b_{jk\alpha} \prod_{w_l<w_k} x_l(t)^{\alpha_l},       \qquad k=1,\ldots,n. 
     \label{eq:Carnot-coord.ODE-system}
   \end{equation}
We thus get a triangular ODE system that can be solved recursively. Combining this with the initial condition $x(0)=y$, we then see that the solution of~(\ref{eq:nilpotent.flowX}) is given by the recursive relations, 
\begin{gather}
 x_k(t)=y_k +t\xi_k \qquad \text{if $w_k=1$},
 \label{eq:nilpotent.Gammakalphbeta-recursive1}\\
 x_k(t)= y_k + t\xi_k +  \sum_{\substack{w_{j}+\brak\alpha=w_{k}\\
    w_k>w_j}} \xi_{j} b_{jk\alpha} \int_0^t\prod_{w_l<w_k}x_l(s)^{\alpha_l}ds \qquad \text{if $w_k\geq 2$}. 
    \label{eq:nilpotent.Gammakalphbeta-recursive2}
\end{gather}
The solution exists for all $t\in \R$. Moreover, an induction on $w_k$ shows that every component $x_k(t)$ is a polynomial in $t\xi$ and $y$ of the form~(\ref{eq:Carnot-coord.xk(t)}), where each coefficient $\hat{c}_{k\alpha\beta}$ is a universal polynomial in the coefficients $b_{jl\gamma}$ with 
$w_j+\brak\gamma=w_l\leq w_k$ and $w_j\leq w_l$. This proves the result.  
\end{proof}

Lemma~\ref{lem:nilpotent.flowX} shows that we have a globally defined smooth exponential map $\exp: \tilde{\fg}^{(a)} \rightarrow \R^n$ given by 
\begin{equation}\label{eq-CH}
    \exp(X):=\left.\exp(tX)(0)\right|_{t=1} \qquad \text{for all $X\in \tilde{\fg}^{(a)}$}. 
\end{equation}
Although, $\exp(X)$ \emph{a priori} arises from the solution of an ODE system, it follows from~(\ref{eq:Carnot-coord.xk(t)}) that $\exp(X)$ is determined effectively in terms of  the coordinates of $X$ in the basis $(X_1^{(a)}, \ldots, X_n^{(a)})$ and the coefficients of the model vector fields $X_j^{(a)}$. In fact, if  $X= \sum \xi_j X_j^{(a)}$, $\xi_j\in \R$, then setting $t=1$ and $y=0$ shows that $x=\exp(X)$ is given by
\begin{equation}
 x_k= \xi_k  +   \sum_{\substack{\brak\alpha=w_{k}\\ |\alpha|\geq 2}} \hat{c}_{k\alpha} \xi^\alpha ,\qquad k=1,\ldots, n, 
\label{eq:Carnot-coord.exp(X)k}
\end{equation}
where we have set $\hat{c}_{k\alpha}=\hat{c}_{k0\alpha}$. It also follows from this formula that $\exp: \tilde{\fg}^{(a)}\rightarrow \R^n$ is a diffeomorphism, since it expresses $x=\exp(X)$ as a triangular polynomial map in the coordinates $(\xi_1,\ldots, \xi_n)$ the diagonal of which is the identity map. In addition, we observe that $x_k$, as a polynomial in $\xi$, is homogeneous of degree $w_k$ with respect to the dilations~(\ref{eq:Nilpotent.dilations2}). Thus, for all $t\in \R^*$, we have 
\begin{equation}
 t\cdot \exp(X)= \exp\left( \sum (t\cdot\xi)_j X_j^{(a)}\right) = \exp \left( \sum \xi_j t^{w_j} X_j^{(a)}\right)= \exp\left( \delta_{t^{-1}}^*X\right).
\label{eq:fga-homogeneity-expX}
\end{equation}

We define the Lie group $G^{(a)}$ as $\R^n$ equipped with the group law given by
\begin{equation}
 x\cdot y = \exp(X\cdot Y), \qquad X,Y\in \tilde{\fg}^{(a)},
 \label{eq:fga.group-law-G(a)}
\end{equation}
where $X$ and $Y$ are the unique elements of $\tilde{\fg}^{(a)}$ such that $\exp(X)=x$ and $\exp(Y)=y$, and $X\cdot Y$ is the Dynkin 
product~(\ref{eq:Carnot.Dynkin-product}). For $j=1,\ldots, n$, the vector field $X_j^{(a)}$ generates a one-parameter subgroup $\exp(tX_j^{(a)})$, $t\in \R$, in $G^{(a)}$, and so this is a left-invariant vector field on $G^{(a)}$. As $(X_1^{(a)}, \ldots, X_n^{(a)})$ is a basis of $\tilde{\fg}^{(a)}$, we then arrive at the following statement. 

\begin{proposition}\label{prop:Nilpotent.G(a)-tildefg(a)}
 The Lie algebra of left-invariant vector fields on $G^{(a)}$ is precisely $\tilde{\fg}^{(a)}$. 
\end{proposition}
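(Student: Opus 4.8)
The plan is to recognize $G^{(a)}$ as a graded nilpotent Lie group isomorphic, via the diffeomorphism $\exp$ of~(\ref{eq-CH}), to the Dynkin group $(\tilde{\fg}^{(a)},\cdot)$, and then to identify its left-invariant vector fields with the model vector fields $X_1^{(a)},\ldots,X_n^{(a)}$. By the very definition~(\ref{eq:fga.group-law-G(a)}) of the group law, the map $\exp:(\tilde{\fg}^{(a)},\cdot)\rightarrow G^{(a)}$ is a group isomorphism, and it is a diffeomorphism by Lemma~\ref{lem:nilpotent.flowX}; hence $G^{(a)}$ is a step-$r$ graded nilpotent Lie group of dimension $n$, and its Lie algebra of left-invariant vector fields has dimension $n$. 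Since $(X_1^{(a)},\ldots,X_n^{(a)})$ is a basis of $\tilde{\fg}^{(a)}$, a dimension count shows that it suffices to prove that each $X_j^{(a)}$ is a left-invariant vector field on $G^{(a)}$: the $n$ linearly independent left-invariant fields $X_j^{(a)}$ would then span the whole $n$-dimensional Lie algebra of $G^{(a)}$.

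First I would fix the notation $\Phi^Z_t$ for the time-$t$ flow of a field $Z\in\tilde{\fg}^{(a)}$, so that $\Phi^Z_t(y)$ is the flow of Lemma~\ref{lem:nilpotent.flowX} and $\exp(Z)=\Phi^Z_1(0)$ as in~(\ref{eq-CH}). I would then reduce left-invariance of $X_j^{(a)}$ to the flow identity $\Phi^{X_j^{(a)}}_t(y)=y\cdot\exp\!\left(tX_j^{(a)}\right)$ for all $y\in G^{(a)}$ and $t\in\R$. Indeed, $t\mapsto\exp\!\left(tX_j^{(a)}\right)$ is a one-parameter subgroup of $G^{(a)}$, since $sX_j^{(a)}\cdot tX_j^{(a)}=(s+t)X_j^{(a)}$ by~(\ref{eq:Carnot.Dynkin-product}); consequently the right-hand side $y\mapsto y\cdot\exp\!\left(tX_j^{(a)}\right)$ is exactly the flow of the left-invariant vector field extending $X_j^{(a)}(0)=\partial_j$, and the displayed identity asserts precisely that this left-invariant field agrees with $X_j^{(a)}$.

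The hard part is establishing this flow identity, and here the key is the classical relation between the Baker--Campbell--Hausdorff formula and the composition of flows of vector fields in a nilpotent Lie algebra. Writing $y=\exp(Y)$ and using $\Phi^{X_j^{(a)}}_t=\Phi^{tX_j^{(a)}}_1$, I would compute $\Phi^{X_j^{(a)}}_t(y)=\left(\Phi^{tX_j^{(a)}}_1\circ\Phi^{Y}_1\right)(0)$ and invoke the identity $\Phi^A_1\circ\Phi^B_1=\Phi^{A\star B}_1$, where $\star$ is the BCH product built from the bracket $-[\cdot,\cdot]$ of vector fields. A short second-order computation pins down the sign: the quadratic term of $A\star B$ equals $\tfrac12[B,A]$ in the vector-field bracket. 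Since the group product $\cdot$ on $\tilde{\fg}^{(a)}$ is the Dynkin product~(\ref{eq:Carnot.Dynkin-product}) for the bracket $[\cdot,\cdot]$ itself, passing to the opposite Lie algebra yields $tX_j^{(a)}\star Y=Y\cdot tX_j^{(a)}$ to all orders, whence $\Phi^{X_j^{(a)}}_t(y)=\exp\!\left(Y\cdot tX_j^{(a)}\right)=y\cdot\exp\!\left(tX_j^{(a)}\right)$ by~(\ref{eq:fga.group-law-G(a)}). The main obstacle is thus the careful bookkeeping of this sign discrepancy between the bracket of vector fields and the group bracket (equivalently, the passage to the opposite group), together with the justification that the flow-composition identity holds \emph{exactly}---not merely formally---in this setting; the latter is guaranteed by the completeness of the fields in Lemma~\ref{lem:nilpotent.flowX} and by the termination of the BCH series, which follows from $\tilde{\fg}^{(a)}$ being nilpotent of step $r$.
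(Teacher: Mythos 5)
Your proof is correct, and its skeleton is the same as the paper's: observe that $t\mapsto\exp\bigl(tX_j^{(a)}\bigr)$ is a one-parameter subgroup of $G^{(a)}$ (via $sX_j^{(a)}\cdot tX_j^{(a)}=(s+t)X_j^{(a)}$ in the Dynkin product), deduce that each $X_j^{(a)}$ is left-invariant, and conclude by a basis/dimension count. The difference is in how much is actually verified. The paper disposes of the middle step in one sentence---``$X_j^{(a)}$ generates a one-parameter subgroup\ldots and so this is a left-invariant vector field''---whereas, as you rightly sense, this is the nontrivial point: a vector field whose integral curve through the unit is a one-parameter subgroup need not be left-invariant, and one must check that the flow through an \emph{arbitrary} point is right translation, $\Phi^{X_j^{(a)}}_t(y)=y\cdot\exp\bigl(tX_j^{(a)}\bigr)$. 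Your argument supplies exactly this verification, and your sign bookkeeping is right: the pullback on functions of $\Phi^A_t\circ\Phi^B_s$ is $e^{sB}e^{tA}$, so the flow-composition identity reads $\Phi^A_1\circ\Phi^B_1=\Phi^{A\star B}_1$ with $\star$ the Dynkin product for the opposite bracket $-[\cdot,\cdot]$, i.e.\ $A\star B=B\cdot A$; hence $tX_j^{(a)}\star Y=Y\cdot tX_j^{(a)}$, which combined with the group law~(\ref{eq:fga.group-law-G(a)}) gives the flow identity. What this buys is a complete proof of the identification that the paper only asserts; what it costs is the need to justify that the flow BCH identity holds exactly (not just formally), which you correctly reduce to completeness of the fields (Lemma~\ref{lem:nilpotent.flowX}) and nilpotency of $\tilde{\fg}^{(a)}$---this is standard, e.g.\ via Palais' integration of a finite-dimensional nilpotent Lie algebra of complete vector fields, or directly from the triangular polynomial form of the flows in~(\ref{eq:Carnot-coord.xk(t)}). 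In short: same route as the paper, with the paper's implicit step made explicit and the sign subtleties handled correctly.
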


\begin{definition}\label{def-Ga}
The graded nilpotent Lie group $G^{(a)}$ equipped with its left-invariant Carnot manifold structure is called the \emph{nilpotent approximation} of $(M,H)$ at $a$ with respect to the privileged coordinates $(x_1,\ldots, x_n)$. 
\end{definition}

\begin{remark}
 In what follows we will often abuse language and call $G^{(a)}$ the nilpotent approximation of $(M,H)$ at $a$.   
\end{remark}

\begin{definition}
 $\fg(a)$ is the nilpotent Lie algebra obtained by equipping $T\R^{n}(0)$ with the Lie bracket given by
  \begin{equation}
     [\partial_{i}, \partial_{j}]=\left\{
     \begin{array}{cl}
   {\displaystyle \sum_{w_{k}=w_{i}+w_{j}}L_{ij}^{k}(a)\partial_{k}} & \text{if $w_{i}+w_{j}\leq  r$},   \\
         0 & \text{otherwise}  \\
     \end{array}\right.
     \label{eq:Nilpotent.structure-constants-fg(a)}
 \end{equation}
\end{definition}

\begin{remark}
 The Lie algebra $\fg(a)$ depends only on the structure constants $L_{ij}^k(a)$, $w_i+w_j=w_k$, and so it does not depend on the choice of the privileged coordinates $(x_1,\ldots, x_n)$. 
\end{remark}

\begin{remark}
$\fg(a)$ is a graded nilpotent Lie algebra with respect to the grading,
\begin{equation}
 \fg(a)=\fg_1(a)\oplus \cdots \oplus \fg_r(a), \qquad \text{where}\ \fg_w(a):=\op{Span}\{\partial_j;\ w_j=w\}. 
 \label{eq:Nilpotent.grading-fg(a)}
\end{equation}
\end{remark}

\begin{proposition}\label{prop:Nilpotent.G(a)-fg(a)-graded}
 The Lie algebra of $G^{(a)}$ is precisely $\fg(a)$. Moreover, the dilations~(\ref{eq:Nilpotent.dilations2}) are group automorphisms of $G^{(a)}$.  
\end{proposition}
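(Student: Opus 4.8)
The plan is to treat the two assertions separately, relying throughout on the fact that $G^{(a)}$ has unit element $0\in\R^n$ (from the group law~(\ref{eq:fga.group-law-G(a)}) and $\exp(0)=0$) and on the paper's convention that the Lie algebra of a Lie group is its tangent space at the unit equipped with the bracket induced from left-invariant vector fields.

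For the first assertion, I would begin by noting that, by Proposition~\ref{prop:Nilpotent.G(a)-tildefg(a)}, each model vector field $X_j^{(a)}$ is left-invariant on $G^{(a)}$, while formula~(\ref{eq-homo-app}) gives $X_j^{(a)}(0)=\partial_j$ (the higher-order terms there carry monomials $x^\alpha$ with $\brak\alpha=w_k-w_j\geq 1$, hence vanish at the origin). Consequently, under the canonical identification of $T\R^n(0)$ with the space $\tilde{\fg}^{(a)}$ of left-invariant vector fields, the basis vector $\partial_j$ corresponds exactly to $X_j^{(a)}$. The bracket induced on $T\R^n(0)$ is then computed by evaluating the bracket of the associated left-invariant vector fields at the origin, i.e.\ $[\partial_i,\partial_j]=[X_i^{(a)},X_j^{(a)}](0)$. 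Lemma~\ref{lem-XX} computes the right-hand side as a combination of the $X_k^{(a)}$ with $w_k=w_i+w_j$, and evaluating at $0$ with $X_k^{(a)}(0)=\partial_k$ yields precisely the structure relations~(\ref{eq:Nilpotent.structure-constants-fg(a)}) defining $\fg(a)$. This identifies the Lie algebra of $G^{(a)}$ with $\fg(a)$.

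For the second assertion, I would use that $\fg(a)$ carries the grading~(\ref{eq:Nilpotent.grading-fg(a)}) and that this grading is compatible with the bracket~(\ref{eq:Nilpotent.structure-constants-fg(a)}), so the anisotropic dilations $t\cdot$ act on $\fg(a)$ as Lie algebra automorphisms. By the general construction recalled in Section~\ref{sec:Carnot-Manifolds} (around~(\ref{eq:Carnot.dilations})), such automorphisms induce group isomorphisms $\delta_t$ of $G^{(a)}$ characterized by $\delta_t(\exp\xi)=\exp(t\cdot\xi)$. It then remains to identify this abstract group dilation with the concrete dilation $x\mapsto t\cdot x$ of $\R^n$. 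Under the identification $\partial_j\leftrightarrow X_j^{(a)}$ of the first part, the group exponential of $G^{(a)}$ is exactly the ODE-defined map $\exp:\tilde{\fg}^{(a)}\to\R^n$ of~(\ref{eq-CH}); and for $\xi=\sum\xi_j\partial_j$ the scaled element $t\cdot\xi$ corresponds to $\sum t^{w_j}\xi_j X_j^{(a)}=\delta_{t^{-1}}^*X_\xi$, since each $X_j^{(a)}$ is homogeneous of degree $-w_j$. Formula~(\ref{eq:fga-homogeneity-expX}) then gives $\exp(t\cdot\xi)=t\cdot\exp(\xi)$, so the abstract dilation coincides with $x\mapsto t\cdot x$, and therefore the dilations~(\ref{eq:Nilpotent.dilations2}) are group automorphisms of $G^{(a)}$.

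Both steps are essentially bookkeeping once the two identifications—namely $\fg(a)\cong\tilde{\fg}^{(a)}$ via $\partial_j\leftrightarrow X_j^{(a)}$, and the group-theoretic exponential of $G^{(a)}$ equaling the map~(\ref{eq-CH})—are securely in place. The main obstacle I anticipate is keeping these identifications consistent: in particular, verifying that the group exponential really agrees with the ODE-defined $\exp$ and that the homogeneity relation~(\ref{eq:fga-homogeneity-expX}) matches the grading scaling $t\cdot\xi$, rather than any substantial analytic difficulty.
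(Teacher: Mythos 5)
Your proof of the first assertion is essentially the paper's: both read off the bracket on $TG^{(a)}(0)$ by evaluating the commutator relations~(\ref{eq:Nilpotent.structure-constants-Xj(a)}) of the left-invariant frame $(X_1^{(a)},\ldots,X_n^{(a)})$ at the origin, where $X_j^{(a)}(0)=\partial_j$, obtaining exactly~(\ref{eq:Nilpotent.structure-constants-fg(a)}). For the second assertion you take a correct but genuinely different route. The paper argues directly: writing $x=\exp(X)$, $y=\exp(Y)$, it uses the Baker-Campbell-Hausdorff formula~(\ref{eq:Carnot.BCH-Formula}) together with~(\ref{eq:fga-homogeneity-expX}) in the form $\delta_t(\exp X)=\exp\left((\delta_t)_*X\right)$ to compute both $\delta_t(x\cdot y)$ and $\delta_t(x)\cdot\delta_t(y)$, finding that each equals $\exp\left[\left((\delta_t)_*X\right)\cdot\left((\delta_t)_*Y\right)\right]$; the only structural input is that $(\delta_t)_*$ is a Lie algebra automorphism of $\tilde{\fg}^{(a)}$ (established just after Proposition~\ref{prop-hat-L}), hence commutes with the Dynkin product. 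You instead invoke the general lifting principle recalled around~(\ref{eq:Carnot.dilations}) --- graded automorphisms of the Lie algebra of a connected simply connected nilpotent group exponentiate to group automorphisms satisfying $\delta_t(\exp\xi)=\exp(t\cdot\xi)$ --- and then pin the resulting abstract automorphism down to the coordinate dilation $x\mapsto t\cdot x$ via the identification of the group exponential of $G^{(a)}$ with the ODE-defined map~(\ref{eq-CH}) and the homogeneity relation~(\ref{eq:fga-homogeneity-expX}). Your intermediate identifications are all sound: left-invariance plus $X_j^{(a)}(0)=\partial_j$ gives $\xi\leftrightarrow X_\xi$, homogeneity of degree $-w_j$ gives $\delta_{t^{-1}}^*X_\xi=X_{t\cdot\xi}$, and the fact that $\exp$ is a global diffeomorphism makes the characterization unambiguous. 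Both arguments ultimately rest on the same identity~(\ref{eq:fga-homogeneity-expX}); yours buys a more conceptual derivation from the Section~\ref{sec:Carnot-Manifolds} framework at the cost of the extra identification steps, while the paper's is a shorter self-contained verification. (As in the paper, the automorphism claim should of course be read for $t\in\R^*$, since $\delta_0$ is not invertible.)
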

\begin{proof}
 We know that $\tilde{\fg}^{(a)}$ is the Lie algebra of left-invariant vector fields on $G^{(a)}$. By definition this is the Lie algebra generated by the vector fields $X_1^{(a)}, \ldots, X_n^{(a)}$. These vector fields satisfy the commutator relations~(\ref{eq:Nilpotent.structure-constants-Xj(a)}). As $X_j^{(a)}(0)=\partial_j$  we see that the Lie bracket of the Lie algebra $TG^{(a)}(0)$ satisfies~(\ref{eq:Nilpotent.structure-constants-fg(a)}). Thus, as a Lie algebra $TG^{(a)}(0)$ agrees with $\fg(a)$. 

In addition, using~(\ref{eq:Carnot.BCH-Formula}) and~(\ref{eq:fga-homogeneity-expX}) shows that, for all $X,Y\in \tilde{\fg}^{(a)}$ and $t\in \R$, we have
\begin{gather*}
 \delta_t\left( \exp(X)\cdot \exp(Y)\right) = \delta_t\left( \exp(X\cdot Y)\right) = \exp\left( (\delta_t)_*(X\cdot Y)\right) = \exp\left[ \left((\delta_t)_*X\right)\cdot  \left((\delta_t)_*Y\right)\right],\\
 \left[ \delta_t\left(\exp(X)\right)\right]\cdot \left[ \delta_t\left(\exp(X)\right)\right]= \exp\left((\delta_t)_*X\right) \cdot  \exp\left((\delta_t)_*Y\right)= \exp\left[ \left((\delta_t)_*X\right)\cdot  \left((\delta_t)_*Y\right)\right]. 
\end{gather*}
It then follows that the dilations $\delta_t$, $t\in \R$, are group automorphisms of $G^{(a)}$. The proof is complete. 
\end{proof}

\subsection{The Class $\sN_X(a)$}
In view of Proposition~\ref{prop:Nilpotent.G(a)-fg(a)-graded} it is natural to introduce the following class of nilpotent groups. 

\begin{definition}\label{def:Nilpotent.class-cN(a)}
 $\sN_{X}(a)$ consists of nilpotent groups $G$ that are obtained by equipping $\R^n$ with a group law such that
 \begin{enumerate}
 \item[(i)] The dilations~(\ref{eq:Nilpotent.dilations2}) are group automorphisms of $G$. 
 
 \item[(ii)] The Lie algebra $TG(0)$ of $G$ is precisely $\fg(a)$. 
\end{enumerate}
\end{definition}

\begin{remark}
 The condition (i) automatically implies that the origin is the unit of $G$.
\end{remark}

\begin{remark}\label{rmk:Nilpotent.compatibility-dilations-cN(a)}
 The condition (i) also implies that the dilations $\delta_t$, $t\in \R$, induce a family of dilations $\delta_t'(0)$, $t\in \R$, on $TG(0)=\fg^{(a)}$. In the basis 
 $(\partial_1, \ldots, \partial_n)$ there are given by~(\ref{eq:Nilpotent.dilations2}), and so they agree with the dilations~(\ref{eq:Carnot.dilations}) defined by the grading~(\ref{eq:Nilpotent.grading-fg(a)}) of $\fg(a)$. 
\end{remark}

\begin{definition}
 Suppose that $G$ is a nilpotent Lie group which is built out of $\R^n$ and has the origin has unit. Let $\tilde{\fg}$ be its Lie algebra of left-invariant vector fields. The \emph{canonical basis} of $\tilde{\fg}$ is the basis $(Y_1,\ldots, Y_n)$, where $Y_j$ is the unique left-invariant vector field on such that $Y_j(0)=\partial_j$. 
\end{definition}

It will be convenient to describe the nilpotent Lie groups in the class $\sN_X(a)$ in terms of their Lie algebras of left-invariant vector fields. 

\begin{proposition}\label{prop:Nilpotent.distinguished-basis}
 Suppose that $G$ is a nilpotent Lie group in the class $\sN_X(a)$. Then the canonical basis $(Y_1,\ldots, Y_n)$ of its Lie algebra of left-invariant vector fields has the following properties:
\begin{enumerate}
\item[(i)] For $j=1,\ldots, n$, the vector field $Y_j$ is homogeneous of degree~$-w_j$ with respect to the dilations~(\ref{eq:Nilpotent.dilations2}) and agrees with $\partial_j$ at $x=0$. 

\item[(ii)] The vector fields $Y_1, \ldots, Y_n$ satisfy the commutator relations~(\ref{eq:Nilpotent.structure-constants-Xj(a)})
\end{enumerate}
 
 Conversely, let $(Y_1,\ldots, Y_n)$ be a family of vector fields on $\R^n$  satisfying~(i)--(ii). Then this is the canonical basis of left-invariant vector fields on a unique nilpotent Lie group in the class $\sN_X(a)$. 
\end{proposition}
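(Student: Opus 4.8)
The plan is to handle the two directions separately, and to note first that conditions~(i)--(ii) are exactly the properties of the model vector fields $X_1^{(a)},\ldots,X_n^{(a)}$ established in Subsection~\ref{sec:extrinsic}: homogeneity of degree $-w_j$ together with $X_j^{(a)}(0)=\partial_j$, and the commutator relations~(\ref{eq:Nilpotent.structure-constants-Xj(a)}). This means that for the converse I can recycle the construction of $G^{(a)}$ almost verbatim.

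\emph{Forward direction.} Let $G\in\sN_X(a)$, let $\tilde{\fg}$ be its Lie algebra of left-invariant vector fields, and let $(Y_1,\ldots,Y_n)$ be its canonical basis, so that $Y_j(0)=\partial_j$ by definition; this is half of~(i). For the homogeneity I would use that, since the dilation $\delta_t$ is a group automorphism of $G$ by Definition~\ref{def:Nilpotent.class-cN(a)}(i), the pushforward $(\delta_t)_*$ preserves $\tilde{\fg}$ and sends a left-invariant field $Y$ to the left-invariant field whose value at the origin is $\delta_t'(0)(Y(0))$. As $\delta_t$ is linear with $\delta_t'(0)\partial_j=t^{w_j}\partial_j$, comparing values at $0$ gives $(\delta_t)_*Y_j=t^{w_j}Y_j$; substituting $t^{-1}$ for $t$ and using $\delta_t^*Y_j=(\delta_{t^{-1}})_*Y_j$ yields $\delta_t^*Y_j=t^{-w_j}Y_j$, i.e.\ $Y_j$ is homogeneous of degree $-w_j$. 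For~(ii) I would use that evaluation $Y\mapsto Y(0)$ is a Lie algebra isomorphism of $\tilde{\fg}$ onto $TG(0)=\fg(a)$ carrying $Y_j$ to $\partial_j$; transporting the bracket~(\ref{eq:Nilpotent.structure-constants-fg(a)}) of $\fg(a)$ through this isomorphism gives $[Y_i,Y_j]=\sum_{w_k=w_i+w_j}L_{ij}^k(a)Y_k$ when $w_i+w_j\le r$ and $0$ otherwise, which is~(\ref{eq:Nilpotent.structure-constants-Xj(a)}).

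\emph{Converse, existence.} Given $(Y_1,\ldots,Y_n)$ satisfying~(i)--(ii), I would first extract their shape: homogeneity of degree $-w_j$ forces each coefficient of $\partial_{x_k}$ in $Y_j$ to be homogeneous of degree $w_k-w_j$, hence (Remark~\ref{rem-poly}) a polynomial that vanishes for $w_k<w_j$ and equals $\delta_{jk}$ for $w_k=w_j$. Thus $Y_j$ has exactly the form~(\ref{eq-homo-app}), and the proof of Lemma~\ref{lem:nilpotent.flowX} applies verbatim: the associated triangular ODE system is globally and polynomially solvable, so the exponential~(\ref{eq-CH}) is a well-defined polynomial diffeomorphism of $\R^n$ of the form~(\ref{eq:Carnot-coord.exp(X)k}). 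By~(ii) the span $\tilde{\fg}:=\op{Span}(Y_1,\ldots,Y_n)$ is a step~$r$ nilpotent Lie algebra of vector fields, so I can equip $\R^n$ with the group law~(\ref{eq:fga.group-law-G(a)}) built from the Dynkin product of $\tilde{\fg}$. Repeating Propositions~\ref{prop:Nilpotent.G(a)-tildefg(a)} and~\ref{prop:Nilpotent.G(a)-fg(a)-graded} then shows that $\tilde{\fg}$ is the Lie algebra of left-invariant vector fields of the resulting group $G$, that $TG(0)=\fg(a)$ (from~(ii) and $Y_j(0)=\partial_j$), and that the $\delta_t$ are automorphisms (from the homogeneity~(i), which gives $t\cdot\exp(Y)=\exp(\delta_{t^{-1}}^*Y)$ as in~(\ref{eq:fga-homogeneity-expX})). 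Hence $G\in\sN_X(a)$, and since each $Y_j$ is left-invariant with $Y_j(0)=\partial_j$ its canonical basis is $(Y_1,\ldots,Y_n)$.

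\emph{Converse, uniqueness.} I expect this to be the main point requiring care, the idea being that the group law is already forced by the $Y_j$ alone. For any $G'\in\sN_X(a)$ with canonical basis $(Y_1,\ldots,Y_n)$, left-invariance makes the flow of $Y_j$ on $G'$ coincide with right translation by the one-parameter subgroup $t\mapsto\exp(tY_j)$; since flows of vector fields are intrinsic, the group exponential $\tilde{\fg}\to\R^n$ is determined by the $Y_j$ and, because $G'$ is connected, simply connected and nilpotent, it coincides with the time-one flow map~(\ref{eq-CH}). As the Dynkin product on $\tilde{\fg}$ is fixed by the brackets in~(ii), the law $x\cdot y=\exp(\exp^{-1}(x)\cdot\exp^{-1}(y))$ is uniquely determined, so $G'=G$. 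The one genuinely non-formal ingredient here is the identification of the group exponential of $G'$ with the time-one flow of its left-invariant fields, which is the standard fact about connected simply connected nilpotent Lie groups and which I would simply invoke.
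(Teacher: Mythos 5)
Your proposal is correct and follows essentially the same route as the paper: homogeneity via the automorphism property of $\delta_t$ plus left-invariance, the commutator relations via evaluation at $0$ onto $TG(0)=\fg(a)$, and the converse by rerunning the construction of $G^{(a)}$ (Lemma~\ref{lem:nilpotent.flowX}, the flow exponential, and the Dynkin product). The only difference is that you spell out the uniqueness step in more detail than the paper's one-line remark, correctly invoking the standard identification of the group exponential of a connected simply connected nilpotent Lie group with the time-one flow of its left-invariant vector fields, which is exactly what the paper's terse claim tacitly relies on.
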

\begin{proof}
 Suppose that $G$ is a nilpotent Lie group in the class $\sN_X(a)$. Let $(Y_1,\ldots, Y_n)$ be the canonical basis of its Lie algebra of left-invariant vector fields. By definition $Y_j(0)=\partial_j$ for $j=1,\ldots,n$. In addition, by assumption $\fg(a)$ is the Lie algebra $TG(0)$ of $G$. Therefore, it follows from the definition~(\ref{eq:Nilpotent.structure-constants-fg(a)}) of the Lie bracket of $\fg(a)$ that  $Y_1, \ldots, Y_n$ satisfy the commutator relations~(\ref{eq:Nilpotent.structure-constants-Xj(a)}). 
 
As mentioned in Remark~\ref{rmk:Nilpotent.compatibility-dilations-cN(a)}, the dilations $\delta_t$, $t\in \R$, in~(\ref{eq:Nilpotent.dilations2}) induce on $TG(0)=T\R^n(0)$ dilations that agree with the dilations defined by the grading~(\ref{eq:Nilpotent.grading-fg(a)}). Thus, for all $t\in \R^*$ and $j=1,\ldots, n$, we have $\delta_t'(0)\partial_j=t^{-w_j}\partial_j$. Bearing this in mind, let $t\in \R^*$ and $j\in \{1, \ldots, n\}$. As $\delta_t$ is a group automorphism of $G$, the vector field $t^{w_j}\delta_t^* Y_j$ is a left-invariant vector field on $G$. Moreover, at $x=0$ we have 
\begin{equation*}
 \left(t^{w_j}\delta_t^* Y_j\right)(0)= t^w_j \delta_{t}'(0)^{-1}\left[ Y_j(0)\right] =  t^w_j \delta_{t}'(0)^{-1}\partial_j(0)=\partial_j=Y_j(0).
\end{equation*}
Therefore, by left-invariance $t^{w_j}\delta_t^* Y_j=Y_j$. This shows that $Y_j$ is homogeneous of degree~$-w_j$ for all $j=1,\ldots, n$. 

Conversely, let  $(Y_1,\ldots, Y_n)$ be a family of vector fields satisfying (i)--(ii). The property (ii) implies that $Y_1,\ldots, Y_n$ generate a graded nilpotent Lie algebra $\tilde{\fg}$ of vector fields on $\R^n$. Moreover, thanks to the property~(i) we can argue along the same lines as that of the proof of Lemma~\ref{lem:nilpotent.flowX} to show that, for every vector field $Y=\sum \eta_j Y_j$, $\eta_j\in \R$, in $\tilde{\fg}$ and for every $y\in \R^n$, the flow $\exp(tY)(y)$ exists for all times $t\in \R$ and is of the form~(\ref{eq:Carnot-coord.xk(t)}). Therefore, in the same way as in~(\ref{eq-CH}), we have a globally defined exponential map $\exp:\tilde{\fg}\rightarrow \R^n$, which is a smooth diffeomorphism. This allows us to define a group law on $\R^n$ as in~(\ref{eq:fga.group-law-G(a)}). Letting $G$ be $\R^n$ equipped with that group law, we obtain a nilpotent Lie group whose Lie algebra of left-invariant vector fields is $\tilde{\fg}$. Note that the product law of $G$ is uniquely determined by $\tilde{\fg}$, and so it is uniquely determined by $(Y_1,\ldots, Y_n)$.

Obviously $(Y_1,\ldots, Y_n)$ is the canonical basis of left-invariant vector fields on $G$. Moreover, the properties (i)--(ii) allow us to argue along the same lines as that of the proof of Proposition~\ref{prop:Nilpotent.G(a)-fg(a)-graded} to see that the Lie algebra  $TG(0)$ of $G$ agrees with the Lie algebra $\fg(a)$ and the dilations~(\ref{eq:Nilpotent.dilations2}) are group automorphisms of $G$. That is, the group $G$ is in the class $\sN_X(a)$. The proof is complete.  
\end{proof}

\subsection{Getting all nilpotent approximations} It follows from Proposition~\ref{prop:Nilpotent.G(a)-fg(a)-graded} that, given any system of privileged coordinates at $a$ adapted to $(X_1,\ldots, X_n)$, the nilpotent approximation $G^{(a)}$ is in the class $\sN_{X}(a)$. We shall now establish the converse of this result. More precisely, given any graded nilpotent Lie group $G$ in the class $\sN_X(a)$, we will see how to get \emph{all} the systems of privileged coordinates in which the nilpotent approximation is given by $G$. 

In what follows, we let $(x_1,\ldots, x_n)$ be privileged coordinates at $a$ adapted to $(X_1,\ldots, X_n)$. For $j=1,\ldots, n$, we let $X_j^{(a)}$ be the model vector field at $a$ of $X_j$, and we denote by $\tilde{\fg}^{(a)}$ the Lie algebra generated by $X_1^{(a)}, \ldots, X^{(a)}_n$. By Proposition~\ref{prop:Nilpotent.G(a)-tildefg(a)} this is the Lie algebra of left-invariant vector fields on the nilpotent approximation $G^{(a)}$ in the privileged coordinates  $(x_1,\ldots, x_n)$. 

\begin{lemma}\label{lem:many.unicity-phi}
 Let $\phi:\R^{n}\rightarrow \R^{n}$ be a $w$-homogeneous smooth diffeomorphism such that
 \begin{equation}
     \phi_{*}X_{j}^{(a)}=X_{j}^{(a)} \qquad \text{for $j=1,\ldots,n$}.
     \label{eq:many.phi*XX(a)}
 \end{equation}Then $\phi$ is the identity map.
\end{lemma}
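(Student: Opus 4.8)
The plan is to exploit the fact that, by Proposition~\ref{prop:Nilpotent.G(a)-tildefg(a)}, each model vector field $X_j^{(a)}$ is a left-invariant vector field on the nilpotent approximation $G^{(a)}$, so that its flow is right translation. The hypothesis $\phi_*X_j^{(a)}=X_j^{(a)}$ will then force $\phi$ to commute with these right translations, hence to be a left translation on $G^{(a)}$, and $w$-homogeneity will pin that translation down to be the identity.

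First I would record that $\phi(0)=0$. This is immediate from $w$-homogeneity: since $t\cdot 0=0$ for all $t$, the defining relation $\phi(t\cdot 0)=t\cdot\phi(0)$ gives $\phi(0)=t\cdot\phi(0)$, and letting $t\to 0$ yields $\phi(0)=0$. In particular the origin (the unit of $G^{(a)}$) is fixed by $\phi$. Next I would translate the hypothesis into a statement about flows: the relation $\phi_*X_j^{(a)}=X_j^{(a)}$ implies that $\phi$ intertwines the flow of $X_j^{(a)}$ with itself, i.e.\ $\phi\bigl(\exp(tX_j^{(a)})(y)\bigr)=\exp(tX_j^{(a)})(\phi(y))$ for all $y\in\R^n$, $t\in\R$, and $j=1,\ldots,n$. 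This is checked by verifying that $t\mapsto\phi\bigl(\exp(tX_j^{(a)})(y)\bigr)$ is an integral curve of $X_j^{(a)}$ through $\phi(y)$; here the flows are globally defined by Lemma~\ref{lem:nilpotent.flowX}.

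Now I would invoke left-invariance. Since $X_j^{(a)}$ is a left-invariant vector field on $G^{(a)}$, its flow through a point $y$ is the right translation $\exp(tX_j^{(a)})(y)=y\cdot g_j(t)$, where $g_j(t):=\exp(tX_j^{(a)})(0)$ is the associated one-parameter subgroup. Combining this with the intertwining relation gives
\begin{equation*}
 \phi\bigl(y\cdot g_j(t)\bigr)=\phi(y)\cdot g_j(t) \qquad \text{for all } y\in\R^n,\ t\in\R,\ j=1,\ldots,n.
\end{equation*}
The final step is to promote this identity from the generators $g_j(t)$ to all of $G^{(a)}$: the one-parameter subgroups $t\mapsto g_j(t)$ generate a connected Lie subgroup of $G^{(a)}$ whose Lie algebra contains $X_1^{(a)},\ldots,X_n^{(a)}$, hence contains $\tilde{\fg}^{(a)}$; as $G^{(a)}$ is connected, this subgroup is all of $G^{(a)}$. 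Therefore $\phi(y\cdot g)=\phi(y)\cdot g$ for all $y,g\in G^{(a)}$, and taking $y=0$ together with $\phi(0)=0$ yields $\phi(g)=0\cdot g=g$ for every $g$, so $\phi$ is the identity map.

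I expect the main obstacle to be the clean justification of this last passage from the generators $g_j(t)$ to the whole group, which relies on $G^{(a)}$ being connected and on $X_1^{(a)},\ldots,X_n^{(a)}$ generating $\tilde{\fg}^{(a)}$ as a Lie algebra. Everything else is a direct application of the left-invariance of the $X_j^{(a)}$ established in Proposition~\ref{prop:Nilpotent.G(a)-tildefg(a)}, the standard flow-intertwining property, and the global existence of the flows from Lemma~\ref{lem:nilpotent.flowX}.
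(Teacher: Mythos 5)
Your proof is correct, but it takes a genuinely different route from the paper's. The paper argues entirely in coordinates: it first shows $\phi'(0)=\op{id}$, so each component $\phi_k$ is $x_k$ plus a combination of monomials $x^\alpha$ with $\brak\alpha=w_k$, $|\alpha|\geq 2$; it then uses the triangular form~(\ref{eq-homo-app2}) of the model vector fields (whose coefficients $b_{jl}$ depend only on variables of weight~$<w_l-w_j$) to prove $\partial_j\phi_k=\delta_{jk}$ by induction on $w_k-w_j$. You instead exploit the group structure: by Proposition~\ref{prop:Nilpotent.G(a)-tildefg(a)} the $X_j^{(a)}$ are left-invariant on $G^{(a)}$, so their flows --- complete by Lemma~\ref{lem:nilpotent.flowX} --- are right translations; the hypothesis $\phi_*X_j^{(a)}=X_j^{(a)}$ forces $\phi$ to intertwine these flows, hence to commute with right translation by every $g_j(t)$, and since the set of $g$ with $\phi(y\cdot g)=\phi(y)\cdot g$ for all $y$ is a subgroup, your generation step (the one-parameter subgroups $g_j(t)$ generate the connected group $G^{(a)}$, e.g.\ because $(t_1,\ldots,t_n)\mapsto g_1(t_1)\cdots g_n(t_n)$ is a local diffeomorphism at $0$, so the generated subgroup contains a neighborhood of the unit) yields $\phi(g)=\phi(0)\cdot g$, and homogeneity pins down $\phi(0)=0$. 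Two remarks: you can bypass the generation step altogether, since $\phi_*$ is linear and thus fixes every $X\in\tilde{\fg}^{(a)}$, while $\exp:\tilde{\fg}^{(a)}\rightarrow G^{(a)}$ is a global diffeomorphism (\emph{cf}.~(\ref{eq:Carnot-coord.exp(X)k})), so every element of $G^{(a)}$ lies on a one-parameter subgroup; and your argument actually proves a stronger statement, since $w$-homogeneity is used only to get $\phi(0)=0$ --- any diffeomorphism fixing the origin and preserving the frame $(X_1^{(a)},\ldots,X_n^{(a)})$ must be the identity (indeed, a left translation in general). What the paper's computation buys in exchange is self-containedness: it needs neither the group law on $G^{(a)}$, nor completeness of the flows, nor Proposition~\ref{prop:Nilpotent.G(a)-tildefg(a)}, only the homogeneity and triangularity of the coefficients, and it makes the rigidity mechanism explicit at the level of the Taylor coefficients of $\phi$.
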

\begin{proof}
We note that the $w$-homogeneity and smoothness of $\phi$ imply that $\phi(0)=0$. Therefore, in order to prove that 
$\phi=\op{id}$ we only have to show that
\begin{equation}
    \partial_{j}\phi_{k}(x)=\delta_{jk} \qquad \text{for $j,k=1,\ldots,n$}.
    \label{eq:many.dphi-delta}
\end{equation}
We also observe that~(\ref{eq:many.phi*XX(a)}) means that
\begin{equation}
    \phi'(x)[X_{j}^{(a)}(x)]=X_{j}^{(a)}(\phi(x)) \qquad \text{for $j=1,\ldots,n$}.
    \label{eq:many.phi*XX(a)-bis}
\end{equation}
In particular, setting $x=0$ gives
\begin{equation*}
    \phi'(0)[X_{j}^{(a)}(0)]=\phi'(0)[\partial_{j}]=X_{j}^{(a)}(0)=\partial_{j}. 
\end{equation*}
This shows that $\phi'(0)=\op{id}$. Combining this with the $w$-homogeneity of $\phi$ we deduce that its components $\phi_k(x)$, $k=1,\ldots,n$, are of 
the form,
\begin{equation*}
    \phi_{k}(x)=x_{k}+\sum_{\substack{\brak \alpha=w_{k}\\|\alpha|\geq 2}} c_{k\alpha}x^{\alpha}, \qquad c_{k\alpha}\in \R. 
\end{equation*}
In particular, we see that $\phi_{k}(x)$ does not depend on the variables $x_{j}$ with $w_{j}\geq w_{k}$ 
and $j\neq k$ and its linear component is just $x_{k}$. Thus, 
\begin{equation*}
    \partial_{j}\phi_{k}(x)=\delta_{jk} \qquad \text{for $w_{j}\geq w_{k}$}. 
\end{equation*}In particular, this gives~(\ref{eq:many.dphi-delta}) when $w_{k}-w_{j}\leq 0$. 

We also know by~(\ref{eq-homo-app}) that the vector fields $X_{j}^{(a)}$, $j=1,\ldots,n$, are of the form,
\begin{equation}
    X_{j}^{(a)}=\partial_{j}+\sum_{w_{l}>w_{j}}b_{jl}(x)\partial_{k},
    \label{eq-homo-app2}
\end{equation}where $b_{jl}(x)$ is a linear combination of monomials $x^{\alpha}$ with $\brak \alpha=w_{l}-w_{j}$. In particular, the coefficient
$b_{jl}(x)$ does not depend on the variables $x_{p}$ with $w_{p}>w_{l}-w_{j}$. In addition, using~(\ref{eq-homo-app2}) we get
\begin{align}
 \phi'(x)[X_{j}^{(a)}(x)]& = \phi'(x)\partial_{j}+\sum_{w_{l}>w_{j}}b_{jl}(x) \phi'(x)\partial_{k} \nonumber \\
 & = \sum_{1\leq k \leq n}\biggl( \partial_j\phi_k(x) + \sum_{w_{j}<w_{l}}b_{jl}(x)\partial_l\phi_k(x)\biggr) \partial_k.
 \label{eq:many.phi'X}
\end{align}
Note also that, as $\phi_k(x)$ is polynomial and homogeneous of degre $w_k$, we have $\partial_l\phi_k(x)=0$ when $w_l>w_k$. Therefore, by combining~(\ref{eq:many.phi*XX(a)-bis}) and~(\ref{eq:many.phi'X}) we get
\begin{equation}
    \partial_{j}\phi_{k}(x)+\sum_{w_{j}<w_{l}\leq w_k}b_{jl}(x)\partial_{l}\phi_{k}(x)=b_{jk}(\phi(x)) \qquad \text{when 
    $w_{k}>w_{j}$}. 
    \label{eq:many.phi*XX(a)-coeff}
\end{equation}

We shall now proceed to prove~(\ref{eq:many.dphi-delta}) by induction on $w_{k}-w_{j}$. We already know that~(\ref{eq:many.dphi-delta}) holds when 
$w_{k}-w_{j}\leq 0$. Assume that~(\ref{eq:many.dphi-delta}) holds for $w_{k}-w_{j}<m$ for some $m\in \N$. We remark that if $w_{k}\leq m$, 
then, for all $j=1,\ldots,n$, we have $w_{k}-w_{j}\leq m-1<m$, and so $\partial_{j}\phi_{k}(x)=\delta_{jk}$. As 
$\phi_{k}(0)=0$, we then deduce that
\begin{equation}
    \phi_{k}(x)=x_{k} \qquad \text{when $w_{k}\leq m$}.
    \label{eq:many-phik-xk}
\end{equation}
Let $j$ and $k$ be positive integers~$\leq n$ such that $w_{j}<w_{k}\leq w_{j}+m$. Then~(\ref{eq:many.phi*XX(a)-coeff}) gives
\begin{equation*}
    \partial_{k}\phi_{j}(x)=b_{jk}\left(\phi(x)\right)-\sum_{w_{j}<w_{l}\leq w_k}b_{jl}(x)\partial_{l}\phi_{k}(x).
\end{equation*}
If $w_{j}<w_{l}\leq w_{k}$, then $w_{k}-w_{l}<w_{k}-w_{j}\leq m$, and so $\partial_{l}\phi_{k}(x)=\delta_{lk}$. 
Moreover, as mentioned above, the coefficient $b_{jk}(x)$ depends only on the variables $x_{p}$ with $w_{p}\leq w_{k}-w_{j}\leq m$. 
As~(\ref{eq:many-phik-xk}) ensures us that $\phi_{p}(x)=x_{p}$ for $w_{p}\leq m$, we deduce that $b_{jk}(\phi(x))=b_{jk}(x)$. It then follows that
\begin{equation*}
    \partial_{k}\phi_{j}(x)=b_{jk}\left(\phi(x)\right)-\sum_{w_{j}<w_{l}\leq w_k}b_{jl}(x)\delta_{lk}=b_{jk}(x)-b_{jk}(x)=0.
\end{equation*}
This shows that~(\ref{eq:many.dphi-delta}) is true when $w_{k}-w_{j}\leq m$. It then follows that $\partial_j\phi_k(x)=\delta_{jk}$ for all $j.k=1,\ldots, n$, and hence $\phi$ is the identity map.  The proof is complete. 
\end{proof}

Suppose that $G$ is a nilpotent group in the class $\sN_{X}(a)$. That is, $G$ is the manifold $\R^n$ equipped with a group law satisfying the conditions~(i)--(ii) of Definition~\ref{def:Nilpotent.class-cN(a)}. We denote by $\tilde{g}$ be the Lie algebra of left-invariant vector fields on $G$, and let $(Y_1,\ldots, Y_n)$ be its canonical basis. 
As mentioned in the proof of Proposition~\ref{prop:Nilpotent.distinguished-basis}, in the same way as in~(\ref{eq-CH}) we have a globally defined exponential map $\exp:\tilde{\fg}\rightarrow \R^n$, which is a smooth diffeomorphism. Therefore, we define a smooth diffeomorphism $\exp_Y:\R^n \rightarrow \R^n$ by
\begin{equation*}
 \exp_Y(x) = \exp(x_1Y_1+\ldots+x_nY_n), \qquad x\in \R^n
\end{equation*}
In fact, as also mentioned in the proof of Proposition~\ref{prop:Nilpotent.distinguished-basis}, $\exp_Y(x)$ is of the form~(\ref{eq:Carnot-coord.xk(t)}) with $y=0$ and $t=1$, i.e., it is of the form~(\ref{eq:Carnot-coord.exp(X)k}). Therefore, we obtain a $w$-homogeneous polynomial diffeomorphism such that $\exp_Y'(0)=\op{id}$. 

Likewise, we have a $w$-homogeneous polynomial diffeomorphism $\exp_{X^{(a)}}:\R^n \rightarrow \R^n$ given by
\begin{equation}
 \exp_{X^{(a)}}(x)= \exp\left(x_1X^{(a)}_1+\ldots+x_nX^{(a)}_n\right) \qquad \text{for all $x\in \R^n$}. 
 \label{eq:Nilpotent.expX(a)}
\end{equation}
We then define the map $ \phi_Y:\R^n\rightarrow \R^n$ by 
\begin{equation}
 \phi_Y(x)=\exp_Y\circ \exp_{X^{(a)}}^{-1}(x) \qquad \text{for all $x\in \R^n$}. 
 \label{eq:Nilpotent.phiY}
\end{equation}
Note that $\phi_Y$  is a $w$-homogeneous diffeomorphism of $\R^n$ whose differential at $x=0$ is the identity map, since $\exp_Y $ and $\exp_{X^{(a)}}$ are both such maps.

\begin{proposition}
 The diffeomorphism $\phi_Y$ is a group isomorphism from $G^{(a)}$ onto $G$. Moreover, this is the unique $w$-homogeneous diffeomorphism of $\R^n$ such that
     \begin{equation}
        \left(\phi_Y\right)_{*}X_{j}^{(a)}=Y_{j} \qquad \text{for $j=1,\ldots,n$}. 
        \label{eq:many.phi*XY}
    \end{equation}
\end{proposition}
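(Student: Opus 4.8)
The plan is to realize $\phi_Y$ as the concrete form of the Lie group isomorphism $G^{(a)}\to G$ that integrates the linear identification $X_j^{(a)}\mapsto Y_j$, and then to read off~(\ref{eq:many.phi*XY}) from this, deferring uniqueness to Lemma~\ref{lem:many.unicity-phi}. First I would check that the linear map $L:\tilde{\fg}^{(a)}\to\tilde{\fg}$ determined by $L(X_j^{(a)})=Y_j$ is a Lie algebra isomorphism. By Lemma~\ref{lem-XX} the basis $(X_1^{(a)},\ldots,X_n^{(a)})$ obeys the commutator relations~(\ref{eq:Nilpotent.structure-constants-Xj(a)}), and by part~(ii) of Proposition~\ref{prop:Nilpotent.distinguished-basis} the canonical basis $(Y_1,\ldots,Y_n)$ obeys the very same relations; hence $L$ matches structure constants and is a graded Lie algebra isomorphism.

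Next I would identify $\phi_Y$ with the isomorphism induced by $L$. Under the identifications $\R^n\simeq\tilde{\fg}^{(a)}$, $x\mapsto\sum x_jX_j^{(a)}$, and $\R^n\simeq\tilde{\fg}$, $x\mapsto\sum x_jY_j$, the maps $\exp_{X^{(a)}}$ and $\exp_Y$ are precisely the Lie group exponential maps of $G^{(a)}$ and $G$ (by Proposition~\ref{prop:Nilpotent.G(a)-tildefg(a)} and the construction preceding the statement), while $L$ becomes the identity of $\R^n$; thus $\phi_Y$ reads as $\exp_G\circ L\circ\exp_{G^{(a)}}^{-1}$. Since $L$ is a Lie algebra isomorphism it preserves the Dynkin product~(\ref{eq:Carnot.Dynkin-product}), i.e.\ $L(X\cdot Y)=L(X)\cdot L(Y)$, this being a universal Lie series in iterated brackets. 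Writing $X=\exp_{G^{(a)}}^{-1}(x)$, $Y=\exp_{G^{(a)}}^{-1}(y)$ and using the product law~(\ref{eq:fga.group-law-G(a)}) on $G^{(a)}$ together with~(\ref{eq:Carnot.BCH-Formula}) on $G$, one finds $\phi_Y(x\cdot y)=\exp_G(L(X\cdot Y))=\exp_G(LX\cdot LY)=\exp_G(LX)\cdot\exp_G(LY)=\phi_Y(x)\cdot\phi_Y(y)$. As $\phi_Y$ is already a diffeomorphism of $\R^n$, it is therefore a group isomorphism from $G^{(a)}$ onto $G$.

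To obtain~(\ref{eq:many.phi*XY}) I would track one-parameter subgroups: directly from its definition $\phi_Y$ sends $\exp_{X^{(a)}}(te_j)=\exp(tX_j^{(a)})$ to $\exp_Y(te_j)=\exp(tY_j)$, so it maps the one-parameter subgroup generated by $X_j^{(a)}$ onto the one generated by $Y_j$. A group isomorphism carries left-invariant vector fields to left-invariant vector fields, so $(\phi_Y)_*X_j^{(a)}$ is the left-invariant field whose one-parameter subgroup is $t\mapsto\exp_Y(te_j)$, namely $Y_j$. For uniqueness I would invoke Lemma~\ref{lem:many.unicity-phi}: if $\phi$ is any $w$-homogeneous diffeomorphism of $\R^n$ with $\phi_*X_j^{(a)}=Y_j$ for all $j$, then $\phi^{-1}$ is $w$-homogeneous as well and $\psi:=\phi^{-1}\circ\phi_Y$ is a $w$-homogeneous diffeomorphism satisfying $\psi_*X_j^{(a)}=X_j^{(a)}$ for every $j$; Lemma~\ref{lem:many.unicity-phi} forces $\psi=\op{id}$, whence $\phi=\phi_Y$.

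The step requiring the most care---and the main conceptual obstacle---is the passage in the second paragraph between the two concrete realizations and the abstract Lie-theoretic picture: one must make sure that $\exp_{X^{(a)}}$ and $\exp_Y$ genuinely are the exponential maps of the two groups in their canonical bases, so that the homomorphism property reduces to the compatibility of $L$ with the Dynkin product. Once this is in place the argument is purely formal, the substance having already been absorbed into Lemma~\ref{lem-XX}, Proposition~\ref{prop:Nilpotent.distinguished-basis}, and Lemma~\ref{lem:many.unicity-phi}.
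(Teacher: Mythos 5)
Your proposal is correct and follows essentially the same route as the paper: you match structure constants to obtain the graded Lie algebra isomorphism $\lie:\tilde{\fg}^{(a)}\rightarrow\tilde{\fg}$, write $\phi_Y=\exp_{\tilde{\fg}}\circ\lie\circ\exp_{\tilde{\fg}^{(a)}}^{-1}$ after checking that $\exp_{X^{(a)}}$ and $\exp_Y$ are the group exponentials in the canonical bases, invoke the Baker--Campbell--Hausdorff/Dynkin compatibility to conclude $\phi_Y$ is a group isomorphism, and reduce uniqueness to Lemma~\ref{lem:many.unicity-phi} exactly as the paper does. The only (harmless) variation is your derivation of $(\phi_Y)_*X_j^{(a)}=Y_j$ via one-parameter subgroups, where the paper instead observes that $(\phi_Y)_*X_j^{(a)}$ is left-invariant and agrees with $\partial_j$ at the origin since $\phi_Y'(0)=\op{id}$.
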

\begin{proof}
As mentioned above, $\phi_Y$ is a $w$-homogeneous diffeomorphism of $\R^n$ whose differential at $x=0$ is the identity map. We also know that $(X_1^{(a)}, \ldots, X_n^{(a)})$ and $(Y_1,\ldots, Y_n)$ satisfy the same commutator relations~(\ref{eq:Nilpotent.structure-constants-Xj(a)}). Therefore, we have a Lie algebra isomorphism $\lie:\tilde{\fg}^{(a)}\rightarrow \tilde{\fg}$ given by
    \begin{equation*}
        \lie\left(x_{1}X_{1}^{(a)}+\cdots +x_{n}X_{n}^{(a)}\right)=x_{1}Y_{1}+\cdots + x_{n}Y_{n}, \qquad x_{j}\in 
        \R.
    \end{equation*} 
In particular, this is a Lie group map with respect to the Dynkin products~(\ref{eq:Carnot.Dynkin-product}) on $\tilde{\fg}^{(a)}$ and $\tilde{\fg}$. 

We also observe that $\exp_Y$ is the exponential map $\exp_{\tilde{\fg}}:\tilde{\fg}\rightarrow G$ in the coordinates defined by the basis $(Y_1, \ldots, Y_n)$. Likewise, the map $\exp_{X^{(a)}}$ is the exponential map $\exp_{\tilde{\fg}^{(a)}}:\tilde{\fg}^{(a)}\rightarrow G^{(a)}$ in the coordinates defined by the basis $(X_1^{(a)}, \ldots, X_n^{(a)})$. Thus, 
 \begin{equation*}
\phi_Y= \exp_Y\circ \exp_{X^{(a)}}^{-1}= \exp_{\tilde{\fg}} \circ \chi \circ \exp_{\tilde{\fg}^{(a)}}^{-1}.
\end{equation*}

By the Baker-Campbell-Hausdorff formula~(\ref{eq:Carnot.BCH-Formula}) the exponential map $\exp_{\tilde{\fg}}$ (resp., $\exp_{\tilde{\fg}^{(a)}}$) intertwines the Dynkin product~(\ref{eq:Carnot.Dynkin-product}) on ${\tilde{\fg}}$ (resp, $\tilde{\fg}^{(a)}$) with the product of $G$ (resp., $G^{(a)})$. It then follows that $\phi_Y$ is given by the composition of maps of Lie groups, and so this is a Lie group isomorphism from $G^{(a)}$ onto $G$. 

Recall that, for $j=1, \ldots, n$, the vector field $Y_j$ (resp., $X_j^{(a)}$) is the unique left-invariant vector field on $G$ (resp., $G^{(a)}$) that agrees with $\partial_j$ at $x=0$. As $\phi_Y$ is a Lie group isomorphism and $\phi_Y'(0)=\op{id}$, the vector field $(\phi_Y)_*X_j^{(a)}$ is a left-invariant vector field on $G$ which at $x=0$ is equal to 
$(\phi_Y)_*X_j^{(a)}(0)=\phi_Y'(0)(X_j^{(a)}(0))=X_j^{(a)}(0)=\partial_j$. It then follows that $(\phi_Y)_{*}X_{j}^{(a)}=Y_{j}$ for $j=1,\ldots,n$. 

It remains to show that $\phi_Y$ is the unique $w$-homogeneous diffeomorphism of $\R^n$ satisfying~(\ref{eq:many.phi*XY}).
Let $\psi: \R^n\rightarrow \R^n$ be another such diffeomorphism. Then $\phi^{-1}_Y\circ \psi$ is a $w$-homogeneous smooth diffeomorphism. Moreover,  for $j=1,\ldots, n$, we have
\begin{equation*}
(\phi^{-1}_Y\circ \psi)_{*}X_{j}^{(a)}=\phi^{*}_Y(\psi_{*}X_{j}^{(a)})=\phi^{*}_YY_{j}=X_{j}^{(a)}.
\end{equation*}
It then follows from Lemma~\ref{lem:many.unicity-phi} that $\phi^{-1}_Y\circ \psi=\op{id}$, i.e., $\psi=\phi_Y$. Thus, $\phi_Y$ is the unique $w$-homogeneous diffeomorphism of $\R^n$ satisfying~(\ref{eq:many.phi*XY}). The proof is complete. 
\end{proof}

We are now in a position to prove the following result. 

\begin{theorem}\label{thm:Nilpotent.coord-G(a)G}
 Suppose that $G$ is a nilpotent Lie group in the class $\sN_{X}(a)$.  Let $(x_1,\ldots, x_n)$ be privileged coordinates at $a$ adapted to $(X_1,\ldots, X_n)$. Then a change of coordinates $x\rightarrow \phi(x)$ produces privileged coordinates at $a$ adapted to $(X_1,\ldots, X_n)$ in which the nilpotent approximation is $G$  if and only if, near $x=0$, we have
\begin{equation}
 \phi(x) =\phi_Y(x) + \Ow\left(\|x\|^{w+1}\right),
 \label{eq:nilp-approx.phi-phiY-Ow} 
\end{equation}
where $\phi_Y$ is defined in~(\ref{eq:Nilpotent.phiY}). In particular, $x\rightarrow \phi_Y(x)$ is the unique such change of coordinates which is $w$-homogeneous.  
\end{theorem}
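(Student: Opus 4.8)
The plan is to deduce the statement from the characterization of privileged coordinates already obtained in Proposition~\ref{prop:char-priv.phi-hatphi-Ow}, combined with the rigidity provided by Lemma~\ref{lem:many.unicity-phi}. The conceptual point is that both the privileged-coordinate property and the resulting nilpotent approximation depend on $\phi$ only through its $w$-homogeneous leading part $\hat\phi$.

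First I would apply Proposition~\ref{prop:char-priv.phi-hatphi-Ow}: the change $x\to\phi(x)$ yields privileged coordinates adapted to $(X_1,\ldots,X_n)$ if and only if $\phi(x)=\hat\phi(x)+\Ow(\|x\|^{w+1})$ near $x=0$, where $\hat\phi$ is a $w$-homogeneous polynomial diffeomorphism with $\hat\phi'(0)=\op{id}$. Granting this, I would identify the model vector fields in the new coordinates $y=\phi(x)$, in which the frame becomes $\phi_*X_j$. The computation already performed in the proof of Proposition~\ref{prop:char-priv.phi-hatphi-Ow}, culminating in~(\ref{eq:char-priv.dtphiXj}), shows that the model vector field of $\phi_*X_j$ is precisely $\hat\phi_*X_j^{(a)}$. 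Thus the new model vector fields are $\tilde X_j:=\hat\phi_*X_j^{(a)}$, $j=1,\ldots,n$.

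Next I would check that $(\tilde X_1,\ldots,\tilde X_n)$ satisfies conditions (i)--(ii) of Proposition~\ref{prop:Nilpotent.distinguished-basis}: each $\tilde X_j$ agrees with $\partial_j$ at the origin since $\hat\phi'(0)=\op{id}$ and $X_j^{(a)}$ has the form~(\ref{eq-homo-app}); each $\tilde X_j$ is homogeneous of degree $-w_j$ since $\hat\phi$ is $w$-homogeneous and $X_j^{(a)}$ is homogeneous of degree $-w_j$; and, $\hat\phi_*$ being a Lie algebra isomorphism, the $\tilde X_j$ obey the same commutator relations~(\ref{eq:Nilpotent.structure-constants-Xj(a)}) as the $X_j^{(a)}$. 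Hence by Proposition~\ref{prop:Nilpotent.distinguished-basis} the $\tilde X_j$ form the canonical basis of the nilpotent approximation attached to the new coordinates, and the bijection of that proposition between groups in $\sN_X(a)$ and their canonical bases shows that this approximation equals $G$ if and only if $\tilde X_j=Y_j$ for every $j$, i.e. $\hat\phi_*X_j^{(a)}=Y_j$.

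To finish I would invoke uniqueness. Since $\phi_Y$ is $w$-homogeneous and satisfies $(\phi_Y)_*X_j^{(a)}=Y_j$ by~(\ref{eq:many.phi*XY}), the map $\phi_Y^{-1}\circ\hat\phi$ is a $w$-homogeneous diffeomorphism fixing every $X_j^{(a)}$, so Lemma~\ref{lem:many.unicity-phi} gives $\hat\phi=\phi_Y$; conversely $\hat\phi=\phi_Y$ trivially yields $\tilde X_j=Y_j$. Therefore the nilpotent approximation is $G$ exactly when $\hat\phi=\phi_Y$, which is~(\ref{eq:nilp-approx.phi-phiY-Ow}). The final clause is then immediate: $\phi_Y$ itself is $w$-homogeneous with vanishing remainder, and any $w$-homogeneous $\phi$ obeying~(\ref{eq:nilp-approx.phi-phiY-Ow}) coincides with its own leading part $\hat\phi=\phi_Y$. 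I expect the only real obstacle to be the middle step---faithfully translating ``the nilpotent approximation is $G$'' into the equality $\tilde X_j=Y_j$---which rests on using the canonical-basis dictionary of Proposition~\ref{prop:Nilpotent.distinguished-basis} in both directions; once that is set up, the closing rigidity argument via Lemma~\ref{lem:many.unicity-phi} is routine.
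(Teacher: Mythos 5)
Your proposal is correct and follows essentially the same route as the paper's proof: Proposition~\ref{prop:char-priv.phi-hatphi-Ow} to reduce to the $w$-homogeneous leading part $\hat\phi$, the computation~(\ref{eq:char-priv.dtphiXj}) to identify the new model vector fields as $\hat\phi_*X_j^{(a)}$, and the canonical-basis dictionary of Proposition~\ref{prop:Nilpotent.distinguished-basis} to translate ``the nilpotent approximation is $G$'' into $\hat\phi_*X_j^{(a)}=Y_j$. The only cosmetic difference is that you invoke Lemma~\ref{lem:many.unicity-phi} directly to conclude $\hat\phi=\phi_Y$, whereas the paper cites the uniqueness clause of the proposition establishing~(\ref{eq:many.phi*XY})---which was itself proved via that same lemma, so the arguments coincide.
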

\begin{proof}
Let $x\rightarrow \phi(x)$ be a change of coordinates.  By Proposition~\ref{prop:char-priv.phi-hatphi-Ow} the new coordinates $(y_1,\ldots, y_n)=\phi(x)$ are privileged coordinates at $a$ adapted to 
$(X_1,\ldots, X_n)$ if and only if
\begin{equation}
 \phi(x) =\hat{\phi}(x) +\Ow\left( \|x\|^{w+1}\right) \qquad \text{near $x=0$},  
  \label{eq:nilp-approx.phi-hatphi-Ow2} 
\end{equation}
where $\hat{\phi}(x)$ is a polynomial $w$-homogeneous map such that $\phi'(0)=\op{id}$. We observe that~(\ref{eq:nilp-approx.phi-phiY-Ow}) is a special case of such an asymptotics, since $\phi_Y(x)$ is a $w$-homogeneous polynomial map such that $\phi_Y'(0)=\op{id}$. Therefore, we may assume that $\phi(x)$ has a behavior of the form~(\ref{eq:nilp-approx.phi-hatphi-Ow2}). 

Let  $U$ be the range of the privileged coordinates $(x_1,\ldots, x_n)$, and set $V=\phi(U)$. If $\phi(x)$ has a behavior of the form~(\ref{eq:nilp-approx.phi-hatphi-Ow2}) near $x=0$, then using~(\ref{eq:char-priv.dtphiXj}) we see that, for $j=1,\ldots, n$ and as $t\rightarrow 0$, we have 
\begin{equation*}
t^{w_j} \delta_t^* (\tilde{\phi}_*X_j)(y)=\left[ \hat{\phi}_*X_j^{(a)}\right](y) +\op{O}(t) \qquad \text{ in $\cX(V)$}. 
\end{equation*}
This shows that $\hat{\phi}_*X_j^{(a)}$ is the model vector field of $X_j$ in the privileged coordinates $(y_1,\ldots, y_n)$. We know by Proposition~\ref{prop:Nilpotent.distinguished-basis} that the nilpotent Lie groups in the class $\sN_X(a)$ are uniquely determined by the canonical bases of their Lie algebras of left-invariant vector fields. Therefore, we see that 
the  nilpotent approximation in the privileged coordinates $(y_1,\ldots, y_n)$ is given by $G$ if and only if 
$\hat{\phi}_*X_j^{(a)}=Y_j$ for $j=1, \ldots, n$.  
By Proposition~\ref{prop:char-priv.phi-hatphi-Ow} this happens if and only if $\hat\phi=\phi_Y$, i.e., the map $\phi(x)$ has a behavior of the form~(\ref{eq:nilp-approx.phi-phiY-Ow}) near $x=0$. 

Finally, note that in~(\ref{eq:nilp-approx.phi-hatphi-Ow2}) the $w$-homogeneous map $\hat{\phi}(x)$ is uniquely determined by $\phi$ (\emph{cf}.~Proposition~\ref{lem:multi.Thetat} and Remark~\ref{rmk:anisotropic.weighted-asymptotic}). 
Therefore, if $\phi(x)$  is $w$-homogeneous and has a behavior of the form~(\ref{eq:nilp-approx.phi-phiY-Ow}), then it must agree with $\phi_Y(x)$ everywhere. Thus, $x\rightarrow \phi_Y(x)$ is the unique $w$-homogeneous change of coordinates that provide us with privileged coordinates in which the nilpotent approximation is given by $G$. The proof is complete. 
\end{proof}

Combining Theorem~\ref{thm:Nilpotent.coord-G(a)G} with Proposition~\ref{prop:Nilpotent.G(a)-fg(a)-graded} we arrive at the following statement. 

\begin{corollary}\label{cor:nilpotent-approx.characterization}
 Let $G$ be a graded nilpotent Lie group of step~$r$ built out of $\R^n$. Then the following are equivalent:
 \begin{enumerate}
\item[(i)] $G$ provides us with the nilpotent approximation of $(M,H)$ at $a$ in some privileged coordinates at $a$ adapted to 
$(X_1,\ldots, X_n)$.

\item[(ii)] $G$ belongs to the class $\sN_{X}(a)$. 
\end{enumerate}
\end{corollary}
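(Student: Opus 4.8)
The plan is to read off the two implications directly from the results already established, so that essentially no fresh computation is required; the corollary is a repackaging of Proposition~\ref{prop:Nilpotent.G(a)-fg(a)-graded} and Theorem~\ref{thm:Nilpotent.coord-G(a)G}.

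For the implication (i) $\Rightarrow$ (ii), I would argue as follows. Suppose $G$ is the nilpotent approximation $G^{(a)}$ of $(M,H)$ at $a$ in some system of privileged coordinates adapted to $(X_1,\ldots,X_n)$. Then Proposition~\ref{prop:Nilpotent.G(a)-fg(a)-graded} tells us precisely that the Lie algebra $TG^{(a)}(0)$ equals $\fg(a)$ and that the dilations~(\ref{eq:Nilpotent.dilations2}) are group automorphisms of $G^{(a)}$. These are exactly the two conditions of Definition~\ref{def:Nilpotent.class-cN(a)}, so $G=G^{(a)}$ lies in the class $\sN_X(a)$.

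For the converse (ii) $\Rightarrow$ (i), I would invoke Theorem~\ref{thm:Nilpotent.coord-G(a)G}. First fix any system of privileged coordinates at $a$ adapted to $(X_1,\ldots,X_n)$; such a system exists by Proposition~\ref{prop:privileged.psi-privileged} (the $\psi$-privileged coordinates). Given $G\in \sN_X(a)$, let $(Y_1,\ldots,Y_n)$ be the canonical basis of its Lie algebra of left-invariant vector fields, and form the $w$-homogeneous diffeomorphism $\phi_Y$ of~(\ref{eq:Nilpotent.phiY}). Since $\phi_Y$ trivially satisfies the asymptotic relation $\phi_Y(x)=\phi_Y(x)+\Ow(\|x\|^{w+1})$ near $x=0$, Theorem~\ref{thm:Nilpotent.coord-G(a)G} guarantees that the change of coordinates $x\rightarrow \phi_Y(x)$ produces privileged coordinates at $a$ adapted to $(X_1,\ldots,X_n)$ in which the nilpotent approximation is exactly $G$. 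Hence $G$ arises as a nilpotent approximation, which is (i).

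I do not anticipate a genuine obstacle, as the substantive content is entirely absorbed into the two cited results. The only point deserving care is that the converse direction presupposes the existence of a starting system of privileged coordinates, which is supplied by the construction of $\psi$-privileged coordinates; the remaining work is merely to match the hypotheses of Definition~\ref{def:Nilpotent.class-cN(a)} against the conclusions of Proposition~\ref{prop:Nilpotent.G(a)-fg(a)-graded}, and to observe that $\phi_Y$ is an admissible choice of $\phi$ in Theorem~\ref{thm:Nilpotent.coord-G(a)G}.
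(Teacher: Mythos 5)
Your proof is correct and follows the paper's own route exactly: the paper derives the corollary by combining Proposition~\ref{prop:Nilpotent.G(a)-fg(a)-graded} (for (i)\,$\Rightarrow$\,(ii)) with Theorem~\ref{thm:Nilpotent.coord-G(a)G} (for (ii)\,$\Rightarrow$\,(i)), just as you do. Your added remarks---that the $\psi$-privileged coordinates supply the needed starting system and that $\phi_Y$ trivially satisfies the asymptotic condition~(\ref{eq:nilp-approx.phi-phiY-Ow})---are exactly the routine verifications the paper leaves implicit.
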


 \subsection{Example: Nilpotent approximations of a Heisenberg manifold}   Let us illustrate  Theorem~\ref{thm:Nilpotent.coord-G(a)G} and 
 Corollary~\ref{cor:nilpotent-approx.characterization} in the case of a Heisenberg manifold $(M^n,H)$, where $H\subset TM$ is a hyperplane bundle that gives rise to the step~2 Carnot filtration 
 $(H,TM)$. Let $(X_1,\ldots, X_n)$ be an $H$-frame near a given point $a\in M$. Then the class $\sN_{X}(a)$ is uniquely determined by the coefficients $L_{ij}(a):=L^n_{ij}(a)$, $i,j=1,\ldots, n-1$, such that
\begin{equation*}
 [X_i,X_j](a)= L_{ij}(a) X_n \qquad \bmod H(a), \qquad i,j=1, \ldots, n-1. 
\end{equation*}
 In addition, in this setup the dilations~(\ref{eq:Nilpotent.dilations2}) are given by 
\begin{equation}
 t\cdot x=(tx_1, \ldots, tx_{n-1}, t^2x_n), \qquad x\in \R^n, \ t\in \R. 
 \label{eq:Nilpotent.dilations-Heisenberg}
\end{equation}

 An example of group in the class $\sN_{X}(a)$ is the group $G^0$ which is obtained by equipping $\R^n$ with the group law, 
\begin{equation*}
    x\cdot y = \biggl( x_{1}+y_{1},\ldots,x_{n-1}+y_{n-1}, x_n+y_n+\frac{1}{2}L(a)(x,y) \biggr), \qquad x,y\in \R^n, 
 \end{equation*}
 where we have set $L(a)(x,y)= \sum_{i,j=1}^{n-1} L_{ji}(a)x_iy_j$.  The canonical basis $(Y_1^0,\ldots, Y_n^0)$ of left-invariant vector fields on $G^0$ is given by
\begin{equation*}
 Y_n^0=\partial_{x_n}, \qquad Y_j^0= \partial_{x_j} + \frac{1}{2}\sum_{1\leq k \leq n-1} L_{kj}(a) x_k\partial_{x_n}, \quad j=1,\ldots, n-1. 
\end{equation*}
 Note that $Y_1^0,\ldots, Y^0_{n-1}$ are homogeneous of degree~$-1$ with respect to the dilations~(\ref{eq:Nilpotent.dilations-Heisenberg}), while $Y_n^0$ is homogeneous of degree~$-2$. Moreover, we have the commutator relations, 
\begin{equation}
 \left[Y_i^0, Y_j^0\right]=L_{ij}(a)Y_n^0, \qquad  \left[Y_i^0, Y_n^0\right]=0, \qquad i,j=1,\ldots, n-1. 
 \label{eq:Nilpotent.Heisenberg-relations}
\end{equation}
 
 Let $G$ be another nilpotent Lie group in the class $\sN_{X}(a)$ and denote by  $\tilde{\fg}$ its Lie algebra of left-invariant vector fields. We know by Proposition~\ref{prop:Nilpotent.distinguished-basis} that $G$ is uniquely determined by the canonical basis $(Y_1, \ldots, Y_n)$ of $\tilde{\fg}$.  By Proposition~\ref{prop:Nilpotent.distinguished-basis} this basis has the following properties: 
 \begin{enumerate}
\item[(i)] $Y_j(0)=\partial_j$ for $j=1,\ldots,n$. 

\item[(ii)] $Y_1, \ldots, Y_{n-1}$ are homogeneous of degree~$-1$ with respect to the dilations~(\ref{eq:Nilpotent.dilations-Heisenberg}), while $Y_n^0$ is homogeneous of degree~$-2$.

\item[(iii)] $Y_1, \ldots, Y_{n}$ satisfy the commutator relations~(\ref{eq:Nilpotent.Heisenberg-relations}). 
\end{enumerate}
Note that the properties (i)--(ii) imply that $Y_n=\partial_{x_n}=Y_n^0$. Moreover, for $j=1, \ldots, n-1$, we can write $Y_j=Y_j^0+Z_j$, where $Z_j$ is homogeneous of degree~$-1$ and vanishes at the origin. Thus, it takes the form, 
\begin{equation}
 Z_j= \sum_{1\leq k \leq n-1}b_{jk}x_k \partial_{x_n}, \qquad b_{jk}\in \R. 
 \label{eq:Nilpotent.Zj}
\end{equation}
  We observe that, for $i,j=1, \ldots, n-1$, we have $[Z_i, Y_n]=[Z_i,Z_j]=0$, and 
\begin{equation*}
[Y_i^0,Z_j]=\sum_{1\leq k \leq n-1} b_{jk}[\partial_{x_j},x_k]\partial_{x_n} = b_{ji} \partial_{x_n}. 
\end{equation*}
Therefore, we have 
\begin{equation*}
 [Y_j,Y_j] = \left[Y_i^0,Y_j^0\right] + \left[Y_i^0,Z_j\right]+ \left[Z_i,Y_j^0\right] 
  = L_{ij}(a) \partial_{x_n} + (b_{ji}-b_{ij})\partial_{x_n}. 
\end{equation*}
 Thus, the vector fields $Y_1, \ldots, Y_n$ satisfy the commutator relations~(\ref{eq:Nilpotent.Heisenberg-relations}) if and only if $b_{ij}=b_{ji}$ for $i,j=1,\ldots,n-1$, i.e., the matrix $b=(b_{ij})$ is symmetric. 
 
 Conversely, let $b=(b_{ij})$ be a real symmetric $(n-1)\times (n-1)$-matrix. For $j=1,\ldots, n-1$, set $Y_j=Y_j^0+Z_j$, where $Z_j$ is given by~(\ref{eq:Nilpotent.Zj}). In addition, set $Y_n=\partial_{x_n}$. Then $(Y_1, \ldots, Y_n)$ satisfy the conditions~(i)--(iii) above. Therefore, by Proposition~\ref{prop:Nilpotent.distinguished-basis} this 
  is the canonical basis of the Lie algebra of left-invariant vectors on a unique nilpotent group in the class $\sN_X(a)$. In fact, this group is obtained by equipping $\R^n$ with the group law, 
 \begin{equation*}
    x\cdot y = \biggl( x_{1}+y_{1},\ldots,x_{n-1}+y_{n-1}, x_n+y_n+B(x,y) \biggr), \qquad x,y\in \R^n, 
 \end{equation*}
 where we have set
\begin{equation*}
 B(x,y)= \frac{1}{2}\sum_{1\leq i,j\leq n-1} L_{ji}(a)x_iy_j+  \sum_{1\leq i,j\leq n-1} b_{ij}x_iy_j. 
\end{equation*}
 This shows that the class $\sN_{X}(a)$ is parametrized by the space of real symmetric $(n-1)\times (n-1)$-matrices. Combining this with Corollary~\ref{cor:nilpotent-approx.characterization} we then arrive at the following statement. 

\begin{proposition}\label{prop:nilpotent-approx.Heisenberg-approx}
 Let $(M^n,H)$ be a Heisenberg manifold and $(X_1, \ldots, X_n)$ an $H$-frame near a point $a\in M$. Then there is a one-to-one correspondance between real symmetric $(n-1)\times (n-1)$-matrices and the nilpotent approximations of $(M,H)$ at $a$ associated with privileged coordinates at $a$ adapted to $(X_1, \ldots, X_n)$. 
\end{proposition}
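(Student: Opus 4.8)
The plan is to reduce the statement entirely to the classification of canonical bases furnished by Proposition~\ref{prop:Nilpotent.distinguished-basis}, and then to pass to nilpotent approximations via Corollary~\ref{cor:nilpotent-approx.characterization}. Indeed, that corollary asserts that a graded nilpotent Lie group $G$ built out of $\R^n$ arises as the nilpotent approximation of $(M,H)$ at $a$ in \emph{some} system of privileged coordinates adapted to $(X_1,\ldots,X_n)$ if and only if $G$ lies in the class $\sN_X(a)$. Therefore it suffices to produce an explicit bijection between $\sN_X(a)$ and the space of real symmetric $(n-1)\times(n-1)$-matrices, and the proposition follows by composing that bijection with the equivalence of Corollary~\ref{cor:nilpotent-approx.characterization}.

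First I would parametrize $\sN_X(a)$ by its canonical bases. By Proposition~\ref{prop:Nilpotent.distinguished-basis}, each group in $\sN_X(a)$ is determined by the canonical basis $(Y_1,\ldots,Y_n)$ of its Lie algebra of left-invariant vector fields, and such bases are exactly the families satisfying the three conditions (i) $Y_j(0)=\partial_j$, (ii) the stated homogeneity with respect to the dilations~(\ref{eq:Nilpotent.dilations-Heisenberg}), and (iii) the commutator relations~(\ref{eq:Nilpotent.Heisenberg-relations}). In the step~$2$ Heisenberg setting, (i)--(ii) force $Y_n=\partial_{x_n}$ and, for $j\leq n-1$, force $Y_j=Y_j^0+Z_j$ with $Z_j$ of the monomial form~(\ref{eq:Nilpotent.Zj}) for some reals $b_{jk}$; so the basis collapses to the single datum of a matrix $b=(b_{jk})$. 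I then compute the brackets: since $[Z_i,Y_n]=[Z_i,Z_j]=0$ and $[Y_i^0,Z_j]=b_{ji}\partial_{x_n}$, expanding $[Y_i,Y_j]=[Y_i^0,Y_j^0]+[Y_i^0,Z_j]+[Z_i,Y_j^0]$ gives $[Y_i,Y_j]=\bigl(L_{ij}(a)+b_{ji}-b_{ij}\bigr)\partial_{x_n}$, and comparison with~(\ref{eq:Nilpotent.Heisenberg-relations}) shows (iii) holds precisely when $b$ is symmetric. Conversely, every real symmetric $b$ yields a family satisfying (i)--(iii), hence by Proposition~\ref{prop:Nilpotent.distinguished-basis} a unique group in $\sN_X(a)$, with explicit group law governed by $\tfrac12 L(a)(x,y)+\sum b_{ij}x_iy_j$.

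These two assignments are mutually inverse, so they define the desired bijection between $\sN_X(a)$ and symmetric $(n-1)\times(n-1)$-matrices, and I finish by invoking Corollary~\ref{cor:nilpotent-approx.characterization}. The content of the argument is carried by the short bracket computation isolating symmetry of $b$, but this is routine linear algebra rather than a genuine obstacle. The point I expect to require the most care is the claim that (i)--(ii) really do pin down $Y_n=\partial_{x_n}$ and the exact form of $Z_j$: this rests on the observation that, for the dilations~(\ref{eq:Nilpotent.dilations-Heisenberg}), a vector field homogeneous of degree $-1$ (resp.\ $-2$) that agrees with $\partial_j$ (resp.\ $\partial_n$) at the origin can only exhibit the stated monomial dependence, which is immediate from the shape of those dilations together with the homogeneity description of the model vector fields in~(\ref{eq-homo-app2}).
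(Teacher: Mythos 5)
Your proposal is correct and follows essentially the same route as the paper: parametrizing $\sN_X(a)$ by canonical bases via Proposition~\ref{prop:Nilpotent.distinguished-basis}, using homogeneity with respect to the dilations~(\ref{eq:Nilpotent.dilations-Heisenberg}) to force $Y_n=\partial_{x_n}$ and $Y_j=Y_j^0+Z_j$ with $Z_j$ as in~(\ref{eq:Nilpotent.Zj}), isolating symmetry of $b=(b_{jk})$ by the same bracket computation, and concluding with Corollary~\ref{cor:nilpotent-approx.characterization}. The only cosmetic difference is that you cite the shape of~(\ref{eq-homo-app2}) to justify the monomial form of $Z_j$, whereas the paper argues directly from homogeneity of degree $-1$ together with vanishing at the origin; the content is the same.
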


More generally, let $(M,H)$ be a step $r$ Carnot manifold of step~$r$ with weight sequence $(w_1, \ldots, w_n)$  and type $(m_1, \ldots, m_r)$ (where $m_j=\op{rk}H_j$). By elaborating on the previous considerations, it can be shown that every class $\sN_{X}(a)$ contains a subclass which is parametrized by a subspace of real $n\times n$-matrices, the dimension of which is equal to
\begin{equation*}
\frac{m_r}2 \cdot \sum_{\substack{w_i+w_j=r\\ w_i<w_j}} m_i m_j + \frac{m_r}2\cdot \sum_{w_i=\frac12 r} m_i(m_i+1). 
\end{equation*}
More precisely, suppose that $G$ is a nilpotent Lie group in the class $\sN_{X}(a)$. Let $(Y_1,\ldots, Y_n)$ be the canonical basis of its Lie algebra of left-invariant vector fields. Then we get a family of other nilpotent Lie groups in $\sN_{X}(a)$ that are associated with the Lie algebras of vector fields generated by families $(Y_1+Z_1, \ldots, Y_n+Z_n)$, where the vector fields $Z_j$, $j=1,\ldots, n$, are of the form, 
\begin{equation*}
 Z_j= \sum_{w_i+w_j=w_k=r} b_{ij}^kx_i\partial_{x_k}, \qquad b_{ij}^k=b_{ji}^k\in \R. 
\end{equation*}
Like in the Heisenberg manifold case, when $r=2$ this produces all the nilpotent approximations associated with privileged coordinates at $a$ adapted to $(X_1, \ldots, X_n)$. 
In any case, we obtain a very large class of nilpotent approximations at any given point of $M$.

\section{Canonical coordinates}\label{sec:Canonical-coord}
In this section, we explain how to use Proposition~\ref{prop:char-priv.phi-hatphi-Ow} to recover the facts that the canonical privileged coordinates of the first kind of~\cite{Go:LNM76, RS:ActaMath76} and the canonical coordinates of the second kind of~\cite{BS:SIAMJCO90, He:SIAMR} are privileged coordinates in the sense considered in this paper. 
The approach is based on the observation that in privileged coordinates the passages to the canonical coordinates of the 1st and 2nd kind on a Carnot manifold are suitably approximated by the passages to the canonical coordinates on the nilpotent approximation (see Proposition~\ref{prop:can.1stkind-approx} and Proposition~\ref{prop:can.2ndkind-approx} below for the precise statements). 
This approach has the advantage of avoiding technical manipulations with flows of vector fields, but it presupposes the existence of privileged coordinates. 

Throughout this section we let $(X_1,\ldots, X_n)$ be an $H$-frame near a given point $a\in M$. 

\subsection{Canonical coordinates of the first kind}
Let $U_0$ be an open subset of $M$ over which the $H$-frame $(X_1,\ldots, X_n)$ is defined. Given any vector field $X$ on $U_0$ and any point $y_0\in U$, the flow $\exp(tX)(y_0)$ is the solution of the initial-value problem, 
\begin{equation*}
 \dot{y}(s)=X\left( y(s)\right), \qquad y(0)=y_0. 
\end{equation*}
When the solution of this initial-value problem is defined on a given interval $I$ containing $0$ we shall say that the flow 
$\exp(tX)(y_0)$ \emph{exists for all $s\in I$}. The only result on vector field flows that we need is the following lemma, which follows from standard ODE theory (see, e.g., \cite[Theorem~2.3.2]{Pe:Springer01}). 

\begin{lemma}\label{lem:Can.parameter-flows}
 Let $(X(u))_{u\in \cU}$ be a $C^\infty$-family of (smooth) vector fields on $U_0$ parametrized by an open set of some Euclidean space $\R^N$. For every $u_0\in \cU$ and every $y_0\in U_0$, there are $c>0$, an open neighborhood $\cV$ of $u_0$ in $\cU$, and an open neighborhood $V$ of $y_0$ in $U_0$ such that: 
 \begin{enumerate}
\item[(i)] The flow $\exp(sX(u))(y)$ exists for all $(s,u,y)$ in $(-c,c)\times \cV\times V$. 

\item[(ii)] The map $(s,u,y)\ni (-c,c)\times \cV\times V\rightarrow \exp(sX(u))(y)\in M$ is smooth. 
\end{enumerate}
\end{lemma}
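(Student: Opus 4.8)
The plan is to reduce the statement to the fundamental theorem on existence, uniqueness and smooth dependence for the flow of a single autonomous, parameter-free smooth vector field, by absorbing the parameter into an enlarged phase space. Since the assertion is purely local—we only seek a small $c>0$ together with neighborhoods $V$ of $y_0$ and $\cV$ of $u_0$—I would first fix a coordinate chart around $y_0$ and thereby assume that $U_0$ is an open subset of $\R^n$ and that each $X(u)$ is a smooth $\R^n$-valued map, with $(u,y)\mapsto X(u)(y)$ jointly smooth on $\cU\times U_0$.

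Next I would introduce the augmented vector field $\tilde{X}$ on the open set $U_0\times \cU\subset \R^n\times\R^N$ defined by $\tilde{X}(y,u)=\left(X(u)(y),0\right)$. This is smooth because $(u,y)\mapsto X(u)(y)$ is smooth, and the vanishing of its $\R^N$-component means that the parameter coordinate is constant along every integral curve. Consequently the flow of $\tilde{X}$ has the product form $\exp\!\left(s\tilde{X}\right)(y,u)=\left(\exp(sX(u))(y),\,u\right)$, so that its first component recovers exactly the flow $\exp(sX(u))(y)$ we wish to control.

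I would then invoke the standard theorem (\cite[Theorem~2.3.2]{Pe:Springer01}, applied to $\tilde{X}$): the maximal flow domain of a smooth vector field is an open subset of $\R\times(U_0\times\cU)$ containing $\{0\}\times(U_0\times\cU)$, on which the flow map is smooth. In particular $(0,(y_0,u_0))$ is an interior point of this domain, so openness furnishes $c>0$ and neighborhoods $V$ of $y_0$ and $\cV$ of $u_0$ with $(-c,c)\times V\times\cV$ contained in the domain. On this box $\exp(s\tilde{X})$ is defined and smooth, and projecting onto the first factor shows that $(s,u,y)\mapsto \exp(sX(u))(y)$ is defined and smooth on $(-c,c)\times\cV\times V$, which yields both (i) and (ii).

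The only point requiring care—rather than any genuine difficulty—is the uniformity of the neighborhoods: a priori the existence time could shrink as $(u,y)$ varies, but this is precisely what openness of the flow domain of the single vector field $\tilde{X}$ guarantees, producing a product box $(-c,c)\times V\times\cV$ rather than base-point–dependent neighborhoods. I expect no substantive obstacle beyond correctly transcribing the parametrized problem into an autonomous one and transferring the conclusion back through the projection; the analytic content of the lemma is entirely contained in the cited smooth-dependence theorem.
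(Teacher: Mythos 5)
Your proof is correct: the suspension trick of adjoining the parameter as extra state variables with zero velocity, followed by openness of the maximal flow domain and smoothness of the flow map for the single augmented field, yields exactly the uniform box $(-c,c)\times\cV\times V$ and the joint smoothness asserted in (i)--(ii), and the chart localization is harmless since the statement is local. This is essentially the same approach as the paper, which offers no argument beyond the citation to \cite[Theorem~2.3.2]{Pe:Springer01} -- whose standard textbook proof is precisely the augmentation you describe -- so you have simply written out the details the paper leaves to the reference.
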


Suppose now that $(x_1,\ldots, x_n)$ are local coordinates centered at $a$ adapted to the $H$-frame $(X_1,\ldots, X_n)$. We denote by $U$ the range of these coordinates. This is an open neighborhood of $0\in \R^n$. For sake of simplicity we also assume that $U$ is \emph{$w$-balanced} in the sense that $\delta_t(U)\subset U$ for all $t\in [0,1]$. For instance, the cubes $\prod_{j=1}^n (-c^{w_j},c^{w_j})$, $c>0$, are $w$-balanced. As these cubes form a basis of neighborhoods of the origin, there is no loss of generality in assuming that $U$ is $w$-balanced. 

As $(x_1,\ldots, x_n)$ are privileged coordinates at $a$ adapted to $(X_1,\ldots, X_n)$, we know by Theorem~\ref{thm-pri-equiv} that, for $j=1, \ldots, n$, the vector field $X_j$ has weight $-w_j$. Let $X_j^{(a)}$ be its model vector field. For $t\in [-1,1]$, we let $\hat{X}_j(t)$ be the vector field 
defined by 
\begin{equation}
\hat{X}_j(t)=\left\{
\begin{array}{cl}
t^{w_j}\delta_t^*X_j  & \text{if $0<|t\leq 1$},   \\
 X_j^{(a)}  & \text{if $t=0$}.  
 \end{array}
\right.
\label{eq:Can.hXjt}
\end{equation}

\begin{lemma}\label{lem:Can.hXjt}
 For $j=1,\ldots, n$, the family $(\hat{X}_j(t))_{|t|\leq 1}$ is a $C^\infty$-family of $C^\infty$-vector fields on $U$.  
\end{lemma}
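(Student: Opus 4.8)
```latex
\textbf{Plan of proof.}
The statement to be proved is that, for each fixed $j$, the family $(\hat{X}_j(t))_{|t|\leq 1}$ defined in~(\ref{eq:Can.hXjt}) is a $C^\infty$-family of smooth vector fields on $U$, i.e.\ that the map $(x,t)\mapsto \hat{X}_j(t)(x)$ is jointly smooth on $U\times[-1,1]$. The only delicate point is smoothness \emph{across} $t=0$, since for $t\neq 0$ the vector field $t^{w_j}\delta_t^*X_j$ is manifestly smooth in $(x,t)$ (it is a composition of the smooth dilation map with the smooth coefficients of $X_j$, times a power of $t$), while at $t=0$ the value is declared to be the model vector field $X_j^{(a)}$. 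The plan is to use the coefficient-wise description of $\delta_t^*X_j$ together with the anisotropic Taylor formula to show that the apparent singularity at $t=0$ is removable and that the limiting value is exactly $X_j^{(a)}$.

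First I would pass to coefficients. Writing $X_j=\sum_{k=1}^n b_{jk}(x)\partial_{x_k}$ with $b_{jk}\in C^\infty(U)$, formula~(\ref{eq:Anistropic.deltat*P}) gives
\begin{equation*}
t^{w_j}\delta_t^*X_j = \sum_{k=1}^n t^{w_j-w_k}\, b_{jk}(t\cdot x)\,\partial_{x_k}, \qquad 0<|t|\leq 1.
\end{equation*}
Since $U$ is $w$-balanced, $t\cdot x\in U$ for all $x\in U$ and $t\in[-1,1]$, so each argument $b_{jk}(t\cdot x)$ is well defined and smooth on $U\times[-1,1]$. The task therefore reduces to showing that each scalar coefficient $c_{jk}(x,t):=t^{w_j-w_k}b_{jk}(t\cdot x)$ extends to a smooth function on $U\times[-1,1]$ whose value at $t=0$ matches the $k$-th coefficient of $X_j^{(a)}$ read off from~(\ref{eq:anisotropic.X(a)}).

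The key step is to invoke Lemma~\ref{lem:anisotropic.weight} (the equivalence of weight~$\geq w$ with the factorization $f(t\cdot x)=t^w\Theta(x,t)$ for a smooth $\Theta$). For the entries with $w_k\leq w_j$ the exponent $w_j-w_k$ is $\geq 0$, so $c_{jk}$ is already smooth up to $t=0$ and one reads its value at $t=0$ directly. For the entries with $w_k>w_j$, I would apply Lemma~\ref{lem:anisotropic.weight} to $b_{jk}$ with weight $w=w_k-w_j$: the point is that, because the coordinates are \emph{privileged}, Theorem~\ref{thm-pri-equiv} guarantees $X_j$ has weight $-w_j$, which forces $b_{jk}(x)$ to have weight $\geq w_k-w_j$. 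Hence Lemma~\ref{lem:anisotropic.weight}(v) furnishes a smooth $\Theta_{jk}(x,t)$ on the relevant set with $b_{jk}(t\cdot x)=t^{w_k-w_j}\Theta_{jk}(x,t)$, so that $c_{jk}(x,t)=\Theta_{jk}(x,t)$ extends smoothly across $t=0$. Collecting the coefficients and comparing the value at $t=0$ with~(\ref{eq:anisotropic.X(a)})--(\ref{eq-homo-app}) shows the limit is precisely $X_j^{(a)}$, so the piecewise definition~(\ref{eq:Can.hXjt}) assembles into one jointly smooth vector field.

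The main obstacle is purely bookkeeping: one must make sure the weight hypothesis coming from privileged coordinates (via Theorem~\ref{thm-pri-equiv}) is correctly translated into the statement that each $b_{jk}$ has weight $\geq w_k-w_j$, and that the smooth factorization of Lemma~\ref{lem:anisotropic.weight} is applied on a domain where $t\cdot x$ stays inside $U$ — which is exactly what the $w$-balanced assumption on $U$ secures. Once these two points are in place, joint smoothness on $U\times[-1,1]$ and the matching of the $t=0$ value follow immediately, completing the proof.
```
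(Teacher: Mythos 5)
Your proof is correct and follows essentially the same route as the paper's: reduce to the coefficients $b_{jk}$ via~(\ref{eq:Anistropic.deltat*P}), use Theorem~\ref{thm-pri-equiv} to get that each $b_{jk}$ has weight~$\geq w_k-w_j$, and invoke the smooth factorization of Lemma~\ref{lem:anisotropic.weight}(v) on $\cU=\{(x,t);\, t\cdot x\in U\}\supset [-1,1]\times U$ (secured by the $w$-balanced hypothesis) to remove the apparent singularity at $t=0$ and identify the limiting coefficients with those of $X_j^{(a)}$. Your explicit case split on the sign of $w_j-w_k$ is a harmless, in fact slightly more careful, variant — the paper applies the factorization uniformly, which for $w_k\leq w_j$ is only vacuously covered by the weight hypothesis — and your identification of the $t=0$ value via~(\ref{eq:anisotropic.X(a)})--(\ref{eq-homo-app}) is equivalent to the paper's appeal to the limit characterization~(\ref{eq-t0X}).
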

\begin{proof}
 Set $X_j=\sum a_{jk}(x) \partial_{x_k}$, $a_{jk}(x)\in C^\infty(U)$, and $\cU=\{(x,t)\in U \times \R; \ t\cdot x \in U\}$. Note that $\cU$ contains $[-1,1]\times U$. 
 Moreover, as $X_j$ has weight $w_j$, for each $k=1,\ldots, n$, the coefficient $a_{jk}(x)$ has weight $\geq w_k-w_j$. Thus, by Lemma~\ref{lem:anisotropic.weight} there is a function $\Theta_{jk}(x,t)\in C^\infty(\cU)$ such that $a_{jk}(t\cdot x)=t^{w_k-w_j} \Theta_{jk}(x,t)$ for all $(x,t)\in \cU$, $t\neq 0$. 
 Combining this with~(\ref{eq:Anistropic.deltat*P}) we see that, for all $(x,t)\in \cU$ with $t\neq 0$, we have 
\begin{equation}
 t^{w_j}\delta_t^*X_j=\sum_{1\leq j \leq n} t^{w_j-w_k} a_{jk}(t\cdot x) \partial_{x_k}= \sum_{1\leq j \leq n}  \Theta_{jk}(x,t) \partial_{x_k}.
 \label{eq:Can.twjdtXj} 
\end{equation}
The $C^\infty$-regularity near $t=0$ of the functions $ \Theta_{jk}(x,t)$ then implies that, as $t\rightarrow 0$, we have
\begin{equation*}
 t^{w_j}\delta_t^*X= \sum_{1\leq k \leq n}  \Theta_{jk}(x,0) \partial_{x_k} +\op{O}(t) \qquad \text{in $\cX(U)$}. 
\end{equation*}
In view of~(\ref{eq-t0X}) this implies that $X^{(a)}_j=\sum  \Theta_{jk}(x,0) \partial_{x_k}$. Combining this with~(\ref{eq:Can.hXjt}) and~(\ref{eq:Can.twjdtXj}) we then 
deduce that on $U$ we have
\begin{equation*}
\hat{X}_j(t)= \sum_{1\leq jk\leq n}  \Theta_{jk}(x,t) \partial_{x_k} \qquad \text{for all $t\in [-1,1]$}. 
\end{equation*}
As the functions $ \Theta_{jk}(t,x)$ are smooth on $\cU\supset [-1,1]\times U$, this shows that $(\hat{X}_j(t))_{|t|\leq 1}$ is a $C^\infty$-family of smooth 
vector fields on $U$. The proof is complete. 
\end{proof}

For $x\in \R^n$ and $t\in [-1,1]$, we set 
\begin{equation*}
 \hat{X}(t,x)=x_1\hat{X}_1(t) +\cdots +x_n \hat{X}_n(t). 
\end{equation*}
It follows from Lemma~\ref{lem:Can.hXjt} that this defines a $C^\infty$-family of vector fields on $U$ parametrized by $[-1,1]\times \R^n$. In particular, for every $x\in \R^n$, this provides us with a smooth homotopy between $\hat{X}(1,x)=x_1X_1+\cdots + x_nX_n$ and $\hat{X}(0,x)=x_1X_1^{(a)}+\cdots +x_nX_n^{(a)}$.

\begin{lemma}\label{lem:Can.expshXtx}
 There are open neighborhoods $V$ and $W$ of the origin $0\in \R^n$ with $W\subset U$,  such that the flow $\exp(s \hat{X}(t,x))(y)$ exists for all $s,t\in [-1,1]$ and $(x,y)\in V\times W$ and depends smoothly on these parameters. 
\end{lemma}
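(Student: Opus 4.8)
The plan is to apply the parametrized flow theorem from Lemma~\ref{lem:Can.parameter-flows} to the $C^\infty$-family of vector fields $\hat{X}(t,x)$, using the compactness of the parameter interval $[-1,1]$ in the $s$ and $t$ variables to upgrade the local-in-parameters conclusion of that lemma into a statement uniform over $s,t\in[-1,1]$.

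First I would observe that by Lemma~\ref{lem:Can.hXjt} each $\hat{X}_j(t)$ is a $C^\infty$-family of smooth vector fields on $U$, and hence $\hat{X}(t,x)=\sum_j x_j \hat{X}_j(t)$ is a $C^\infty$-family of vector fields on $U$ parametrized by $(t,x)\in [-1,1]\times \R^n$. To put this in the exact framework of Lemma~\ref{lem:Can.parameter-flows}, I would regard the pair $u=(s,t,x)$ as the parameter (or, more cleanly, treat $s$ as the flow-time variable and $u=(t,x)$ as the parameter, so that $\cU$ is an open neighborhood of $[-1,1]\times\{0\}$ in $\R\times\R^n$). Applying the lemma at the base point $y_0=0$, $u_0=(t_0,0)$ gives, for each fixed $t_0\in[-1,1]$, a radius $c>0$ and neighborhoods $\cV$ of $(t_0,0)$ and $V$ of $0$ on which the flow exists for $|s|<c$ and depends smoothly on $(s,t,x,y)$.

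The key step is the compactness argument. The set $[-1,1]$ (in the $t$-variable) is compact, so I would cover it by finitely many of the neighborhoods $\cV$ produced above, obtaining a uniform radius $c_0>0$ and a common neighborhood $V_0\times W_0$ of $(0,0)$ on which the flow $\exp(s\hat{X}(t,x))(y)$ exists and is smooth for all $|s|<c_0$, all $t\in[-1,1]$, and all $(x,y)\in V_0\times W_0$. Next I would remove the restriction $|s|<c_0$ and replace it by $|s|\le 1$: since $s\mapsto \exp(s\hat{X}(t,x))(y)$ is a flow (a one-parameter group), I can write $\exp(s\hat{X}(t,x))=\exp(\tfrac{s}{N}\hat{X}(t,x))^{\circ N}$ for a fixed integer $N$ with $1/N<c_0$, provided all intermediate trajectory points remain inside $W_0$. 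This is arranged by shrinking $V$ and $W$ so that the trajectories starting in $W$ with $x\in V$, $|s|\le 1$, $t\in[-1,1]$ stay within $W_0$; continuity of the (already-established) local flow at $x=0$, $s=0$ together with $\exp(0)(y)=y$ makes such a shrinking possible.

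The main obstacle I anticipate is precisely this last bookkeeping: guaranteeing that the trajectory does not escape $W_0$ over the whole time interval $|s|\le 1$ uniformly in the parameters $(t,x)$. The clean way to handle it is to note that at $x=0$ one has $\hat{X}(t,0)=0$, so $\exp(s\hat{X}(t,0))(y)=y$ is stationary; by continuity of the flow in all variables (on the small neighborhood from the compactness step), for $(x,y)$ close enough to $(0,0)$ and $|s|\le 1$, $t\in[-1,1]$ the trajectory stays in any prescribed neighborhood of $y$, in particular in $W_0$. I would therefore choose $V\subset V_0$ and $W\subset W_0$ small enough to enforce this, and then the group-law composition gives existence and smoothness of $\exp(s\hat{X}(t,x))(y)$ for all $s,t\in[-1,1]$ and $(x,y)\in V\times W$, with smooth dependence following from smoothness of each factor in the finite composition. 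This completes the proof.
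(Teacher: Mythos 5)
Your proof is correct, but it takes a genuinely different route from the paper's. You handle the parameter $t$ by a compactness cover of $[-1,1]$ and then extend the time interval from $|s|<c_0$ to $|s|\le 1$ by writing the flow as an $N$-fold composition of short-time flows, keeping intermediate points inside $W_0$ thanks to the stationarity $\hat{X}(t,0)=0$. The paper instead exploits the quasi-homogeneity of the rescaled frame: from $\delta_\lambda^*X_j^{(a)}=\lambda^{-w_j}X_j^{(a)}$ and $\delta_\lambda^*\bigl(t^{w_j}\delta_t^*X_j\bigr)=\lambda^{-w_j}(\lambda t)^{w_j}\delta_{\lambda t}^*X_j$ one gets the identity $s\hat{X}(t,x)=\lambda s\,(\delta_\lambda)_*\bigl[\hat{X}\bigl(\lambda t,\lambda^{-1}\cdot(\lambda^{-1}x)\bigr)\bigr]$, so a \emph{single} application of Lemma~\ref{lem:Can.parameter-flows} at the base point, giving existence for $|s|,|t|\le c$, is converted (taking $\lambda=c<1$) into existence for all $s,t\in[-1,1]$ on the anisotropically shrunken neighborhoods $V'=V\cap(c\,\delta_c(V))$ and $W'=\delta_c(W)$. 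Thus the paper needs neither a compactness cover in $t$ (the rescaling enlarges $t$ and $s$ simultaneously) nor any composition of flows, and the rescaling identity~(\ref{eq:Can.expshXtx}) is reused verbatim afterwards, in the proofs of Proposition~\ref{prop:can.1stkind-approx} and Proposition~\ref{prop:can.2ndkind-approx}. Your argument is more elementary and more robust --- it uses only that $\hat{X}(t,x)$ is linear in $x$, hence vanishes at $x=0$, not the homogeneity structure --- at the cost of extra bookkeeping; the paper's argument is shorter and produces a formula it needs later anyway.

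One step of yours should be tightened: you cannot invoke ``continuity of the flow in all variables'' over $|s|\le 1$ before existence on that interval is established --- the compactness step only gives the flow, and hence its continuity, for $|s|<c_0$. The repair is routine and is implicit in your setup: by uniform continuity of the short-time flow on a compact set $K$ with $\overline{W}\subset \operatorname{int}K\subset K\subset W_0$, the per-step displacement $\sup\{|\exp(s\hat{X}(t,x))(y)-y|;\ |s|\le c_0,\ t\in[-1,1],\ y\in K\}$ tends to $0$ as $x\to 0$ (since the flow is the identity at $x=0$). Since $N$ is fixed once $c_0$ is, an induction over the $N$ steps shows the total displacement is at most $N$ times the per-step one, so shrinking $V$ keeps every intermediate point in $W_0$. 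With that adjustment, your composition argument and the resulting smoothness in $(s,t,x,y)$ go through.
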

\begin{proof}
 As $(\hat{X}(t,x))$ is a smooth family of vector fields, it follows from Lemma~\ref{lem:Can.parameter-flows} that, there are $c >0$ and neighborhoods $V$ and $W$ of the origin $0\in \R^n$ with $W\subset U$ such that, the flow $\exp(s \hat{X}(t,x))(y)$ exists for all $s,t\in [-c,c]$ and $(x,y)\in V\times W$ and depends smooth on these parameters. In addition, let $\lambda\in (0,1)$. For $j=1,\ldots, n$, the homogeneity of $X_j^{(a)}$ implies that $\delta_\lambda^*X_j^{(a)}=\lambda^{-w_j}X_j^{(a)}$. Moreover, for all $t\in \R^*$, we have $\delta_\lambda^*(t^{w_j}\delta_t^*X_j)=\lambda^{-w_j} (\lambda t)^{w_j}(\delta_{\lambda t})^*X_j$. Thus, for all $t\in [-1,1]$, we have $\delta_\lambda^* [\hat{X}_j(t)]= \lambda^{-w_j}\hat{X}_j(\lambda t)$, i.e., $\hat{X}_j(t)= \lambda^{-w_j} (\delta_\lambda)_*[ \hat{X}_j(\lambda t)]$. This implies that, for all $s\in \R$ and $x\in \R^n$, we have
 \begin{equation*}
 s\hat{X}(t,x)= \sum_{1\leq j \leq n} s x_j \lambda^{-w_j} (\delta_\lambda)_*\left[ \hat{X}_j(\lambda t)\right] = 
  \sum_{1\leq j \leq n} \lambda s\left[ \lambda^{-1}\cdot (\lambda^{-1} x)\right]_j (\delta_\lambda)_*\left[ \hat{X}_j(\lambda t)\right].   
\end{equation*}
That is, we have
 \begin{equation}
 s\hat{X}(t,x)=   \lambda s (\delta_\lambda)_*\left[\hat{X}\left(\lambda t, \lambda^{-1}\cdot (\lambda^{-1}x)\right)\right] \qquad 
 \text{for all $s\in \R$ and $x\in \R^n$}. 
 \label{eq:Can.shXtx}
\end{equation}

Suppose that $c<1$ and set $\hat{x}= c^{-1}\cdot (c^{-1}x)$. Using~(\ref{eq:Can.shXtx}) we deduce that,  for all $s,t\in [-c,c]$ and $(x,y)\in V\times W$ such that $\hat{x}\in V$ and $c^{-1}\cdot y\in W$, we have 
\begin{equation}
 \exp\left( s\hat{X}(t,x)\right)\!\!(y) =  \exp \left(  c s(\delta_c)_*\hat{X}(c t, \hat{x})\right)\!\!(y)
  = c \cdot 
  \exp\left( c s\hat{X}(c t, \hat{x})\right)\!(c^{-1}\cdot y). 
  \label{eq:Can.expshXtx}
\end{equation}
Note that the flow $ \exp( c s\hat{X}(c t, \hat{x}))\!(c^{-1}\cdot y)$ actually exists for all $s,t\in [-1,1]$. Therefore, if we set $V'=V\cap (c\delta_c(V))$ and $W'=\delta_c(W)$, then~(\ref{eq:Can.expshXtx}) shows that the flow $ \exp( s\hat{X}(t,x))(y)$ exists for all $s,t\in [-1,1]$ and $(x,y)\in V'\times W'$ and depends smoothly on these parameters. This proves the result.
 \end{proof}

Let $V$ and $W$ be as in Lemma~\ref{lem:Can.expshXtx}.  We thus define a smooth map $\exp_X:V\rightarrow \R^n$ by letting
\begin{align*}
 \exp_X(x) =\exp\left(x_1X_1+\cdots + x_nX_n\right)\!(0) = \left. \exp\left(s \hat{X}(1,x)\right)\!(0)\right|_{s=1}, \quad x\in V. 
\end{align*}

 \begin{proposition}\label{prop:can.1stkind-approx}
 Suppose that $(x_1,\ldots, x_n)$ are privileged coordinates at $a$ adapted to $(X_1,\ldots, X_n)$. Then, near $x=0$, we have
\begin{equation*}
 \exp_X(x)= \exp\left(x_1X_1^{(a)}+\cdots + x_nX_n^{(a)}\right)\!(0)+\Ow\left(\|x\|^{w+1}\right).
\end{equation*}
\end{proposition}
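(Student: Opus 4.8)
The plan is to exploit the smooth homotopy $\hat{X}(t,x)=x_1\hat{X}_1(t)+\cdots+x_n\hat{X}_n(t)$ defined by~\eqref{eq:Can.hXjt}, which interpolates between the true frame $\sum_j x_j X_j$ at $t=1$ and the model frame $\sum_j x_j X_j^{(a)}$ at $t=0$, together with the scaling identity~\eqref{eq:Can.expshXtx} for its flow. The aim is to realize the rescaled canonical map $\lambda^{-1}\cdot\exp_X(\lambda\cdot x)$ as a flow that depends smoothly on $\lambda$ all the way down to $\lambda=0$. Writing the right-hand side of the statement as $\exp_{X^{(a)}}(x)=\exp(x_1X_1^{(a)}+\cdots+x_nX_n^{(a)})(0)$ (see~\eqref{eq:Nilpotent.expX(a)}), the assertion $\exp_X(x)-\exp_{X^{(a)}}(x)=\Ow(\|x\|^{w+1})$ will then drop out of the uniqueness of anisotropic asymptotic expansions.

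First I would derive the key identity. Since $U$ is $w$-balanced, for $\lambda\in[0,1]$ the point $\lambda\cdot x$ lies in the domain of $\exp_X$ and the relevant flows exist by Lemma~\ref{lem:Can.expshXtx}. Applying~\eqref{eq:Can.expshXtx} with $c=\lambda$, $s=t=1$, $y=0$, and with the variable there set to $\lambda\cdot x$ (so that the point $\hat{x}$ of~\eqref{eq:Can.expshXtx} becomes the ordinary scalar multiple $\lambda^{-1}x$), and using the elementary identity $\lambda\,\hat{X}(\lambda,\lambda^{-1}x)=\hat{X}(\lambda,x)$, one obtains
\begin{equation*}
 \lambda^{-1}\cdot\exp_X(\lambda\cdot x)=\exp\bigl(\hat{X}(\lambda,x)\bigr)(0),\qquad \lambda\in(0,1].
\end{equation*}
The whole point of this identity is that its right-hand side is smooth at $\lambda=0$: by Lemma~\ref{lem:Can.hXjt} the family $(\hat{X}_j(t))_{|t|\le1}$ is a $C^\infty$-family of vector fields, hence $(\lambda,x)\mapsto\hat{X}(\lambda,x)$ is a $C^\infty$-family, and by Lemma~\ref{lem:Can.parameter-flows} together with the uniform existence of flows from Lemma~\ref{lem:Can.expshXtx}, the map $(\lambda,x)\mapsto\exp(\hat{X}(\lambda,x))(0)$ is smooth on a neighborhood of the origin in $[0,1]\times\R^n$, with value $\exp(\hat{X}(0,x))(0)=\exp_{X^{(a)}}(x)$ at $\lambda=0$.

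Finally I would feed this into the asymptotic machinery. By Proposition~\ref{lem:multi.Thetat}, $\exp_X$ admits an expansion $\lambda^{-1}\cdot\exp_X(\lambda\cdot x)\simeq\sum_{\ell\ge-w_n}\lambda^\ell(\exp_X)^{[\ell]}(x)$ in $C^\infty$, whose coefficients $(\exp_X)^{[\ell]}$ are $w$-homogeneous of degree $\ell$. Comparing this with the genuine Taylor expansion in $\lambda$ of the smooth function $\exp(\hat{X}(\lambda,x))(0)$ and invoking uniqueness of asymptotic expansions (pointwise in $x$ already suffices to match coefficients), the absence of negative powers of $\lambda$ forces $(\exp_X)^{[\ell]}=0$ for $\ell<0$, while evaluation at $\lambda=0$ identifies the degree-$0$ term as $(\exp_X)^{[0]}=\exp_{X^{(a)}}$. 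Remark~\ref{rmk:anisotropic.weighted-asymptotic} (with $m=1$) then gives $\exp_X(x)-\exp_{X^{(a)}}(x)=\Ow(\|x\|^{w+1})$, which is the claim. The main obstacle is the middle step: one must be certain that the rescaled canonical flow extends smoothly across $\lambda=0$ precisely to the nilpotent-approximation flow, and that this smoothness translates into the anisotropic $\Ow$-estimate via Lemma~\ref{lem-eq-we}; everything else is bookkeeping with the dilations.
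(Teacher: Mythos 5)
Your proposal is correct and takes essentially the same route as the paper's own proof: the same rescaling identity $t^{-1}\cdot \exp_X(t\cdot x)=\exp\bigl(\hat{X}(t,x)\bigr)(0)$ as in~(\ref{eq:Can.rescaling-expX}), the same appeal to the $C^\infty$-regularity of $(t,x)\rightarrow \exp(\hat{X}(t,x))(0)$ across $t=0$ furnished by Lemmas~\ref{lem:Can.hXjt} and~\ref{lem:Can.expshXtx}, and the same conversion of the resulting $\op{O}(t)$-asymptotics into the $\Ow(\|x\|^{w+1})$ estimate. Your detour through Proposition~\ref{lem:multi.Thetat} and Remark~\ref{rmk:anisotropic.weighted-asymptotic} to kill the negative-degree terms and identify the degree-zero term as $\exp_{X^{(a)}}$ is just a repackaging of the paper's single application of Lemma~\ref{lem-eq-we}.
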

\begin{proof}
 Let $x\in V$ and $t\in (-1,1)$, $t\neq 0$. In the same way as in~(\ref{eq:Can.shXtx})--(\ref{eq:Can.expshXtx}) we have
\begin{equation}
 t^{-1}\cdot \exp_X(t\cdot x)= t^{-1}\cdot \exp\left( \hat{X}(1,t\cdot x)\right)\!(t\cdot 0)= \exp\left( \hat{X}(t,x)\right)\!(0). 
 \label{eq:Can.rescaling-expX}
\end{equation}
 Note that $ \exp( \hat{X}(0,x))(0)=\exp(x_1X_1^{(a)}+\cdots +x_n X_n^{(a)})(0)= \exp_{X^{(a)}}(x)$. Thus, the $C^\infty$-regularity of $t \rightarrow  \exp( \hat{X}(t,x))(0)$ on $[-1,1]\times V$ implies that, as $t\rightarrow 0$, we have 
 \begin{equation*}
 t^{-1}\cdot \exp_X(t\cdot x)=\exp_{X^{(a)}}(x) +\op{O}(t) \qquad \text{in $C^\infty(V)$}. 
\end{equation*}
Using Lemma~\ref{lem-eq-we} we then deduce that $\exp_X(x)=\exp_{X^{(a)}}(x)+\Ow(\|x\|^{w+1})$ near $x=0$. 
The proof is complete.
\end{proof}

Using Lemma~\ref{lem:Can.expshXtx} and pulling back the flow $\exp(s\hat{X}(1,x))(y)$ to $M$ shows there is a 
neighborhood $W_0$ of $a$ in $M$ such that the flow $\exp(s(x_1X_1+\cdots +x_nX_n))(y)$ is defined for all $s\in [-1,1]$ and $(x,y)\in V\times W_0$ and depends smoothly on these parameters. In particular, we have a smooth map $V\ni x\rightarrow \exp_X(x;a)\in M$ given by
\begin{equation*}
\exp_{X}(x;a)=\exp(x_1X_1+\cdots +x_nX_n)(a), \qquad x\in V. 
\end{equation*}
For $j=1,\ldots, n$, we have $\partial_{x_j} \exp_{X}(0;a)= X_j(a)$. As $(X_1(a),\ldots, X_n(a))$ is a basis of $TM(a)$, we deduce that $\exp_X'(0;a)$ is non-singular. Therefore, possibly by shrinking $V$ we may assume that $x\rightarrow \exp_X(x;a)$ is a diffeomorphism from $V$ onto an open neighborhood of $a$ in $M$. Its inverse map then is a local chart around $a$. The local coordinates defined by this chart are the so-called canonical coordinates of the first kind (\emph{cf}.~\cite{Go:LNM76, RS:ActaMath76}).  

\begin{proposition}[\cite{Go:LNM76, RS:ActaMath76}; see also~\cite{Je:Brief14}] \label{prop:can.1stkind-privileged}
 The canonical coordinates of the 1st kind above are privileged coordinates at $a$ adapted to $(X_1,\ldots, X_n)$. 
\end{proposition}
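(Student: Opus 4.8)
The plan is to recognize the passage from the privileged coordinates $(x_1,\ldots,x_n)$ to the canonical coordinates of the first kind as a change of coordinates of exactly the type characterized in Proposition~\ref{prop:char-priv.phi-hatphi-Ow}, and then to extract the required $\Ow$-expansion directly from the approximation result of Proposition~\ref{prop:can.1stkind-approx}. Since Proposition~\ref{prop:char-priv.phi-hatphi-Ow} tells us precisely which changes of coordinates carry privileged coordinates to privileged coordinates, the whole statement reduces to identifying the relevant transition map and verifying it has the stated form.

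First I would identify the change of coordinates. Writing $\kappa$ for the privileged chart and $c=\exp_X(\cdot\,;a)^{-1}$ for the canonical chart, naturality of flows under $\kappa$ gives $\kappa\circ\exp_X(\cdot\,;a)=\exp_X$, where $\exp_X:V\to\R^n$ is the map defined just before Proposition~\ref{prop:can.1stkind-approx} in the privileged coordinates. Hence the transition map from privileged to canonical coordinates is $c\circ\kappa^{-1}=\exp_X^{-1}$. Because the coordinates are linearly adapted, $X_j(0)=\partial_j$, so $\partial_{x_j}\exp_X(0)=X_j(0)=\partial_j$, i.e.\ $\exp_X'(0)=\op{id}$; thus $\exp_X$ is a genuine local diffeomorphism fixing the origin, and $\exp_X^{-1}$ is well defined near $0$.

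Next I would expand the inverse. By Proposition~\ref{prop:can.1stkind-approx}, near $x=0$ we have $\exp_X(x)=\exp_{X^{(a)}}(x)+\Ow(\|x\|^{w+1})$, where $\exp_{X^{(a)}}$ is the map~(\ref{eq:Nilpotent.expX(a)}); by the homogeneity relation~(\ref{eq:fga-homogeneity-expX}) this map is a $w$-homogeneous polynomial diffeomorphism with $\exp_{X^{(a)}}'(0)=\op{id}$. Applying Proposition~\ref{lem:Carnot-coord.inverse-Ow} (with $m=1$ and $\hat\phi=\exp_{X^{(a)}}$, noting $\exp_X(0)=0$) yields
\begin{equation*}
 \exp_X^{-1}(x)=\exp_{X^{(a)}}^{-1}(x)+\Ow\left(\|x\|^{w+1}\right)\qquad \text{near $x=0$}.
\end{equation*}
The leading term $\exp_{X^{(a)}}^{-1}$ is again $w$-homogeneous (the inverse of a $w$-homogeneous map is $w$-homogeneous), hence a polynomial diffeomorphism by Remark~\ref{rmk:Privileged-w-homogeneous-map}, and its differential at the origin is $(\exp_{X^{(a)}}'(0))^{-1}=\op{id}$. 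Thus $\phi=\exp_X^{-1}$ has exactly the form~(\ref{eq:char-priv-coord.phi-x-Ow}), and Proposition~\ref{prop:char-priv.phi-hatphi-Ow} shows that the canonical coordinates of the first kind are privileged at $a$ adapted to $(X_1,\ldots,X_n)$.

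I expect the proof to be short once Proposition~\ref{prop:can.1stkind-approx} is in hand, so the genuine substance already lies in that earlier result; within the present argument the only delicate point is the bookkeeping in the first step, namely checking that the transition map really is $\exp_X^{-1}$ (rather than $\exp_X$) and that the hypotheses of Proposition~\ref{lem:Carnot-coord.inverse-Ow}—in particular $\phi(0)=0$ and the $w$-homogeneity of the leading polynomial—are all in place. These are routine, so no serious obstacle remains beyond assembling the cited results in the correct order.
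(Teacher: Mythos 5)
Your proposal is correct and follows essentially the same route as the paper's own proof: identify the transition map from privileged to canonical coordinates as $\exp_X^{-1}$ (the paper writes $\phi=\kappa_a\circ\kappa^{-1}$ and shows $\phi^{-1}=\exp_X$ in the privileged coordinates), invoke Proposition~\ref{prop:can.1stkind-approx} for the expansion $\exp_X(x)=\exp_{X^{(a)}}(x)+\Ow(\|x\|^{w+1})$, invert it via Proposition~\ref{lem:Carnot-coord.inverse-Ow}, and conclude with Proposition~\ref{prop:char-priv.phi-hatphi-Ow}. Your extra checks ($\exp_X'(0)=\op{id}$, the $w$-homogeneity and polynomiality of $\exp_{X^{(a)}}^{-1}$) are sound and merely make explicit what the paper leaves implicit.
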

\begin{proof}
 These local coordinates are defined by the local chart  $\kappa_a$ that inverts the map $V\ni x\rightarrow \exp_X(x;a)$. Let $\kappa$ be a local chart that gives rise to privileged coordinates at $a$ adapted to $(X_1, \ldots, X_n)$. Without any loss of generality we may assume that $\kappa_a$ and $\kappa$ have the same domain. Set $\phi=\kappa_a\circ \kappa^{-1}$. If we denote by $(x_1, \ldots, x_n)$ the local coordinates defined by $\kappa$, then we pass from these coordinates to the canonical coordinates of the 1st kind by means of the change of coordinates $x\rightarrow \phi(x)$. Therefore, by Proposition~\ref{prop:char-priv.phi-hatphi-Ow} in order to show that the canonical coordinates of the 1st kind are privileged coordinates we only have to check that $\phi(x)$ has a behavior of the form~(\ref{eq:char-priv-coord.phi-x-Ow}) near $x=0$. 
 
 We observe that, for all $x\in V$, we have 
\begin{align*}
 \phi^{-1}(x) & = \kappa\left[ \exp(x_1X_1+\cdots +x_nX_n)(a)\right] \\
 & = \kappa \circ  \exp\left( x_1\kappa_*X_1+\cdots + x_n\kappa_*X_n\right)\circ \kappa^{-1}(0)\\
 & = \exp\left( x_1\kappa_*X_1+\cdots + x_n\kappa_*X_n\right)(0). 
\end{align*}
 In other word, $\phi^{-1}(x)$ is the map $\exp_X$ in the privileged coordinates defined by $\kappa$. Therefore, by Proposition~\ref{prop:can.1stkind-approx}, near $x=0$, we have 
\begin{equation}
 \phi^{-1}(x)=\exp_{X^{(a)}}(x) +\Ow\left(\|x\|^{w+1}\right). 
  \label{eq:Can.phi-1-expX}
\end{equation}
As $\exp_{X^{(a)}}(x)$ is a $w$-homogeneous map, Proposition~\ref{lem:Carnot-coord.inverse-Ow} implies that $\phi(x)$ has a behavior of the form~(\ref{eq:char-priv-coord.phi-x-Ow}) near $x=0$. As mentioned above this shows that the canonical coordinates of the 1st kind are privileged coordinates.  
The proof is complete.
\end{proof}

\subsection{Canonical coordinates of the 2nd kind} Let us now turn to canonical coordinates of the 2nd kind. Suppose that $(x_1,\ldots, x_n)$ are privileged coordinates at $a$ adapted to $(X_1,\ldots, X_n)$. 

\begin{lemma}\label{lem:Can.sxjhXj}
There exist neighborhoods $V$ and $W_k$, $k=1,\dots, n$, of the origin $0\in \R^n$ with $W_1\subset \cdots \subset W_n\subset U$ such that
 \begin{enumerate}
\item[(i)] For every $j=1,\ldots, n$, the flow $\exp(sx_j \hat{X}_j(t))(y)$ exists for all $s,t\in [-1,1]$ and $(x,y)$ in $V\times W_n$ and depends smoothly on these parameters. 

\item[(ii)]  For every $j=1,\ldots, n$ and $k=1,\ldots, n-1$, we have
\begin{equation*}
\exp\left( sx_j\hat{X}_j(t)\right)\!\!(W_k) \subset W_{k+1} \qquad \text{for all $s,t\in [-1,1]$ and $x\in V$}.  
\end{equation*}
\end{enumerate}
\end{lemma}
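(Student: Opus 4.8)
The plan is to deduce part~(i) directly from Lemma~\ref{lem:Can.expshXtx} and to obtain the nesting in part~(ii) by a descending recursion on $k$, exploiting the fact that each vector field $x_j\hat{X}_j(t)$ vanishes identically when $x=0$.

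For part~(i) I would begin with the exact identity $x_j\hat{X}_j(t)=\hat{X}(t,x_je_j)$, where $e_j$ denotes the $j$-th vector of the canonical basis of $\R^n$ and $\hat{X}(t,x)=\sum_{k}x_k\hat{X}_k(t)$ is the family introduced just before Lemma~\ref{lem:Can.expshXtx}. Consequently $\exp(sx_j\hat{X}_j(t))(y)=\exp(s\hat{X}(t,x_je_j))(y)$. Shrinking the neighborhood $V$ furnished by Lemma~\ref{lem:Can.expshXtx} to a ball centered at the origin, one has $x\in V\Rightarrow x_je_j\in V$ for every $j$, since $x_je_j$ is no farther from the origin than $x$. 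Lemma~\ref{lem:Can.expshXtx} then yields at once the existence of all the flows $\exp(sx_j\hat{X}_j(t))(y)$ for $s,t\in[-1,1]$ and $(x,y)\in V\times W_n$, together with their smooth dependence on $(s,t,x,y)$, once we take $W_n$ to be the neighborhood $W$ of that lemma (further shrunk, if needed, so that $\overline{W_n}$ is a compact subset of $U$). This proves~(i).

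For part~(ii) I would build the nested neighborhoods $W_1\subset\cdots\subset W_{n-1}\subset W_n$ by downward recursion, shrinking $V$ at each stage. The crucial observation is that $x_j\hat{X}_j(t)$ is the zero vector field when $x=0$, so the flow map $\Phi_j(s,t,x,y):=\exp(sx_j\hat{X}_j(t))(y)$ satisfies $\Phi_j(s,t,0,y)=y$ for all $s,t,y$; thus for $x$ near the origin each $\Phi_j$ is uniformly close to the identity on compact sets. Assuming $W_{k+1}$ has been constructed, I would choose an open neighborhood $W_k'$ of the origin whose closure is compact and contained in $W_{k+1}$. Since $\Phi_j$ is continuous and sends the compact set $[-1,1]^2\times\{0\}\times\overline{W_k'}$ into $W_{k+1}$ (its image there being exactly $\overline{W_k'}$), the set $\Phi_j^{-1}(W_{k+1})$ is open and contains that compact set; the tube lemma, applied with compact factor $[-1,1]^2\times\overline{W_k'}$ and variable factor the $x$-space around $0$, then provides an open neighborhood $V_{k,j}$ of the origin with $[-1,1]^2\times V_{k,j}\times\overline{W_k'}\subset\Phi_j^{-1}(W_{k+1})$. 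Setting $W_k:=W_k'$ and intersecting the finitely many $V_{k,j}$ over $j$ gives a neighborhood on which $\exp(sx_j\hat{X}_j(t))(W_k)\subset W_{k+1}$ for every $j$ and all $s,t\in[-1,1]$.

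Running this recursion for $k=n-1,n-2,\ldots,1$ yields all the $W_k$, and taking $V$ to be the intersection of the neighborhood from part~(i) with all the $V_{k,j}$ — a finite intersection — produces a single $V$ on which both (i) and (ii) hold. I expect the main obstacle to be organizing the argument so that one common $V$ serves every level of the nesting simultaneously; this is overcome precisely because there are only finitely many levels $k$ and finitely many indices $j$, so a finite intersection suffices, and because the vanishing of $x_j\hat{X}_j(t)$ at $x=0$ makes each flow arbitrarily close to the identity for $x$ near the origin, uniformly in $(s,t)\in[-1,1]^2$.
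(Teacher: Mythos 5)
Your proof is correct, and both parts rest on the same inputs as the paper's argument --- Lemma~\ref{lem:Can.expshXtx} together with the identity $x_j\hat{X}_j(t)=\hat{X}(t,x_je_j)$ for part~(i), and the fact that the flow reduces to the identity at $x=0$ for part~(ii) --- but your mechanism for (ii) is genuinely different. The paper makes the ``close to the identity'' statement quantitative: by $C^1$-regularity of the flow near $x=0$ it extracts a uniform Lipschitz bound $\left|\exp\left(s\hat{X}(t,x)\right)(y)-y\right|\leq C|x|$ for all $s,t\in[-1,1]$ and $(x,y)$ in fixed balls, then simply takes the $W_k$ to be the explicit concentric balls $B_{\frac{k}{n}\delta}(0)$ and shrinks $V$ once so that $C|x|\leq n^{-1}\delta$; no recursion is needed, and the containment is in fact obtained for the full flows $\exp(s\hat{X}(t,x))$, of which your coordinate flows $\exp(sx_j\hat{X}_j(t))$ are the special case $x=x_je_j$. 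Your route is softer: from $\Phi_j(s,t,0,y)=y$ you deduce via the generalized tube lemma that $\Phi_j^{-1}(W_{k+1})$ contains a full tube $[-1,1]^2\times V_{k,j}\times\overline{W_k'}$, and you build the $W_k$ by downward recursion, intersecting the finitely many $V_{k,j}$ at the end. This is valid --- the needed compactness ($\overline{W_k'}$ compact inside $W_{k+1}\subset W_n$, the parameter square $[-1,1]^2$ compact, and $\Phi_j$ continuous on $[-1,1]^2\times V\times W_n$ by your part~(i)) is all available, and only finitely many shrinkings of $V$ occur --- and it avoids any metric estimate; the price is that your neighborhoods are non-explicit and constructed level by level, whereas the paper's single uniform estimate yields explicit nested balls, one common $V$ in one step, and the slightly stronger containment for arbitrary linear combinations $\hat{X}(t,x)$ rather than only the coordinate fields.
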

\begin{proof}
Let $V$ and $W$ be as in Lemma~\ref{lem:Can.expshXtx}. Given any $\epsilon>0$, we denote by $B_\epsilon(0)$ the ball of radius $\epsilon$ about the origin $0\in \R^n$. Let $\rho>0$ and $\delta>0$ be such that $\overline{B_\rho(0))}\subset V$ and  $\overline{B_\delta(0))}\subset W$. Note that $ \exp( s\hat{X}(t,0))(y)=\exp(0)(y)=y$. Therefore, the $C^1$-regularity near $x=0$ of the flow $\exp( s\hat{X}(t,0))(y)$ implies there is constant $C>0$ such that, for all $s,t\in [-1,1]$ and $(x,y)$ in $B_\rho(0)) \times B_\delta(0)$, we have
\begin{equation}
  \left| \exp\left( s\hat{X}(t,x)\right)\!\!(y)-y\right| \leq C|x|. 
  \label{eq:Can.expxhXtxy-y}
\end{equation}

Set $\rho'=\min\{\rho, (nC)^{-1}\delta\}$, and let $s,t\in [-1,1]$ and $x\in B_{\rho'}(0)$.  Given any $\delta'\in(0,\delta)$ and $y\in B_{\delta'}(0)$ the estimate~(\ref{eq:Can.expxhXtxy-y}) implies that $| \exp( s\hat{X}(t,x))(y)|\leq |y|+C|x|\leq \delta' +n^{-1} \delta$. Thus, if for $k=1,\ldots, n$ we set $W_k=B_{\frac{k}{n}\delta}(0)$, then, for $k=1,\ldots, n-1$, we have 
\begin{equation}
 \exp\left( s\hat{X}(t,x)\right)\!\!(W_k) \subset W_{k+1} \qquad \text{for all $s,t\in [-1,1]$ and $x\in V$}.
 \label{eq:Can.expxhXtxWk}
\end{equation}

Let $(\epsilon_1,\ldots, \epsilon_n)$ be the canonical basis of $\R^n$, and set $V'=(-\rho',\rho')^n$. Given any $x\in V'$, for $j=1,\ldots, n$, 
the point $x_j\epsilon_j$ is in $V'$ and $\hat{X}(t,x_j\epsilon_j)=x_j\hat{X}_j(t)$. Therefore, the flow  $\exp(sx_j \hat{X}_j(t))(y)$ exists for all $s,t\in [-1,1]$ and $(x,y)\in V'\times W_n$ and depends smoothly on these parameters. Moreover, for $k=1,\ldots, n-1$, it follows from~(\ref{eq:Can.expxhXtxWk}) that 
$\exp( s\hat{X}(t,x))(W_k) \subset W_{k+1}$ for all $s,t\in [-1,1]$ and $x\in V$. This proves the lemma.
\end{proof}

Let $V$ and $W_k$, $k=1,\ldots, n$, be neighborhoods of the origin $0\in \R^n$ as in Lemma~\ref{lem:Can.sxjhXj}. Then, for $j=1,\ldots, n$, the flow $\exp(sx_j \hat{X}_j(t))(y)$ exists for 
$s,t\in [-1,1]$ and $(x,y)\in V\times W_{n-j+1}$ and gives rise to a smooth map from $[-1,1]^2\times V\times W_{n-j+1}$ to $W_{n-j+2}$ (with the convention that $W_{n+1}=U$). Therefore, the composition of flows $\exp(sx_1\hat{X}(t))\circ \cdots \circ \exp(sx_n\hat{X}(t))(y)$ is well defined for all $s,t\in [-1,1]$ and $(x,y)\in V\times W_1$ and depends smoothly on these parameters. In particular, this allows us to define a smooth map $\gamma_X:V\rightarrow \R^n$ by letting
\begin{align*}
  \gamma_X(x) & = \exp\left( x_1X_1\right) \circ \cdots \circ  \exp\left(x_nX_n\right)\!(0)\\
  &=  \left.\exp\left( x_1\hat{X}_1(1)\right) \circ \cdots \circ  \exp\left(x_n\hat{X}_n(1)\right)\!(0)\right|_{s=1},   \qquad x\in V. 
\end{align*}
 
\begin{proposition}\label{prop:can.2ndkind-approx}
 Suppose that $(x_1,\ldots,x_n)$ are privileged coordinates at $a$ adapted to $(X_1,\ldots, X_n)$. Then, near $x=0$, we have 
\begin{equation*}
 \gamma_X(x)= \exp\left( x_1X_1^{(a)}\right) \circ \cdots \circ  \exp\left(x_nX_n^{(a)}\right)\!(0) +\Ow\left(\|x\|^{w+1}\right). 
\end{equation*}
\end{proposition}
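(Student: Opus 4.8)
The plan is to mirror the proof of Proposition~\ref{prop:can.1stkind-approx} for the first kind. The heart of that argument was the rescaling identity~(\ref{eq:Can.rescaling-expX}), which expresses $t^{-1}\cdot \exp_X(t\cdot x)$ as a flow built from the $t$-dependent vector fields $\hat{X}_j(t)$ of~(\ref{eq:Can.hXjt}) that is smooth in $(t,x)$ up to $t=0$; evaluating at $t=0$ produced the model object, and Lemma~\ref{lem-eq-we} converted the resulting $\op{O}(t)$-expansion in $C^\infty$ into the desired $\Ow(\|x\|^{w+1})$ estimate. I would establish the analogous identity
\begin{equation*}
 t^{-1}\cdot \gamma_X(t\cdot x) = \exp\left(x_1\hat{X}_1(t)\right)\circ \cdots \circ \exp\left(x_n\hat{X}_n(t)\right)\!(0),
\end{equation*}
valid for $x$ near $0$ and $0<|t|\le 1$, and then run the same limiting argument.

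The key tool is the conjugation-by-dilation relation for flows: for any vector field $X$ and any diffeomorphism $\phi$ one has $\exp(sX)\circ \phi = \phi \circ \exp(s\,\phi^{*}X)$. Taking $\phi=\delta_t$ (invertible for $t\neq 0$ with inverse $\delta_{t^{-1}}$, and fixing the origin) and using $t^{w_j}\delta_t^{*}X_j=\hat{X}_j(t)$ gives, for each $j$,
\begin{equation*}
 \exp\left(t^{w_j}x_j X_j\right)\circ \delta_t = \delta_t\circ \exp\left(x_j\hat{X}_j(t)\right).
\end{equation*}
Since $(t\cdot x)_j=t^{w_j}x_j$ and $\delta_t(0)=0$, I would write $\gamma_X(t\cdot x)=\exp(t^{w_1}x_1X_1)\circ\cdots\circ\exp(t^{w_n}x_nX_n)\circ\delta_t(0)$ and then telescope: move the single copy of $\delta_t$ leftward past each factor, each pass converting $\exp(t^{w_j}x_jX_j)$ into $\exp(x_j\hat{X}_j(t))$. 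This yields $\gamma_X(t\cdot x)=\delta_t\bigl(\exp(x_1\hat{X}_1(t))\circ\cdots\circ\exp(x_n\hat{X}_n(t))(0)\bigr)$, which is exactly the displayed identity after applying $\delta_{t^{-1}}$.

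With the identity in hand, the rest is routine. The right-hand side is the composed flow studied in Lemma~\ref{lem:Can.sxjhXj}, which (using the nested neighborhoods $W_1\subset\cdots\subset W_n$ precisely to keep each intermediate point in the domain of the next flow) is a smooth function of $(t,x)$ on $[-1,1]\times V$. Since $\hat{X}_j(0)=X_j^{(a)}$, evaluating at $t=0$ gives $\exp(x_1X_1^{(a)})\circ\cdots\circ\exp(x_nX_n^{(a)})(0)$, so the $C^\infty$-regularity in $t$ yields, as $t\to0$ and in $C^\infty(V)$,
\begin{equation*}
 t^{-1}\cdot\gamma_X(t\cdot x)=\exp\left(x_1X_1^{(a)}\right)\circ\cdots\circ\exp\left(x_nX_n^{(a)}\right)\!(0)+\op{O}(t).
\end{equation*}
Applying Lemma~\ref{lem-eq-we} then converts this into the asserted estimate near $x=0$. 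The only genuinely delicate point is the telescoping step: one must check that each intermediate flow is defined and that the bookkeeping of the $\delta_t$-conjugations is correct. Both are guaranteed by Lemma~\ref{lem:Can.sxjhXj} together with the elementary flow identity above, so I expect no real obstruction beyond careful bookkeeping.
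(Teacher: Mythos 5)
Your proposal is correct and follows essentially the same route as the paper: the paper's proof establishes the identity $t^{-1}\cdot\gamma_X(t\cdot x)=\exp\bigl(x_1\hat{X}_1(t)\bigr)\circ\cdots\circ\exp\bigl(x_n\hat{X}_n(t)\bigr)(0)$ by exactly the same dilation-conjugation/telescoping computation (inserting $\delta_t\circ\delta_t^{-1}$ between the factors), then uses the $C^\infty$-regularity at $t=0$ of the composed flows on the nested neighborhoods from Lemma~\ref{lem:Can.sxjhXj} together with $\hat{X}_j(0)=X_j^{(a)}$ to get the $\op{O}(t)$ expansion in $C^\infty(V)$, and concludes via Lemma~\ref{lem-eq-we}. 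No gaps to report.
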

\begin{proof}
 Given any $x\in V$ and $t\in [-1,1]$, $ t \neq 0$, in the same way as in~(\ref{eq:Can.rescaling-expX}) we have
 \begin{align}
 t^{-1}\cdot \gamma_X (t\cdot x) & = \left( \delta_t^{-1} \circ \exp\left(t^{w_1}x_1X_1\right) \circ \delta_t\right)\circ \cdots \circ  
 \left( \delta_t^{-1} \circ \exp\left(t^{w_n}x_n X_n\right)\circ \delta_t\right)\!(0) \nonumber \\
 & =  \exp\left(x_1 t^{w_1}\delta_t^*X_1\right) \circ \cdots \circ    \exp\left(x_n t^{w_n}\delta_t^*X_n\right)\!(0) 
 \label{eq:Can.rescaling-gammaX}\\ 
 & =  \exp\left(x_1 \hat{X}_1(t)\right) \circ \cdots \circ \exp\left(x_n \hat{X}_n(t)\right)\!(0). \nonumber
\end{align}

As it follows from the discussion right after Lemma~\ref{lem:Can.sxjhXj}, for $j=1,\ldots, n$ we have a smooth map $[-1,1]\times V\times W_{n-j+1}\ni (t,x,y)\rightarrow \exp(x_j\hat{X}_j(t))(y)\in W_{n-j+2}$. In addition, we have $\hat{X}_j(0)=X_j^{(a)}$. Therefore, the $C^\infty$-regularity near $t=0$ of the above map implies that, as $t\rightarrow 0$, we have 
\begin{equation*}
 \exp\left(x_j \hat{X}_j(t)\right)(y)= \exp\left(x_j X_j^{(a)}\right)(y) +\op{O}(t) \qquad \text{in $C^\infty(V\times W_{n-j+1}, W_{n-j+2})$}. 
\end{equation*}
Combining this with~(\ref{eq:Can.rescaling-gammaX}) we deduce that, as $t\rightarrow 0$ and in $C^\infty(V)$, we have 
 \begin{equation*}
 t^{-1}\cdot \gamma_X (t\cdot x) = \exp\left( x_1X_1^{(a)}\right) \circ \cdots \circ  \exp\left(x_nX_n^{(a)}\right)\!(0) + \op{O}(t).  
\end{equation*}
Using Lemma~\ref{lem-eq-we} then shows that $ \gamma_X(x)= \exp( x_1X_1^{(a)}) \circ \cdots \circ  \exp(x_nX_n^{(a)})(0) +\Ow(\|x\|^{w+1})$ near $x=0$, completing the proof. 
\end{proof}

Pulling back the flows $\exp(sx_j X_j)$ to the manifold $M$ enables us to define a smooth map $V\ni x \rightarrow \gamma_X(x;a)$ by
\begin{equation*}
 \gamma_X(x;a)=  \exp\left( x_1X_1\right) \circ \cdots \circ  \exp\left(x_nX_n\right)\!(a), \qquad x \in V. 
\end{equation*}
Furthermore, for $j=1,\ldots, n$ we have $\partial_{x_j} \gamma_X(0)= \partial_{x_j} \exp(x_jX_j)(a)= X_j(a)$. 
Therefore, in the same way as with the map $\exp_X$, possibly by shrinking $V$ we may assume that $\gamma_X$ is a diffeomorphism from $V$ onto an open neighborhood of $a$. 
The inverse of this map is a local chart around $a$. 
The local coordinates defined by this chart are called \emph{canonical coordinates of the second kind}~(\emph{cf}.~\cite{BS:SIAMJCO90, He:SIAMR}). 

By using Proposition~\ref{prop:can.2ndkind-approx} and arguing as in the proof of Proposition~\ref{prop:can.1stkind-privileged} we recover the following result. 

\begin{proposition}[\cite{BS:SIAMJCO90, He:SIAMR}; see also~\cite{Je:Brief14, Mo:AMS02}] \label{prop:can.2ndkind-privileged}
 The canonical coordinates of the 2nd kind above are privileged coordinates at $a$ adapted to $(X_1,\ldots, X_n)$. 
\end{proposition}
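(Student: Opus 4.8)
The plan is to replay, \emph{mutatis mutandis}, the proof of Proposition~\ref{prop:can.1stkind-privileged}, with the composed-flow map $\gamma_X$ in place of $\exp_X$ and with Proposition~\ref{prop:can.2ndkind-approx} in place of Proposition~\ref{prop:can.1stkind-approx}. Concretely, let $\kappa_a$ be the local chart inverting the map $V\ni x\rightarrow \gamma_X(x;a)$ that defines the canonical coordinates of the 2nd kind, and let $\kappa$ be a chart giving privileged coordinates at $a$ adapted to $(X_1,\ldots,X_n)$, which we may take to have the same domain as $\kappa_a$. Setting $\phi=\kappa_a\circ\kappa^{-1}$, the passage from the privileged coordinates defined by $\kappa$ to the canonical coordinates of the 2nd kind is the change of variables $x\rightarrow\phi(x)$. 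By Proposition~\ref{prop:char-priv.phi-hatphi-Ow}, it then suffices to verify that $\phi(x)$ has a behavior of the form~(\ref{eq:char-priv-coord.phi-x-Ow}) near $x=0$.

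Next I would compute $\phi^{-1}$ by naturality of flows under the diffeomorphism $\kappa$ (using $\kappa(a)=0$ and $\kappa\circ\exp(tX)\circ\kappa^{-1}=\exp(t\kappa_*X)$ applied factor by factor). This gives $\phi^{-1}(x)=\kappa\bigl[\exp(x_1X_1)\circ\cdots\circ\exp(x_nX_n)(a)\bigr]=\exp(x_1\kappa_*X_1)\circ\cdots\circ\exp(x_n\kappa_*X_n)(0)$, so that $\phi^{-1}(x)$ is precisely the map $\gamma_X$ expressed in the privileged coordinates defined by $\kappa$. Proposition~\ref{prop:can.2ndkind-approx} then yields, near $x=0$, the expansion $\phi^{-1}(x)=\gamma_{X^{(a)}}(x)+\Ow(\|x\|^{w+1})$, where I write $\gamma_{X^{(a)}}(x):=\exp(x_1X_1^{(a)})\circ\cdots\circ\exp(x_nX_n^{(a)})(0)$ for the corresponding composition of model flows.

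It remains to confirm that $\gamma_{X^{(a)}}$ is a $w$-homogeneous diffeomorphism with $\gamma_{X^{(a)}}'(0)=\op{id}$, so that Proposition~\ref{lem:Carnot-coord.inverse-Ow} applies. The differential at $0$ is immediate, since $\partial_{x_j}\gamma_{X^{(a)}}(0)=X_j^{(a)}(0)=\partial_j$. For homogeneity, I would reuse the rescaling identity~(\ref{eq:Can.rescaling-gammaX}): applied to the model vector fields, and using that each $X_j^{(a)}$ is homogeneous of degree $-w_j$ so that $t^{w_j}\delta_t^*X_j^{(a)}=X_j^{(a)}$, it collapses to $t^{-1}\cdot\gamma_{X^{(a)}}(t\cdot x)=\gamma_{X^{(a)}}(x)$, i.e.\ $\gamma_{X^{(a)}}(t\cdot x)=t\cdot\gamma_{X^{(a)}}(x)$. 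Invoking Proposition~\ref{lem:Carnot-coord.inverse-Ow} on the expansion of $\phi^{-1}$ then gives $\phi(x)=\gamma_{X^{(a)}}^{-1}(x)+\Ow(\|x\|^{w+1})$, where $\gamma_{X^{(a)}}^{-1}$ is again a $w$-homogeneous diffeomorphism with identity differential at the origin; this is exactly the form~(\ref{eq:char-priv-coord.phi-x-Ow}), and Proposition~\ref{prop:char-priv.phi-hatphi-Ow} finishes the proof.

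Since the genuine analytic content has been isolated in the assumed Proposition~\ref{prop:can.2ndkind-approx}, there is no serious obstacle left; the only point requiring a moment's care is the $w$-homogeneity of $\gamma_{X^{(a)}}$, where the flows are composed in a fixed order and one must check that homogeneity survives this ordering — which it does, because conjugating each one-parameter flow $\exp(x_jX_j^{(a)})$ by $\delta_t$ rescales its generator in the compatible way. The remainder is bookkeeping identical to the 1st-kind argument.
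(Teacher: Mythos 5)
Your proof is correct and is essentially the paper's own argument: the paper proves Proposition~\ref{prop:can.2ndkind-privileged} by the single remark that one should use Proposition~\ref{prop:can.2ndkind-approx} and argue as in the proof of Proposition~\ref{prop:can.1stkind-privileged}, which is precisely the substitution ($\gamma_X$ for $\exp_X$, Proposition~\ref{prop:can.2ndkind-approx} for Proposition~\ref{prop:can.1stkind-approx}) that you carried out. Your explicit verification that $\gamma_{X^{(a)}}$ is a $w$-homogeneous polynomial diffeomorphism with $\gamma_{X^{(a)}}'(0)=\op{id}$ — via the rescaling identity~(\ref{eq:Can.rescaling-gammaX}) together with $t^{w_j}\delta_t^*X_j^{(a)}=X_j^{(a)}$, so that Proposition~\ref{lem:Carnot-coord.inverse-Ow} and then Proposition~\ref{prop:char-priv.phi-hatphi-Ow} apply — correctly supplies the one detail the paper leaves implicit.
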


\end{document}